\documentclass[11pt,a4paper,german]{book}
\usepackage[T1]{fontenc}
\usepackage[latin9]{inputenc}
\setcounter{secnumdepth}{3}
\setcounter{tocdepth}{3}
\usepackage{amsmath}
\usepackage{amssymb}
\pagestyle{plain}

\makeatletter

\pdfpageheight\paperheight
\pdfpagewidth\paperwidth

\providecommand{\tabularnewline}{\\}

\usepackage{amsmath} 
\usepackage{amsthm,amscd,amssymb} 
\newtheoremstyle{break}
  {9pt}
  {9pt}
  {\itshape}
  {}
  {\bfseries}
  {}
  {\newline}
  {}
\newtheoremstyle{beispiel}
  {9pt}
  {9pt}
  {\upshape}
  {}
  {\bfseries}
  {}
  {\newline}
  {}

\theoremstyle{break}

\newtheorem{defi}{Definition}[chapter]
\newtheorem{satz}[defi]{Satz}
\newtheorem{hilf}[defi]{Hilfssatz}
\newtheorem{lemma}[defi]{Lemma}
\newtheorem{korollar}[defi]{Korollar}

\theoremstyle{beispiel}
\newtheorem{beisp}[defi]{Beispiel}
\newtheorem{anwendung}[defi]{Anwendung}

\newtheorem{bem}[defi]{Bemerkung}

\makeatother

\usepackage{babel}
\begin{document}
\begin{titlepage}

{\renewcommand{\baselinestretch}{1.1}

\centering

\Huge Testen statistischer Funktionale f\"ur Zweistichprobenprobleme\\[10ex]

\LARGE I n a u g u r a l - D i s s e r t a t i o n\\

\normalsize\ \\

zur\\

Erlangung des Doktorgrades \\

der Mathematisch-Naturwissenschaftlichen Fakult\"at\\

der Heinrich-Heine-Universit\"at D\"usseldorf \\[30ex]

vorgelegt von\\

\LARGE Vladimir Ostrovski \\[1.25ex]

\normalsize

aus Moskau \\[8ex]

D\"usseldorf 2006 \\

}

\newpage\pagestyle{empty}

Aus dem Mathematischen Institut\\

der Heinrich-Heine-Universit\"at D\"usseldorf\\

\vspace*{11cm}

 Gedruckt mit der Genehmigung der \\

Mathematisch-Naturwissenschaftlichen Fakult\"at der\\

Heinrich-Heine-Universit\"at D\"usseldorf

\vspace*{1cm}

$\begin{array}{ll}

\mbox{\hspace{-0.16cm}Referent:} & \mbox{Prof. Dr. A. Janssen, D\"usseldorf} \\

\mbox{\hspace{-0.16cm}Korreferenten:} & \mbox{Prof. Dr. K. Janßen, D\"usseldorf} \\

               & \mbox{Prof. Dr. F. Liese, Rostock} \\

 &   \\

\mbox{\hspace{-0.16cm}Tag der m\"undlichen Pr\"ufung:} & \mbox{26.01.2006} 

\end{array}$ \end{titlepage}\pagenumbering{roman}

\tableofcontents{}

\chapter*{Einführung\pagenumbering{arabic}\addcontentsline{toc}{chapter}{Einführung}}

Viele Testprobleme der parametrischen und nichtparametrischen Statistik
lassen sich mit Hilfe der reellwertigen statistischen Funktionale
formulieren. In dieser Arbeit werden die Probleme des Testens statistischer
Funktionale im Zweistichprobenkontext betrachtet. Es wird stets vorausgesetzt,
dass die beiden Stichproben stochastisch unabhängig sind und die Zufallsvariablen
in jeder Stichprobe unabhängig identisch verteilt sind. Seien $\mathcal{P}\subset\mathcal{M}_{1}\left(\Omega_{1},\mathcal{A}_{1}\right)$
und $\mathcal{Q}\subset\mathcal{M}_{1}\left(\Omega_{2},\mathcal{A}_{2}\right)$
zwei nichtleere Mengen von Wahrscheinlichkeitsmaßen. Wir betrachten
die Familie $\mathcal{P}\otimes\mathcal{Q}:=\left\{ P\otimes Q:P\in\mathcal{P},Q\in\mathcal{Q}\right\} $
von Produktmaßen, die das kanonische Modell für die Zweistichprobenprobleme
darstellt. Jede Abbildung $k:\mathcal{P}\otimes\mathcal{Q}\rightarrow\mathbb{R}$
wird dann als reellwertiges statistisches Funktional bezeichnet. Viele
einseitige und zweiseitige Testprobleme aus der Praxis lassen sich
dann in der Form 
\begin{equation}
\left\{ P\otimes Q\in\mathcal{P}\otimes\mathcal{Q}:k\left(P\otimes Q\right)\leq0\right\} \;\mbox{gegen}\;\left\{ P\otimes Q\in\mathcal{P}\otimes\mathcal{Q}:k\left(P\otimes Q\right)>0\right\} \label{e1}
\end{equation}
 beziehungsweise 
\begin{equation}
\left\{ P\otimes Q\in\mathcal{P}\otimes\mathcal{Q}:k\left(P\otimes Q\right)=0\right\} \;\mbox{gegen}\;\left\{ P\otimes Q\in\mathcal{P}\otimes\mathcal{Q}:k\left(P\otimes Q\right)\neq0\right\} \label{e2}
\end{equation}
 darstellen. An einer beliebigen aber fest gewählten Stelle $P_{0}\otimes Q_{0}\in\mathcal{P}\otimes\mathcal{Q}$
mit $k\left(P_{0}\otimes Q_{0}\right)=0$ werden diese Testprobleme
mit Hilfe der $L_{2}\left(P_{0}\otimes Q_{0}\right)$-differenzierbaren
Kurven von Wahrscheinlichkeitsmaßen parametrisiert. Weiterhin betrachtet
man ein an der Stelle $P_{0}\otimes Q_{0}$ differenzierbares statistisches
Funktional $k:\mathcal{P}\otimes\mathcal{Q}\rightarrow\mathbb{R}$.
Die an der Stelle $P_{0}\otimes Q_{0}$ differenzierbaren statistischen
Funktionale lassen sich lokal entlang der $L_{2}\left(P_{0}\otimes Q_{0}\right)$-differenzierbaren
Kurven linearisieren. Dementsprechend werden die einseitigen und zweiseitigen
Testprobleme (\ref{e1}) und (\ref{e2}) lokal an der Stelle $P_{0}\otimes Q_{0}$
als lineare Testprobleme in einem Hilbertraum $H\subset L_{2}\left(P_{0}\otimes Q_{0}\right)$
aufgefasst. Mit dem Übergang zur Asymptotik im Rahmen der modernen
Theorie von Le Cam erhält man dann die linearen Testprobleme für die
Limesexperimente. Die geeignete Lokalisierung führt dabei zu den unendlich-dimensionalen
Gauß-Shift Experimenten als Limesexperimente. Die linearen Testprobleme
lassen sich für die Gauß-Shift Experimente optimal lösen. Man findet
also für die Limesexperimente die optimalen Tests für die gegebenen
linearen Testprobleme. Mit Hilfe der Theorie von Le Cam und insbesondere
des Hauptsatzes der Testtheorie führt das zu oberen Schranken für
die asymptotischen Gütefunktionen der Testfolgen für die Testprobleme
(\ref{e1}) und (\ref{e2}). Diese oberen Schranken gelten dann für
die asymptotischen Gütefunktionen aller Testfolgen, die einige für
das jeweilige Testproblem sinnvolle Nebenbedingungen erfüllen. Die
Testfolgen, deren asymptotische Gütefunktionen die obere Schranke
erreichen, werden als asymptotisch optimal bezeichnet.

In Kapitel 1 wird an die $L_{2}$-differenzierbaren Kurven von Wahrscheinlichkeitsmaßen
erinnert. Danach werden die differenzierbaren statistischen Funktionale
und ihre kanonischen Gradienten vorgestellt. Der Tangentialraum und
der Tangentenkegel eines Wahrscheinlichkeitsmaßes $P_{0}\in\mathcal{P}$
bezüglich einer Familie $\mathcal{P}$ von Wahrscheinlichkeitsmaßen
werden eingeführt. Insbesondere wird auf die Struktur der Tangentialräume
und Tangentenkegel der Familien von Produktmaßen eingegangen. Die
Kenntnis dieser Struktur ist der Schlüssel zum Testen der differenzierbaren
statistischen Funktionale im Zweistichprobenkontext. Als Letztes werden
in diesem Kapitel die Hilfsmittel vorgestellt, die zum asymptotischen
Vergleich der verschiedenen Testfolgen benötigt werden.

In Kapitel 2 wird zuerst ein Darstellungssatz für den kanonischen
Gradienten eines an der Stelle $P_{0}\otimes Q_{0}$ differenzierbaren
statistischen Funktionals $k:\mathcal{P}\otimes\mathcal{Q}\rightarrow\mathbb{R}$
bewiesen. Danach werden die kanonischen Gradienten für einige wichtige
Funktionale im Zweistichprobenkontext ausgerechnet. Unter anderem
werden die kanonischen Gradienten für das Wilcoxon Funktional und
für die von Mises Funktionale berechnet. Anschließend wird die Differenzierbarkeit
zusammengesetzter statistischer Funktionale untersucht und mit einigen
wichtigen Beispielen begleitet.

Mit $n_{i}$ wird der Umfang der $i$-ten Stichprobe für $i=1,2$
bezeichnet. Der gesamte Stichprobenumfang ist dann $n=n_{1}+n_{2}$.
Als Modell für die unabhängigen Stichproben und die unabhängig identisch
verteilten Zufallsvariablen in jeder Stichprobe betrachtet man das
statistische Experiment 
\[
E_{n}=\left(\Omega_{1}^{n_{1}}\otimes\Omega_{2}^{n_{2}},\mathcal{A}_{1}^{n_{1}}\otimes\mathcal{A}_{2}^{n_{2}},\left\{ P^{n_{1}}\otimes Q^{n_{2}}:P\in\mathcal{P},\,Q\in\mathcal{Q}\right\} \right).
\]
 Seien nun $\left(n_{1}\right)_{n\in\mathbb{N}}$ und $\left(n_{2}\right)_{n\in\mathbb{N}}$
zwei Folgen von natürlichen Zahlen mit $n_{1}+n_{2}=n$ für alle $n\in\mathbb{N}$
und $\lim\limits_{n\rightarrow\infty}\frac{n_{2}}{n}=d$ für ein $d\in\left(0,1\right)$.
In Kapitel 3 wird die Teststatistik $T_{n}$, die auf dem kanonischen
Gradienten eines an der Stelle $P_{0}\otimes Q_{0}$ differenzierbaren
statistischen Funktionals $k:\mathcal{P}\otimes\mathcal{Q}\rightarrow\mathbb{R}$
beruht, motiviert und begründet. Die Zweistichproben-U-Statistik $U_{n_{1},n_{2}}$
mit dem kanonischen Gradienten als Kern besitzt im Vergleich zu den
allgemeinen U-Statistiken eine besonders einfache Gestalt. Die Teststatistik
$T_{n}$ erhält man dann aus $U_{n_{1},n_{2}}$ durch $T_{n}:=\sqrt{n}U_{n_{1},n_{2}}.$
Danach wird an die lokale asymptotische Normalität (LAN) erinnert.
Die lokale asymptotische Normalität der Familien von Produktmaßen
wird untersucht und die zentrale Folge $X_{n}$ wird vorgestellt.
Außerdem wird die gemeinsame asymptotische Verteilung der Teststatistik
$T_{n}$ und der zentralen Folge $X_{n}$ unter dem Wahrscheinlichkeitsmaß
$P_{0}^{n_{1}}\otimes Q_{0}^{n_{2}}$ berechnet. In den letzten zwei
Abschnitten von Kapitel 3 werden die einseitigen und zweiseitigen
Tests auf Basis der Teststatistik $T_{n}$ entwickelt. Die asymptotische
Gütefunktion der Testfolgen der einseitigen und zweiseitigen Tests
wird berechnet. Es wird außerdem gezeigt, dass die Folge der einseitigen
Tests eine asymptotische Maximin-$\alpha$-Testfolge in der Menge
aller asymptotisch Niveau $\alpha$ ähnlichen Testfolgen ist. Der
asymptotisch optimale relative Stichprobenumfang $d=\lim\limits_{n\rightarrow\infty}\frac{n_{2}}{n}$
wird für die Folge der einseitigen Tests bestimmt.

In Kapitel 4 werden die unendlich-dimensionalen Gauß-Shift Experimente
mit einem Hilbertraum als Parameterraum vorgestellt. Danach wird die
schwache Konvergenz von Experimenten eingeführt. Die lokale asymptotische
Normalität wird dann als schwache Konvergenz gegen ein Gauß-Shift
Experiment verallgemeinert. Mit diesen Mitteln werden einige Ergebnisse
aus \cite{Janssen:1999a}, \cite{Janssen:1999b} und \cite{Janssen:2000}
für die Folgen der einseitigen Tests im Einstichprobenkontext erneut
bewiesen. Zusätzlich wird die asymptotische Optimalität der Folgen
der zweiseitigen Tests unter den analogen Voraussetzungen nachgewiesen.
Mit Hilfe der Approximation in dem Limesexperiment wird die asymptotische
Optimalität der Testfolgen auf Basis des kanonischen Gradienten für
die einseitigen und zweiseitigen Testprobleme unter schwachen Voraussetzungen
an den Tangentenkegel gezeigt. Anschließend wird die verallgemeinerte
Theorie der lokalen asymptotischen Normalität auf die Folgen der Experimente,
die als Modelle für die zwei unabhängigen Stichproben dienen, übertragen.
Das Problem des Testens statistischer Funktionale im Zweistichprobenkontext
lässt sich elegant behandeln, indem man das Skalarprodukt in dem Tangentialraum
wechselt und dann auf die Lösung der entsprechenden Einstichprobenprobleme
zurückgreift. Die am Anfang des Kapitels gezeigte Ergebnisse für das
Testen der differenzierbaren statistischen Funktionale im Einstichprobenkontext
lassen sich auf die Situation der Zweistichproben übertragen.

Kapitel 5 widmet sich der Anwendung der Theorie des Testens statistischer
Funktionale für die Zweistichprobenprobleme. Auf die Stärken und Schwierigkeiten
der Anwendung wird hingewiesen. Der Begriff der asymptotischen Äquivalenz
der Testfolgen wird eingeführt. Anhand der Beispiele werden die Techniken
und Methoden vorgestellt, die bei dem Nachweis der asymptotischen
Äquivalenz der Testfolgen besonders hilfreich und nützlich sind. Für
das Wilcoxon Funktional wird gezeigt, dass der wohl bekannte Wilcoxon-Rangsummentest
bereits der asymptotisch optimale Test ist, falls man die Nullhypothese
auf die Produktmaße $P^{2}$ mit dem stetigen Wahrscheinlichkeitsmaß
$P\in\mathcal{M}_{1}\left(\mathbb{R},\mathcal{B}\right)$ einschränkt.
Das Testen der Erwartungswerte zweier unabhängigen Stichproben führt
zur Betrachtung der Summen zweier von Mises Funktionale. Das Problem
des Testens des Erwartungswertes einer randomisierten Summe kann als
Motivation zur Betrachtung des Produktes zweier von Mises Funktionale
dienen. Deswegen werden die Summe und das Produkt zweier von Mises
Funktionale ausführlich behandelt. Die letzte Anwendung widmet sich
wieder dem Testen des Wilcoxon Funktionals. Es gibt sehr umfangreiche
Literatur zu diesem für die Praxis wichtigen Testproblem, vgl. insbesondere
\cite{Hajek:1999}, \cite{Lehmann:1986}, \cite{Janssen:1997}, \cite{Janssen:1999a},
\cite{Janssen:2000} und \cite{Xie:2002}. Hier wird die allgemeine
Fragestellung ohne üblichen Einschränkungen der Nullhypothese betrachtet.
Es werden berechenbare asymptotisch optimale Testfolgen für das einseitige
und zweiseitige Testproblem hergeleitet. Die asymptotische Optimalität
wird mit Hilfe der in dieser Arbeit entwickelten Theorie und der funktionellen
Delta Methode bewiesen. Das besondere Augenmerk gilt der Bestimmung
des kritischen Wertes. Die entwickelten Tests beruhen im Unterschied
zu dem Wilcoxon-Rangsummentest auf den Vergleichen der Realisierungen
der reellwertigen Zufallsvariablen.\\
\\
\\
\emph{Mein herzlicher Dank gilt meinem Lehrer Prof. Dr. Arnold Janssen
sowohl für viele anregende Gespräche als auch für das interessante
und herausfordernde Thema und die ausgezeichnete Betreuung. Ein herzlicher
Dank gebührt Herrn Prof. Dr. K. Janßen und Herrn Prof. Dr. F. Liese
für die Übernahme und Erstellung der weiteren Gutachten. Ich bedanke
mich bei den anderen Mitarbeitern des Mathematischen Instituts der
HHU Düsseldorf für Ihre Unterstützung. Mein tiefer Dank gilt meiner
Frau, meinen Eltern und meinen Großeltern für die Liebe, Unterstützung
und Zuneigung, die sie mir in meinem Leben zuteil werden ließen.}

\chapter{Grundlagen}

In diesem Kapitel wird zunächst die $L_{2}$-Differenzierbarkeit der
Kurven von Wahrscheinlichkeitsmaßen eingeführt. Der Tangentenkegel
und der Tangentialraum eines Wahrscheinlichkeitsmaßes $P\in\mathcal{P}$
bzgl. der Familie $\mathcal{P}$ werden vorgestellt. Danach wird der
Differentialkalkül für statistische Funktionale entwickelt. Dabei
steht der kanonische Gradient eines statistischen Funktionals im Mittelpunkt.
Anschließend werden die Tangentenkegel und die Tangentialräume der
Familien der Produktmaße untersucht. Im letzten Abschnitt werden die
Hilfsmittel zum asymptotischen Vergleich der Testfolgen bereitgestellt.

\section{$L_{2}$-Differenzierbarkeit und Tangentialräume}

Es seien $(\Omega,\mathcal{A})$ ein Messraum und $\mathcal{M}_{1}\left(\Omega,\mathcal{A}\right)$
die Menge aller Wahrscheinlichkeitsmaße auf $(\Omega,\mathcal{A}).$
Jede nichtleere Teilmenge $\mathcal{P}\subset\mathcal{M}_{1}\left(\Omega,\mathcal{A}\right)$
heißt nichtparametrische Familie von Wahrscheinlichkeitsmaßen.\begin{defi}[Dichtequotient]\label{Dichtequotient}Seien
$P,Q\in\mathcal{M}_{1}\left(\Omega,\mathcal{A}\right)$ Wahrscheinlichkeitsmaße.
Jede messbare Funktion $L:\left(\Omega,\mathcal{A}\right)\rightarrow\left(\overline{\mathbb{R}}_{\geq0},\overline{\mathcal{B}}\mid_{\mathbb{R}_{\geq0}}\right)$
mit
\begin{eqnarray*}
P(A) & = & \int_{A}L\,dQ+P(A\cap\{L=\infty\})
\end{eqnarray*}
für alle $A\in\mathcal{A}$ und $Q(L<\infty)=1$ heißt ein Dichtequotient
von $P$ bzgl. $Q$. \end{defi}\begin{bem}

Sei $\mu$ ein $\sigma$-endliches Maß auf $\left(\Omega,\mathcal{A}\right)$
mit $P,Q\ll\mu.$ Setze $f_{P}=\frac{dP}{d\mu}$ und $f_{Q}=\frac{dQ}{d\mu}.$
Dann ist
\begin{equation}
L:=\frac{f_{P}}{f_{Q}}\mathbf{1}_{\left\{ f_{Q}>0\right\} }+\infty\mathbf{1}_{\left\{ f_{Q}=0,\:f_{P}>0\right\} }\label{DQ}
\end{equation}
eine Version des Dichtequotienten von $P$ bzgl. $Q$.\end{bem}\begin{bem}Der
Dichtequotient von $P$ bzgl. $Q$ ist $(P+Q)$-f.ü. eindeutig bestimmt
(vgl. \cite{Witting:1985}, Seite 112, Satz 1.110). Deswegen wird
die Notation $\frac{dP}{dQ}$ für den Dichtequotient von $P$ bzgl.
$Q$ verwendet. \end{bem} \begin{defi}[$L_{2}$-Differenzierbarkeit]\label{L_2_diff}Es
seien $\varepsilon>0$ und $\mathcal{P}\subset\mathcal{M}_{1}\left(\Omega,\mathcal{A}\right)$
eine nichtleere Teilmenge. Eine Abbildung $\left(-\varepsilon,\varepsilon\right)\rightarrow\mathcal{P},t\mapsto P_{t}$
heißt eine $L_{2}\left(P_{0}\right)$-differenzierbare Kurve in $\mathcal{P}$
mit Tangente $g\in L_{2}\left(P_{0}\right)$, falls für $t\rightarrow0$
gilt
\begin{equation}
\left\Vert 2\left(\left(\frac{dP_{t}}{dP_{0}}\right)^{\frac{1}{2}}-1\right)-tg\right\Vert _{L_{2}\left(P_{0}\right)}=o\left(\left|t\right|\right)\label{L2-diffbarkeit1}
\end{equation}
und
\begin{equation}
P_{t}\left(\left\{ \frac{dP_{t}}{dP_{0}}=\infty\right\} \right)=o\left(t^{2}\right).\label{L2-diffbarkeit2}
\end{equation}
\end{defi}Zu (\ref{L2-diffbarkeit1}) und (\ref{L2-diffbarkeit2})
sind äquivalent
\begin{equation}
\left\Vert 2\left(\left(\frac{dP_{t}}{d\mu}\right)^{\frac{1}{2}}-\left(\frac{dP_{0}}{d\mu}\right)^{\frac{1}{2}}\right)-tg\left(\frac{dP_{0}}{d\mu}\right)^{\frac{1}{2}}\right\Vert _{L_{2}(\mu)}=o\left(\left|t\right|\right)\label{'L2-diffbarkeit1}
\end{equation}
und
\begin{equation}
\int\mathbf{1}_{\left\{ \frac{dP_{t}}{dP_{0}}=\infty\right\} }\frac{dP_{t}}{d\mu}d\mu=o\left(t^{2}\right),\label{'L2-diffbarkeit2}
\end{equation}
falls die Familie $\left\{ P_{t}:\left|t\right|<\varepsilon\right\} $
durch ein $\sigma$-endliches Maß $\mu$ dominiert ist. \begin{bem}\label{LBem}\renewcommand{\labelenumi}{\alph{enumi})}
\begin{enumerate}
\item Die Eigenschaft der $L_{2}\left(P_{0}\right)$-Differenzierbarkeit
ist unabhängig von der speziellen Version des Dichtequotienten von
$P_{t}$ bzgl. $P_{0}$.
\item Die Tangente $g\in L_{2}\left(P_{0}\right)$ ist $P_{0}$-f.ü. eindeutig
bestimmt. Jede Version von $g$ bezeichnet man deshalb als Tangente
an die $L_{2}\left(P_{0}\right)$-differenzier\-ba\-re Kurve $t\mapsto P_{t}$
in $\mathcal{P}$.
\item Aus der $L_{2}\left(P_{0}\right)$-Differenzierbarkeit folgen die
$L_{1}\left(P_{0}\right)$-Stetigkeit und die $L_{2}\left(P_{0}\right)$-Stetigkeit,
d.h. die Gültigkeit von $\left\Vert \frac{dP_{t}}{dP_{0}}-1\right\Vert _{L_{1}\left(P_{0}\right)}=o(1)$
und $\left\Vert \left(\frac{dP_{t}}{dP_{0}}\right)^{\frac{1}{2}}-1\right\Vert _{L_{2}\left(P_{0}\right)}=o(1)$
für $t\rightarrow0$.
\item Zur Abkürzung wird es häufig $L_{2}\left(0\right)$ anstelle von $L_{2}\left(P_{0}\right)$
geschrieben, falls aus dem Zusammenhang klar ist, welches Wahrscheinlichkeitsmaß
gemeint ist.
\end{enumerate}
\end{bem}\begin{bem}In der Literatur findet man gelegentlich die
Definition der einseitig $L_{2}(P_{0})$-differenzierbaren Kurven
$\left[0,\varepsilon\right)\rightarrow\mathcal{P},\,t\mapsto P_{t}$
für ein $\varepsilon>0$, vgl. \cite{Vaart:1988}. Es wird hier auf
die Betrachtung der einseitig differenzierbaren Kurven verzichtet,
weil es für die Modellbildung in dieser Arbeit nicht erforderlich
ist. Eine einseitig $L_{2}(P_{0})$-differenzierbare Kurve $\left[0,\varepsilon\right)\rightarrow\mathcal{P},\,t\mapsto P_{t}$
lässt sich außerdem in $\mathcal{M}_{1}\left(\Omega,\mathcal{A}\right)$
so fortsetzen, dass eine zweiseitig $L_{2}(P_{0})$-differenzierbare
Kurve $\left(-\varepsilon,\varepsilon\right)\rightarrow\mathcal{M}_{1}\left(\Omega,\mathcal{A}\right),\,t\mapsto P_{t}$
entsteht. \end{bem}Umfangreiche und ausführliche Informationen zur
$L_{2}$-Differenzierbarkeit (auch allgemeiner zur $L_{r}$-Differenzierbarkeit
für $r\geq1$) findet man in \cite{Witting:1985} und \cite{Strasser:1998}.
\begin{bem}[Tangentenraum]Sei $P\in\mathcal{M}_{1}\left(\Omega,\mathcal{A}\right)$
ein Wahrscheinlichkeitsmaß. Sei $t\mapsto P_{t}$ eine $L_{2}\left(P_{0}\right)$-differenzierbare
Kurve mit Tangente $g\in L_{2}\left(P_{0}\right)$ und $P_{0}=P$.
Es gilt dann $\int g\,dP_{0}=0$ (zum Beweis vgl. \cite{Witting:1985},
Satz 1.190 und Hilfssatz 1.178). Der Vektorraum 
\[
L_{2}^{(0)}\left(P\right):=\left\{ h\in L_{2}\left(P\right):\int h\,dP=0\right\} 
\]
 wird deswegen als Tangentenraum bezeichnet. Der Vektorraum $L_{2}^{(0)}(P)$
ist ein abgeschlossener Unterraum des Hilbertraums $L_{2}(P)$. \end{bem}Sei
$\mathcal{P}\subset\mathcal{M}_{1}\left(\Omega,\mathcal{A}\right)$
eine nichtparametrische Familie von Wahrscheinlichkeitsmaßen.\begin{satz}[lineare Transformation]\label{lin_trafo}
Sei $t\mapsto P_{t}$ eine $L_{2}(P_{0})$-differenzierbare Kurve
in $\mathcal{P}$ mit Tangente $g$. Für jedes $a\in\mathbb{R}\setminus\{0\}$
ist dann die Kurve $t\mapsto P_{at}$ ebenfalls $L_{2}(P_{0})$-differenzierbar
und zwar mit Tangente $ag$. \end{satz}\begin{proof}Wegen $\int ag\,dP_{0}=a\int g\,dP_{0}=0$
gilt $ag\in L_{2}^{(0)}\left(P_{0}\right).$ Die Bedingung (\ref{L2-diffbarkeit1})
ist erfüllt, denn es gilt $P_{at}\left(\left\{ \frac{dP_{at}}{dP_{0}}=\infty\right\} \right)=o\left(\left(at\right)^{2}\right)=o\left(t^{2}\right).$
Es reicht die Bedingung (\ref{L2-diffbarkeit2}) für die Kurve $t\mapsto P_{at}$
mit der Tangente $ag$ nachzuweisen. Mit der Substitution $s:=at$
erhält man
\begin{eqnarray*}
 &  & \lim_{t\rightarrow0}\left\Vert \frac{2}{t}\left(\left(\frac{dP_{at}}{dP_{0}}\right)^{\frac{1}{2}}-1\right)-ag\right\Vert _{L_{2}\left(P_{0}\right)}\\
\textrm{} & = & \lim_{t\rightarrow0}\left|a\right|\left\Vert \frac{2}{at}\left(\left(\frac{dP_{at}}{dP_{0}}\right)^{\frac{1}{2}}-1\right)-g\right\Vert _{L_{2}(P_{0})}\\
 & = & \left|a\right|\lim_{s\rightarrow0}\left\Vert \frac{2}{s}\left(\left(\frac{dP_{s}}{dP_{0}}\right)^{\frac{1}{2}}-1\right)-g\right\Vert _{L_{2}(P_{0})}\\
 & = & 0.
\end{eqnarray*}
\end{proof}\begin{bem}\label{Tangente0} Sei $P\in\mathcal{P}$ beliebig
aber fest gewählt. Die Kurve $\mathbb{R}\rightarrow\mathcal{P},t\mapsto P$
ist offensichtlich $L_{2}(0)$-differenzierbar mit der Tangente $0\in L_{2}^{(0)}\left(P\right).$\end{bem}\begin{defi}[Tangentenkegel]\label{Tangentenkegel}Die
Menge der Tangenten eines Wahrscheinlichkeitsmaßes $P\in\mathcal{P}$
bzgl. der Familie $\mathcal{P}$ wird definiert als $K(P,\mathcal{P}):=\{g\in L_{2}^{(0)}\left(P\right):$
es existiert eine $L_{2}\left(P_{0}\right)$-differenzier\-ba\-re
Kurve $t\mapsto P_{t}$ in $\mathcal{P}$ mit Tangente $g$ und $P_{0}=P\}$.\end{defi}Mit
Hilfe von Satz \ref{lin_trafo} und Bemerkung \ref{Tangente0} sieht
man, dass $K(P,\mathcal{P})$ ein Kegel ist. Die Menge $K(P,\mathcal{P})$
wird deshalb als Tangentenkegel des Wahrscheinlichkeitsmaßes $P$
bzgl. der Familie $\mathcal{P}$ bezeichnet. \begin{defi}[Tangentialraum]\label{Tangentialraum}Der
$L_{2}(P)$-Abschluss der linearen Hülle eines Tangentenkegels $K(P,\mathcal{P})$
wird mit $T(P,\mathcal{P}):=\overline{\mbox{lin}\,K(P,\mathcal{P})}$
notiert. Man bezeichnet $T(P,\mathcal{P})$ als Tangentialraum des
Wahrscheinlichkeitsmaßes $P$ bzgl. der Familie $\mathcal{P}$. \end{defi}Der
Tangentialraum spiegelt die lokale Struktur eines statistischen Experiments
wider. Für jedes Wahrscheinlichkeitsmaß $P\in\mathcal{P}$ ist der
Tangentialraum $T(P,\mathcal{P})$ ein abgeschlossener Unterraum des
Hilbertraums $L_{2}(P)$.

\section{Differenzierbare statistische Funktionale}

Sei $\mathcal{P}\subset\mathcal{M}_{1}\left(\Omega,\mathcal{A}\right)$
eine nichtparametrische Familie von Wahrscheinlichkeitsmaßen. Jede
Abbildung $k:\mathcal{P}\rightarrow\mathbb{R},P\mapsto k\left(P\right)$
wird als statistisches Funktional bezeichnet. Man interessiert sich
für die statistischen Funktionale, die sich entlang der $L_{2}$-differenzierbaren
Kurven differenzieren lassen. Viele nichtparametrischen Testprobleme
lassen sich zum Beispiel mit Hilfe solcher Funktionale mathematisch
exakt formulieren. \begin{defi}[differenzierbares statistisches Funktional]\label{diff-stat-funk}Ein
statistisches Funktional $k:\mathcal{P}\rightarrow\mathbb{R}$ heißt
differenzierbar an einer Stelle $P\in\mathcal{P}$ mit einem Gradienten
$\dot{k}=\dot{k}(P)\in L_{2}^{(0)}(P)$, falls für jede $L_{2}\left(P_{0}\right)$-differenzierbare
Kurve $t\mapsto P_{t}$ in $\mathcal{P}$ mit Tangente $g$ und $P_{0}=P$
gilt
\begin{equation}
\lim_{t\downarrow0}\frac{1}{t}\left(k\left(P_{t}\right)-k\left(P_{0}\right)\right)=\int\dot{k}g\,dP_{0}.\label{gradient}
\end{equation}
\end{defi}\begin{bem}Der einseitige Grenzwert in (\ref{gradient})
stellt keine Einschränkung dar. Es seien $k$ ein statistisches an
einer Stelle $P\in\mathcal{P}$ differenzierbares Funktional, $t\mapsto P_{t}$
eine $L_{2}\left(P_{0}\right)$-differenzierbare Kurve in $\mathcal{P}$
mit Tangente $g$ und $P_{0}=P$.

Die Kurve $t\mapsto P_{-t}$ ist nach Satz \ref{lin_trafo} ebenfalls
$L_{2}\left(P_{0}\right)$-differenzierbar mit Tangente $-g$ und
$P_{0}=P$. Für die Kurve $t\mapsto P_{-t}$ gilt also 
\begin{eqnarray}
\lim_{t\downarrow0}\frac{1}{t}\left(k\left(P_{-t}\right)-k\left(P_{0}\right)\right) & = & -\int\dot{k}g\,dP\label{beidseitigeGW}
\end{eqnarray}
nach Definition \ref{diff-stat-funk}. Aus (\ref{beidseitigeGW})
ergibt sich
\begin{eqnarray}
\lim_{t\uparrow0}\frac{1}{t}\left(k\left(P_{t}\right)-k\left(P_{0}\right)\right) & = & \lim_{t\downarrow0}\frac{1}{\left(-t\right)}\left(k\left(P_{-t}\right)-k\left(P_{0}\right)\right)\nonumber \\
 & = & -\lim_{t\downarrow0}\frac{1}{t}\left(k\left(P_{-t}\right)-k\left(P_{0}\right)\right)\nonumber \\
 & = & \int\dot{k}g\,dP.\label{GW_von_unten}
\end{eqnarray}
 \end{bem}\begin{bem}Ist ein statistisches Funktional $k:\mathcal{P}\rightarrow\mathbb{R}$
differenzierbar an einer Stelle $P\in\mathcal{P}$, so impliziert
(\ref{gradient}) eine Linearisierung von $k$ an der Stelle $P$
entlang der $L_{2}\left(P_{0}\right)$-differenzierbaren Kurven in
$\mathcal{P}$ mit $P_{0}=P$.\end{bem}\begin{bem}Ist ein statistisches
Funktional $k:\mathcal{P}\rightarrow\mathbb{R}$ differenzierbar an
einer Stelle $P\in\mathcal{P}$, so ist der Gradient von $k$ im Allgemeinen
nicht eindeutig bestimmt. \end{bem}\begin{beisp}\label{nicht_eind_gradient}Sei
$P\in\mathcal{P}$ ein Wahrscheinlichkeitsmaß mit $\{0\}\neq T(P,\mathcal{P})\neq L_{2}^{(0)}(P)$.
Sei $k:\mathcal{P}\rightarrow\mathbb{R}$ ein an der Stelle $P$ differenzierbares
statistisches Funktional mit einem Gradienten $\dot{k}\in L_{2}^{(0)}(P)$.

Mit $T(P,\mathcal{P})^{\perp}$ wird der Orthogonalraum von $T(P,\mathcal{P})$
in $L_{2}^{(0)}(P)$ bezeichnet. Aus $L_{2}^{(0)}(P)=T(P,\mathcal{P})\oplus T(P,\mathcal{P})^{\perp}$
folgt $T(P,\mathcal{P})^{\perp}\neq\{0\}$ wegen Voraussetzung $\{0\}\neq T(P,\mathcal{P})\neq L_{2}^{(0)}(P)$.
Für jedes $h\in T(P,\mathcal{P})^{\perp}$ und für jede $L_{2}\left(P_{0}\right)$-differenzierbare
Kurve $t\mapsto P_{t}$ in $\mathcal{P}$ mit Tangente $g$ und $P_{0}=P$
gilt dann
\[
\lim_{t\downarrow0}\frac{1}{t}\left(k\left(P_{t}\right)-k\left(P_{0}\right)\right)=\int\dot{k}g\,dP=\int\dot{k}g\,dP+\int hg\,dP=\int\left(\dot{k}+h\right)g\,dP.
\]
 Hieraus folgt, dass die Funktion $\dot{k}+h$ für jedes $h\in T(P,\mathcal{P})^{\perp}$
ein Gradient von $k$ an der Stelle $P$ ist.\end{beisp}\begin{satz}[kanonischer Gradient]\label{kanon_Gradient}Ein
statistisches Funktional $k:\mathcal{P}\rightarrow\mathbb{R}$ sei
differenzierbar an einer Stelle $P\in\mathcal{P}.$ Es existiert ein
eindeutiger Gradient $\widetilde{k}\in T(P,\mathcal{P})$, der die
kleinste Norm innerhalb der Menge aller Gradienten von $k$ an der
Stelle $P$ besitzt. Man bezeichnet $\widetilde{k}$ als kanonischen
Gradienten von $k$ an der Stelle $P$.\end{satz}\begin{proof}vgl.
\cite{Janssen:1998}, Seite 185 oder \cite{Ostrovski:2004}, Seite
51, Satz 4.13.\end{proof}\begin{bem}\label{kan_grad_orth_pro}Ist
$\dot{k}$ ein Gradient von $k:\mathcal{P}\rightarrow\mathbb{R}$
an der Stelle $P\in\mathcal{P}$, so lässt sich der kanonische Gradient
$\widetilde{k}$ an der Stelle $P$ als die orthogonale Projektion
von $\dot{k}$ in den Tangentialraum $T(P,\mathcal{P})$ berechnen,
vgl. \cite{Janssen:1998}, Seite 185 oder \cite{Pfanzagl:1985}, Seite
119.\end{bem}

\section{$L_{2}$-Differenzierbarkeit und Tangentialräume der Produktmaße}

Die Produktmaße spielen bei Zweistichprobenproblemen der Statistik
eine besondere Rolle, weil die beiden Stichproben meistens als stochastisch
unabhängig angesehen werden. Die Tangentialräume der Familien der
Produktmaße besitzen eine besondere Struktur, die sich als Schlüssel
zum Testen der differenzierbaren statistischen Funktionale für Zweistichprobenprobleme
erweist. Eine ausführliche Information über die Tangentialräume und
$L_{2}$-Differenzierbarkeit der Familien der Produktmaße findet man
in \cite{Pfanzagl:1985}, \cite{Witting:1985}, \cite{Strasser:1989}
und \cite{Ostrovski:2004}. Allgemeinere Ergebnisse über die Struktur
der Tangenten bei gegebenen Randverteilungen und gemeinsamer Verteilung
zweier Zufallsvariablen findet man in \cite{Janssen:2002} und \cite{Rahnenfuehrer:1999}. 

\begin{satz}[$L_2$-Differenzierbarkeit der Produktmaße]\label{L2Produkt}Sei
$n\in\mathbb{N}$. Es seien $\left(\Omega_{i},\mathcal{A}_{i}\right)$
Messräume und $\mathcal{P}_{i}\subset\mathcal{M}_{1}\left(\Omega_{i},\mathcal{A}_{i}\right)$
nichtparametrische Familien von Wahrscheinlichkeitsmaßen für alle
$i\in\left\{ 1,\ldots,n\right\} $. Die Familie der Produktmaße wird
definiert als $\mathcal{P}:=\left\{ \bigotimes\limits_{i=1}^{n}P_{i}:P_{i}\in\mathcal{P}_{i}\right\} $. 

Dann sind folgende Aussagen äquivalent:
\begin{enumerate}
\item Eine Kurve $t\mapsto P_{t}=\bigotimes\limits_{i=1}^{n}P_{i,t}$ in
$\mathcal{P}$ ist $L_{2}(0)$-differenzierbar.
\item Die Kurven $t\mapsto P_{i,t}$ sind $L_{2}(0)$-differenzierbar für
alle $i\in\left\{ 1,\ldots,n\right\} $.
\end{enumerate}
Für jedes $i\in\left\{ 1,\ldots,n\right\} $ sei $g_{i}$ die Tangente
zu der Kurve $t\mapsto P_{i,t}$ an der Stelle $0$. Die Abbildung
\[
\times_{i=1}^{n}\Omega_{i}\rightarrow\mathbb{R},\,(\omega_{1},\ldots,\omega_{n})\mapsto\sum\limits_{i=1}^{n}g_{i}(\omega_{i})
\]
 ist dann die Tangente zu der Kurve $t\mapsto P_{t}=\bigotimes\limits_{i=1}^{n}P_{i,t}$
an der Stelle $0$.\end{satz}\begin{proof}

$\mathbf{1.\Rightarrow2.}$ Sei $i\in\left\{ 1,\ldots,n\right\} $.
Die $i$-te kanonische Projektion 
\[
\pi_{i}:\times_{i=1}^{n}\Omega_{i}\rightarrow\Omega_{i},(\omega_{1},\ldots,\omega_{n})\mapsto\omega_{i}
\]
 ist eine messbare Abbildung. Die Kurve $t\mapsto P_{t}^{\pi_{i}}=P_{i,t}$
ist $L_{2}(0)$-differenzierbar nach \cite{Witting:1985}, Seite 178,
Satz 1.193.

$\mathbf{2.\Rightarrow1.}$ Diese Implikation entspricht \cite{Witting:1985},
Seite 176, Satz 1.191.\end{proof} \begin{bem}\label{prod_bem}Sei
$t\mapsto P_{t}$ eine $L_{2}\left(P_{0}\right)$-differenzierbare
Kurve in $\mathcal{P}$ mit Tangente $g\in L_{2}^{(0)}\left(P_{0}\right)$.
Sei $S:\left(\Omega,\mathcal{A}\right)\rightarrow\left(\mathbb{R},\mathcal{B}\right)$
eine Zufallsvariable. Die Kurve $t\mapsto P_{t}^{S}$ ist dann $L_{2}\left(P_{0}^{S}\right)$-differenzierbar
mit Tangente 
\[
h(s)=E_{P_{0}}\left(g\left|S=s\right.\right).
\]
Zum Beweis vergleiche \cite{Witting:1985}, Seite 176, Satz 1.191.
\end{bem}\begin{satz}\label{produkt_tangentialraum} Sei $n\in\mathbb{N}$.
Für jedes $i\in\left\{ 1,\ldots,n\right\} $ sei $\mathcal{P}_{i}\subset\mathcal{M}_{1}\left(\Omega_{i},\mathcal{A}_{i}\right)$
eine nichtparametrische Familie von Wahrscheinlichkeitsmaßen. Die
Familie der Produktmaße $\mathcal{P}$ sei definiert wie in Satz \ref{L2Produkt}.
Für jedes $P=\bigotimes\limits_{i=1}^{n}P_{i}\in\mathcal{P}$ gilt
dann:
\begin{enumerate}
\item Der Tangentenkegel $K(P,\mathcal{P})$ ist die Menge der Abbildungen
\\
$(\omega_{1},\ldots,\omega_{n})\mapsto\sum\limits_{i=1}^{n}g_{i}(\omega_{i})$
mit $g_{i}\in K(P_{i},\mathcal{P}_{i})$ für jedes $i\in\left\{ 1,\ldots,n\right\} $.
\item Der Tangentialraum $T(P,\mathcal{P})$ ist die Menge der Abbildungen
\\
$(\omega_{1},\ldots,\omega_{n})\mapsto\sum\limits_{i=1}^{n}g_{i}(\omega_{i})$
mit $g_{i}\in T(P_{i},\mathcal{P}_{i})$ für jedes $i\in\left\{ 1,\ldots,n\right\} $.
\end{enumerate}
\end{satz}\begin{proof} Es ist leicht zu zeigen, dass die Menge
der Abbildungen 
\[
(\omega_{1},\ldots,\omega_{n})\mapsto\sum\limits_{i=1}^{n}g_{i}(\omega_{i})
\]
 mit $g_{i}\in T(P_{i},\mathcal{P}_{i})$ für jedes $i\in\left\{ 1,\ldots,n\right\} $
ein bzgl. der $L_{2}\left(\otimes_{i=1}^{n}P_{i}\right)$-Norm abgeschlossener
Vektorraum ist. Die erste Aussage ergibt sich dann unmittelbar aus
Satz \ref{L2Produkt}. Mit Hilfe von Definition \ref{Tangentialraum}
folgt die zweite Aussage. \end{proof}\begin{bem}\label{eind_tangente}Sei
$g\in T\left(\otimes_{i=1}^{n}P_{i},\mathcal{P}\right)$ eine Tangente.
Dann existieren eindeutige Abbildungen $g_{i}\in T\left(P_{i},\mathcal{P}_{i}\right)$
für alle $i\in\left\{ 1,\ldots,n\right\} $ mit $g=\sum\limits_{i=1}^{n}g_{i}\circ\pi_{i}$.
Die Existenz folgt unmittelbar aus Satz \ref{produkt_tangentialraum}.
Die Eindeutigkeit ist zu zeigen. Es gelte 
\[
g=\sum\limits_{i=1}^{n}g_{i}\circ\pi_{i}=\sum\limits_{i=1}^{n}\widetilde{g}_{i}\circ\pi_{i}
\]
mit $g_{i},\widetilde{g}_{i}\in T\left(P_{i},\mathcal{P}_{i}\right)$
für alle $i\in\left\{ 1,\ldots,n\right\} $. Man erhält zunächst
\[
0=\int\left(\sum\limits_{i=1}^{n}g_{i}\circ\pi_{i}-\sum\limits_{i=1}^{n}\widetilde{g}_{i}\circ\pi_{i}\right)^{2}d\otimes_{i=1}^{n}P_{i}=\sum\limits_{i=1}^{n}\int\left(g_{i}-\widetilde{g}_{i}\right)^{2}dP_{i},
\]
weil $T\left(P_{i},\mathcal{P}_{i}\right)\subset L_{2}^{(0)}\left(P_{i}\right)$
für alle $i\in\left\{ 1,\ldots,n\right\} $ gilt. Hieraus folgt $g_{i}=\widetilde{g}_{i}$
$P_{i}$-f.s. für alle $i\in\left\{ 1,\ldots,n\right\} $. \end{bem}

\section{Asymptotisches Vergleichen der Testfolgen}

In der Statistik stellt sich häufig die Frage, wie man zwei Testfolgen
für ein Testproblem vergleichen kann. Es wird zum Beispiel oft gezeigt,
dass zwei Testfolgen die gleiche asymptotische Güte besitzen. Solche
Testfolgen nennt man asymptotisch äquivalent. In diesem Abschnitt
werden verschiedene Hilfsmittel zur Lösung dieser Probleme vorgestellt.\begin{defi}[Norm der totalen Variation]Es
seien $P$ und $Q$ zwei Wahrscheinlichkeitsmaße auf einem Messraum
$\left(\Omega,\mathcal{A}\right)$. Außerdem sei $\mu$ ein $\sigma$-endliches
Maß auf $\left(\Omega,\mathcal{A}\right)$ mit $P,Q\ll\mu$. Dann
heißt 
\[
\left\Vert P-Q\right\Vert :=\frac{1}{2}\int\left|\frac{dP}{d\mu}-\frac{dQ}{d\mu}\right|d\mu
\]
die Norm der totalen Variation.\end{defi}\begin{bem}Die Norm der
totalen Variation $\left\Vert P-Q\right\Vert $ ist unabhängig von
dem speziellen dominierenden Maß $\mu$. Insbesondere kann das Maß
$P+Q$ als dominierendes Maß gewählt werden. \end{bem}\begin{bem}Für
alle Wahrscheinlichkeitsmaße $P,Q$ auf einem Messraum $\left(\Omega,\mathcal{A}\right)$
gilt
\[
\left\Vert P-Q\right\Vert =\sup_{B\in\mathcal{A}}\left|P\left(B\right)-Q\left(B\right)\right|.
\]
 Zum Beweis vergleiche \cite{Witting:1985}, Seite 136.\end{bem}

\begin{hilf}\label{hsatz1}Seien $P_{n},Q_{n}\in\mathcal{M}_{1}\left(\Omega_{n},\mathcal{A}_{n}\right)$
zwei Folgen von Wahrscheinlichkeitsmaßen mit
\[
\lim_{n\rightarrow\infty}\left\Vert P_{n}-Q_{n}\right\Vert =0.
\]
Sei $\varphi_{n}:\left(\Omega_{n},\mathcal{A}_{n}\right)\rightarrow\left(\left[0,1\right],\left.\mathcal{B}\right|_{\left[0,1\right]}\right)$
eine Testfolge, so dass der Grenzwert $\lim\limits_{n\rightarrow\infty}E_{P_{n}}\left(\varphi_{n}\right)$
existiert. Dann gilt $\lim\limits_{n\rightarrow\infty}E_{P_{n}}\left(\varphi_{n}\right)=\lim\limits_{n\rightarrow\infty}E_{Q_{n}}\left(\varphi_{n}\right).$
\end{hilf}\begin{proof}Sei $\mu_{n}$ ein $\sigma$-endliches Maß
mit $P_{n},Q_{n}\ll\mu_{n}$. Dann gilt
\begin{eqnarray*}
\left|E_{P_{n}}\left(\varphi_{n}\right)-E_{Q_{n}}\left(\varphi_{n}\right)\right| & \leq & \int\left|\varphi_{n}\frac{dP_{n}}{d\mu_{n}}-\varphi_{n}\frac{dQ_{n}}{d\mu_{n}}\right|d\mu_{n}\\
 & = & \int\varphi_{n}\left|\frac{dP_{n}}{d\mu_{n}}-\frac{dQ_{n}}{d\mu_{n}}\right|d\mu_{n}
\end{eqnarray*}
\begin{eqnarray*}
 & \leq & \int\left|\frac{dP_{n}}{d\mu_{n}}-\frac{dQ_{n}}{d\mu_{n}}\right|d\mu_{n}\\
 & = & 2\left\Vert P_{n}-Q_{n}\right\Vert \rightarrow0\;\mbox{für}\;n\rightarrow\infty.
\end{eqnarray*}
\end{proof}\begin{satz}[asymptotisch äquivalente Testfolgen]\label{asympt_aquivalent_testfolge}
Sei $\left(P_{n}\right)_{n\in\mathbb{N}}$ eine Folge der Wahrscheinlichkeitsmaße.
Es seien $\left(T_{1n}\right)_{n\in\mathbb{N}}$ und $\left(T_{2n}\right)_{n\in\mathbb{N}}$
zwei Folgen der Teststatistiken, so dass 
\[
\lim\limits_{n\rightarrow\infty}P_{n}\left(\left\{ \left|T_{1n}-T_{2n}\right|>\varepsilon\right\} \right)=0
\]
 für alle $\varepsilon>0$ gilt. Außerdem seien $\left(\varphi_{in}\right)_{n\in\mathbb{N}}$
Testfolgen für $i=1,2$ mit 
\[
\varphi_{in}=\left\{ \begin{array}{cccc}
1 &  & >\\
\gamma_{in} & T_{in} & = & c_{in}\\
0 &  & <
\end{array}\right..
\]
Es gelte $\lim\limits_{n\rightarrow\infty}c_{in}=c$ für $i=1,2$
und $c\in\mathbb{R}$. Die Verteilung $P_{n}^{T_{1n}}$ konvergiere
schwach gegen ein Wahrscheinlichkeitsmaß $\mu$. Ist $\mu\left(\left\{ c\right\} \right)=0$,
so folgt 
\[
\lim_{n\rightarrow\infty}E_{P_{n}}\left(\left|\varphi_{1n}-\varphi_{2n}\right|\right)=0.
\]
 \end{satz}\begin{proof}vgl. \cite{Janssen:1998}, Seite 96 oder
\cite{Witting-Noelle:1970}, Seite 58. \end{proof} Beim Nachweis
der asymptotischen Äquivalenz der Testfolgen für die Familien von
Wahrscheinlichkeitsmaßen ist der Begriff der Benachbartheit unverzichtbar.\begin{defi}Es
seien $P_{n},Q_{n}\in\mathcal{M}_{1}\left(\Omega_{n},\mathcal{A}_{n}\right)$
zwei Folgen von Wahrscheinlichkeitsmaßen. Die Folge $\left(Q_{n}\right)_{n\in\mathbb{N}}$
heißt benachbart zu $\left(P_{n}\right)_{n\in\mathbb{N}}$, wenn für
alle Folgen $A_{n}\in\mathcal{A}_{n}$ von Mengen aus $\lim\limits_{n\rightarrow\infty}P_{n}\left(A_{n}\right)=0$
bereits $\lim\limits_{n\rightarrow\infty}Q_{n}\left(A_{n}\right)=0$
folgt. Die Benachbartheit von $\left(Q_{n}\right)_{n\in\mathbb{N}}$
zu $\left(P_{n}\right)_{n\in\mathbb{N}}$ wird mit $Q_{n}\triangleleft P_{n}$
notiert.\end{defi}Ausführliche Informationen zur Benachbartheit findet
man in \cite{Strasser:1985b}, \cite{Hajek:1999}, \cite{Vaart:1998}
und \cite{LeCam:2000}. Die asymptotische Äquivalenz der Testfolgen
wird häufig zuerst nur für eine Folge $\left(P_{n}\right)_{n\in\mathbb{N}}$
von Wahrscheinlichkeitsmaßen gezeigt, vgl. Satz \ref{asympt_aquivalent_testfolge}.
Diese Aussage lässt sich dann unter schwachen Voraussetzungen auf
jede zu $\left(P_{n}\right)_{n\in\mathbb{N}}$ benachbarte Folge von
Wahrscheinlichkeitsmaßen übertragen. \begin{lemma}\label{benach_imp_asymp_eq}Es
seien $P_{n},Q_{n}\in\mathcal{M}_{1}\left(\Omega_{n},\mathcal{A}_{n}\right)$
zwei Folgen von Wahrscheinlichkeitsmaßen mit $Q_{n}\triangleleft P_{n}$.
Außerdem seien $\left(\varphi_{in}\right)_{n\in\mathbb{N}}$ zwei
Testfolgen für $i=1,2$ mit $\lim\limits_{n\rightarrow\infty}E_{P_{n}}\left(\left|\varphi_{1n}-\varphi_{2n}\right|\right)=0.$
Dann gilt
\[
\lim_{n\rightarrow\infty}E_{Q_{n}}\left(\left|\varphi_{1n}-\varphi_{2n}\right|\right)=0.
\]
\end{lemma}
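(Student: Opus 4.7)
Mein Plan besteht darin, die Konvergenz $E_{Q_n}(|\varphi_{1n}-\varphi_{2n}|)\to 0$ aus der entsprechenden Konvergenz unter $P_n$ mittels einer Standard-Zerlegung herzuleiten, wobei die Benachbartheit $Q_n\triangleleft P_n$ genau an der richtigen Stelle eingesetzt wird. Der Schlüssel ist die Tatsache, dass die Testfunktionen Werte in $[0,1]$ annehmen, sodass $|\varphi_{1n}-\varphi_{2n}|\leq 1$ gilt.

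Zunächst fixiere ich $\varepsilon>0$ und betrachte die Mengen
\[
A_{n}:=\left\{\left|\varphi_{1n}-\varphi_{2n}\right|>\varepsilon\right\}\in\mathcal{A}_{n}.
\]
Mit der Markov-Ungleichung folgt aus der Voraussetzung
\[
P_{n}(A_{n})\leq\frac{1}{\varepsilon}E_{P_{n}}\left(\left|\varphi_{1n}-\varphi_{2n}\right|\right)\longrightarrow 0.
\]
Da $Q_n\triangleleft P_n$, erhalte ich hieraus unmittelbar $Q_n(A_n)\to 0$. Das ist der einzige Schritt, an dem die Benachbartheit benutzt wird.

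Anschließend zerlege ich den Erwartungswert unter $Q_n$ in die Anteile auf $A_n$ und $A_n^c$:
\[
E_{Q_{n}}\left(\left|\varphi_{1n}-\varphi_{2n}\right|\right)\leq Q_{n}(A_{n})+\varepsilon\, Q_{n}(A_{n}^{c})\leq Q_{n}(A_{n})+\varepsilon,
\]
wobei auf $A_n$ der triviale Bound $|\varphi_{1n}-\varphi_{2n}|\leq 1$ und auf $A_n^c$ die Definition von $A_n$ verwendet wurde. Mit $Q_n(A_n)\to 0$ ergibt sich $\limsup_{n\to\infty} E_{Q_n}(|\varphi_{1n}-\varphi_{2n}|)\leq\varepsilon$, und da $\varepsilon>0$ beliebig war, folgt die Behauptung.

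Eine wesentliche Schwierigkeit sehe ich hier nicht; der Beweis ist eine direkte Anwendung der Definition der Benachbartheit kombiniert mit der $[0,1]$-Beschränktheit der Tests. Der einzige konzeptionelle Punkt ist die Umwandlung der $L_1$-Konvergenz in eine Konvergenz der $P_n$-Wahrscheinlichkeiten über Markov, denn nur auf diese Konvergenz lässt sich die Benachbartheit anwenden.
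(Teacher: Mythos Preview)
Your proof is correct and follows essentially the same approach as the paper: fix $\varepsilon>0$, use Markov's inequality to get $P_n(A_n)\to 0$, apply contiguity to get $Q_n(A_n)\to 0$, then split the $Q_n$-expectation over $A_n$ and $A_n^c$ using the bound $|\varphi_{1n}-\varphi_{2n}|\leq 1$. The only cosmetic difference is that you explicitly invoke Markov, while the paper states the implication from $L_1$-convergence to convergence in probability without naming it.
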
\begin{proof}Sei $\varepsilon>0$. Aus der Konvergenz
$\lim\limits_{n\rightarrow\infty}E_{P_{n}}\left(\left|\varphi_{1n}-\varphi_{2n}\right|\right)=0$
folgt dann $\lim\limits_{n\rightarrow\infty}P_{n}\left(\left\{ \left|\varphi_{1n}-\varphi_{2n}\right|>\varepsilon\right\} \right)=0.$
Die Benachbartheit $Q_{n}\triangleleft P_{n}$ impliziert
\[
\lim\limits_{n\rightarrow\infty}Q_{n}\left(\left\{ \left|\varphi_{1n}-\varphi_{2n}\right|>\varepsilon\right\} \right)=0.
\]
 Man erhält außerdem die Abschätzung
\begin{eqnarray*}
\textrm{} &  & E_{Q_{n}}\left(\left|\varphi_{1n}-\varphi_{2n}\right|\right)\\
 & = & \int\left|\varphi_{1n}-\varphi_{2n}\right|\mathbf{1}_{\left\{ \left|\varphi_{1n}-\varphi_{2n}\right|>\varepsilon\right\} }dQ_{n}+\int\left|\varphi_{1n}-\varphi_{2n}\right|\mathbf{1}_{\left\{ \left|\varphi_{1n}-\varphi_{2n}\right|\leq\varepsilon\right\} }dQ_{n}\\
 & \leq & Q_{n}\left(\left\{ \left|\varphi_{1n}-\varphi_{2n}\right|>\varepsilon\right\} \right)+\varepsilon Q_{n}\left(\left\{ \left|\varphi_{1n}-\varphi_{2n}\right|\leq\varepsilon\right\} \right)\\
 & \leq & Q_{n}\left(\left\{ \left|\varphi_{1n}-\varphi_{2n}\right|>\varepsilon\right\} \right)+\varepsilon.
\end{eqnarray*}
Somit ergibt sich
\[
\limsup\limits_{n\rightarrow\infty}E_{Q_{n}}\left(\left|\varphi_{1n}-\varphi_{2n}\right|\right)\leq\lim\limits_{n\rightarrow\infty}Q_{n}\left(\left\{ \left|\varphi_{1n}-\varphi_{2n}\right|>\varepsilon\right\} \right)+\varepsilon=\varepsilon
\]
 für alle $\varepsilon>0$. Hieraus folgt $\lim\limits_{n\rightarrow\infty}E_{Q_{n}}\left(\left|\varphi_{1n}-\varphi_{2n}\right|\right)=0.$
\end{proof}

\chapter{Differentiation statistischer Funktionale für das Zweistichprobenproblem}

In diesem Abschnitt werden die Differenzierbarkeitseigenschaften statistischer
Funktionale für das Zweistichprobenproblem untersucht. Die besondere
Aufmerksamkeit wird auf die Struktur der Tangentialräume und des kanonischen
Gradienten gerichtet. Es werden leistungsfähige Werkzeuge entwickelt,
mit deren Hilfe die Differenzierbarkeit eines statistischen Funktionals
und die Gestalt des kanonischen Gradienten bestimmt werden können.
Außerdem werden die kanonischen Gradienten für viele wichtige statistische
Funktionale ausgerechnet. 

Es seien $\left(\Omega_{1},\mathcal{A}_{1}\right)$ und $\left(\Omega_{2},\mathcal{A}_{2}\right)$
Messräume, $\mathcal{P}\subset\mathcal{M}_{1}\left(\Omega_{1},\mathcal{A}_{1}\right)$
und $\mathcal{Q}\subset\mathcal{M}_{1}\left(\Omega_{2},\mathcal{A}_{2}\right)$
nichtparametrische Familien von Wahrscheinlichkeitsmaßen.\begin{defi}Die
Familie der Produktmaße $\mathcal{P}\otimes\mathcal{Q}\subset\mathcal{M}_{1}\left(\Omega_{1}\otimes\Omega_{2},\mathcal{A}_{1}\otimes\mathcal{A}_{2}\right)$
wird definiert als $\mathcal{P}\otimes\mathcal{Q}:=\left\{ P\otimes Q:\,P\in\mathcal{P},\,Q\in\mathcal{Q}\right\} .$
Die Abbildungen $\pi_{1}:\Omega_{1}\otimes\Omega_{2}\rightarrow\Omega_{1},\left(\omega_{1},\omega_{2}\right)\mapsto\omega_{1}$
und $\pi_{2}:\Omega_{1}\otimes\Omega_{2}\rightarrow\Omega_{2},\left(\omega_{1},\omega_{2}\right)\mapsto\omega_{2}$
bezeichnen die kanonischen Projektionen.\end{defi} \begin{satz}[Struktur eines kanonischen Gradienten]\label{umrechnung}Sei
$k:\mathcal{P}\otimes\mathcal{Q}\rightarrow\mathbb{R}$ ein statistisches
Funktional, das an einer Stelle $P_{0}\otimes Q_{0}\in\mathcal{P}\otimes\mathcal{Q}$
differenzierbar ist. Es seien $\dot{k}\in L_{2}\left(P_{0}\otimes Q_{0}\right)$
ein Gradient von $k$ an der Stelle $P_{0}\otimes Q_{0}$ und $\widetilde{k}\in L_{2}^{(0)}\left(P_{0}\otimes Q_{0}\right)$
der kanonische Gradient von $k$ an der Stelle $P_{0}\otimes Q_{0}$.
Dann gilt:
\begin{enumerate}
\item Die statistischen Funktionale $k_{1}:\mathcal{P}\rightarrow\mathbb{R},P\mapsto k\left(P\otimes Q_{0}\right)$
und $k_{2}:\mathcal{Q}\rightarrow\mathbb{R},Q\mapsto k\left(P_{0}\otimes Q\right)$
sind differenzierbar an der Stelle $P_{0}$ bzw. $Q_{0}$.
\item Die Abbildung $\dot{k}_{1}:\omega_{1}\mapsto\int\dot{k}\left(\omega_{1},\omega_{2}\right)dQ_{0}(\omega_{2})$
ist ein Gradient von $k_{1}$ an der Stelle $P_{0}$ und die Abbildung
$\dot{k}_{2}:\omega_{2}\mapsto\int\dot{k}\left(\omega_{1},\omega_{2}\right)dP_{0}(\omega_{1})$
ist ein Gradient von $k_{2}$ an der Stelle $Q_{0}$. 
\item Die Abbildung $\widetilde{k}_{1}:\omega_{1}\mapsto\int\widetilde{k}\left(\omega_{1},\omega_{2}\right)dQ_{0}(\omega_{2})$
ist der kanonische Gradient von $k_{1}$ an der Stelle $P_{0}$. Die
Abbildung $\widetilde{k}_{2}:\omega_{2}\mapsto\int\widetilde{k}\left(\omega_{1},\omega_{2}\right)dP_{0}(\omega_{1})$
ist der kanonische Gradient von $k_{2}$ an der Stelle $Q_{0}$. Außerdem
gilt $\widetilde{k}=\widetilde{k}_{1}\circ\pi_{1}+\widetilde{k}_{2}\circ\pi_{2}$.
\end{enumerate}
\end{satz}\begin{proof}

Sei $t\mapsto P_{t}\otimes Q_{t}$ eine $L_{2}(0)$-differenzierbare
Kurve in $\mathcal{P}\otimes\mathcal{Q}$ mit Tangente $h\in L_{2}^{(0)}\left(P_{0}\otimes Q_{0}\right)$.
Nach Satz \ref{produkt_tangentialraum} existieren die Tangenten $h_{1}\in L_{2}^{(0)}\left(P_{0}\right)$
und $h_{2}\in L_{2}^{(0)}\left(Q_{0}\right)$ mit $h=h_{1}\circ\pi_{1}+h_{2}\circ\pi_{2}$
. Die Kurven $t\mapsto P_{t}$ und $t\mapsto Q_{t}$ sind dann $L_{2}(0)$-differenzierbar
mit Tangenten $h_{1}$ und $h_{2}$. Mit erneuter Anwendung von Satz
\ref{produkt_tangentialraum} erhält man, dass die Kurve $t\mapsto P_{t}\otimes Q_{0}$
ebenfalls $L_{2}\left(0\right)$-differenzierbar ist und zwar mit
Tangente $h_{1}\circ\pi_{1}\in L_{2}^{(0)}\left(P_{0}\otimes Q_{0}\right).$
Es gilt somit
\begin{eqnarray*}
\lim_{t\downarrow0}\frac{1}{t}\left(k_{1}\left(P_{t}\right)-k_{1}\left(P_{0}\right)\right) & = & \lim_{t\downarrow0}\frac{1}{t}\left(k\left(P_{t}\otimes Q_{0}\right)-k\left(P_{0}\otimes Q_{0}\right)\right)\\
 & = & \int h_{1}(\omega_{1})\dot{k}(\omega_{1},\omega_{2})\,dP_{0}\otimes Q_{0}\left(\omega_{1},\omega_{2}\right)\\
 & = & \int h_{1}(\omega_{1})\underbrace{\left[\int\dot{k}(\omega_{1},\omega_{2})dQ_{0}(\omega_{2})\right]}_{=:\dot{k}_{1}(\omega_{1})}dP_{0}(\omega_{1}).
\end{eqnarray*}
Es bleibt $\dot{k}_{1}\in L_{2}\left(P_{0}\right)$ zu zeigen. Die
Jensensche Ungleichung liefert
\begin{eqnarray*}
\int\dot{k}_{1}^{2}dP_{0} & = & \int\left[\int\dot{k}(\omega_{1},\omega_{2})dQ_{0}(\omega_{2})\right]^{2}dP_{0}(\omega_{1})\\
 & \leq & \int\int\dot{k}(\omega_{1},\omega_{2})^{2}dQ_{0}(\omega_{2})dP_{0}(\omega_{1})\\
 & = & \int\dot{k}^{2}dP_{0}\otimes Q_{0}<\infty.
\end{eqnarray*}
Somit sind Teile 1 und 2 dieses Satzes bewiesen. 

Der kanonische Gradient $\widetilde{k}\in T\left(P_{0}\otimes Q_{0},\mathcal{P}\otimes\mathcal{Q}\right)$
besitzt nach Satz \ref{produkt_tangentialraum} die Darstellung $\widetilde{k}=g_{1}\circ\pi_{1}+g_{2}\circ\pi_{2}$,
wobei $g_{1}\in T\left(P_{0},\mathcal{P}\right)$ und $g_{2}\in T\left(Q_{0},\mathcal{Q}\right)$
nach Bemerkung \ref{eind_tangente} eindeutig sind. Hieraus folgt
\[
\widetilde{k}_{1}(\omega_{1})=\int\left(g_{1}(\omega_{1})+g_{2}(\omega_{2})\right)dQ_{0}(\omega_{2})=g_{1}(\omega_{1}).
\]
Nach Teil 2 dieses Satzes ist die Funktion $\widetilde{k}_{1}=g_{1}$
ein Gradient von $k_{1}$ an der Stelle $P_{0}$. Nach Satz \ref{kanon_Gradient}
und Bemerkung \ref{kan_grad_orth_pro} folgt, dass die Abbildung $\widetilde{k}_{1}=g_{1}\in T\left(P_{0},\mathcal{P}\right)$
bereits der kanonische Gradient von $k_{1}$ an der Stelle $P_{0}$
ist.\end{proof}\begin{bem}Die Abbildung $\widetilde{k}_{i}$ in
Satz \ref{umrechnung} ergibt sich als bedingte Erwartung 
\[
\widetilde{k}_{i}=E_{P_{0}\otimes Q_{0}}\left(\widetilde{k}\mid\pi_{i}\right)
\]
 für $i=1,2.$ \end{bem}\begin{defi}\label{volle_familie}Eine nichtparametrische
Familie $\mathcal{P}\subset\mathcal{M}_{1}\left(\Omega,\mathcal{A}\right)$
von Wahrscheinlichkeitsmaßen heißt voll, wenn $T\left(P,\mathcal{P}\right)=L_{2}^{(0)}\left(P\right)$
für alle $P\in\mathcal{P}$ gilt. \end{defi}\begin{bem}Sei $\mu$
ein $\sigma$-endliches Maß auf $\left(\Omega,\mathcal{A}\right)$.
Die Familie 
\[
\mathcal{P}_{\mu}:=\left\{ P\in\mathcal{M}_{1}\left(\Omega,\mathcal{A}\right):P\ll\mu\right\} 
\]
 ist voll. Zum Beweis vergleiche Satz \ref{konstruktion}  oder \cite{Ostrovski:2004},
Korollar 4.17.\end{bem}\begin{beisp}[von Mises Funktional]\label{von_mises}Sei
$k:\mathcal{P}\otimes\mathcal{Q}\rightarrow\mathbb{R},P\otimes Q\mapsto\int h\,dP\otimes Q$
ein statistisches an einer Stelle $P_{0}\otimes Q_{0}$ differenzierbares
Funktional und sei $\dot{k}=h-\int h\,dP_{0}\otimes Q_{0}$ ein Gradient
von $k$ in $P_{0}\otimes Q_{0}$. Dies tritt zum Beispiel dann auf,
wenn $h$ eine beschränkte Funktion ist, vgl. \cite{Witting:1985},
Seite 164, Satz 1.179. Allgemeinere Funktionale mit $h\in L_{2}\left(P_{0}\otimes Q_{0}\right)$
sind mit Hilfe von Lemma \ref{LemmaIundH} zu untersuchen. Nach Satz
\ref{umrechnung} gilt dann 
\begin{eqnarray*}
\dot{k}_{1}\left(\omega_{1}\right) & = & \int\left[h\left(\omega_{1},\omega_{2}\right)-\int h\left(\omega_{1},\omega_{2}\right)dP_{0}\otimes Q_{0}\left(\omega_{1},\omega_{2}\right)\right]dQ_{0}\left(\omega_{2}\right)\\
 & = & \int h\left(\omega_{1},\omega_{2}\right)dQ_{0}\left(\omega_{2}\right)-\int h\left(\omega_{1},\omega_{2}\right)dP_{0}\otimes Q_{0}\left(\omega_{1},\omega_{2}\right),
\end{eqnarray*}
\begin{eqnarray*}
\dot{k}_{2}\left(\omega_{2}\right) & = & \int\left[h\left(\omega_{1},\omega_{2}\right)-\int h\left(\omega_{1},\omega_{2}\right)dP_{0}\otimes Q_{0}\left(\omega_{1},\omega_{2}\right)\right]dP_{0}\left(\omega_{1}\right)\\
 & = & \int h\left(\omega_{1},\omega_{2}\right)dP_{0}\left(\omega_{1}\right)-\int h\left(\omega_{1},\omega_{2}\right)dP_{0}\otimes Q_{0}\left(\omega_{1},\omega_{2}\right).
\end{eqnarray*}

Sind die Familien $\mathcal{P}$ und $\mathcal{Q}$ voll, dann folgt
$\widetilde{k}_{1}=\dot{k}_{1}\in T\left(P_{0},\mathcal{P}\right)$
und $\widetilde{k}_{2}=\dot{k}_{2}\in T\left(Q_{0},\mathcal{Q}\right)$
nach Satz \ref{kanon_Gradient} und Bemerkung \ref{kan_grad_orth_pro}.
Der kanonische Gradient des Funktionals $k$ an der Stelle $P_{0}\otimes Q_{0}$
ergibt sich dann als 
\[
\widetilde{k}\left(\omega_{1},\omega_{2}\right)=\int h\left(\omega_{1},\omega_{2}\right)dQ_{0}\left(\omega_{2}\right)+\int h\left(\omega_{1},\omega_{2}\right)dP_{0}\left(\omega_{1}\right)-2k\left(P_{0}\otimes Q_{0}\right).
\]
 \end{beisp}\begin{anwendung}[Wilcoxon Funktional]\label{wilcoxon}Es
seien $\left(\Omega_{1},\mathcal{A}_{1}\right)=\left(\Omega_{2},\mathcal{A}_{2}\right)=\left(\mathbb{R},\mathcal{B}\right)$
und $h:=1_{\left\{ \left(x,y\right)\in\mathbb{R}^{2}:x\geq y\right\} }$.
Die Familien $\mathcal{P}\subset\mathcal{M}\left(\Omega_{1},\mathcal{A}_{1}\right)$
und $\mathcal{Q}\subset\mathcal{M}\left(\Omega_{2},\mathcal{A}_{2}\right)$
seien voll. Das statistische Funktional $k:P\otimes Q\mapsto\int h\,dP\otimes Q$
ist dann differenzierbar an jeder Stelle $P_{0}\otimes Q_{0}\in\mathcal{P}\otimes\mathcal{Q}$
mit dem kanonischen Gradienten
\begin{eqnarray*}
\widetilde{k}(x,y) & = & \int1_{\left\{ \left(x,y\right)\in\mathbb{R}^{2}:x\geq y\right\} }dQ_{0}(y)+\int1_{\left\{ \left(x,y\right)\in\mathbb{R}^{2}:x\geq y\right\} }dP_{0}(x)\\
 &  & -2k\left(P_{0}\otimes Q_{0}\right)\\
 & = & Q_{0}\left(\left(-\infty,x\right]\right)+P_{0}\left(\left[y,+\infty\right)\right)-2k\left(P_{0}\otimes Q_{0}\right)\\
 & = & F_{Q_{0}}(x)+1-F_{P_{0}}^{-}(y)-2k\left(P_{0}\otimes Q_{0}\right),
\end{eqnarray*}
wobei $F_{P}^{-}$ die linksseitige Verteilungsfunktion eines Wahrscheinlichkeitsmaßes
$P$ bezeichnet.\end{anwendung}\begin{bem}Es seien $n\in\mathbb{N}$
und $\left(\Omega_{i},\mathcal{A}_{i}\right)$ Messräume für alle
$i\in\left\{ 1,\ldots,n\right\} $. Die Familie der Produktmaße wird
definiert als $\mathcal{P}=\left\{ \otimes_{i=1}^{n}P_{i}:\,P_{i}\in\mathcal{P}_{i}\right\} ,$
wobei $\mathcal{P}_{i}\subset\mathcal{M}_{1}\left(\Omega_{i},\mathcal{A}_{i}\right)$
für jedes $i\in\left\{ 1,\ldots,n\right\} $ eine nichtparametrische
Familie von Wahrscheinlichkeitsmaßen ist. Der Satz \ref{umrechnung}
lässt sich dann analog auf die statistischen Funktionale $k:\mathcal{P}\rightarrow\mathbb{R}$
verallgemeinern.\end{bem}\begin{beisp}[invariante Funktionale]\label{survival}Seien
$\mathcal{P},\mathcal{Q}\subset\mathcal{M}_{1}(\mathbb{R},\mathcal{B})$.
Die statistischen Funktionale der Form 
\begin{equation}
k:\mathcal{P}\otimes\mathcal{Q}\rightarrow\mathbb{R},P\otimes Q\mapsto\int h\left(F_{Q}(x)\right)dP(x)\label{inv_funk}
\end{equation}
 finden eine breite Anwendung in der Praxis, wobei $h$ eine $\mathcal{B}$-messbare
Funktion ist und $F_{Q}$ die Verteilungsfunktion von $Q$ bezeichnet.
Insbesondere ist das Wilcoxon Funktional von diesem Typ für $h=\mbox{Id}_{\mathbb{R}}$.
Die statistischen Funktionale der Form (\ref{inv_funk}) sind invariant
unter streng isotonen Transformationen im folgenden Sinne. Sei $T:\mathbb{R}\rightarrow\mathbb{R}$
eine streng isotone Abbildung. Für alle $P\otimes Q\in\mathcal{P}\otimes\mathcal{Q}$
gilt dann 
\begin{eqnarray*}
k\left(P^{T}\otimes Q^{T}\right) & = & \int h\left(F_{Q^{T}}\left(x\right)\right)dP^{T}\left(x\right)\\
 & = & \int h\left(\int\mathbf{1}_{\left(-\infty,x\right]}\left(y\right)dQ^{T}\left(y\right)\right)dP^{T}\left(x\right)\\
 & = & \int h\left(\int\mathbf{1}_{\left(-\infty,T\left(x\right)\right]}\left(T\left(y\right)\right)dQ\left(y\right)\right)dP\left(x\right)\\
 & = & \int h\left(\int\mathbf{1}_{\left(-\infty,x\right]}\left(y\right)dQ\left(y\right)\right)dP\left(x\right)\\
 & = & k\left(P\otimes Q\right).
\end{eqnarray*}

Die $\mathcal{B}$-messbare Abbildung $h:[0,1]\rightarrow\mathbb{R}$
sei differenzierbar mit beschränkter Ableitung, d.h. $\left|\dot{h}(t)\right|\leq c$
für alle $t\in\left[0,1\right]$ und ein $c\in\left(0,+\infty\right)$.
Hieraus folgt, dass die Abbildung $h$ ebenfalls beschränkt ist. Es
wird nun gezeigt, dass das statistische Funktional $k$ dann an jeder
Stelle $P_{0}\otimes Q_{0}\in\mathcal{P}\otimes\mathcal{Q}$ differenzierbar
ist und die Abbildung 
\begin{eqnarray*}
\dot{k}\left(x,y\right) & = & h\left(F_{Q_{0}}(x)\right)+\int\dot{h}\left(F_{Q_{0}}(s)\right)1_{\left[y,+\infty\right)}(s)dP_{0}(s)\\
 &  & -k\left(P_{0}\otimes Q_{0}\right)-\int\dot{h}\left(F_{Q_{0}}(s)\right)F_{Q_{0}}\left(s\right)dP_{0}\left(s\right)
\end{eqnarray*}
 ein Gradient von $k$ an der Stelle $P_{0}\otimes Q_{0}$ ist. 

Sei $t\mapsto P_{t}\otimes Q_{t}$ eine $L_{2}\left(0\right)$-differenzierbare
Kurve in $\mathcal{P}\otimes\mathcal{Q}$ mit Tangente $g\in L_{2}^{(0)}\left(P_{0}\otimes Q_{0}\right)$.
Nach Satz \ref{L2Produkt} sind die Kurven $t\mapsto P_{t}$ und $t\mapsto Q_{t}$
ebenfalls $L_{2}\left(0\right)$-differenzierbar mit Tangenten $g_{1}\in L_{2}^{(0)}\left(P_{0}\right)$
und $g_{2}\in L_{2}^{(0)}\left(Q_{0}\right)$. Es gilt außerdem $g=g_{1}\circ\pi_{1}+g_{2}\circ\pi_{2}$.
Zuerst erhält man mit den einfachen Umformungen
\begin{eqnarray*}
\textrm{} &  & \frac{1}{t}\left(k\left(P_{t}\otimes Q_{t}\right)-k\left(P_{0}\otimes Q_{0}\right)\right)\\
 & = & \frac{1}{t}\left(\int h\left(F_{Q_{t}}(x)\right)dP_{t}(x)-\int h\left(F_{Q_{0}}(x)\right)dP_{0}(x)\right)\\
 & = & \frac{1}{t}\left(\int h\left(F_{Q_{t}}(x)\right)dP_{t}(x)-\int h\left(F_{Q_{0}}(x)\right)dP_{t}(x)\right)\\
 &  & +\frac{1}{t}\left(\int h\left(F_{Q_{0}}(x)\right)dP_{t}(x)-\int h\left(F_{Q_{0}}(x)\right)dP_{0}(x)\right).
\end{eqnarray*}
Für den zweiten Summanden ergibt sich 
\begin{eqnarray*}
 &  & \lim_{t\rightarrow0}\frac{1}{t}\left(\int h\left(F_{Q_{0}}(x)\right)dP_{t}(x)-\int h\left(F_{Q_{0}}(x)\right)dP_{0}(x)\right)\\
 & = & \left.\frac{d}{dt}\int h\left(F_{Q_{0}}(x)\right)dP_{t}(x)\right|_{t=0}\\
 & = & \int h\left(F_{Q_{0}}(x)\right)g_{1}\left(x\right)\,dP_{0}(x)
\end{eqnarray*}
nach \cite{Witting:1985}, Seite 164, Satz 1.179, weil $h$ eine beschränkte
Funktion ist. Es bleibt den Grenzwert
\[
\lim_{t\rightarrow0}\frac{1}{t}\left(\int h\left(F_{Q_{t}}(x)\right)dP_{t}(x)-\int h\left(F_{Q_{0}}(x)\right)dP_{t}(x)\right)
\]
 für den ersten Summanden zu bestimmen. Sei $\left(t_{n}\right)_{n\in\mathbb{N}}$
eine beliebige Nullfolge. Es existiert ein Wahrscheinlichkeitsmaß
$\mu$ mit $P_{0}\ll\mu$ und $P_{t_{n}}\ll\mu$ für alle $n\in\mathbb{N}$.
Man erhält dann
\begin{eqnarray*}
\textrm{} &  & \frac{1}{t_{n}}\left(\int h\left(F_{Q_{t_{n}}}(x)\right)dP_{t_{n}}(x)-\int h\left(F_{Q_{0}}(x)\right)dP_{t_{n}}(x)\right)\\
 & = & \int\frac{1}{t_{n}}\left(h\left(F_{Q_{t_{n}}}(x)\right)-h\left(F_{Q_{0}}(x)\right)\right)dP_{t_{n}}(x)
\end{eqnarray*}
\begin{eqnarray*}
\textrm{} & = & \int\frac{1}{t_{n}}\left(h\left(F_{Q_{t_{n}}}(x)\right)-h\left(F_{Q_{0}}(x)\right)\right)\frac{dP_{t_{n}}}{d\mu}(x)d\mu(x).
\end{eqnarray*}
Um den Grenzwertübergang unter dem Integral zu rechtfertigen, wird
der Integrand
\[
f_{n}(x):=\frac{1}{t_{n}}\left(h\left(F_{Q_{t_{n}}}(x)\right)-h\left(F_{Q_{0}}(x)\right)\right)\frac{dP_{t_{n}}}{d\mu}(x)
\]
zunächst abgeschätzt. Nach dem Mittelwertsatz der Differentialrechnung
erhält man 
\[
\frac{1}{t_{n}}\left(h\left(F_{Q_{t_{n}}}(x)\right)-h\left(F_{Q_{0}}(x)\right)\right)=\frac{1}{t_{n}}\left(F_{Q_{t_{n}}}(x)-F_{Q_{0}}(x)\right)\dot{h}\left(\xi_{n}\right)
\]
 für ein $\xi_{n}\in\left[0,1\right]$. Es gilt außerdem
\begin{eqnarray*}
\textrm{} &  & \lim_{n\rightarrow\infty}\left|\frac{1}{t_{n}}\left(F_{Q_{t_{n}}}(x)-F_{Q_{0}}(x)\right)\right|\\
 & = & \left|\left.\frac{d}{dt}\int\mathbf{1}_{\left(-\infty,x\right]}dQ_{t}\right|_{t=0}\right|\\
 & = & \left|\int\mathbf{1}_{\left(-\infty,x\right]}g_{2}\,dQ_{0}\right|\\
 & \leq & \int\mathbf{1}_{\left(-\infty,x\right]}\left|g_{2}\right|\,dQ_{0}\\
 & \leq & \int\left|g_{2}\right|\,dQ_{0}
\end{eqnarray*}
nach \cite{Witting:1985}, Seite 164, Satz 1.179. Für jedes $\varepsilon>0$
existiert also ein $n_{0}\in\mathbb{N}$, so dass
\[
\left|\frac{1}{t_{n}}\left(F_{Q_{t_{n}}}(x)-F_{Q_{0}}(x)\right)\right|\leq\int\left|g_{2}\right|\,dQ_{0}+\varepsilon
\]
für alle $n\geq n_{0}$ gilt. Insgesamt erhält man für alle $n\geq n_{0}$
die Majorante
\begin{eqnarray}
\left|f_{n}(x)\right| & = & \left|\frac{1}{t_{n}}\left(h\left(F_{Q_{t_{n}}}(x)\right)-h\left(F_{Q_{0}}(x)\right)\right)\right|\frac{dP_{t_{n}}}{d\mu}(x)\nonumber \\
 & = & \left|\frac{1}{t_{n}}\left(F_{Q_{t_{n}}}(x)-F_{Q_{0}}(x)\right)\dot{h}\left(\xi_{n}\right)\right|\frac{dP_{t_{n}}}{d\mu}(x)\nonumber \\
 & \leq & \left|\frac{1}{t_{n}}\left(F_{Q_{t_{n}}}(x)-F_{Q_{0}}(x)\right)\right|\frac{dP_{t_{n}}}{d\mu}(x)\max_{\xi\in\left[0,1\right]}\left|\dot{h}\left(\xi\right)\right|\nonumber \\
 & \leq & \left(\int\left|g_{2}\right|\,dQ_{0}+\varepsilon\right)\max_{\xi\in\left[0,1\right]}\left|\dot{h}\left(\xi\right)\right|\frac{dP_{t_{n}}}{d\mu}(x)\nonumber \\
 & =: & g_{n}(x).\label{eq:pr1}
\end{eqnarray}
Es gilt außerdem $\frac{dP_{t_{n}}}{d\mu}\rightarrow\frac{dP_{0}}{d\mu}$
für $n\rightarrow\infty$ in $L_{1}\left(\mu\right)$-Norm nach Bemerkung
\ref{LBem}, weil die $L_{2}\left(P_{0}\right)$-Differenzierbarkeit
der Kurve $t\mapsto P_{t}$ insbesondere die Stetigkeit der Abbildung
$t\mapsto\frac{dP_{t}}{dP_{0}}$ an der Stelle $t=0$ bzgl. der $L_{1}\left(P_{0}\right)$-Norm
impliziert, vgl. \cite{Witting:1985}, Seite 174. Hieraus folgt 
\begin{equation}
g_{n}(x)\rightarrow\left(\int\left|g_{2}\right|\,dQ_{0}+\varepsilon\right)\max_{\xi\in\left[0,1\right]}\left|\dot{h}\left(\xi\right)\right|\frac{dP_{0}}{d\mu}(x)\label{eq:pr2}
\end{equation}
in $L_{1}\left(\mu\right)$-Norm für $n\rightarrow\infty$. Außerdem
gilt 
\begin{equation}
\int g_{n}(x)d\mu(x)=\left(\int\left|g_{2}\right|\,dQ_{0}+\varepsilon\right)\max_{\xi\in\left[0,1\right]}\left|\dot{h}\left(\xi\right)\right|.\label{eq:pr3}
\end{equation}
Mit (\ref{eq:pr1}), (\ref{eq:pr2}) und (\ref{eq:pr3}) erhält man
nach dem Satz von Pratt die Konvergenz
\begin{eqnarray*}
\lim_{n\rightarrow\infty}\int f_{n}(x) & = & \lim_{n\rightarrow\infty}\int\frac{1}{t_{n}}\left(h\left(F_{Q_{t_{n}}}(x)\right)-h\left(F_{Q_{0}}(x)\right)\right)\frac{dP_{t_{n}}}{d\mu}(x)d\mu(x)\\
 & = & \int\dot{h}\left(F_{Q_{0}}(x)\right)\left[\int\mathbf{1}_{\left(-\infty,x\right]}(y)g_{2}(y)dQ_{0}(y)\right]\frac{dP_{0}}{d\mu}(x)d\mu(x)\\
 & = & \int\dot{h}\left(F_{Q_{0}}(x)\right)\left[\int\mathbf{1}_{\left(-\infty,x\right]}(y)g_{2}(y)\,dQ_{0}(y)\right]dP_{0}(x),
\end{eqnarray*}
vgl. \cite{elstrodt:2002}, Seite 258-259. Mit dem Satz von Fubini
ergibt sich 
\begin{eqnarray*}
 &  & \int\dot{h}\left(F_{Q_{0}}(x)\right)\left[\int\mathbf{1}_{\left(-\infty,x\right]}g_{2}\,dQ_{0}\right]dP_{0}(x)\\
 & = & \int\left[\int\dot{h}\left(F_{Q_{0}}(x)\right)\mathbf{1}_{\left(-\infty,x\right]}(y)g_{2}(y)dQ_{0}(y)\right]dP_{0}(x)\\
 & = & \int\left[\int\dot{h}\left(F_{Q_{0}}(x)\right)\mathbf{1}_{\left[y,+\infty\right)}(x)g_{2}(y)dP_{0}(x)\right]dQ_{0}(y)\\
 & = & \int g_{2}(y)\left[\int\dot{h}\left(F_{Q_{0}}(x)\right)\mathbf{1}_{\left[y,+\infty\right)}(x)dP_{0}(x)\right]dQ_{0}(y).
\end{eqnarray*}
 Insgesamt erhält man
\begin{eqnarray*}
\textrm{} &  & \lim_{t\rightarrow0}\frac{1}{t}\left(k\left(P_{t}\otimes Q_{t}\right)-k\left(P_{0}\otimes Q_{0}\right)\right)\\
 & = & \int h\left(F_{Q_{0}}(x)\right)g_{1}\left(x\right)\,dP_{0}(x)\\
 &  & +\int\left[\int\dot{h}\left(F_{Q_{0}}(s)\right)\mathbf{1}_{\left[y,+\infty\right)}(s)dP_{0}(s)\right]g_{2}(y)dQ_{0}(y)\\
 & = & \int\left(g_{1}\left(x\right)+g_{2}\left(y\right)\right)\hat{k}(x,y)dP_{0}\otimes Q_{0}(x,y),
\end{eqnarray*}
wobei $\widehat{k}(x,y):=h\left(F_{Q_{0}}(x)\right)+\int\dot{h}\left(F_{Q_{0}}(s)\right)\mathbf{1}_{\left[y,+\infty\right)}(s)dP_{0}(s)$
ist. Das statistische Funktional $k$ ist also differenzierbar an
der Stelle $P_{0}\otimes Q_{0}$ mit dem Gradienten $\dot{k}:=\widehat{k}-E_{P_{0}\otimes Q_{0}}\left(\widehat{k}\right)$
nach Definition \ref{diff-stat-funk}. Mit dem Satz von Fubini erhält
man dann
\begin{eqnarray*}
\dot{k}\left(x,y\right) & = & h\left(F_{Q_{0}}(x)\right)+\int\dot{h}\left(F_{Q_{0}}(s)\right)1_{\left[y,+\infty\right)}(s)dP_{0}(s)\\
 &  & -\int\left(h\left(F_{Q_{0}}(x)\right)+\int\dot{h}\left(F_{Q_{0}}(s)\right)1_{\left[y,+\infty\right)}(s)dP_{0}(s)\right)dP_{0}\otimes Q_{0}(x,y)\\
 & = & h\left(F_{Q_{0}}(x)\right)+\int\dot{h}\left(F_{Q_{0}}(s)\right)1_{\left[y,+\infty\right)}(s)dP_{0}(s)\\
 &  & -\int h\left(F_{Q_{0}}(x)\right)dP_{0}(x)-\int\int\dot{h}\left(F_{Q_{0}}(s)\right)1_{\left[y,+\infty\right)}(s)dP_{0}(s)dQ_{0}(y)\\
 & = & h\left(F_{Q_{0}}(x)\right)+\int\dot{h}\left(F_{Q_{0}}(s)\right)1_{\left[y,+\infty\right)}(s)dP_{0}(s)\\
 &  & -k\left(P_{0}\otimes Q_{0}\right)-\int\dot{h}\left(F_{Q_{0}}(s)\right)\left[\int1_{\left(-\infty,s\right]}(y)dQ_{0}(y)\right]dP_{0}(s)\\
 & = & h\left(F_{Q_{0}}(x)\right)+\int\dot{h}\left(F_{Q_{0}}(s)\right)1_{\left[y,+\infty\right)}(s)dP_{0}(s)\\
 &  & -k\left(P_{0}\otimes Q_{0}\right)-\int\dot{h}\left(F_{Q_{0}}(s)\right)F_{Q_{0}}\left(s\right)dP_{0}\left(s\right).
\end{eqnarray*}
 Der kanonische Gradient $\widetilde{k}$ an der Stelle $P_{0}\otimes Q_{0}$
lässt sich mit Satz \ref{umrechnung} berechnen, falls die Tangentialräume
$T\left(P_{0},\mathcal{P}\right)$ und $T\left(Q_{0},\mathcal{Q}\right)$
bekannt sind. \end{beisp}Das nachfolgende Lemma liefert eine hinreichende
Bedingung für die Differenzierbarkeit der von Mises Funktionale, die
einen breiten Einzug in die Theorie der asymptotischen Statistik gefunden
hat, vgl. \cite{Bickel:1993}, \cite{Ibragimov:1981}, \cite{Vaart:1988}
und \cite{Witting:1985}. Die Voraussetzungen des Lemmas sind hinreichend
schwach und meistens leicht nachzuprüfen. \begin{lemma}\label{LemmaIundH}Sei
$\mathcal{P}\!\subset\!\mathcal{M}_{1}\left(\Omega,\mathcal{A}\right)$
eine nichtparametrische Familie von Wahrscheinlichkeitsmaßen. Es seien
$t\mapsto P_{t}$ eine $L_{2}(P_{0})$-differenzierbare Kurve in $\mathcal{P}$
mit Tangente $g\in L_{2}^{(0)}\left(P_{0}\right)$ und $T:(\Omega,\mathcal{A})\rightarrow(\mathbb{R},\mathcal{B})$
eine Statistik mit $E_{P_{0}}(T^{2})<\infty$.

Falls $\limsup\limits_{t\downarrow0}E_{P_{t}}(T^{2})<\infty$ gilt,
so ist die Abbildung $t\mapsto E_{P_{t}}(T)$ rechtsseitig differenzierbar
an der Stelle $0$ und es gilt
\begin{equation}
\left.\frac{d}{dt}E_{P_{t}}(T)\right|_{t=0^{+}}=E_{P_{0}}\left(Tg\right)\label{IundH}
\end{equation}
(vgl. \cite{Ibragimov:1981}, Seite 73, Theorem 7.3. oder \cite{Bickel:1993},
Seite 457, Proposition 2).\end{lemma}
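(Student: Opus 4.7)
Der Plan wäre, zu einem $\sigma$-endlichen Maß $\mu$ mit $P_{t},P_{0}\ll\mu$ für $\left|t\right|<\varepsilon$ überzugehen, die Dichten $f_{t}:=dP_{t}/d\mu$ zu betrachten und die in der äquivalenten Form (\ref{'L2-diffbarkeit1}) enthaltene Hellinger-Entwicklung
\[
\sqrt{f_{t}}=\sqrt{f_{0}}+\tfrac{t}{2}g\sqrt{f_{0}}+u_{t},\qquad\|u_{t}\|_{L_{2}(\mu)}=o(t),
\]
auszunutzen. Der Beitrag der Menge $\{dP_{t}/dP_{0}=\infty\}$ zu $E_{P_{t}}(T)$ ist nach Cauchy-Schwarz durch $\sqrt{E_{P_{t}}(T^{2})\cdot P_{t}(\{dP_{t}/dP_{0}=\infty\})}$ beschränkt und somit $o(t)$ nach (\ref{'L2-diffbarkeit2}) und der Voraussetzung. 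Auf dem dominierten Teil zerlege ich $f_{t}-f_{0}=(\sqrt{f_{t}}-\sqrt{f_{0}})(\sqrt{f_{t}}+\sqrt{f_{0}})$ und setze die Entwicklung ein, so dass
\[
E_{P_{t}}(T)-E_{P_{0}}(T)=\tfrac{t}{2}\int Tg\sqrt{f_{0}}\,(\sqrt{f_{t}}+\sqrt{f_{0}})\,d\mu+\int Tu_{t}\,(\sqrt{f_{t}}+\sqrt{f_{0}})\,d\mu+o(t)
\]
folgt. Cauchy-Schwarz mit $\|T(\sqrt{f_{t}}+\sqrt{f_{0}})\|_{L_{2}(\mu)}^{2}\leq 2(E_{P_{t}}(T^{2})+E_{P_{0}}(T^{2}))=O(1)$ und $\|u_{t}\|_{L_{2}(\mu)}=o(t)$ zeigt, dass das zweite Integral $o(t)$ ist; genau an dieser Stelle geht die Voraussetzung $\limsup E_{P_{t}}(T^{2})<\infty$ wesentlich ein.

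Für den Hauptterm gilt $\sqrt{f_{t}}+\sqrt{f_{0}}\to 2\sqrt{f_{0}}$ in $L_{2}(\mu)$ nach Bemerkung \ref{LBem}(c), und eine direkte Anwendung von Cauchy-Schwarz würde $\tfrac{t}{2}\int Tg\sqrt{f_{0}}\,(\sqrt{f_{t}}+\sqrt{f_{0}})\,d\mu = tE_{P_{0}}(Tg)+o(t)$ liefern, sofern $Tg\in L_{2}(P_{0})$ wäre. Die Voraussetzungen garantieren aber nach Cauchy-Schwarz aus $T,g\in L_{2}(P_{0})$ nur $Tg\in L_{1}(P_{0})$. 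Um diese Lücke zu schließen, würde ich eine Abschneidetechnik einsetzen: Für $T_{M}:=(T\wedge M)\vee(-M)$ ist der beschränkte Fall Standard (vgl.\ \cite{Witting:1985}, Satz 1.179) und liefert $\left(E_{P_{t}}(T_{M})-E_{P_{0}}(T_{M})\right)/t\to E_{P_{0}}(T_{M}g)$ für $t\downarrow 0$, während $E_{P_{0}}(T_{M}g)\to E_{P_{0}}(Tg)$ für $M\to\infty$ nach dem Satz von Lebesgue, denn $|T_{M}g|\leq |Tg|\in L_{1}(P_{0})$.

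Die Hauptschwierigkeit liegt in der gleichmäßigen Kontrolle des Abschneidefehlers. Cauchy-Schwarz in Verbindung mit $\|\sqrt{f_{t}}-\sqrt{f_{0}}\|_{L_{2}(\mu)}=O(t)$ liefert die Abschätzung
\[
\left|\frac{E_{P_{t}}(T-T_{M})-E_{P_{0}}(T-T_{M})}{t}\right|\leq C\bigl(E_{P_{t}}(T^{2}\mathbf{1}_{\{|T|>M\}})+E_{P_{0}}(T^{2}\mathbf{1}_{\{|T|>M\}})\bigr)^{1/2}+o(1).
\]
Der $P_{0}$-Rest verschwindet mit $M\to\infty$ nach dem Satz von Lebesgue; der $P_{t}$-Rest hingegen erfordert die gleichgradige Integrierbarkeit von $T^{2}$ unter der Familie $\{P_{t}\}_{0<t\leq\varepsilon}$. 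Dies ist der technische Kern des Arguments und ergibt sich aus der Kombination $\limsup E_{P_{t}}(T^{2})<\infty$ mit der aus der $L_{2}$-Differenzierbarkeit folgenden Hellinger- bzw.\ Totalvariations-Konvergenz $P_{t}\to P_{0}$ (vgl.\ \cite{Ibragimov:1981} und \cite{Bickel:1993}). Ein $\varepsilon/3$-Argument, das den beschränkten Fall, den Abschneidefehler und die $P_{0}$-Approximation von $Tg$ miteinander verknüpft, liefert schließlich die rechtsseitige Differenzierbarkeit von $t\mapsto E_{P_{t}}(T)$ bei $0$ mit Ableitung $E_{P_{0}}(Tg)$.
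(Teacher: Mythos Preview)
Die Arbeit selbst gibt hier keinen eigenst\"andigen Beweis, sondern verweist lediglich auf van der Vaart (1988), Lemma 5.21, bzw.\ Ostrovski (2004), Lemma 2.1. Dein Entwurf folgt im Wesentlichen genau dem Standardargument dieser Referenzen: Hellinger-Entwicklung, Abspaltung des singul\"aren Anteils via (\ref{L2-diffbarkeit2}), Zerlegung $f_t-f_0=(\sqrt{f_t}-\sqrt{f_0})(\sqrt{f_t}+\sqrt{f_0})$ und Kontrolle des Restglieds durch Cauchy--Schwarz mit der Schranke $\|T(\sqrt{f_t}+\sqrt{f_0})\|_{L_2(\mu)}^2\leq 2(E_{P_t}(T^2)+E_{P_0}(T^2))$.

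Eine Bemerkung zum technischen Kern: Deine Behauptung, die gleichgradige Integrierbarkeit von $T^2$ unter $\{P_t\}$ folge aus $\limsup E_{P_t}(T^2)<\infty$ zusammen mit $\|P_t-P_0\|\to 0$, ist in dieser Allgemeinheit nicht ganz unmittelbar --- man erh\"alt so nur $\lim_M\limsup_t E_{P_t}(T^2\mathbf 1_{|T|>M})\leq \limsup_t E_{P_t}(T^2)-E_{P_0}(T^2)$, und die rechte Seite kann a priori positiv sein. Die zitierten Referenzen umgehen dies typischerweise, indem sie den Hauptterm als Skalarprodukt $\langle \tfrac{2}{t}(\sqrt{f_t}-\sqrt{f_0}),\,T(\sqrt{f_t}+\sqrt{f_0})\rangle_{L_2(\mu)}$ schreiben und ausnutzen, dass der erste Faktor stark gegen $g\sqrt{f_0}$ konvergiert, w\"ahrend der zweite in $L_2(\mu)$ beschr\"ankt ist und entlang jeder Teilfolge mit $f_t\to f_0$ $\mu$-f.\"u.\ schwach gegen $2T\sqrt{f_0}$ konvergiert; das Produkt aus starker und schwacher Konvergenz liefert dann direkt den Grenzwert $E_{P_0}(Tg)$, ohne dass volle gleichgradige Integrierbarkeit ben\"otigt wird. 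Dein Abschneideargument l\"asst sich mit dieser Beobachtung ebenfalls abschlie{\ss}en.
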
\begin{proof} vgl. \cite{Vaart:1988},
Seite 167, Lemma 5.21 oder \cite{Ostrovski:2004}, Seite 11, Lemma
2.1.\end{proof} Zur Anwendung des obigen Lemmas in der nichtparametrischen
Situation braucht man eine mit der $L_{2}$-Differenzierbarkeit verträgliche
Metrik auf $\mathcal{M}_{1}\left(\Omega,\mathcal{A}\right)$, d.h.
eine $L_{2}(0)$-differenzierbare Kurve $t\mapsto P_{t}$ in $\mathcal{M}_{1}\left(\Omega,\mathcal{A}\right)$
ist dann bereits stetig an der Stelle $0$ bzgl. dieser Metrik. Die
Hellingerdistanz ist eine solche Metrik, die sich für die nichtparametrischen
Untersuchungen in der Statistik gut eignet.\begin{defi}[Hellingerdistanz]Seien
$P$ und $Q$ Wahrscheinlichkeitsmaße auf $(\Omega,\mathcal{A})$
mit $P,Q\ll\nu$ für ein $\sigma$-endliches Maß $\nu$. Dann heißt
\[
d(P,Q)=\left(\frac{1}{2}\int\left(\left(\frac{dP}{d\nu}\right)^{\frac{1}{2}}-\left(\frac{dQ}{d\nu}\right)^{\frac{1}{2}}\right)^{2}d\nu\right)^{\frac{1}{2}}
\]
 die Hellingerdistanz von $P$ und $Q$.\end{defi}Mit Hilfe der Hellingerdistanz
kann man die Voraussetzungen von Lemma \ref{LemmaIundH} für die nichtparametrischen
Familien von Wahrscheinlichkeitsmaßen formulieren. Im nachfolgenden
Satz werden drei äquivalente Möglichkeiten gegeben, wie man die für
die Differenzierbarkeit hinreichende Voraussetzung aus Lemma \ref{LemmaIundH}
im nichtparametrischen Kontext ausdrücken kann. \begin{satz}[topologische Äquivalenz]\label{topologie} Es
seien $\mathcal{P}\subset\mathcal{M}_{1}\left(\Omega,\mathcal{A}\right)$
eine nichtparametrische Familie von Wahrscheinlichkeitsmaßen und $T:(\Omega,\mathcal{A})\rightarrow(\mathbb{R},\mathcal{B})$
eine Statistik mit $E_{P_{0}}(T^{2})<\infty$ für ein $P_{0}\in\mathcal{P}$.
Es sind folgenden Aussagen äquivalent:\renewcommand{\labelenumi}{(\alph{enumi})}

\begin{enumerate}
\item Für alle Folgen $\left(P_{n}\right)_{n\in\mathbb{N}}$ aus $\mathcal{P}$
mit $\lim\limits_{n\rightarrow\infty}d\left(P_{n},P_{0}\right)=0$
gilt
\[
\limsup_{n\rightarrow\infty}E_{P_{n}}\left(T^{2}\right)<\infty.
\]
\item Es existiert ein $\varepsilon>0$ mit der Eigenschaft
\[
\sup\left\{ E_{P}\left(T^{2}\right):\,P\in\mathcal{P}\;\mbox{mit}\quad d\left(P,P_{0}\right)<\varepsilon\right\} <\infty.
\]
\item Es existieren ein $\varepsilon>0$ und ein $K>0$, so dass 
\[
E_{P}\left(T^{2}\right)<K
\]
für alle $P\in\mathcal{P}$ mit $d\left(P,P_{0}\right)<\varepsilon$
gilt.
\end{enumerate}
\end{satz}\begin{proof}

Die Äquivalenz zwischen $(b)$ und $(c)$ ist leicht zu zeigen.

\textbf{$(a)\Rightarrow(b)$} Angenommen, die Aussage $(b)$ ist falsch.
Dann existiert eine Folge $\left(P_{n}\right)_{n\in\mathbb{N}}\subset\mathcal{P}$
mit $\lim\limits_{n\rightarrow\infty}d\left(P_{n},P_{0}\right)=0$
und $\lim\limits_{n\rightarrow\infty}E_{P_{n}}\left(T^{2}\right)=\infty.$
Hieraus folgt $\limsup_{n\rightarrow\infty}E_{P_{n}}\left(T^{2}\right)=\infty.$
Dies ist ein Widerspruch zu $(a)$.

\textbf{$(b)\Rightarrow(a)$} Sei $\left(P_{n}\right)_{n\in\mathbb{N}}$
eine Folge mit $\lim\limits_{n\rightarrow\infty}d\left(P_{n},P_{0}\right)=0$.
Für jedes $\varepsilon>0$ existiert dann ein $n_{0}\in\mathbb{N},$
so dass $d\left(P_{n},P_{0}\right)<\varepsilon$ für alle $n\geq n_{0}$
gilt. Man erhält somit 
\[
\limsup_{n\rightarrow\infty}E_{P_{n}}\left(T^{2}\right)<\sup\left\{ E_{P}\left(T^{2}\right):\,P\in\mathcal{P}\;\mbox{mit}\quad d\left(P,P_{0}\right)<\varepsilon\right\} <\infty.
\]
\end{proof}Es seien $k_{1}:\mathcal{P}\rightarrow\mathbb{R}$ und
$k_{2}:\mathcal{Q}\rightarrow\mathbb{R}$ statistische Funktionale.
Oft verwendet man die zusammengesetzten statistischen Funktionale
von der Form
\begin{equation}
\mathcal{P}\otimes\mathcal{Q}\rightarrow\mathbb{R},\:P\otimes Q\mapsto f\left(k_{1}\left(P\right),k_{2}\left(Q\right)\right),\label{zusamm_gesetz_stat_funk}
\end{equation}
 wobei $f:\mathbb{R}^{2}\rightarrow\mathbb{R}$ eine Abbildung ist.
Die Differenzierbarkeit der zusammengesetzten statistischen Funktionale
der Form (\ref{zusamm_gesetz_stat_funk}) wird nun untersucht.\begin{satz}\label{kett_reg_stat_funk}Sei
$k_{1}:\mathcal{P}\rightarrow\mathbb{R}$ ein statistisches an einer
Stelle $P_{0}\in\mathcal{P}$ differenzierbares Funktional mit einem
Gradienten $\dot{k}_{1}\in L_{2}^{(0)}\left(P_{0}\right)$ und dem
kanonischen Gradienten $\widetilde{k}_{1}\in T\left(P_{0},\mathcal{P}\right)$.
Sei $k_{2}:\mathcal{Q}\rightarrow\mathbb{R}$ ebenfalls ein statistisches
an einer Stelle $Q_{0}\in\mathcal{Q}$ differenzierbares Funktional
mit einem Gradienten $\dot{k}_{2}\in L_{2}^{(0)}\left(Q_{0}\right)$
und dem kanonischen Gradienten $\widetilde{k}_{2}\in T\left(Q_{0},\mathcal{Q}\right)$.
Sei $U\subset\mathbb{R}^{2}$ eine offene Menge mit $\left\{ k_{1}\left(P\right):\,P\in\mathcal{P}\right\} \times\left\{ k_{2}\left(Q\right):\,Q\in\mathcal{Q}\right\} \subset U$.
Sei $f:U\rightarrow\mathbb{R}$ eine an der Stelle $\left(k_{1}\left(P_{0}\right),k_{2}\left(Q_{0}\right)\right)$
differenzierbare Funktion. Es bezeichne 
\[
c_{1}=\left.\frac{\partial}{\partial x}f\left(x,y\right)\right|_{(x,y)=\left(k_{1}\left(P_{0}\right),k_{2}\left(Q_{0}\right)\right)}
\]
 und 
\[
c_{2}=\left.\frac{\partial}{\partial y}f\left(x,y\right)\right|_{(x,y)=\left(k_{1}\left(P_{0}\right),k_{2}\left(Q_{0}\right)\right)}.
\]
Das statistische Funktional $k:\mathcal{P}\otimes\mathcal{Q}\rightarrow\mathbb{R},\:P\otimes Q\mapsto f\left(k_{1}\left(P\right),k_{2}\left(Q\right)\right)$
ist dann differenzierbar an der Stelle $P_{0}\otimes Q_{0}$ und die
Abbildung 
\[
\dot{k}:\Omega_{1}\times\Omega_{2}\rightarrow\mathbb{R},\:\left(\omega_{1},\omega_{2}\right)\mapsto c_{1}\dot{k}_{1}\left(\omega_{1}\right)+c_{2}\dot{k}_{2}\left(\omega_{2}\right)
\]
 ist ein Gradient von $k$ an der Stelle $P_{0}\otimes Q_{0}$. Die
Abbildung 
\[
\widetilde{k}:\Omega_{1}\times\Omega_{2}\rightarrow\mathbb{R},\:\left(\omega_{1},\omega_{2}\right)\mapsto c_{1}\widetilde{k}_{1}\left(\omega_{1}\right)+c_{2}\widetilde{k}_{2}\left(\omega_{2}\right)
\]
ist der kanonische Gradient von $k$ an der Stelle $P_{0}\otimes Q_{0}$.
\end{satz}\begin{proof}Sei $t\mapsto P_{t}\otimes Q_{t}$ eine $L_{2}\left(0\right)$-differenzierbare
Kurve in $\mathcal{P}\otimes\mathcal{Q}$ mit Tangente $g\in L_{2}^{(0)}\left(P_{0}\otimes Q_{0}\right)$.
Nach Satz \ref{L2Produkt} sind die Kurven $t\mapsto P_{t}$ in $\mathcal{P}$
und $t\mapsto Q_{t}$ in $\mathcal{Q}$ ebenfalls $L_{2}\left(0\right)$-differenzierbar
mit Tangenten $g_{1}\in L_{2}^{(0)}\left(P_{0}\right)$ und $g_{2}\in L_{2}^{(0)}\left(Q_{0}\right)$.
Es gilt außerdem $g=g_{1}\circ\pi_{1}+g_{2}\circ\pi_{2}$. Nach der
Kettenregel der Differentialrechnung (vgl. \cite{heuser:1993}, Seite
267) erhält man
\begin{eqnarray*}
 &  & \left.\frac{d}{dt}k\left(P_{t}\otimes Q_{t}\right)\right|_{t=0}\\
 & = & \left.\frac{d}{dt}f\left(k_{1}\left(P_{t}\right),k_{2}\left(Q_{t}\right)\right)\right|_{t=0}\\
 & = & \left.\frac{\partial}{\partial x}f\left(x,y\right)\right|_{(x,y)=\left(k_{1}\left(P_{0}\right),k_{2}\left(Q_{0}\right)\right)}\left.\frac{d}{dt}k_{1}\left(P_{t}\right)\right|_{t=0}\\
 &  & +\left.\frac{\partial}{\partial y}f\left(x,y\right)\right|_{(x,y)=\left(k_{1}\left(P_{0}\right),k_{2}\left(Q_{0}\right)\right)}\left.\frac{d}{dt}k_{2}\left(Q_{t}\right)\right|_{t=0}\\
 & = & c_{1}\int g_{1}\dot{k}_{1}\,dP_{0}+c_{2}\int g_{2}\dot{k}_{2}\,dQ_{0}\\
 & = & \int\left(g_{1}\circ\pi_{1}+g_{2}\circ\pi_{2}\right)\left(c_{1}\dot{k}_{1}\circ\pi_{1}+c_{2}\dot{k}_{2}\circ\pi_{2}\right)\,dP_{0}\otimes Q_{0}\\
 & = & \int g\dot{k}\,dP_{0}\otimes Q_{0}.
\end{eqnarray*}
Das statistische Funktional $k:\mathcal{P}\otimes\mathcal{Q}\rightarrow\mathbb{R}$
ist somit differenzierbar an der Stelle $P_{0}\otimes Q_{0}$ und
die Abbildung $\dot{k}$ ist ein Gradient von $k$. Insbesondere ist
die Abbildung $\widetilde{k}$ ein Gradient von $k$ an der Stelle
$P_{0}\otimes Q_{0}$. Nach Satz \ref{produkt_tangentialraum} ergibt
sich $\widetilde{k}\in T\left(P_{0}\otimes Q_{0},\mathcal{P}\otimes\mathcal{Q}\right)$,
weil $c_{1}\widetilde{k}_{1}\in T\left(P_{0},\mathcal{P}\right)$
und $c_{2}\widetilde{k}_{2}\in T\left(Q_{0},\mathcal{Q}\right)$ sind.
Nach Satz \ref{kanon_Gradient} und Bemerkung \ref{kan_grad_orth_pro}
erhält man nun, dass die Abbildung $\widetilde{k}$ der kanonische
Gradient von $k$ an der Stelle $P_{0}\otimes Q_{0}$ ist.\end{proof}\begin{beisp}\label{bsp_zusamm_gesetzte_funk}Es
gelten die Voraussetzungen aus Satz \ref{kett_reg_stat_funk}. 
\begin{enumerate}
\item Sei $f:\mathbb{R}^{2}\rightarrow\mathbb{R},\left(x,y\right)\mapsto x+y$.
Das statistische Funktional 
\[
k:\mathcal{P}\otimes\mathcal{Q},\,P\otimes Q\mapsto k_{1}\left(P\right)+k_{2}\left(Q\right)
\]
 ist dann differenzierbar an der Stelle $P_{0}\otimes Q_{0}$ mit
dem kanonischen Gradienten $\widetilde{k}=\widetilde{k}_{1}\circ\pi_{1}+\widetilde{k}_{2}\circ\pi_{2}$.
Die Abbildung $\dot{k}=\dot{k}_{1}\circ\pi_{1}+\dot{k}_{2}\circ\pi_{2}$
ist ein Gradient von $k$ an der Stelle $P_{0}\otimes Q_{0}$.
\item Sei $f:\mathbb{R}^{2}\rightarrow\mathbb{R},\left(x,y\right)\mapsto xy$.
Das statistische Funktional 
\[
k:\mathcal{P}\otimes\mathcal{Q},\,P\otimes Q\mapsto k_{1}\left(P\right)k_{2}\left(Q\right)
\]
 ist differenzierbar an der Stelle $P_{0}\otimes Q_{0}$ mit dem kanonischen
Gradienten 
\[
\widetilde{k}=k_{2}\left(Q_{0}\right)\widetilde{k}_{1}\circ\pi_{1}+k_{1}\left(P_{0}\right)\widetilde{k}_{2}\circ\pi_{2}.
\]
Die Abbildung $\dot{k}=k_{2}\left(Q_{0}\right)\dot{k}_{1}\circ\pi_{1}+k_{1}\left(P_{0}\right)\dot{k}_{2}\circ\pi_{2}$
ist ein Gradient von $k$ an der Stelle $P_{0}\otimes Q_{0}$.
\item Sei $f:\mathbb{R}^{2}\rightarrow\mathbb{R},\left(x,y\right)\mapsto\frac{x}{y}$.
Das statistische Funktional 
\[
k:\mathcal{P}\otimes\mathcal{Q},\,P\otimes Q\mapsto\frac{k_{1}\left(P\right)}{k_{2}\left(Q\right)}
\]
 ist differenzierbar an der Stelle $P_{0}\otimes Q_{0}$ mit dem kanonischen
Gradienten 
\[
\widetilde{k}=\frac{1}{k_{2}\left(Q_{0}\right)}\widetilde{k}_{1}\circ\pi_{1}-\frac{k_{1}\left(P_{0}\right)}{k_{2}\left(Q_{0}\right)^{2}}\widetilde{k}_{2}\circ\pi_{2}.
\]
Die Abbildung $\dot{k}=\frac{1}{k_{2}\left(Q_{0}\right)}\dot{k}_{1}\circ\pi_{1}-\frac{k_{1}\left(P_{0}\right)}{k_{2}\left(Q_{0}\right)^{2}}\dot{k}_{2}\circ\pi_{2}$
ist ein Gradient von $k$ an der Stelle $P_{0}\otimes Q_{0}$.
\end{enumerate}
\end{beisp}

\chapter{Testen impliziter Alternativen im Zweistichprobenkontext\label{cha:Testen-implizit-definierter}}

\section{Zweistichprobenprobleme beim Testen statistischer Funktionale und
Teststatistiken\label{sec:Zweistichprobenprobleme-beim-Testen}}

Es seien $\mathcal{P}\subset\mathcal{M}_{1}\left(\Omega_{1},\mathcal{A}_{1}\right)$,
$\mathcal{Q}\subset\mathcal{M}_{1}\left(\Omega_{2},\mathcal{A}_{2}\right)$
nichtparametrische Familien von Wahrscheinlichkeitsmaßen und $\mathcal{P}\otimes\mathcal{Q}:=\left\{ P\otimes Q:P\in\mathcal{P},Q\in\mathcal{Q}\right\} $
eine Familie von Produktmaßen. Für viele Anwendungen kann die statistische
Aufgabe als ein Testproblem bzgl. eines statistischen Funktionals
$k:\mathcal{P}\otimes\mathcal{Q}\rightarrow\mathbb{R}$ formuliert
werden. Man interessiert sich unter anderem für die einseitigen Testprobleme
\[
H=\left\{ P\otimes Q\in\mathcal{P}\otimes\mathcal{Q}:k\left(P\otimes Q\right)=a\right\} 
\]
\[
\mbox{gegen}\quad K_{1}=\left\{ P\otimes Q\in\mathcal{P}\otimes\mathcal{Q}:k\left(P\otimes Q\right)>a\right\} 
\]
für eine Zahl $a\in\mathbb{R}$. Häufig betrachtet man die erweiterte
Hypothese 
\[
H_{1}=\left\{ P\otimes Q\in\mathcal{P}\otimes\mathcal{Q}:k\left(P\otimes Q\right)\leq a\right\} 
\]
für das einseitige Testproblem. Die zweiseitigen Testprobleme
\[
H=\left\{ P\otimes Q\in\mathcal{P}\otimes\mathcal{Q}:k\left(P\otimes Q\right)=a\right\} 
\]
\[
\mbox{gegen}\quad K_{2}=\left\{ P\otimes Q\in\mathcal{P}\otimes\mathcal{Q}:k\left(P\otimes Q\right)\neq a\right\} 
\]
 sind ebenfalls von großer Bedeutung. Für die analogen Einstichprobenprobleme
ist es bekannt (vgl. \cite{Janssen:1999a}, \cite{Janssen:1999b}
und \cite{Janssen:2000}), dass die Teststatistiken, die auf dem kanonischen
Gradienten beruhen, zu den asymptotisch optimalen Testfolgen führen.
Für die Zweistichprobenprobleme bietet es sich an, die Zweistichproben-U-Statistiken
mit dem kanonischen Gradienten als Kern zu betrachten. Umfangreiche
Informationen zu den allgemeinen U-Statistiken und insbesondere auch
zu den Zweistichproben-U-Statistiken findet man in \cite{Lee:1990},
\cite{Korolyuk:1994}, \cite{Witting:1985} und \cite{Witting:1995}. 

Ein statistisches Funktional $k:\mathcal{P}\otimes\mathcal{Q}\rightarrow\mathbb{R}$
sei differenzierbar an einer Stelle $P_{0}\otimes Q_{0}\in H$ mit
dem kanonischen Gradienten $\widetilde{k}\in L_{2}^{(0)}\left(P_{0}\otimes Q_{0}\right).$
Mit $n_{i}$ wird der Umfang der $i$-ten Stichprobe für $i=1,2$
bezeichnet, wobei $n$ der gesamte Stichprobenumfang ist. Es seien
$\left(n_{1}\right)_{n\in\mathbb{N}}\subset\mathbb{N}$ und $\left(n_{2}\right)_{n\in\mathbb{N}}\subset\mathbb{N}$
zwei Folgen mit $n_{1}+n_{2}=n$ für alle $n\in\mathbb{N}$ und
\begin{equation}
\lim_{n\rightarrow\infty}\frac{n_{2}}{n}=d\label{d-wert}
\end{equation}
für ein $d\in\left(0,1\right).$ Für jedes $n\in\mathbb{N}$ sei 
\[
E_{n}=\left(\Omega_{1}^{n_{1}}\times\Omega_{2}^{n_{2}},\mathcal{A}_{1}^{n_{1}}\otimes\mathcal{A}_{2}^{n_{2}},\left\{ P^{n_{1}}\otimes Q^{n_{2}}:P\in\mathcal{P},Q\in\mathcal{Q}\right\} \right)
\]
 ein statistisches Experiment. Man erhält dann
\begin{eqnarray}
U_{n_{1},n_{2}}:\Omega_{1}^{n_{1}}\times\Omega_{2}^{n_{2}} & \rightarrow & \mathbb{R},\nonumber \\
\left(\omega_{1,1},\ldots,\omega_{1,n_{1}},\omega_{2,1},\ldots,\omega_{2,n_{2}}\right) & \mapsto & \frac{1}{n_{1}n_{2}}\sum_{i=1}^{n_{1}}\sum_{j=1}^{n_{2}}\widetilde{k}\left(\omega_{1,i},\omega_{2,j}\right)\label{u_stat}
\end{eqnarray}
als die Zweistichproben-U-Statistik für das Experiment $E_{n}$ mit
dem kanonischen Gradienten $\widetilde{k}$ als Kern. Diese Statistik
lässt sich wesentlich vereinfachen, weil der kanonische Gradient $\widetilde{k}\in L_{2}^{(0)}\left(P_{0}\otimes Q_{0}\right)$
eine besonders einfache Struktur besitzt. Nach Satz \ref{umrechnung}
existieren $\widetilde{k}_{1}\in L_{2}^{(0)}\left(P_{0}\right)$ und
$\widetilde{k}_{2}\in L_{2}^{(0)}\left(Q_{0}\right)$ mit 
\[
\widetilde{k}:\Omega_{1}\times\Omega_{2}\rightarrow\mathbb{R},\left(\omega_{1},\omega_{2}\right)\mapsto\widetilde{k}_{1}\left(\omega_{1}\right)+\widetilde{k}_{2}\left(\omega_{2}\right).
\]
Für die Zweistichproben-U-Statistik $U_{n_{1},n_{2}}$ ergibt sich
somit
\begin{eqnarray}
 &  & U_{n_{1},n_{2}}\left(\omega_{1,1},\ldots,\omega_{1,n_{1}},\omega_{2,1},\ldots,\omega_{2,n_{2}}\right)\nonumber \\
 & = & \frac{1}{n_{1}n_{2}}\sum_{i=1}^{n_{1}}\sum_{j=1}^{n_{2}}\widetilde{k}\left(\omega_{1,i},\omega_{2,j}\right)\nonumber \\
 & = & \frac{1}{n_{1}n_{2}}\sum_{i=1}^{n_{1}}\sum_{j=1}^{n_{2}}\left(\widetilde{k}_{1}\left(\omega_{1,i}\right)+\widetilde{k}_{2}\left(\omega_{2,j}\right)\right)\nonumber \\
 & = & \frac{1}{n_{1}}\sum_{i=1}^{n_{1}}\widetilde{k}_{1}\left(\omega_{1,i}\right)+\frac{1}{n_{2}}\sum_{j=1}^{n_{2}}\widetilde{k}_{2}\left(\omega_{2,j}\right).\label{u_stat_kanon}
\end{eqnarray}
\begin{beisp}[Wilcoxon Funktional]\label{Ts_WF}Das Wilcoxon-Funktional
$k:P\otimes Q\mapsto\int1_{\left\{ \left(x,y\right)\in\mathbb{R}^{2}:x\geq y\right\} }dP\otimes Q(x,y)$
aus Anwendung \ref{wilcoxon} ist differenzierbar an jeder Stelle
$P_{0}\otimes Q_{0}\in\mathcal{P}\otimes Q.$ Die Familien $\mathcal{P}$
und $\mathcal{Q}$ seien voll, vgl. Definition \ref{volle_familie}.
Nach Anwendung \ref{wilcoxon} erhält man dann den kanonischen Gradienten
$\widetilde{k}$ an der Stelle $P_{0}\otimes Q_{0}$ als 
\[
\widetilde{k}\left(x,y\right)=Q_{0}\left(\left(-\infty,x\right]\right)+P_{0}\left(\left[y,+\infty\right)\right)-2k\left(P_{0}\otimes Q_{0}\right).
\]
 Nach Satz \ref{umrechnung} erhält man außerdem $\widetilde{k}_{1}\left(x\right)=Q_{0}\left(\left(-\infty,x\right]\right)-k\left(P_{0}\otimes Q_{0}\right)$
und $\widetilde{k}_{2}\left(y\right)=P_{0}\left(\left[y,+\infty\right)\right)-k\left(P_{0}\otimes Q_{0}\right).$
Die Statistik $U_{n_{1},n_{2}}$ zu dem kanonischen Gradienten $\widetilde{k}$
an der Stelle $P_{0}\otimes Q_{0}$ ergibt sich als
\begin{eqnarray*}
U_{n_{1},n_{2}}\left(x_{1},\ldots,x_{n_{1}},y_{1},\ldots,y_{n_{2}}\right) & = & \frac{1}{n_{1}}\sum_{i=1}^{n_{1}}Q_{0}\left(\left(-\infty,x_{i}\right]\right)\\
 &  & +\frac{1}{n_{2}}\sum_{j=1}^{n_{2}}P_{0}\left(\left[y_{j},+\infty\right)\right)-2k\left(P_{0}\otimes Q_{0}\right).
\end{eqnarray*}
\end{beisp}\begin{beisp}[von Mises Funktional]Das von Mises Funktional
$k:P\otimes Q\mapsto\int h\left(\omega_{1},\omega_{2}\right)dP\otimes Q\left(\omega_{1},\omega_{2}\right)$
aus Beispiel \ref{von_mises} sei differenzierbar an einer Stelle
$P_{0}\otimes Q_{0}$ mit dem kanonischen Gradienten 
\begin{eqnarray*}
\widetilde{k}\left(\omega_{1},\omega_{2}\right) & = & \int h\left(\omega_{1},\omega_{2}\right)dQ_{0}\left(\omega_{2}\right)+\int h\left(\omega_{1},\omega_{2}\right)dP_{0}\left(\omega_{1}\right)-2k\left(P_{0}\otimes Q_{0}\right).
\end{eqnarray*}
Es gilt außerdem $\widetilde{k}_{1}\left(\omega_{1}\right)=\int h\left(\omega_{1},\omega_{2}\right)dQ_{0}\left(\omega_{2}\right)-k\left(P_{0}\otimes Q_{0}\right)$
und $\widetilde{k}_{2}\left(\omega_{2}\right)=\int h\left(\omega_{1},\omega_{2}\right)dP_{0}\left(\omega_{1}\right)-k\left(P_{0}\otimes Q_{0}\right)$
nach Satz \ref{umrechnung}. Die Statistik $U_{n_{1},n_{2}}$ ergibt
sich dann als 
\begin{eqnarray*}
U_{n_{1},n_{2}}\left(\omega_{1,1},\ldots,\omega_{1,n_{1}},\omega_{2,1},\ldots,\omega_{2,n_{2}}\right) & = & \frac{1}{n_{1}}\sum_{i=1}^{n_{1}}\int h\left(\omega_{1,i},\omega_{2}\right)dQ_{0}\left(\omega_{2}\right)\\
 &  & +\frac{1}{n_{2}}\sum_{j=1}^{n_{2}}\int h\left(\omega_{1},\omega_{2,j}\right)dP_{0}\left(\omega_{1}\right)\\
 &  & -2k\left(P_{0}\otimes Q_{0}\right).
\end{eqnarray*}
 \end{beisp}\begin{beisp}[invariante Funktionale]\label{s_f_f} Sei
$h:[0,1]\rightarrow\mathbb{R}$ eine differenzierbare Abbildung mit
beschränkter Ableitung. Ein invariantes Funktional $k:P\otimes Q\mapsto\int h\left(F_{Q}\left(x\right)\right)dP\left(x\right)$
ist dann differenzierbar an jeder Stelle $P_{0}\otimes Q_{0}\in\mathcal{P}\otimes\mathcal{Q}$
nach Beispiel \ref{survival}. Die Abbildung 
\begin{eqnarray*}
\dot{k}\left(x,y\right) & = & h\left(F_{Q_{0}}(x)\right)+\int\dot{h}\left(F_{Q_{0}}(s)\right)1_{\left[y,+\infty\right)}(s)dP_{0}(s)\\
 &  & -k\left(P_{0}\otimes Q_{0}\right)-\int\dot{h}\left(F_{Q_{0}}(s)\right)F_{Q_{0}}\left(s\right)dP_{0}\left(s\right)
\end{eqnarray*}
 ist ein Gradient des Funktionals $k$ an der Stelle $P_{0}\otimes Q_{0}$.
Es gelte $T\left(P_{0},\mathcal{P}\right)=L_{2}^{(0)}\left(P_{0}\right)$
und $T\left(Q_{0},\mathcal{Q}\right)=L_{2}^{(0)}\left(Q_{0}\right)$.
Nach Satz \ref{umrechnung} erhält man dann 
\begin{eqnarray*}
\widetilde{k}_{1}(x) & = & \int\dot{k}\left(x,y\right)dQ_{0}(y)\\
 & = & h\left(F_{Q_{0}}(x)\right)+\int\int\dot{h}\left(F_{Q_{0}}(s)\right)1_{\left[y,+\infty\right)}(s)dP_{0}(s)dQ_{0}(y)\\
 &  & -k\left(P_{0}\otimes Q_{0}\right)-\int\dot{h}\left(F_{Q_{0}}(s)\right)F_{Q_{0}}\left(s\right)dP_{0}\left(s\right)
\end{eqnarray*}
\begin{eqnarray*}
\textrm{} & = & h\left(F_{Q_{0}}(x)\right)-k\left(P_{0}\otimes Q_{0}\right)
\end{eqnarray*}
und
\begin{eqnarray*}
\widetilde{k}_{2}(y) & = & \int\dot{k}\left(x,y\right)dP_{0}(x)\\
 & = & \int h\left(F_{Q_{0}}(x)\right)dP_{0}(x)+\int\dot{h}\left(F_{Q_{0}}(s)\right)1_{\left[y,+\infty\right)}(s)dP_{0}(s)\\
 &  & -k\left(P_{0}\otimes Q_{0}\right)-\int\dot{h}\left(F_{Q_{0}}(s)\right)F_{Q_{0}}\left(s\right)dP_{0}\left(s\right)\\
 & = & \int\dot{h}\left(F_{Q_{0}}(s)\right)1_{\left[y,+\infty\right)}(s)dP_{0}(s)-\int\dot{h}\left(F_{Q_{0}}(s)\right)F_{Q_{0}}\left(s\right)dP_{0}\left(s\right).
\end{eqnarray*}
Die von dem kanonischen Gradienten erzeugte Statistik $U_{n_{1},n_{2}}$
ergibt sich als
\begin{eqnarray*}
U_{n_{1},n_{2}}\left(x_{1},\ldots,x_{n_{1}},y_{1},\ldots,y_{n_{2}}\right) & = & \frac{1}{n_{1}}\sum_{i=1}^{n_{1}}h\left(F_{Q_{0}}(x_{i})\right)\\
 &  & +\frac{1}{n_{2}}\sum_{j=1}^{n_{2}}\int\dot{h}\left(F_{Q_{0}}(s)\right)1_{\left[y_{j},+\infty\right)}(s)dP_{0}(s)\\
 &  & -k\left(P_{0}\otimes Q_{0}\right)-\int\dot{h}\left(F_{Q_{0}}(s)\right)F_{Q_{0}}\left(s\right)dP_{0}\left(s\right).
\end{eqnarray*}
\end{beisp}

\section{Lokale asymptotische Normalität der Produktexperimente}

Die lokale asymptotische Normalität (kurz: LAN) kann man als zentralen
Grenzwertsatz für Experimente betrachten, der eine Grundlage für die
Untersuchungen der lokalen asymptotischen Optimalität der Test- und
Schätzverfahren bildet. Umfangreiche Informationen zu der lokalen
asymptotischen Normalität findet man in \cite{Strasser:1985b}, \cite{LeCam:2000}
und \cite{Witting:1995}. An dieser Stelle wird kurz an die Definition
der lokalen asymptotischen Normalität erinnert. Für die nichtparametrischen
Untersuchungen der asymptotischen Optimalität in diesem Abschnitt
reicht die eindimensionale Version der LAN aus.\begin{defi}[LAN]\label{lan}Sei
$E_{n}=\left(\Omega_{n},\mathcal{A}_{n},\left\{ P_{n,\vartheta}:\vartheta\in\Theta_{n}\right\} \right)$
eine Folge von Experimenten mit $0\in\Theta_{n}\subset\mathbb{R}$
und $\Theta_{n}\uparrow\Theta$ für $n\rightarrow\infty.$ Die Folge
$E_{n}$ heißt lokal asymptotisch normal (LAN), wenn es Folgen von
Zufallsvariablen $X_{n}:\Omega_{n}\rightarrow\mathbb{R}$ und $R_{n,\vartheta}:\Omega_{n}\rightarrow\left[-\infty,\infty\right]$
gibt, so dass für ein $\sigma>0$ gilt:\renewcommand{\labelenumi}{(\arabic{enumi})}
\begin{enumerate}
\item $\log\frac{dP_{n,\vartheta}}{dP_{n,0}}=\vartheta X_{n}-\frac{1}{2}\vartheta^{2}\sigma^{2}+R_{n,\vartheta},$
\item $\mathcal{L}\left(X_{n}\mid P_{n,0}\right)\rightarrow N\left(0,\sigma^{2}\right)$
für $n\rightarrow\infty$,
\item $\lim\limits_{n\rightarrow\infty}P_{n,0}\left(\left\{ \left|R_{n,\vartheta}\right|>\varepsilon\right\} \right)=0$
für alle $\varepsilon>0$ und für alle $\vartheta\in\Theta$.
\end{enumerate}
Die Zufallsvariablen $\left(X_{n}\right)_{n\in\mathbb{N}}$ werden
als zentrale Folge bezeichnet.

\end{defi}\begin{bem}Häufig sind $P_{n,\vartheta}:=P_{\vartheta_{0}+\frac{\vartheta}{\sqrt{n}}}^{n}$
Produktmaße, wobei $\vartheta_{0}$ als globaler Parameter und $\vartheta$
als lokaler Parameter bezeichnet werden. Der globale Parameter $\vartheta_{0}\in\Theta$
wird festgehalten. Der lokale Parameter $\vartheta$ kann beliebige
Werte aus $\left\{ \vartheta\in\mathbb{R}:\vartheta_{0}+\frac{\vartheta}{\sqrt{n}}\in\Theta_{n}\right\} $
annehmen, so dass für jedes $n\in\mathbb{N}$ eine parametrische Familie
$\left\{ P_{\vartheta_{0}+\frac{\vartheta}{\sqrt{n}}}^{n}:\vartheta_{0}+\frac{\vartheta}{\sqrt{n}}\in\Theta_{n}\right\} $
von Wahrscheinlichkeitsmaßen entsteht.\end{bem}\begin{bem}Eine Folge
$E_{n}=\left(\Omega_{n},\mathcal{A}_{n},\left\{ P_{n,\vartheta}:\vartheta\in\Theta_{n}\right\} \right)$
von Experimenten sei LAN. Dann gilt
\[
P_{n,\vartheta}\triangleleft P_{n,0}\quad\mbox{und}\quad P_{n,0}\triangleleft P_{n,\vartheta}.
\]
\end{bem}\begin{proof}Die Aussage folgt unmittelbar aus dem ersten
Lemma von Le Cam, vgl. \cite{Hajek:1999}, Seite 251, \cite{Witting:1995},
Seite 331 oder \cite{Janssen:1998}, Seite 113.\end{proof}\begin{bem}\label{asympt_banachb_equiv_tests}Eine
Folge $E_{n}=\left(\Omega_{n},\mathcal{A}_{n},\left\{ P_{n,\vartheta}:\vartheta\in\Theta_{n}\right\} \right)$
von Experimenten sei LAN. Es seien $\left(\varphi_{in}\right)_{n\in\mathbb{N}}$
zwei Testfolgen für $i=1,2$ mit $\lim\limits_{n\rightarrow\infty}\int\left|\varphi_{1n}-\varphi_{2n}\right|dP_{n,0}=0$.
 Nach Lemma \ref{benach_imp_asymp_eq} ergibt sich dann
\[
\lim_{n\rightarrow\infty}\int\left|\varphi_{1n}-\varphi_{2n}\right|dP_{n,\vartheta}=0
\]
 für alle $\vartheta\in\Theta$. \end{bem}Beim Testen statistischer
Funktionale ist es nützlich, wenn eine verschärfte Version der LAN-Bedingung
zur Verfügung steht. Dann kann man die lokale Trenngeschwindigkeit
$n^{-\frac{1}{2}}$ durch eine Folge $t_{n}$ mit $\lim\limits_{n\rightarrow\infty}n^{\frac{1}{2}}t_{n}=1$
ersetzen. Man betrachtet dann das asymptotische Verhalten der Experimente
\[
E_{n}=\left(\Omega_{n},\mathcal{A}_{n},\left\{ P_{\vartheta_{0}+t_{n}\vartheta}^{n}:\vartheta_{0}+t_{n}\vartheta\in\Theta_{n}\right\} \right)
\]
 für einen fest gewählten globalen Parameter $\vartheta_{0}\in\Theta$.
Aus diesem Grund wird eine gleichmäßige LAN-Bedingung angegeben, die
in der Literatur ULAN-Bedingung heißt. \begin{defi}[ULAN]\label{ulan}Sei
$E_{n}=\left(\Omega_{n},\mathcal{A}_{n},\left\{ P_{n,\vartheta}:\vartheta\in\Theta_{n}\right\} \right)$
eine LAN-Folge von Experimenten. Die Folge $E_{n}$ heißt ULAN, falls
für alle kompakten Mengen $K\subset\mathbb{R}$ und für alle $\varepsilon>0$
folgt
\begin{equation}
\lim_{n\rightarrow\infty}\sup_{\vartheta\in K\cap\Theta_{n}}P_{n,0}\left(\left\{ \left|R_{n,\vartheta}\right|>\varepsilon\right\} \right)=0.\label{ulan}
\end{equation}
 Die äquivalente Bedingung an die Restglieder $R_{n,\vartheta}$ ist
die Konvergenz
\begin{equation}
\lim_{n\rightarrow\infty}P_{n,0}\left(\left\{ \left|R_{n,\vartheta_{n}}\right|>\varepsilon\right\} \right)=0\label{ulan_einfach}
\end{equation}
 für alle konvergenten Folgen $\vartheta_{n}\rightarrow\vartheta$
und für alle $\varepsilon>0$. \end{defi}Für die Zweistichprobenprobleme
braucht man zusätzliche Hilfsmittel, die die Arbeit mit der LAN für
die Familien der Produktmaße erleichtern. Sind zwei Folgen von Experimenten
gegeben, so interessiert man sich für die lokalen asymptotischen Eigenschaften
der Folge der Experimenten, die auf Familien der Produktmaße beruhen.
Eine umfassende Antwort auf diese Fragestellung wird in Satz \ref{lan_produkt}
gegeben.\begin{defi}[Produktexperiment]\label{prod_exp_defi}Es seien
$\left(n_{1}\right)_{n\in\mathbb{N}}\subset\mathbb{N}$ und $\left(n_{2}\right)_{n\in\mathbb{N}}\subset\mathbb{N}$
zwei Folgen mit $n_{1}+n_{2}=n$ für alle $n\in\mathbb{N}$ und $\lim\limits_{n\rightarrow\infty}\frac{n_{2}}{n}=d\in(0,1)$.
Es seien $E_{1,n}=\left(\Omega_{1,n},\mathcal{A}_{1,n},\left\{ P_{n,\vartheta}:\vartheta\in\Theta_{n}\right\} \right)$
und $E_{2,n}=(\Omega_{2,n},\mathcal{A}_{2,n},\left\{ Q_{n,\vartheta}:\vartheta\in\Theta_{n}\right\} )$
zwei Folgen von Experimenten. Das Produktexperiment wird definiert
als 
\begin{equation}
E_{n}=\left(\Omega_{1,n_{1}}\times\Omega_{2,n_{2}},\,\mathcal{A}_{1,n_{1}}\otimes\mathcal{A}_{2,n_{2}},\left\{ P_{n_{1},\vartheta}\otimes Q_{n_{2},\vartheta}:\vartheta\in\Theta_{n}\right\} \right).\label{produktexperiment}
\end{equation}
\end{defi}\begin{satz}[LAN von Produktexperimenten]\label{lan_produkt}Die
Voraussetzungen aus Definition \ref{prod_exp_defi} seien erfüllt.
Es gelte außerdem $0\in\Theta_{n}\subset\mathbb{R}$ für alle $n\in\mathbb{N}$
und $\Theta_{n}\uparrow\Theta$ für $n\rightarrow\infty$. Die Folge
$\left(E_{i,n}\right)_{n\in\mathbb{N}}$ von Experimenten sei LAN
mit $X_{i,n}$ als zentrale Folge für $i=1,2$. Dann ist die Folge
$\left(E_{n}\right)_{n\in\mathbb{N}}$ der Produktexperimente LAN
mit
\begin{eqnarray*}
X_{n}:\Omega_{1,n_{1}}\times\Omega_{2,n_{2}} & \rightarrow & \mathbb{R},\\
\left(\omega_{1,n_{1}},\omega_{2,n_{2}}\right) & \mapsto & X_{1,n_{1}}\left(\omega_{1,n_{1}}\right)+X_{2,n_{2}}\left(\omega_{2,n_{2}}\right)
\end{eqnarray*}
als zentrale Folge. \end{satz}\begin{proof}Nach Definition \ref{lan}
existieren Folgen $\left(R_{n,\vartheta}\right)_{n\in\mathbb{N}}$
und $\left(\widetilde{R}_{n,\vartheta}\right)_{n\in\mathbb{N}}$ von
Zufallsvariablen für jedes $\vartheta\in\Theta$, so dass gilt:\renewcommand{\labelenumi}{(\arabic{enumi})}
\begin{enumerate}
\item $\log\frac{dP_{n,\vartheta}}{dP_{n,0}}=\vartheta X_{1,n}-\frac{1}{2}\vartheta^{2}\sigma_{1}^{2}+R_{n,\vartheta}$
und $\log\frac{dQ_{n,\vartheta}}{dQ_{n,0}}=\vartheta X_{2,n}-\frac{1}{2}\vartheta^{2}\sigma_{2}^{2}+\widetilde{R}_{n,\vartheta}$,
\item $\mathcal{L}\left(X_{1,n}\mid P_{n,0}\right)\rightarrow N\left(0,\sigma_{1}^{2}\right)$
und $\mathcal{L}\left(X_{2,n}\mid Q_{n,0}\right)\rightarrow N\left(0,\sigma_{2}^{2}\right)$
in Verteilung für $n\rightarrow\infty$,
\item $\lim\limits_{n\rightarrow\infty}P_{n,0}\left(\left\{ \left|R_{n,\vartheta}\right|>\varepsilon\right\} \right)=0$
und $\lim\limits_{n\rightarrow\infty}Q_{n,0}\left(\left\{ \left|\widetilde{R}_{n,\vartheta}\right|>\varepsilon\right\} \right)=0$
für alle $\vartheta\in\Theta$ und alle $\varepsilon>0$.
\end{enumerate}
Man erhält zunächst 
\begin{eqnarray*}
\textrm{} &  & \log\frac{dP_{n_{1},\vartheta}\otimes Q_{n_{2},\vartheta}}{dP_{n_{1},0}\otimes Q_{n_{2},0}}\left(\omega_{1,n_{1}},\omega_{2,n_{2}}\right)\\
 & = & \log\left(\frac{dP_{n_{1},\vartheta}}{dP_{n_{1},0}}\left(\omega_{1,n_{1}}\right)\frac{dQ_{n_{2},\vartheta}}{dQ_{n_{2},0}}\left(\omega_{2,n_{2}}\right)\right)
\end{eqnarray*}
\begin{eqnarray*}
 & = & \log\left(\frac{dP_{n_{1},\vartheta}}{dP_{n_{1},0}}\left(\omega_{1,n_{1}}\right)\right)+\log\left(\frac{dQ_{n_{2},\vartheta}}{dQ_{n_{2},0}}\left(\omega_{2,n_{2}}\right)\right)\\
 & = & \vartheta X_{1,n_{1}}\left(\omega_{1,n_{1}}\right)-\frac{1}{2}\vartheta^{2}\sigma_{1}^{2}+R_{n_{1},\vartheta}\left(\omega_{1,n_{1}}\right)\\
 &  & +\vartheta X_{2,n_{2}}\left(\omega_{2,n_{2}}\right)-\frac{1}{2}\vartheta^{2}\sigma_{2}^{2}+\widetilde{R}_{n_{2},\vartheta}\left(\omega_{2,n_{2}}\right)\\
 & = & \vartheta\left(X_{1,n_{1}}\left(\omega_{1,n_{1}}\right)+X_{2,n_{2}}\left(\omega_{2,n_{2}}\right)\right)-\frac{1}{2}\vartheta^{2}\left(\sigma_{1}^{2}+\sigma_{2}^{2}\right)\\
 &  & +R_{n_{1},\vartheta}\left(\omega_{1,n_{1}}\right)+\widetilde{R}_{n_{2},\vartheta}\left(\omega_{2,n_{2}}\right).
\end{eqnarray*}
Für die Folge $\left(X_{n}\right)_{n\in\mathbb{N}}$ von Zufallsvariablen
erhält man unmittelbar 
\begin{eqnarray*}
\mathcal{L}\left(X_{n}\mid P_{n_{1},0}\otimes Q_{n_{2},0}\right) & = & \mathcal{L}\left(X_{1,n_{1}}\mid P_{n_{1},0}\right)*\mathcal{L}\left(X_{2,n_{2}}\mid Q_{n_{2},0}\right)\\
 & \rightarrow & N\left(0,\sigma_{1}^{2}\right)*N\left(0,\sigma_{2}^{2}\right)\\
 & = & N\left(0,\sigma_{1}^{2}+\sigma_{2}^{2}\right)
\end{eqnarray*}
 für $n\rightarrow\infty$, weil $\mathcal{L}\left(X_{1,n_{1}}\mid P_{n_{1},0}\right)$
und $\mathcal{L}\left(X_{2,n_{2}}\mid Q_{n_{2},0}\right)$ stochastisch
unabhängig sind. Für jedes $\varepsilon>0$ gilt außerdem
\begin{eqnarray*}
 &  & P_{n_{1},0}\otimes Q_{n_{2},0}\left(\left\{ \left|R_{n_{1},\vartheta}+\widetilde{R}_{n_{2},\vartheta}\right|>\varepsilon\right\} \right)\\
 & \leq & P_{n_{1},0}\otimes Q_{n_{2},0}\left(\left\{ \left|R_{n_{1},\vartheta}\right|>\frac{\varepsilon}{2}\right\} \right)+P_{n_{1},0}\otimes Q_{n_{2},0}\left(\left\{ \left|\widetilde{R}_{n_{2},\vartheta}\right|>\frac{\varepsilon}{2}\right\} \right)\\
 & = & \underbrace{P_{n_{1},0}\left(\left\{ \left|R_{n_{1},\vartheta}\right|>\frac{\varepsilon}{2}\right\} \right)}_{\rightarrow0}+\underbrace{Q_{n_{2},0}\left(\left\{ \left|\widetilde{R}_{n_{2},\vartheta}\right|>\frac{\varepsilon}{2}\right\} \right)}_{\rightarrow0}
\end{eqnarray*}
für $n\rightarrow\infty$. Somit ist alles bewiesen. \end{proof}\begin{satz}[ULAN von Produktexperimenten]\label{ulan_produkt}Es
gelten die Voraussetzungen aus Satz \ref{lan_produkt}. Die Folgen
$E_{1,n}$ und $E_{2,n}$ von Experimenten seien ULAN. Die Folge $\left(E_{n}\right)_{n\in\mathbb{N}}$
der Produktexperimente ist dann ebenfalls ULAN.\end{satz}\begin{proof}Nach
Satz \ref{lan_produkt} reicht es die Bedingung (\ref{ulan_einfach})
nachzuweisen. Sei $\left(\vartheta_{n}\right)_{n\in\mathbb{N}}$ eine
Folge mit $\vartheta_{n}\in\Theta_{n}$ für alle $n\in\mathbb{N}$
und $\lim\limits_{n\rightarrow\mathbb{N}}\vartheta_{n}=\vartheta$
für ein $\vartheta\in\Theta.$ Man erhält dann für jedes $\varepsilon>0$
die Konvergenz
\begin{eqnarray*}
 &  & P_{n_{1},0}\otimes Q_{n_{2},0}\left(\left\{ \left|R_{n_{1},\vartheta_{n}}+\widetilde{R}_{n_{2},\vartheta_{n}}\right|>\varepsilon\right\} \right)\\
\textrm{} & \leq & \underbrace{P_{n_{1},0}\left(\left\{ \left|R_{n_{1},\vartheta_{n}}\right|>\frac{\varepsilon}{2}\right\} \right)}_{\rightarrow0}+\underbrace{Q_{n_{2},0}\left(\left\{ \left|\widetilde{R}_{n_{2},\vartheta_{n}}\right|>\frac{\varepsilon}{2}\right\} \right)}_{\rightarrow0}
\end{eqnarray*}
für $n\rightarrow\infty$. Die Bedingung (\ref{ulan_einfach}) ist
somit erfüllt. \end{proof} \begin{defi}Sei $\Theta\subset\mathbb{R}$
mit $\left(-\varepsilon,\varepsilon\right)\subset\Theta$ für ein
$\varepsilon>0$. Ein Experiment $E\left(\Omega,\mathcal{A},\left\{ P_{\vartheta}:\vartheta\in\Theta\right\} \right)$
heißt $L_{2}\left(P_{0}\right)$-differenzierbar mit Tangente $g\in L_{2}^{(0)}\left(P_{0}\right)$,
falls die Kurve $\vartheta\mapsto P_{\vartheta}$ $L_{2}\left(P_{0}\right)$-differenzierbar
mit Tangente $g\in L_{2}^{(0)}\left(P_{0}\right)$ ist.\end{defi}Die
$L_{2}$-Differenzierbarkeit eines Experimentes ist eine Glattheitseigenschaft.
Der folgende Satz von Le Cam verifiziert LAN für die geeigneten Folgen
von $L_{2}(0)$-differenzierbaren Experimenten und zeigt dabei die
zentrale Rolle der Tangente an der Stelle $0$.\begin{satz}[von Le Cam]\label{lecam}Sei
$E=\left(\Omega,\mathcal{A},\left\{ P_{\vartheta}:\vartheta\in\Theta\right\} \right)$
ein $L_{2}(0)$-differenzierbares Experiment mit Tangente $g\in L_{2}^{(0)}\left(P_{0}\right)$
und $g\neq0.$ Sei $\left(\left(c_{n,i}\right)_{1\leq i\leq n}\right)_{n\in\mathbb{N}}\subset\mathbb{R}$
eine Dreiecksschema von Regressionskoeffizienten mit\renewcommand{\labelenumi}{(\roman{enumi})}
\begin{enumerate}
\item $\max\limits_{1\leq i\leq n}\left|c_{n,i}\right|\rightarrow0$ für
$n\rightarrow\infty,$
\item $\sum\limits_{i=1}^{n}c_{n,i}^{2}\rightarrow c^{2}>0$ für $n\rightarrow\infty$.
\end{enumerate}
Sei $E_{n}=\left(\Omega^{n},\mathcal{A}^{n},\left\{ \bigotimes\limits_{i=1}^{n}P_{c_{n,i}\vartheta}:\vartheta\in\Theta_{n}\right\} \right)$
eine Folge von Experimenten mit $\Theta_{n}\subset\left\{ \vartheta:c_{n,i}\vartheta\in\Theta\;\mbox{für alle}\;i\in\left\{ 1,\ldots,n\right\} \right\} $.
Dann erfüllt $\left(E_{n}\right)_{n\in\mathbb{N}}$ die LAN-Bedingung
und die Folge
\[
X_{n}\left(\omega_{1},\ldots,\omega_{n}\right):=\sum_{i=1}^{n}c_{n,i}g\left(\omega_{i}\right)
\]
von Zufallsvariablen ist eine zentrale Folge. Außerdem gilt
\[
\lim_{n\rightarrow\infty}\int X_{n}^{2}\,dP_{0}^{n}=c^{2}\int g^{2}dP_{0}.
\]
 \end{satz}\begin{proof}vgl. \cite{Witting:1995}, Seite 317, Satz
6.130 oder \cite{Janssen:1998}, Seite 126, Satz 14.8.\end{proof}\begin{satz}\label{ulan_le_cam}Es
gelten die Voraussetzungen aus Satz \ref{lecam}. Die Folge $\left(E_{n}\right)_{n\in\mathbb{N}}$
der Experimente aus Satz \ref{lecam} ist dann ULAN.\end{satz}\begin{proof}vgl.
\cite{Janssen:1998}, Seite 138, Beispiel 14.18 oder \cite{Witting:1995},
Seite 317, Satz 6.130.\end{proof}\begin{anwendung}\label{ulan_anwendung}Seien
$E_{1}=\left(\Omega_{1},\mathcal{A}_{1},\left\{ P_{\vartheta}:\vartheta\in\Theta\right\} \right)$
und $E_{2}=\left(\Omega_{2},\mathcal{A}_{2},\left\{ Q_{\vartheta}:\vartheta\in\Theta\right\} \right)$
zwei $L_{2}(0)$-differenzierbare Experimente mit Tangenten $g_{1}\in L_{2}^{(0)}\left(P_{0}\right)$
und $g_{2}\in L_{2}^{(0)}\left(Q_{0}\right)$. Außerdem gelte $g_{1}\neq0$
und $g_{2}\neq0$. Seien $\left(\left(c_{n,i}\right)_{1\leq i\leq n}\right)_{n\in\mathbb{N}}\subset\mathbb{R}$
und $\left(\left(\widetilde{c}_{n,i}\right)_{1\leq i\leq n}\right)_{n\in\mathbb{N}}\subset\mathbb{R}$
zwei Dreiecksschemata von Regressionskoeffi\-zienten, die den Bedingungen
\emph{$(i)$} und $(ii)$ aus Satz \ref{lecam} entsprechen. Dann
gilt 
\[
\lim_{n\rightarrow\infty}\sum_{i=1}^{n}c_{n,i}^{2}=c_{1}^{2}>0\;\mbox{und}\;\lim_{n\rightarrow\infty}\sum_{i=1}^{n}\widetilde{c}_{n,i}^{2}=c_{2}^{2}>0.
\]
Die Folgen $E_{1,n}=\left(\Omega_{1}^{n},\mathcal{A}_{1}^{n},\left\{ \bigotimes\limits_{i=1}^{n}P_{c_{n,i}\vartheta}:\vartheta\in\Theta_{n}\right\} \right)$
und \\
$E_{2,n}=\left(\Omega_{2}^{n},\mathcal{A}_{2}^{n},\left\{ \bigotimes\limits_{i=1}^{n}Q_{\widetilde{c}_{n,i}\vartheta}:\vartheta\in\Theta_{n}\right\} \right)$
von Experimenten erfüllen die ULAN-Bedingung nach Satz \ref{lecam}.
Seien $\left(n_{1}\right)_{n\in\mathbb{N}}\subset\mathbb{N}$ und
$\left(n_{2}\right)_{n\in\mathbb{N}}\subset\mathbb{N}$ zwei Folgen
mit $\lim\limits_{n\rightarrow\infty}\frac{n_{2}}{n}=d\in(0,1)$ und
$n_{1}+n_{2}=n$ für alle $n\in\mathbb{N}$. Die Folge $E_{n}$ der
Produktexperimente erfüllt dann nach Satz \ref{lan_produkt} die ULAN-Bedingung
mit
\[
X_{n}\left(\omega_{1,1},\ldots,\omega_{1,n_{1}},\omega_{2,1},\ldots,\omega_{2,n_{2}}\right)=\sum_{i=1}^{n_{1}}c_{n_{1},i}g_{1}\left(\omega_{1,i}\right)+\sum_{i=1}^{n_{2}}\widetilde{c}_{n_{2},i}g_{2}\left(\omega_{2,i}\right)
\]
als zentrale Folge. Es gilt außerdem 
\[
\lim_{n\rightarrow\infty}\mbox{Var}_{P_{0}^{n_{1}}\otimes Q_{0}^{n_{2}}}\left(X_{n}\right)=c_{1}^{2}\int g_{1}^{2}dP_{0}+c_{2}^{2}\int g_{2}^{2}dQ_{0}.
\]
 Die Regressionskoeffizienten können zum Beispiel als 
\begin{equation}
c_{n_{1},i}:=\frac{1}{\sqrt{n}}\quad\mbox{und}\quad\widetilde{c}_{n_{2},i}:=\frac{1}{\sqrt{n}}\label{regr_koeff}
\end{equation}
 gewählt werden. Man erhält dann $c_{1}^{2}=1-d$ und $c_{2}^{2}=d$.
Hieraus folgt
\begin{eqnarray*}
\lim_{n\rightarrow\infty}\mbox{Var}_{P_{0}^{n_{1}}\otimes Q_{0}^{n_{2}}}\left(X_{n}\right) & = & \left(1-d\right)\int g_{1}^{2}dP_{0}+d\int g_{2}^{2}dQ_{0}.
\end{eqnarray*}
Die Abbildung $g:\Omega_{1}\times\Omega_{2}\rightarrow\mathbb{R},\left(\omega_{1},\omega_{2}\right)\mapsto g_{1}\left(\omega_{1}\right)+g_{2}\left(\omega_{2}\right)$
ist die Tangente zu der $L_{2}(0)$-differenzierbaren Kurve $\vartheta\mapsto P_{\vartheta}\otimes Q_{\vartheta}$
an der Stelle $0$, vgl. Satz \ref{L2Produkt}. Die Tangente $g$
und die Regressionskoeffi\-zienten bestimmen also die asymptotischen
Eigenschaften der Produktexperimente $\left(E_{n}\right)_{n\in\mathbb{N}}$.
\end{anwendung}\newpage 

\section{Asymptotisches Verhalten der Teststatistik }

Im weiteren Verlauf dieser Arbeit werden die lokalen asymptotischen
Eigenschaften der Teststatistik
\[
T_{n}:=\sqrt{n}U_{n_{1},n_{2}}
\]
 analysiert, wobei $U_{n_{1},n_{2}}$ die von dem kanonischen Gradient
$\widetilde{k}$ erzeugte Zwei\-stich\-pro\-ben-U-Sta\-tis\-tik
ist, vgl. (\ref{u_stat}). Mit (\ref{u_stat_kanon}) ergibt sich zunächst
\begin{equation}
T_{n}\left(\omega_{1,1},\ldots,\omega_{1,n_{1}},\omega_{2,1},\ldots,\omega_{2,n_{2}}\right)=\frac{\sqrt{n}}{n_{1}}\sum_{i=1}^{n_{1}}\widetilde{k}_{1}\left(\omega_{1,i}\right)+\frac{\sqrt{n}}{n_{2}}\sum_{j=1}^{n_{2}}\widetilde{k}_{2}\left(\omega_{2,j}\right),\label{teststatistik T}
\end{equation}
wobei $\widetilde{k}:\Omega_{1}\times\Omega_{2}\rightarrow\mathbb{R},\left(\omega_{1},\omega_{2}\right)\mapsto\widetilde{k}_{1}\left(\omega_{1}\right)+\widetilde{k}_{2}\left(\omega_{2}\right)$
der kanonische Gradient eines statistischen Funktionals $k:\mathcal{P}\otimes\mathcal{Q}\rightarrow\mathbb{R}$
an der Stelle $P_{0}\otimes Q_{0}$ ist, vgl. Satz \ref{umrechnung}.
\begin{satz}[Asymptotisches Verhalten von $T_n$ unter $P_0\otimes Q_0$]\label{asymp_verhalt_der_T_stat} Seien
$\left(n_{1}\right)_{n\in\mathbb{N}}\subset\mathbb{N}$ und $\left(n_{2}\right)_{n\in\mathbb{N}}\subset\mathbb{N}$
zwei Folgen mit $n_{1}+n_{2}=n$ für alle $n\in\mathbb{N}$ und $\lim\limits_{n\rightarrow\infty}\frac{n_{2}}{n}=d\in(0,1)$.
Dann gilt
\[
\mathcal{L}\left(T_{n}\left|P_{0}^{n_{1}}\otimes Q_{0}^{n_{2}}\right.\right)\rightarrow N\left(0,\frac{1}{1-d}\left\Vert \widetilde{k}_{1}\right\Vert _{L_{2}\left(P_{0}\right)}^{2}+\frac{1}{d}\left\Vert \widetilde{k}_{2}\right\Vert _{L_{2}\left(Q_{0}\right)}^{2}\right)
\]
für $n\rightarrow\infty$, wobei $\left\Vert \widetilde{k}_{1}\right\Vert _{L_{2}\left(P_{0}\right)}^{2}=\int\widetilde{k}_{1}^{2}\,dP_{0}$
und $\left\Vert \widetilde{k}_{2}\right\Vert _{L_{2}\left(Q_{0}\right)}^{2}=\int\widetilde{k}_{2}^{2}\,dQ_{0}$
sind. \end{satz}\begin{proof}Die $i$-te kanonische Projektion wird
mit $\pi_{i}$ bezeichnet. Nach dem zentralen Grenzwertsatz von Lindeberg-Feller
(vgl. \cite{Bauer:1991}, Satz 28.3) erhält man unmittelbar die Konvergenz
\[
\mathcal{L}\left(\left.\frac{\sqrt{n}}{n_{1}}\sum_{i=1}^{n_{1}}\widetilde{k}_{1}\circ\pi_{i}\right|P_{0}^{n_{1}}\right)\rightarrow N\left(0,\frac{1}{1-d}\int\widetilde{k}_{1}^{2}\,dP_{0}\right)
\]
und
\[
\mathcal{L}\left(\left.\frac{\sqrt{n}}{n_{2}}\sum_{j=1}^{n_{2}}\widetilde{k}_{2}\circ\pi_{j}\right|Q_{0}^{n_{2}}\right)\rightarrow N\left(0,\frac{1}{d}\int\widetilde{k}_{2}^{2}\,dQ_{0}\right)
\]
 für $n\rightarrow\infty$, weil $\widetilde{k}_{1}\in L_{2}^{(0)}\left(P_{0}\right)$
und $\widetilde{k}_{2}\in L_{2}^{(0)}\left(Q_{0}\right)$ sind. Insgesamt
folgt
\begin{eqnarray*}
\mathcal{L}\left(T_{n}\left|P_{0}^{n_{1}}\otimes Q_{0}^{n_{2}}\right.\right) & = & \mathcal{L}\left(\left.\frac{\sqrt{n}}{n_{1}}\sum_{i=1}^{n_{1}}\widetilde{k}_{1}\circ\pi_{i}+\frac{\sqrt{n}}{n_{2}}\sum_{j=1}^{n_{2}}\widetilde{k}_{2}\circ\pi_{n_{1}+j}\right|P_{0}^{n_{1}}\otimes Q_{0}^{n_{2}}\right)\\
 & \rightarrow & N\left(0,\frac{1}{1-d}\int\widetilde{k}_{1}^{2}\,dP_{0}+\frac{1}{d}\int\widetilde{k}_{2}^{2}\,dQ_{0}\right)
\end{eqnarray*}
für $n\rightarrow\infty$, weil $\lim\limits_{n\rightarrow\infty}\frac{n_{2}}{n}=d\in\left(0,1\right)$
nach Voraussetzung gilt und die Verteilungen $\mathcal{L}\left(\left.\frac{\sqrt{n}}{n_{1}}\sum_{i=1}^{n_{1}}\widetilde{k}_{1}\circ\pi_{i}\right|P_{0}^{n_{1}}\right)$
und $\mathcal{L}\left(\left.\frac{\sqrt{n}}{n_{2}}\sum_{j=1}^{n_{2}}\widetilde{k}_{2}\circ\pi_{j}\right|Q_{0}^{n_{2}}\right)$
stochastisch unabhängig sind.\end{proof}Zur Anwendung der asymptotischen
Testtheorie bleibt noch die gemeinsame asymptotische Verteilung der
zentralen Folge 
\begin{equation}
X_{n}=\sum_{i=1}^{n_{1}}c_{n_{1},i}\,g_{1}\circ\pi_{i}+\sum_{i=n_{1}+1}^{n}\widetilde{c}_{n_{2},i}\,g_{2}\circ\pi_{i}\label{zfolge}
\end{equation}
 und der Statistik $T_{n}$ zu untersuchen. \begin{satz}[gemeinsame asymptotische Verteilung]\label{gem-asymp-vert}Seien
$\left(n_{1}\right)_{n\in\mathbb{N}}\subset\mathbb{N}$ und $\left(n_{2}\right)_{n\in\mathbb{N}}\subset\mathbb{N}$
zwei Folgen mit $n_{1}+n_{2}=n$ und $\lim\limits_{n\rightarrow\infty}\frac{n_{2}}{n}=d\in(0,1)$.
Die Regressionskoeffizienten $\left(\left(c_{n,i}\right)_{1\leq i\leq n}\right)_{n\in\mathbb{N}}\subset\mathbb{R}$
und $\left(\left(\widetilde{c}_{n,i}\right)_{1\leq i\leq n}\right)_{n\in\mathbb{N}}\subset\mathbb{R}$
der zentralen Folge $X_{n}$ seien allgemein gewählt und erfüllen
die Bedingungen aus Satz \ref{lecam}. Außerdem existieren die Grenzwerte
\[
\lim_{n\rightarrow\infty}n^{-\frac{1}{2}}\sum_{i=1}^{n}c_{n,i}=a_{1}\quad\mbox{und}\quad\lim_{n\rightarrow\infty}n^{-\frac{1}{2}}\sum_{i=1}^{n}\widetilde{c}_{n,i}=a_{2}.
\]
 Dann gilt
\[
\mathcal{L}\left(\left(T_{n},X_{n}\right)^{t}\left|P_{0}^{n_{1}}\otimes Q_{0}^{n_{2}}\right.\right)\rightarrow N\left(\left(\begin{array}{c}
0\\
0
\end{array}\right),\left(\begin{array}{cc}
\sigma_{1}^{2} & \sigma_{12}\\
\sigma_{12} & \sigma_{2}^{2}
\end{array}\right)\right)
\]
 für $n\rightarrow\infty$. Die Einträge der Kovarianzmatrix ergeben
sich als 
\[
\sigma_{1}^{2}=\frac{1}{1-d}\int\widetilde{k}_{1}^{2}\,dP_{0}+\frac{1}{d}\int\widetilde{k}_{2}^{2}\,dQ_{0},
\]
\[
\sigma_{2}^{2}=c_{1}^{2}\int g_{1}^{2}\,dP+c_{2}^{2}\int g_{2}^{2}\,dQ,
\]
\[
\sigma_{12}=a_{1}\frac{1}{\sqrt{1-d}}\int\widetilde{k}_{1}g_{1}\,dP_{0}+a_{2}\frac{1}{\sqrt{d}}\int\widetilde{k}_{2}g_{2}\,dQ_{0}.
\]
\end{satz}\begin{proof}Nach dem Cram\'er-Wold-Device (vgl. \cite{gaenssler:1977},
Seite 357) reicht es für alle $\left(\lambda_{1},\lambda_{2}\right)\in\mathbb{R}^{2}$
zu zeigen, dass 
\[
\mathcal{L}\left(\lambda_{1}T_{n}+\lambda_{2}X_{n}\left|P_{0}^{n_{1}}\otimes Q_{0}^{n_{2}}\right.\right)\rightarrow N\left(0,\lambda_{1}^{2}\sigma_{1}^{2}+2\lambda_{1}\lambda_{2}\sigma_{12}+\lambda_{2}^{2}\sigma_{2}^{2}\right)
\]
für $n\rightarrow\infty$ konvergiert. Man erhält zuerst 
\begin{eqnarray*}
\lambda_{1}T_{n}+\lambda_{2}X_{n} & = & \lambda_{1}\left(\frac{\sqrt{n}}{n_{1}}\sum_{i=1}^{n_{1}}\widetilde{k}_{1}\circ\pi_{i}+\frac{\sqrt{n}}{n_{2}}\sum_{j=1}^{n_{2}}\widetilde{k}_{2}\circ\pi_{n_{1}+j}\right)\\
 &  & +\lambda_{2}\left(\sum_{i=1}^{n_{1}}c_{n_{1},i}\,g_{1}\circ\pi_{i}+\sum_{j=1}^{n_{2}}\widetilde{c}_{n_{2},j}\,g_{2}\circ\pi_{n_{1}+j}\right)
\end{eqnarray*}
\begin{eqnarray*}
 & = & \sum_{i=1}^{n_{1}}\left(\lambda_{1}\frac{\sqrt{n}}{n_{1}}\widetilde{k}_{1}\circ\pi_{i}+\lambda_{2}c_{n_{1},i}\,g_{1}\circ\pi_{i}\right)\\
 &  & +\sum_{j=1}^{n_{2}}\left(\lambda_{1}\frac{\sqrt{n}}{n_{2}}\widetilde{k}_{2}\circ\pi_{n_{1}+j}+\lambda_{2}\widetilde{c}_{n_{2},j}\,g_{2}\circ\pi_{n_{1}+j}\right).
\end{eqnarray*}
Nach dem zentralen Grenzwertsatz von Lindeberg-Feller (vgl. \cite{Bauer:1991},
Satz 28.3) ergibt sich die Konvergenz
\begin{eqnarray*}
\mathcal{L}\left(\left.\sum_{i=1}^{n_{1}}\left(\lambda_{1}\frac{\sqrt{n}}{n_{1}}\widetilde{k}_{1}\circ\pi_{i}+\lambda_{2}c_{n_{1},i}g_{1}\circ\pi_{i}\right)\right|P_{0}^{n_{1}}\right) & \rightarrow & N\left(0,v_{1}^{2}\right)
\end{eqnarray*}
und
\[
\mathcal{L}\left(\left.\sum_{j=1}^{n_{2}}\left(\lambda_{1}\frac{\sqrt{n}}{n_{2}}\widetilde{k}_{2}\circ\pi_{j}+\lambda_{2}\widetilde{c}_{n_{2},j}g_{2}\circ\pi_{j}\right)\right|Q_{0}^{n_{2}}\right)\rightarrow N\left(0,v_{2}^{2}\right)
\]
für $n\rightarrow\infty$, weil $\widetilde{k}_{1},g_{1}\in L_{2}^{(0)}\left(P_{0}\right)$
und $\widetilde{k}_{2},g_{2}\in L_{2}^{(0)}\left(Q_{0}\right)$ sind.
Die Varianzen $v_{1}^{2}$ und $v_{2}^{2}$ erhält man als folgende
Grenzwerte: 
\begin{eqnarray*}
v_{1}^{2} & = & \lim_{n\rightarrow\infty}\mbox{Var}_{P_{0}^{n_{1}}}\left(\sum_{i=1}^{n_{1}}\left(\lambda_{1}\frac{\sqrt{n}}{n_{1}}\widetilde{k}_{1}\circ\pi_{i}+\lambda_{2}c_{n_{1},i}g_{1}\circ\pi_{i}\right)\right)\\
 & = & \lim_{n\rightarrow\infty}\left(\sum_{i=1}^{n_{1}}\mbox{Var}_{P_{0}^{n_{1}}}\left(\lambda_{1}\frac{\sqrt{n}}{n_{1}}\widetilde{k}_{1}\circ\pi_{i}+\lambda_{2}c_{n_{1},i}g_{1}\circ\pi_{i}\right)\right)\\
 & = & \lim_{n\rightarrow\infty}\left(\sum_{i=1}^{n_{1}}\mbox{Var}_{P_{0}^{n_{1}}}\left(\lambda_{1}\frac{\sqrt{n}}{n_{1}}\widetilde{k}_{1}\circ\pi_{i}\right)+\sum_{i=1}^{n_{1}}\mbox{Var}_{P_{0}^{n_{1}}}\left(\lambda_{2}c_{n_{1},i}g_{1}\circ\pi_{i}\right)\right.\\
 &  & +\left.2\sum_{i=1}^{n_{1}}\mbox{Cov}_{P_{0}^{n_{1}}}\left(\lambda_{1}\frac{\sqrt{n}}{n_{1}}\widetilde{k}_{1}\circ\pi_{i},\,\lambda_{2}c_{n_{1},i}g_{1}\circ\pi_{i}\right)\right)\\
 & = & \lambda_{1}^{2}\mbox{Var}_{P_{0}}\left(\widetilde{k}_{1}\right)\lim_{n\rightarrow\infty}\left(\frac{n}{n_{1}}\right)+\lambda_{2}^{2}\mbox{Var}_{P_{0}}\left(g_{1}\right)\lim_{n\rightarrow\infty}\left(\sum_{i=1}^{n_{1}}c_{n_{1},i}^{2}\right)\\
 &  & +2\lambda_{1}\lambda_{2}\mbox{Cov}_{P_{0}}\left(\widetilde{k}_{1},\,g_{1}\right)\lim_{n\rightarrow\infty}\left(\frac{\sqrt{n}}{\sqrt{n_{1}}}\frac{1}{\sqrt{n_{1}}}\sum_{i=1}^{n_{1}}c_{n_{1},i}\right)\\
 & = & \lambda_{1}^{2}\frac{1}{1-d}\int\widetilde{k}_{1}^{2}\,dP_{0}+\lambda_{2}^{2}c_{1}^{2}\int g_{1}^{2}dP_{0}+2\lambda_{1}\lambda_{2}\frac{1}{\sqrt{1-d}}a_{1}\int\widetilde{k}_{1}g_{1}\,dP_{0}
\end{eqnarray*}
und
\begin{eqnarray*}
v_{2}^{2} & = & \lim_{n\rightarrow\infty}\mbox{Var}_{Q_{0}^{n_{2}}}\left(\sum_{j=1}^{n_{2}}\left(\lambda_{1}\frac{\sqrt{n}}{n_{2}}\widetilde{k}_{2}\circ\pi_{j}+\lambda_{2}\widetilde{c}_{n_{2},j}g_{2}\circ\pi_{j}\right)\right)
\end{eqnarray*}
\begin{eqnarray*}
 & = & \lambda_{1}^{2}\mbox{Var}_{Q_{0}}\left(\widetilde{k}_{2}\right)\lim_{n\rightarrow\infty}\left(\frac{n}{n_{2}}\right)+\lambda_{2}^{2}\mbox{Var}_{Q_{0}}\left(g_{2}\right)\lim_{n\rightarrow\infty}\left(\sum_{j=1}^{n_{2}}\widetilde{c}_{n_{2},j}^{2}\right)\\
 &  & +2\lambda_{1}\lambda_{2}\mbox{Cov}_{Q_{0}}\left(\widetilde{k}_{2},\,g_{2}\right)\lim_{n\rightarrow\infty}\left(\frac{\sqrt{n}}{\sqrt{n_{2}}}\frac{1}{\sqrt{n_{2}}}\sum_{j=1}^{n_{2}}\widetilde{c}_{n_{2},j}\right)\\
 & = & \lambda_{1}^{2}\frac{1}{d}\int\widetilde{k}_{2}^{2}\,dQ_{0}+\lambda_{2}^{2}c_{2}^{2}\int g_{2}^{2}dQ_{0}+2\lambda_{1}\lambda_{2}\frac{1}{\sqrt{d}}a_{2}\int\widetilde{k}_{2}g_{2}\,dQ_{0}.
\end{eqnarray*}
Insgesamt ergibt sich 
\begin{eqnarray*}
\textrm{} &  & \mathcal{L}\left(\lambda_{1}T_{n}+\lambda_{2}X_{n}\left|P_{0}^{n_{1}}\otimes Q_{0}^{n_{2}}\right.\right)\\
 & = & \mathcal{L}\left(\sum_{i=1}^{n_{1}}\left(\lambda_{1}\frac{\sqrt{n}}{n_{1}}\widetilde{k}_{1}\circ\pi_{i}+\lambda_{2}c_{n_{1},i}g_{1}\circ\pi_{i}\right)\left|P_{0}^{n_{1}}\right.\right)\\
 &  & *\mathcal{L}\left(\sum_{j=1}^{n_{2}}\left(\lambda_{1}\frac{\sqrt{n}}{n_{2}}\widetilde{k}_{2}\circ\pi_{j}+\lambda_{2}\widetilde{c}_{n_{2},j}g_{2}\circ\pi_{j}\right)\left|Q_{0}^{n_{2}}\right.\right)\\
 & \rightarrow & N\left(0,v_{1}^{2}\right)*N\left(0,v_{2}^{2}\right)\\
 & = & N\left(0,v_{1}^{2}+v_{2}^{2}\right),
\end{eqnarray*}
für $n\rightarrow\infty$, weil die Verteilungen $\mathcal{L}\left(\sum_{i=1}^{n_{1}}\left(\lambda_{1}\frac{\sqrt{n}}{n_{1}}\widetilde{k}_{1}\circ\pi_{i}+\lambda_{2}c_{n_{1},i}\,g_{1}\circ\pi_{i}\right)\left|P_{0}^{n_{1}}\right.\right)$
und $\mathcal{L}\left(\sum_{j=1}^{n_{2}}\left(\lambda_{1}\frac{\sqrt{n}}{n_{2}}\widetilde{k}_{2}\circ\pi_{j}+\lambda_{2}\widetilde{c}_{n_{2},j}g_{2}\circ\pi_{j}\right)\left|Q_{0}^{n_{2}}\right.\right)$
stochastisch unabhängig sind. Die Varianz $v_{1}^{2}+v_{2}^{2}$ erhält
man als
\begin{eqnarray*}
v_{1}^{2}+v_{2}^{2} & = & \lambda_{1}^{2}\frac{1}{1-d}\int\widetilde{k}_{1}^{2}\,dP_{0}+\lambda_{2}^{2}c_{1}^{2}\int g_{1}^{2}dP_{0}+2\lambda_{1}\lambda_{2}\frac{1}{\sqrt{1-d}}a_{1}\int\widetilde{k}_{1}g_{1}\,dP_{0}\\
 &  & +\lambda_{1}^{2}\frac{1}{d}\int\widetilde{k}_{2}^{2}\,dQ_{0}+\lambda_{2}^{2}c_{2}^{2}\int g_{2}^{2}dQ_{0}+2\lambda_{1}\lambda_{2}\frac{1}{\sqrt{d}}a_{2}\int\widetilde{k}_{2}g_{2}\,dQ_{0}\\
 & = & \lambda_{1}^{2}\left(\frac{1}{1-d}\int\widetilde{k}_{1}^{2}\,dP_{0}+\frac{1}{d}\int\widetilde{k}_{2}^{2}\,dQ_{0}\right)\\
 &  & +\lambda_{2}^{2}\left(c_{1}^{2}\int g_{1}^{2}dP_{0}+c_{2}^{2}\int g_{2}^{2}dQ_{0}\right)\\
 &  & +2\lambda_{1}\lambda_{2}\left(a_{1}\frac{1}{\sqrt{1-d}}\int\widetilde{k}_{1}g_{1}\,dP_{0}+a_{2}\frac{1}{\sqrt{d}}\int\widetilde{k}_{2}g_{2}\,dQ_{0}\right).
\end{eqnarray*}
Somit ist alles bewiesen.\end{proof} 

\section{Asymptotische Eigenschaften der einseitigen Tests\label{sec:Asymptotische-Eigenschaften-der} }

Die Ergebnisse aus Satz \ref{gem-asymp-vert} werden nun zum asymptotisch
optimalen Testen der statistischen Funktionale angewandt. Dies erfordert
allerdings noch einige Vorbereitungen. 

Eine Folge $E_{n}=\left(\Omega_{n},\mathcal{A}_{n},\left\{ P_{n,\vartheta}:\vartheta\in\Theta_{n}\right\} \right)$
von Experimenten sei LAN mit $\left(X_{n}\right)_{n\in\mathbb{N}}$
als zentrale Folge. Man betrachtet die Folge der einseitigen Testprobleme
\[
H_{1,n}=\left\{ \vartheta\in\Theta_{n}:\vartheta\leq0\right\} \quad\mbox{gegen}\quad K_{1,n}=\left\{ \vartheta\in\Theta_{n}:\vartheta>0\right\} .
\]
Die gebräuchlichen Tests sind von der Gestalt
\[
\varphi_{n}=\left\{ \begin{array}{cccc}
1 &  & >\\
\lambda_{n} & S_{n} & = & c_{n}\\
0 &  & <
\end{array}\right.,
\]
 die durch die Nebenbedingung 
\begin{equation}
\lim_{n\rightarrow\infty}E_{P_{n,0}}\left(\varphi_{n}\right)=\alpha\label{asympt_unverf}
\end{equation}
festgelegt sind.\begin{defi}\label{asymp_0_aplha_anlich_testfolge}
Eine Testfolge $\left(\varphi_{n}\right)_{n\in\mathbb{N}}$ heißt
asymptotisch $\left\{ 0\right\} $-$\alpha$-ähnlich, falls die Bedingung
(\ref{asympt_unverf}) erfüllt ist.\end{defi} Derartige Testfolgen
lassen sich mit Hilfe der Theorie von Le Cam vergleichen. Die Vorgehensweise
von Le Cam führt dann zu den wichtigen Ergebnissen der asymptotischen
Testtheorie. Insbesondere kann die asymptotische Gütefunktion einer
asymptotisch $\left\{ 0\right\} $-$\alpha$-ähnlichen Testfolge berechnet
werden. \begin{satz}[asymptotische G\"utefunktion]\label{asymp-guete}Sei
$\left(\varphi_{n}\right)_{n\in\mathbb{N}}$ eine asymptotisch $\left\{ 0\right\} $-$\alpha$-ähnliche
Testfolge. Außerdem gelte
\begin{eqnarray*}
\mathcal{L}\left(\left(S_{n},X_{n}\right)^{t}\left|P_{n,0}\right.\right) & \rightarrow & N\left(\left(\begin{array}{c}
0\\
0
\end{array}\right),\left(\begin{array}{cc}
\sigma_{1}^{2} & \sigma_{12}\\
\sigma_{12} & \sigma_{2}^{2}
\end{array}\right)\right)
\end{eqnarray*}
für $n\rightarrow\infty.$ Für jedes $\vartheta\in\Theta$ gilt dann
\[
\lim_{n\rightarrow\infty}E_{P_{n,\vartheta}}\left(\varphi_{n}\right)=\Phi\left(\vartheta\frac{\sigma_{12}}{\sigma_{1}}-u_{1-\alpha}\right),
\]
 wobei $\Phi$ die Verteilungsfunktion der Standardnormalverteilung
bezeichnet und $u_{1-\alpha}:=\Phi^{-1}\left(1-\alpha\right)$ das
$\left(1-\alpha\right)$-Quantil von $\Phi$ ist. \end{satz}\begin{proof}vgl.
\cite{Janssen:1998}, Seite 147, Satz 15.2. oder \cite{Hajek:1999},
Chapter 7.\end{proof}Für die einseitigen Testprobleme
\[
H_{1}=\left\{ P\otimes Q\in\mathcal{P}\otimes\mathcal{Q}:k\left(P\otimes Q\right)\leq a\right\} 
\]
\begin{equation}
\mbox{gegen}\quad K_{1}=\left\{ P\otimes Q\in\mathcal{P}\otimes\mathcal{Q}:k\left(P\otimes Q\right)>a\right\} \label{einseit_testproblem}
\end{equation}
 wird eine lokale Parametrisierung des Testproblems in Abhängigkeit
von einem statistischen Funktional $k:\mathcal{P}\otimes\mathcal{Q}\rightarrow\mathbb{R}$
gesucht. 

Ein statistisches Funktional $k:\mathcal{P}\otimes\mathcal{Q}\rightarrow\mathbb{R}$
sei differenzierbar an der Stelle $P_{0}\otimes Q_{0}$ mit dem kanonischen
Gradienten $\widetilde{k}\in L_{2}^{(0)}\left(P_{0}\otimes Q_{0}\right).$
Außerdem gelte 
\[
\left\Vert \widetilde{k}\right\Vert _{L_{2}\left(P_{0}\otimes Q_{0}\right)}^{2}\neq0.
\]
 Sei $t\mapsto P_{t}\otimes Q_{t}$ eine $L_{2}\left(0\right)$-differenzierbare
Kurve in $\mathcal{P}\otimes\mathcal{Q}$ mit Tangente $g\in L_{2}^{(0)}\left(P_{0}\otimes Q_{0}\right)$.
Ferner gilt $g=g_{1}\circ\pi_{1}+g_{2}\circ\pi_{2}$ für die Tangenten
$g_{1}\in L_{2}^{(0)}\left(P_{0}\right)$ und $g_{2}\in L_{2}^{(0)}\left(Q_{0}\right)$,
vgl. Satz \ref{L2Produkt}. Eine lokale Parametrisierung des Testproblems
(\ref{einseit_testproblem}) erhält man zum Beispiel durch
\begin{equation}
H_{n}^{p}=\left\{ P_{0}^{n_{1}}\otimes Q_{0}^{n_{2}}\right\} \quad\mbox{gegen}\quad K_{n}^{p}=\left\{ P_{\frac{t}{\sqrt{n}}}^{n_{1}}\otimes Q_{\frac{t}{\sqrt{n}}}^{n_{2}}\right\} \label{asympt_testproblem}
\end{equation}
für ein $t>0$, wobei die Bedingung
\begin{equation}
\lim_{n\rightarrow\infty}\frac{k\left(P_{\frac{t}{\sqrt{n}}}\otimes Q_{\frac{t}{\sqrt{n}}}\right)-k\left(P_{0}\otimes Q_{0}\right)}{n^{-\frac{1}{2}}}>0\label{rechtseitege_bedingung}
\end{equation}
für alle $t>0$ vorausgesetzt wird, vgl. \cite{Janssen:1999a} und
\cite{Janssen:1999b}. Die Bedingung (\ref{rechtseitege_bedingung})
impliziert insbesondere 
\[
\int\widetilde{k}g\,dP_{0}\otimes Q_{0}>0
\]
wegen 
\[
\lim_{n\rightarrow\infty}\frac{k\left(P_{\frac{t}{\sqrt{n}}}\otimes Q_{\frac{t}{\sqrt{n}}}\right)-k\left(P_{0}\otimes Q_{0}\right)}{n^{-\frac{1}{2}}}=t\int\widetilde{k}g\,dP_{0}\otimes Q_{0}.
\]
 Die Nullhypothese $H_{n}^{p}$ lässt sich ebenfalls erweitern. Sei
$s\mapsto P_{s}\otimes Q_{s}$ eine weitere $L_{2}(0)$-differenzierbare
Kurve in $\mathcal{P}\otimes\mathcal{Q}$ mit Tangente $h\in L_{2}\left(P_{0}\otimes Q_{0}\right)$.
Außerdem gelte 
\begin{equation}
\lim_{n\rightarrow\infty}\frac{k\left(P_{\frac{s}{\sqrt{n}}}\otimes Q_{\frac{s}{\sqrt{n}}}\right)-k\left(P_{0}\otimes Q_{0}\right)}{n^{-\frac{1}{2}}}\leq0\label{linsseitige_bedingung}
\end{equation}
 für alle $s>0$. Die erweiterte Nullhypothese $H_{n}^{p,e}$ ergibt
sich dann als 
\begin{equation}
H_{n}^{p,e}=\left\{ P_{\frac{s}{\sqrt{n}}}^{n_{1}}\otimes Q_{\frac{s}{\sqrt{n}}}^{n_{2}}\right\} \label{Hnpe}
\end{equation}
 für ein $s>0$. Sowohl für das Testproblem (\ref{asympt_testproblem})
als auch für das erweiterte Testproblem 
\[
H_{n}^{p,e}\quad\mbox{gegen}\quad K_{n}^{p}
\]
eignet sich die Testfolge
\begin{eqnarray}
\psi_{n} & = & \left\{ \begin{array}{cccc}
1 &  & >\\
 & T_{n} &  & c\\
0 &  & \leq
\end{array}\right.,\label{einseit_asym_opt_testfolge}
\end{eqnarray}
wobei $T_{n}$ die Teststatistik wie in (\ref{teststatistik T}) ist.
Der kritische Wert 
\[
c:=u_{1-\alpha}\left(\frac{1}{1-d}\int\widetilde{k}_{1}^{2}\,dP_{0}+\frac{1}{d}\int\widetilde{k}_{2}^{2}\,dQ_{0}\right)^{\frac{1}{2}}
\]
wird unabhängig von $n$ gewählt. Es ist zu beachten, dass 
\[
\left\Vert \widetilde{k}\right\Vert _{L_{2}\left(P_{0}\otimes Q_{0}\right)}^{2}=\int\widetilde{k}_{1}^{2}\,dP_{0}+\int\widetilde{k}_{2}^{2}\,dQ_{0}\neq0
\]
 nach Voraussetzung gilt. 

Die impliziten Alternativen aus (\ref{asympt_testproblem}) und die
impliziten Hypothesen (\ref{Hnpe}) lassen sich noch etwas verallgemeinern,
indem man die lokalen Parameter $\frac{t}{\sqrt{n}}$ durch die geeigneten
Nullfolgen $\left(t_{n}\right)_{n\in\mathbb{N}}$ ersetzt, vgl. \cite{Janssen:1999a}
und \cite{Janssen:1999b}. Die Parametrisierung ist dann durch die
lokalen Werte des Funktionals gegeben. Für ein $\vartheta\in\mathbb{R}\setminus\left\{ 0\right\} $
sei $\left(t_{n}\left(\vartheta\right)\right)_{n\in\mathbb{N}}$ eine
Nullfolge, welche die Bedingung
\begin{equation}
k\left(P_{t_{n}\left(\vartheta\right)}\otimes Q_{t_{n}\left(\vartheta\right)}\right)=k\left(P_{0}\otimes Q_{0}\right)+n^{-\frac{1}{2}}\vartheta+o\left(n^{-\frac{1}{2}}\right)\label{implizite Alternative}
\end{equation}
für $n\rightarrow\infty$ erfüllt. Die Folge $\left(P_{t_{n}\left(\vartheta\right)}\otimes Q_{t_{n}\left(\vartheta\right)}\right)_{n\in\mathbb{N}}$
der Wahrscheinlichkeitsmaße wird für $\vartheta>0$ als implizite
Alternative und für $\vartheta<0$ als implizite Hypothese bezeichnet.
Es sei zusätzlich vorausgesetzt, dass $\int\widetilde{k}g\,dP_{0}\otimes Q_{0}\neq0$
ist. Dann gilt
\begin{eqnarray}
\vartheta & = & \lim_{n\rightarrow\infty}\frac{k\left(P_{t_{n}\left(\vartheta\right)}\otimes Q_{t_{n}\left(\vartheta\right)}\right)-k\left(P_{0}\otimes Q_{0}\right)}{n^{-\frac{1}{2}}}\nonumber \\
 & = & \lim_{n\rightarrow\infty}\frac{k\left(P_{t_{n}\left(\vartheta\right)}\otimes Q_{t_{n}\left(\vartheta\right)}\right)-k\left(P_{0}\otimes Q_{0}\right)}{t_{n}\left(\vartheta\right)}\frac{t_{n}\left(\vartheta\right)}{n^{-\frac{1}{2}}}\nonumber \\
 & = & \left(\int\widetilde{k}g\,dP_{0}\otimes Q_{0}\right)\lim_{n\rightarrow\infty}\left(n^{\frac{1}{2}}t_{n}\left(\vartheta\right)\right).\label{diff_t_n_grad}
\end{eqnarray}
Die Folge
\[
\widetilde{t}_{n}(\vartheta)=n^{-\frac{1}{2}}\vartheta\left(\int\widetilde{k}g\,dP_{0}\otimes Q_{0}\right)^{-1}
\]
erfüllt somit die Bedingung (\ref{implizite Alternative}) nach Konstruktion.
Für alle $\vartheta\in\!\mathbb{R}\!\setminus\left\{ 0\right\} $
existiert also eine Nullfolge $\left(t_{n}\left(\vartheta\right)\right)_{n\in\mathbb{N}},$
die der Bedingung (\ref{implizite Alternative}) genügt. Seien $\left(n_{1}\right)_{n\in\mathbb{N}}\subset\mathbb{N}$
und $\left(n_{2}\right)_{n\in\mathbb{N}}\subset\mathbb{N}$ zwei Folgen
mit $n_{1}+n_{2}=n$ für alle $n\in\mathbb{N}$ und $\lim\limits_{n\rightarrow\infty}\frac{n_{2}}{n}=d\in(0,1)$.
Die Folge 
\[
E_{n}=\left(\Omega_{1}^{n_{1}}\times\Omega_{2}^{n_{2}},\,\mathcal{A}_{1}^{n_{1}}\otimes\mathcal{A}_{2}^{n_{2}},\left\{ P_{\widetilde{t}_{n}(\vartheta)}^{n_{1}}\otimes Q_{\widetilde{t}_{n}(\vartheta)}^{n_{2}}:\vartheta\in\Theta_{n}\right\} \right)
\]
der Produktexperimente ist dann ULAN nach Anwendung \ref{ulan_anwendung}.
\\
Wegen $\lim\limits_{n\rightarrow\infty}\frac{\widetilde{t}_{n}(\vartheta)}{t_{n}(\vartheta)}=1$
folgt
\[
\lim_{n\rightarrow\infty}\left\Vert P_{t_{n}(\vartheta)}^{n_{1}}\otimes Q_{t_{n}(\vartheta)}^{n_{2}}-P_{\widetilde{t}_{n}(\vartheta)}^{n_{1}}\otimes Q_{\widetilde{t}_{n}(\vartheta)}^{n_{2}}\right\Vert =0
\]
 nach \cite{Janssen:1998}, Satz $14.17$, weil die Folge $\left(E_{n}\right)_{n\in\mathbb{N}}$
von Experimenten ULAN ist. Man erhält somit 
\begin{eqnarray}
\lim_{n\rightarrow\infty}\int\varphi_{n}\,dP_{t_{n}(\vartheta)}^{n_{1}}\otimes Q_{t_{n}(\vartheta)}^{n_{2}} & = & \lim_{n\rightarrow\infty}\int\varphi_{n}\,dP_{\widetilde{t}_{n}(\vartheta)}^{n_{1}}\otimes Q_{\widetilde{t}_{n}(\vartheta)}^{n_{2}}\label{t-strich-vereinfachung}
\end{eqnarray}
 für jede Testfolge $\left(\varphi_{n}\right)_{n\in\mathbb{N}}$ nach
Hilfssatz \ref{hsatz1}, falls einer der Grenzwerte existiert. Man
sagt, dass alle Testfolgen unter $P_{t_{n}(\vartheta)}^{n_{1}}\otimes Q_{t_{n}(\vartheta)}^{n_{2}}$
und $P_{\widetilde{t}_{n}(\vartheta)}^{n_{1}}\otimes Q_{\widetilde{t}_{n}(\vartheta)}^{n_{2}}$
die gleiche asymptotische Güte besitzen. Die asymptotischen Eigenschaften
der Testfolge $\left(\psi_{n}\right)_{n\in\mathbb{N}}$ werden nun
mit Hilfe der Theorie von Le Cam untersucht. \begin{satz}[asymptotische G\"utefunktion]\label{asymp_gute}Seien
$\left(n_{1}\right)_{n\in\mathbb{N}}\subset\mathbb{N}$ und $\left(n_{2}\right)_{n\in\mathbb{N}}\subset\mathbb{N}$
zwei Folgen mit $n_{1}+n_{2}=n$ für alle $n\in\mathbb{N}$ und $\lim\limits_{n\rightarrow\infty}\frac{n_{2}}{n}=d\in(0,1)$.
Es gelte 
\begin{equation}
\int\widetilde{k}^{2}\,dP_{0}\otimes Q_{0}\neq0\label{vor-kanon-grad}
\end{equation}
und
\[
\int\widetilde{k}g\,dP_{0}\otimes Q_{0}\neq0.
\]
 Außerdem sei $\left(P_{t_{n}}\otimes Q_{t_{n}}\right)_{n\in\mathbb{N}}$
eine Folge mit 
\[
k\left(P_{t_{n}}\otimes Q_{t_{n}}\right)=k\left(P_{0}\otimes Q_{0}\right)+n^{-\frac{1}{2}}\vartheta+o\left(n^{-\frac{1}{2}}\right)
\]
 für ein $\vartheta\in\mathbb{R}\setminus\{0\}.$ Die Testfolge $\left(\psi_{n}\right)_{n\in\mathbb{N}}$
ist dann asymptotisch $\{0\}$-$\alpha$-ähnlich und es gilt 
\begin{equation}
\lim_{n\rightarrow\infty}\int\psi_{n}\,dP_{t_{n}}^{n_{1}}\otimes Q_{t_{n}}^{n_{2}}=\Phi\left(\vartheta\left(\frac{1}{1-d}\int\widetilde{k}_{1}^{2}\,dP_{0}+\frac{1}{d}\int\widetilde{k}_{2}^{2}\,dQ_{0}\right)^{-\frac{1}{2}}-u_{1-\alpha}\right).\label{asyp_gutefunktion}
\end{equation}
Insbesondere hält die Testfolge $\left(\psi_{n}\right)_{n\in\mathbb{N}}$
asymptotisch das Niveau $\alpha$ für $\vartheta<0$ ein.\end{satz}\begin{proof}
Wegen (\ref{t-strich-vereinfachung}) reicht es, den Grenzwert 
\[
\lim_{n\rightarrow\infty}\int\psi_{n}\,dP_{\widetilde{t}_{n}}^{n_{1}}\otimes Q_{\widetilde{t}_{n}}^{n_{2}}
\]
 für 
\[
\widetilde{t}_{n}(\vartheta)=n^{-\frac{1}{2}}\vartheta\left(\int\widetilde{k}g\,dP_{0}\otimes Q_{0}\right)^{-1}
\]
zu berechnen. Die Testfolge $\left(\psi_{n}\right)_{n\in\mathbb{N}}$
ist asymptotisch $\{0\}$-$\alpha$-ähnlich wegen
\begin{eqnarray*}
\lim_{n\rightarrow\infty}\int\psi_{n}\,dP_{0}^{n_{1}}\otimes Q_{0}^{n_{2}} & = & \lim_{n\rightarrow\infty}P_{0}^{n_{1}}\otimes Q_{0}^{n_{2}}\left(\left\{ T_{n}>c\right\} \right)\\
 & = & N\left(0,\left(\frac{1}{1-d}\int\widetilde{k}_{1}^{2}\,dP_{0}+\frac{1}{d}\int\widetilde{k}_{2}^{2}\,dQ_{0}\right)\right)\left(\left(c,+\infty\right)\right)\\
 & = & 1-\Phi\left(\frac{u_{1-\alpha}\left(\frac{1}{1-d}\int\widetilde{k}_{1}^{2}\,dP_{0}+\frac{1}{d}\int\widetilde{k}_{2}^{2}\,dQ_{0}\right)}{\frac{1}{1-d}\int\widetilde{k}_{1}^{2}\,dP_{0}+\frac{1}{d}\int\widetilde{k}_{2}^{2}\,dQ_{0}}\right)\\
 & = & \alpha.
\end{eqnarray*}
 Nach Satz \ref{ulan_produkt} und Anwendung \ref{ulan_anwendung}
erhält man, dass die Folge der Experimente 
\[
E_{n}=\left(\Omega_{1}^{n_{1}}\times\Omega_{2}^{n_{2}},\mathcal{A}_{1}^{n_{1}}\otimes\mathcal{A}_{2}^{n_{2}},\left\{ P_{\widetilde{t}_{n}(\vartheta)}^{n_{1}}\otimes Q_{\widetilde{t}_{n}(\vartheta)}^{n_{2}}:\vartheta\in\Theta_{n}\right\} \right)
\]
 ULAN ist. Eine zentrale Folge ist dann
\begin{eqnarray*}
X_{n} & = & \sum_{i=1}^{n_{1}}c_{n_{1},i}\,g_{1}\circ\pi_{i}+\sum_{j=1}^{n_{2}}\widetilde{c}_{n_{2},j}\,g_{2}\circ\pi_{j+n_{1}},
\end{eqnarray*}
wobei die Koeffizienten $c_{n_{1},i}$ und $\widetilde{c}_{n_{2},j}$
für alle $i\in\left\{ 1,\ldots,n_{1}\right\} $ und $j\in\left\{ 1,\ldots,n_{2}\right\} $
durch
\[
c_{n_{1},i}:=n^{-\frac{1}{2}}\left(\int\widetilde{k}g\,dP_{0}\otimes Q_{0}\right)^{-1},
\]
\[
\widetilde{c}_{n_{2},j}:=n^{-\frac{1}{2}}\left(\int\widetilde{k}g\,dP_{0}\otimes Q_{0}\right)^{-1}
\]
 gegeben sind. Nach Satz \ref{gem-asymp-vert} ergibt sich die Konvergenz
\[
\mathcal{L}\left(\left(T_{n},X_{n}\right)^{t}\left|P_{0}^{n_{1}}\otimes Q_{0}^{n_{2}}\right.\right)\rightarrow N\left(\left(\begin{array}{c}
0\\
0
\end{array}\right),\left(\begin{array}{cc}
\sigma_{1}^{2} & \sigma_{12}\\
\sigma_{12} & \sigma_{2}^{2}
\end{array}\right)\right)
\]
für $n\rightarrow\infty$. Als Einträge der Kovarianzmatrix erhält
man 
\[
\sigma_{1}^{2}=\frac{1}{1-d}\int\widetilde{k}_{1}^{2}\,dP_{0}+\frac{1}{d}\int\widetilde{k}_{2}^{2}\,dQ_{0},
\]
\[
\sigma_{2}^{2}=c_{1}^{2}\int g_{1}^{2}\,dP+c_{2}^{2}\int g_{2}^{2}\,dQ,
\]
\[
\sigma_{12}=a_{1}\frac{1}{\sqrt{1-d}}\int\widetilde{k}_{1}g_{1}\,dP_{0}+a_{2}\frac{1}{\sqrt{d}}\int\widetilde{k}_{2}g_{2}\,dQ_{0}.
\]
Die Werte $a_{1}$ und $a_{2}$ sind nun zu bestimmen. Es gilt zunächst
\begin{eqnarray*}
a_{1} & = & \lim_{n_{1}\rightarrow\infty}n_{1}^{-\frac{1}{2}}\sum_{i=1}^{n_{1}}c_{n_{1},i}\\
 & = & \lim_{n_{1}\rightarrow\infty}n_{1}^{\frac{1}{2}}n^{-\frac{1}{2}}\left(\int\widetilde{k}g\,dP_{0}\otimes Q_{0}\right)\\
 & = & \sqrt{\left(1-d\right)}\left(\int\widetilde{k}g\,dP_{0}\otimes Q_{0}\right)^{-1}
\end{eqnarray*}
und
\begin{eqnarray*}
a_{2} & = & \lim_{n_{2}\rightarrow\infty}n_{2}^{-\frac{1}{2}}\sum_{i=1}^{n_{2}}\widetilde{c}_{n_{2},i}\\
 & = & \lim_{n_{1}\rightarrow\infty}n_{2}^{\frac{1}{2}}n^{-\frac{1}{2}}\left(\int\widetilde{k}g\,dP_{0}\otimes Q_{0}\right)^{-1}\\
 & = & \sqrt{d}\left(\int\widetilde{k}g\,dP_{0}\otimes Q_{0}\right)^{-1}.
\end{eqnarray*}
Für die Kovarinz $\sigma_{12}$ ergibt sich dann
\begin{eqnarray*}
\sigma_{12} & = & a_{1}\frac{1}{\sqrt{1-d}}\int\widetilde{k}_{1}g_{1}\,dP_{0}+a_{2}\frac{1}{\sqrt{d}}\int\widetilde{k}_{2}g_{2}\,dQ_{0}\\
 & = & \int\widetilde{k}_{1}g_{1}\,dP_{0}\left(\int\widetilde{k}g\,dP_{0}\otimes Q_{0}\right)^{-1}+\int\widetilde{k}_{2}g_{2}\,dQ_{0}\left(\int\widetilde{k}g\,dP_{0}\otimes Q_{0}\right)^{-1}\\
 & = & \left(\int\widetilde{k}g\,dP_{0}\otimes Q_{0}\right)^{-1}\left(\int\widetilde{k}_{1}g_{1}\,dP_{0}+\int\widetilde{k}_{2}g_{2}\,dQ_{0}\right)\\
 & = & 1.
\end{eqnarray*}
Nach Satz \ref{asymp-guete} erhält man die asymptotische Güte als
\begin{eqnarray*}
 &  & \lim_{n\rightarrow\infty}\int\psi_{n}\,dP_{\widetilde{t}_{n}(\vartheta)}^{n_{1}}\otimes Q_{\widetilde{t}_{n}(\vartheta)}^{n_{2}}\\
 & = & \Phi\left(\vartheta\frac{\sigma_{12}}{\sigma_{1}}-u_{1-\alpha}\right)\\
 & = & \Phi\left(\vartheta\left(\frac{1}{1-d}\int\widetilde{k}_{1}^{2}\,dP_{0}+\frac{1}{d}\int\widetilde{k}_{2}^{2}\,dQ_{0}\right)^{-\frac{1}{2}}-u_{1-\alpha}\right).
\end{eqnarray*}
 \end{proof}\begin{bem}Die asymptotische Gütefunktion wird durch
$d=\lim\limits_{n\rightarrow\infty}\frac{n_{2}}{n}$, die Normen $\left\Vert \widetilde{k}_{1}\right\Vert _{L_{2}\left(P_{0}\right)}$
und $\left\Vert \widetilde{k}_{2}\right\Vert _{L_{2}\left(Q_{0}\right)}$
der Komponenten des kanonischen Gradienten $\widetilde{k}$ und die
lokalen Werte $\vartheta$ vollständig beschrieben. Es ist bemerkenswert,
dass die asymptotische Gütefunktion von der Tangente $g\in L_{2}\left(P_{0}\otimes Q_{0}\right)$
unabhängig ist. Die $L_{2}(0)$-differenzierbaren Kurven sind daher
als mathematische Konstruktionen anzusehen, die asymptotisch wieder
eliminiert werden, vgl. \cite{Janssen:1999a} und \cite{Janssen:1998}.
\end{bem}\begin{satz}\label{niveau_alpha_trivial}Es seien $t\mapsto P_{t}\otimes Q_{t}$
eine $L_{2}\left(0\right)$-differenzierbare Kurve in $\mathcal{P}\otimes\mathcal{Q}$
mit Tangente $g=0\in L_{2}\left(P_{0}\otimes Q_{0}\right)$ und $\left(\phi_{n}\right)_{n\in\mathbb{N}}$
eine Testfolge mit 
\[
\lim\limits_{n\rightarrow\infty}\int\phi_{n}\,dP_{0}^{n_{1}}\otimes Q_{0}^{n_{2}}=\alpha.
\]
 Für jede Nullfolge $\left(t_{n}\right)_{n\in\mathbb{N}}$ mit $\lim_{n\rightarrow\infty}t_{n}\sqrt{n}>0$
gilt dann 
\[
\lim\limits_{n\rightarrow\infty}\int\phi_{n}\,dP_{t_{n}}^{n_{1}}\otimes Q_{t_{n}}^{n_{2}}=\alpha.
\]
 \end{satz}\begin{proof}Die Kurven $t\mapsto P_{t}$ in $\mathcal{P}$
und $t\mapsto Q_{t}$ in $\mathcal{Q}$ sind nach Satz \ref{L2Produkt} 
und Bemerkung \ref{prod_bem} beide $L_{2}\left(0\right)$-differenzierbar
mit Tangenten $0\in L_{2}\left(P_{0}\right)$ und $0\in L_{2}\left(Q_{0}\right)$.
Wegen $\lim_{n\rightarrow\infty}t_{n}\sqrt{n}>0$ erhält man dann
\begin{eqnarray*}
\left\Vert P_{t_{n}}^{n_{1}}\otimes Q_{t_{n}}^{n_{2}}-P_{0}^{n_{1}}\otimes Q_{0}^{n_{2}}\right\Vert  & \leq & \left\Vert P_{t_{n}}^{n_{1}}\otimes Q_{t_{n}}^{n_{2}}-P_{t_{n}}^{n_{1}}\otimes Q_{0}^{n_{2}}\right\Vert \\
 &  & +\left\Vert P_{t_{n}}^{n_{1}}\otimes Q_{0}^{n_{2}}-P_{0}^{n_{1}}\otimes Q_{0}^{n_{2}}\right\Vert \\
 & = & \left\Vert Q_{t_{n}}^{n_{2}}-Q_{0}^{n_{2}}\right\Vert +\left\Vert P_{t_{n}}^{n_{1}}-P_{0}^{n_{1}}\right\Vert \\
 & \rightarrow & 0
\end{eqnarray*}
 für $n\rightarrow\infty$ nach \cite{Strasser:1985b}, Seite 386,
Theorem 75.8. Nach Hilfssatz \ref{hsatz1} gilt dann

\[
\lim\limits_{n\rightarrow\infty}\int\phi_{n}\,dP_{t_{n}}^{n_{1}}\otimes Q_{t_{n}}^{n_{2}}=\lim\limits_{n\rightarrow\infty}\int\phi_{n}\,dP_{0}^{n_{1}}\otimes Q_{0}^{n_{2}}=\alpha.
\]
 \end{proof}\begin{satz}[asymptotisch unverf\"alschte Testfolge]\label{asymp_unverf_testfolge}

Es gelte $\int\widetilde{k}^{2}\,dP_{0}\otimes Q_{0}\neq0.$ Sei $P_{0}\otimes Q_{0}=\widetilde{P}_{0}\otimes\widetilde{Q}_{0}\in\mathcal{P}\otimes\mathcal{Q}$.
Es seien $t\mapsto P_{t}\otimes Q_{t}$ und $s\mapsto\widetilde{P}_{s}\otimes\widetilde{Q}_{s}$
zwei $L_{2}(0)$-differenzierbare Kurven mit Tangenten $g\in L_{2}^{(0)}\left(P_{0}\otimes Q_{0}\right)$
und $h\in L_{2}^{(0)}\left(\widetilde{P}_{0}\otimes\widetilde{Q}_{0}\right)$.
Es gelten außerdem die Bedingungen 
\begin{equation}
\lim_{n\rightarrow\infty}\frac{k\left(P_{t_{n}}\otimes Q_{t_{n}}\right)-k\left(P_{0}\otimes Q_{0}\right)}{n^{-\frac{1}{2}}}>0\label{lb1}
\end{equation}
und
\begin{equation}
\lim_{n\rightarrow\infty}\frac{k\left(\widetilde{P}_{s_{n}}\otimes\widetilde{Q}_{s_{n}}\right)-k\left(\widetilde{P}_{0}\otimes\widetilde{Q}_{0}\right)}{n^{-\frac{1}{2}}}\leq0,\label{rb1}
\end{equation}
für alle Nullfolgen $\left(t_{n}\right)_{n\in\mathbb{N}}$ mit $\lim_{n\rightarrow\infty}t_{n}\sqrt{n}>0$
und alle Nullfolgen $\left(s_{n}\right)_{n\in\mathbb{N}}$ mit $\lim_{n\rightarrow\infty}s_{n}\sqrt{n}>0$.
Die Testfolge $\left(\psi_{n}\right)_{n\in\mathbb{N}}$ für die Testprobleme
\begin{equation}
H_{n}^{p,e}=\left\{ \widetilde{P}_{s_{n}}^{n_{1}}\otimes\widetilde{Q}_{s_{n}}^{n_{2}}\right\} \quad\mbox{gegen}\quad K_{n}^{p}=\left\{ P_{t_{n}}^{n_{1}}\otimes Q_{t_{n}}^{n_{2}}\right\} \label{he-gegen-k}
\end{equation}
 ist dann asymptotisch unverfälscht zum Niveau $\alpha$.\end{satz}\begin{proof}
Man setze zuerst
\begin{eqnarray}
\vartheta & = & \lim_{n\rightarrow\infty}\frac{k\left(P_{t_{n}}\otimes Q_{t_{n}}\right)-k\left(P_{0}\otimes Q_{0}\right)}{n^{-\frac{1}{2}}}\nonumber \\
 & = & \lim_{n\rightarrow\infty}\frac{t_{n}}{n^{-\frac{1}{2}}}\frac{k\left(P_{t_{n}}\otimes Q_{t_{n}}\right)-k\left(P_{0}\otimes Q_{0}\right)}{t_{n}}\nonumber \\
 & = & t\int\widetilde{k}g\,dP_{0}\otimes Q_{0},\label{theta-funk-umrechnung}
\end{eqnarray}
wobei $t:=\lim\limits_{n\rightarrow\infty}\frac{t_{n}}{n^{-\frac{1}{2}}}$
ist. Unmittelbar aus Voraussetzung (\ref{lb1}) folgt dann $\vartheta>0$
und $\int\widetilde{k}g\,dP_{0}\neq0.$ Mit Hilfe von Satz \ref{asymp_gute}
erhält man 
\begin{eqnarray*}
 &  & \lim_{n\rightarrow\infty}\int\psi_{n}dP_{t_{n}}^{n_{1}}\otimes Q_{t_{n}}^{n_{2}}\\
 & = & \Phi\left(\underbrace{\vartheta\left(\frac{1}{1-d}\int\widetilde{k}_{1}^{2}\,dP_{0}+\frac{1}{d}\int\widetilde{k}_{2}^{2}\,dQ_{0}\right)^{-\frac{1}{2}}}_{>0}-u_{1-\alpha}\right)\\
 & > & \Phi\left(-u_{1-\alpha}\right)\\
 & = & \alpha.
\end{eqnarray*}
 Es gelte nun
\[
\lim_{n\rightarrow\infty}\frac{k\left(\widetilde{P}_{s_{n}}\otimes\widetilde{Q}_{s_{n}}\right)-k\left(\widetilde{P}_{0}\otimes\widetilde{Q}_{0}\right)}{n^{-\frac{1}{2}}}<0.
\]
Der Parameter $\widetilde{\vartheta}$ wird durch 
\[
\widetilde{\vartheta}=\lim_{n\rightarrow\infty}\frac{k\left(\widetilde{P}_{s_{n}}\otimes\widetilde{Q}_{s_{n}}\right)-k\left(\widetilde{P}_{0}\otimes\widetilde{Q}_{0}\right)}{n^{-\frac{1}{2}}}=s\int\widetilde{k}h\,dP_{0}\otimes Q_{0}
\]
bestimmt, wobei $s:=\lim\limits_{n\rightarrow\infty}s_{n}n^{\frac{1}{2}}$
ist. Hieraus folgt $\widetilde{\vartheta}<0$ und $\int\!\widetilde{k}h\,dP_{0}\otimes Q_{0}\!\neq\!0$
nach Voraussetzung. Nach Satz \ref{asymp_gute} ergibt sich
\begin{eqnarray*}
\textrm{} &  & \lim_{n\rightarrow\infty}\int\psi_{n}dP_{s_{n}}^{n_{1}}\otimes Q_{s_{n}}^{n_{2}}\\
 & = & \Phi\left(\underbrace{\widetilde{\vartheta}\left(\frac{1}{1-d}\int\widetilde{k}_{1}^{2}\,dP_{0}+\frac{1}{d}\int\widetilde{k}_{2}^{2}\,dQ_{0}\right)^{-\frac{1}{2}}}_{<0}-u_{1-\alpha}\right)
\end{eqnarray*}
\begin{eqnarray*}
 & < & \Phi\left(-u_{1-\alpha}\right)\\
 & = & \alpha.
\end{eqnarray*}
 Sei nun 
\[
\lim_{n\rightarrow\infty}\frac{k\left(\widetilde{P}_{s_{n}}\otimes\widetilde{Q}_{s_{n}}\right)-k\left(\widetilde{P}_{0}\otimes\widetilde{Q}_{0}\right)}{n^{-\frac{1}{2}}}=0.
\]
Das impliziert $\int\widetilde{k}h\,dP_{0}\otimes Q_{0}=0$. Ohne
Einschränkung sei $\int h^{2}\,dP_{0}\otimes Q_{0}\neq0$, denn sonst
gilt 
\[
\lim_{n\rightarrow\infty}\int\psi_{n}dP_{s_{n}}^{n_{1}}\otimes Q_{s_{n}}^{n_{2}}=\alpha
\]
 nach Satz \ref{niveau_alpha_trivial}. Sei $m\in\mathbb{N}$, so
dass $\widetilde{P}_{s_{n}}\otimes\widetilde{Q}_{s_{n}}\in\mathcal{P}\otimes\mathcal{Q}$
für alle $n>m$ gilt. Es sei wieder $s:=\lim\limits_{n\rightarrow\infty}s_{n}n^{\frac{1}{2}}.$
Die Folge $\left(\left\{ \widetilde{P}_{s_{n}}^{n_{1}}\otimes\widetilde{Q}_{s_{n}}^{n_{2}}\right\} \right)_{n\in\mathbb{N}\setminus\left\{ 1,\ldots,m\right\} }$
ist nach Anwendung \ref{ulan_anwendung} ULAN mit der zentralen Folge
\begin{eqnarray*}
X_{n} & = & \sum_{i=1}^{n_{1}}s_{n}h_{1}\circ\pi_{i}+\sum_{j=1}^{n_{2}}s_{n}h_{2}\circ\pi_{j+n_{1}}.
\end{eqnarray*}
 Nach Satz \ref{gem-asymp-vert} erhält man
\[
\mathcal{L}\left(\left(T_{n},X_{n}\right)^{t}\left|\widetilde{P}_{0}^{n_{1}}\otimes\widetilde{Q}_{0}^{n_{2}}\right.\right)\rightarrow N\left(\left(\begin{array}{c}
0\\
0
\end{array}\right),\left(\begin{array}{cc}
\sigma_{1}^{2} & \sigma_{12}\\
\sigma_{12} & \sigma_{2}^{2}
\end{array}\right)\right)
\]
für $n\rightarrow\infty$. Es gilt außerdem 
\[
\sigma_{1}^{2}=\lim_{n\rightarrow\infty}\int T_{n}\,dP_{0}^{n_{1}}\otimes Q_{0}^{n_{2}}=\frac{1}{1-d}\int\widetilde{k}_{1}^{2}\,dP_{0}+\frac{1}{d}\int\widetilde{k}_{2}^{2}\,dQ_{0}>0.
\]
 Für die Koeffizienten $a_{1}$ und $a_{2}$ aus Satz \ref{gem-asymp-vert}
ergibt sich
\[
a_{1}=\lim_{n\rightarrow\infty}n_{1}^{-\frac{1}{2}}\sum_{i=1}^{n_{1}}s_{n}=\lim_{n\rightarrow\infty}\frac{n_{1}^{\frac{1}{2}}}{n^{\frac{1}{2}}}s_{n}n^{\frac{1}{2}}=\sqrt{\left(1-d\right)}s
\]
und
\[
a_{2}=\lim_{n\rightarrow\infty}n_{2}^{-\frac{1}{2}}\sum_{i=1}^{n_{2}}s_{n}=\sqrt{d}s.
\]

\noindent Hieraus folgt
\begin{eqnarray*}
\sigma_{12} & = & \frac{1}{\sqrt{1-d}}a_{1}\int\widetilde{k}_{1}h_{1}\,dP_{0}+\frac{1}{\sqrt{d}}a_{2}\int\widetilde{k}_{2}h_{2}\,dQ_{0}\\
 & = & s\int\widetilde{k}_{1}h_{1}\,dP_{0}+s\int\widetilde{k}_{2}h_{2}\,dQ_{0}\\
 & = & s\int\widetilde{k}h\,dP_{0}\otimes Q_{0}\\
 & = & 0.
\end{eqnarray*}
Nach Satz \ref{asymp-guete} erhält man nun
\begin{eqnarray*}
\lim_{n\rightarrow\infty}\int\psi_{n}\,d\widetilde{P}_{s_{n}}^{n_{1}}\otimes\widetilde{Q}_{s_{n}}^{n_{2}}= & \Phi\left(\underbrace{\frac{\sigma_{12}}{\sigma_{1}}}_{=0}-u_{1-\alpha}\right)=\Phi\left(-u_{1-\alpha}\right)=\alpha.
\end{eqnarray*}
 \end{proof}\begin{defi}Die Menge $\mathcal{F}_{2}$ enthalte alle
Folgen $\left(P_{t_{n}}\otimes Q_{t_{n}}\right)_{n\in\mathbb{N}}$,
die die folgenden Bedingungen erfüllen:
\begin{enumerate}
\item Die Nullfolge $\left(t_{n}\right)_{n\in\mathbb{N}}$ erfüllt $\lim_{n\rightarrow\infty}t_{n}\sqrt{n}>0.$
\item Es existieren ein $\varepsilon>0$ und eine $L_{2}\left(P_{0}\otimes Q_{0}\right)$-differenzierbare
Kurve $f:\left(-\varepsilon,\varepsilon\right)\rightarrow\mathcal{P}\otimes\mathcal{Q},\,t\mapsto P_{t}\otimes Q_{t},$
so dass $f\left(t_{n}\right)=P_{t_{n}}\otimes Q_{t_{n}}$ für alle
$n\in\mathbb{N}$ gilt.
\end{enumerate}
\noindent\end{defi}\begin{defi}\label{impl_alternativen} Die Menge
$\mathcal{K}_{1}$ aller impliziten Alternativen für das Testproblem
$H_{1}$ gegen $K_{1}$ ist definiert durch 
\[
\mathcal{K}_{1}:=\left\{ \left(P_{t_{n}}\otimes Q_{t_{n}}\right)_{n\in\mathbb{N}}\in\mathcal{F}_{2}:\lim_{n\rightarrow\infty}\sqrt{n}\left(k\left(P_{t_{n}}\otimes Q_{t_{n}}\right)-k\left(P_{0}\otimes Q_{0}\right)\right)>0\right\} .
\]

\noindent Die Menge $\mathcal{H}_{1}$ aller impliziten Hypothesen
für das Testproblem $H_{1}$ gegen $K_{1}$ ist gegeben durch 
\[
\mathcal{H}_{1}:=\left\{ \left(P_{t_{n}}\otimes Q_{t_{n}}\right)_{n\in\mathbb{N}}\in\mathcal{F}_{2}:\lim_{n\rightarrow\infty}\sqrt{n}\left(k\left(P_{t_{n}}\otimes Q_{t_{n}}\right)-k\left(P_{0}\otimes Q_{0}\right)\right)\leq0\right\} .
\]
\end{defi}\begin{bem}Die Statistik $T_{n}$ setzt sich aus zwei
Teilen $\frac{\sqrt{n}}{n_{1}}\sum\limits_{i=1}^{n_{1}}\widetilde{k}_{1}\circ\pi_{i}$
und $\frac{\sqrt{n}}{n_{2}}\sum\limits_{i=n_{1}+1}^{n}\widetilde{k}_{2}\circ\pi_{i}$
zusammen, die jeweils nur von einer der beiden Stichproben abhängen.
Es stellt sich die Frage, ob eine gewichtete Summe dieser beiden Anteile
eine geeignete Teststatistik für das Testproblem $\mathcal{H}_{1}$
gegen $\mathcal{K}_{1}$ darstellt. Es wird hier jedoch gezeigt, dass
die Testfolgen auf Basis der gewichteten Summe der beiden Anteile
von $T_{n}$ im Allgemeinen asymptotisch verfälscht sind und das Niveau
$\alpha$ asymptotisch nicht einhalten. Seien $\left(a_{n}\right)_{n\in\mathbb{N}}\subset\mathbb{R}$
und $\left(b_{n}\right)_{n\in\mathbb{N}}\subset\mathbb{R}$ zwei Folgen
mit $\lim\limits_{n\rightarrow\infty}a_{n}=a$, $\lim\limits_{n\rightarrow\infty}b_{n}=b$
und $a\neq b.$ Außerdem seien $\int\widetilde{k}_{1}^{2}\,dP_{0}\neq0$
und $\int\widetilde{k}_{2}^{2}\,dQ_{0}\neq0$. Die Teststatistik $S_{n}$
wird durch 
\[
S_{n}=a_{n}\frac{\sqrt{n}}{n_{1}}\sum\limits_{i=1}^{n_{1}}\widetilde{k}_{1}\circ\pi_{i}+b_{n}\frac{\sqrt{n}}{n_{2}}\sum\limits_{i=n_{1}+1}^{n}\widetilde{k}_{2}\circ\pi_{i}
\]
 für alle $n\in\mathbb{N}$ definiert. Als gemeinsame asymptotische
Verteilung von $S_{n}$ und $X_{n}$ erhält man 
\[
\mathcal{L}\left(\left(S_{n},X_{n}\right)^{t}\left|P_{0}^{n_{1}}\otimes Q_{0}^{n_{2}}\right.\right)\rightarrow N\left(\left(\begin{array}{c}
0\\
0
\end{array}\right),\left(\begin{array}{cc}
\sigma_{1}^{2} & \sigma_{12}\\
\sigma_{12} & \sigma_{2}^{2}
\end{array}\right)\right)
\]
für $n\rightarrow\infty$. Die Einträge der Kovarianzmatrix ergeben
sich als 
\[
\sigma_{1}^{2}=a^{2}\frac{1}{1-d}\int\widetilde{k}_{1}^{2}\,dP_{0}+b^{2}\frac{1}{d}\int\widetilde{k}_{2}^{2}\,dQ_{0},
\]
\[
\sigma_{2}^{2}=c_{1}^{2}\int g_{1}^{2}\,dP+c_{2}^{2}\int g_{2}^{2}\,dQ,
\]
\[
\sigma_{12}=\left(a\int\widetilde{k}_{1}g_{1}\,dP_{0}+b\int\widetilde{k}_{2}g_{2}\,dQ_{0}\right)\left(\int\widetilde{k}g\,dP_{0}\otimes Q_{0}\right)^{-1}.
\]
Die Testfolge 
\begin{eqnarray*}
\phi_{n} & = & \left\{ \begin{array}{cccc}
1 &  & >\\
 & S_{n} &  & u_{1-\alpha}\sigma_{1}^{2}\\
0 &  & \leq
\end{array}\right.
\end{eqnarray*}
 ist asymptotisch $\{0\}$-$\alpha$-ähnlich, d.h. es gilt
\[
\lim_{n\rightarrow\infty}\int\phi_{n}\,dP_{0}^{n_{1}}\otimes Q_{0}^{n_{2}}=\alpha.
\]
Die Voraussetzungen aus Satz \ref{asymp_gute} seien erfüllt. Analog
wie in Satz \ref{asymp_gute} erhält man dann die asymptotische Gütefunktion
\begin{eqnarray*}
 &  & \lim_{n\rightarrow\infty}\int\phi_{n}\,dP_{t_{n}}^{n_{1}}\otimes Q_{t_{n}}^{n_{2}}\\
\textrm{} & = & \Phi\left(\vartheta\frac{a\int\widetilde{k}_{1}g_{1}\,dP_{0}+b\int\widetilde{k}_{2}g_{2}\,dQ_{0}}{\int\widetilde{k}g\,dP_{0}\otimes Q_{0}\left(a^{2}\frac{1}{1-d}\int\widetilde{k}_{1}^{2}\,dP_{0}+b^{2}\frac{1}{d}\int\widetilde{k}_{2}^{2}\,dQ_{0}\right)^{\frac{1}{2}}}-u_{1-\alpha}\right),
\end{eqnarray*}
 wobei 
\[
\vartheta=\lim_{n\rightarrow\infty}\frac{k\left(P_{t_{n}}\otimes Q_{t_{n}}\right)-k\left(P_{0}\otimes Q_{0}\right)}{n^{-\frac{1}{2}}}=\int\widetilde{k}g\,dP_{0}\otimes Q_{0}\lim_{n\rightarrow\infty}n^{\frac{1}{2}}t_{n}
\]
 ist. Falls der Tangentenkegel $K\left(P_{0}\otimes Q_{0},\mathcal{P}\otimes\mathcal{Q}\right)$
reichhaltig genug ist, so existiert eine Tangente $g\in K\left(P_{0}\otimes Q_{0},\mathcal{P}\otimes\mathcal{Q}\right)$
mit
\[
\int\widetilde{k}_{1}g_{1}\,dP_{0}+\int\widetilde{k}_{2}g_{2}\,dQ_{0}=\int\widetilde{k}g\,dP_{0}\otimes Q_{0}>0
\]
und 
\[
a\int\widetilde{k}_{1}g_{1}\,dP_{0}+b\int\widetilde{k}_{2}g_{2}\,dQ_{0}<0,
\]
 wobei $g=g_{1}\circ\pi_{1}+g_{2}\circ\pi_{2}$ mit $g_{1}\in L_{2}^{(0)}\left(P_{0}\right)$
und $g_{2}\in L_{2}^{(0)}\left(Q_{0}\right)$ nach Satz \ref{L2Produkt}
gilt. Es seien $t\mapsto P_{t}\otimes Q_{t}$ eine $L_{2}\left(0\right)$-differenzierbare
Kurve mit Tangente $g$ und $\left(t_{n}\right)_{n\in\mathbb{N}}$
eine Nullfolge mit $\lim_{n\rightarrow\infty}t_{n}\sqrt{n}>0.$ Die
Folge $\left(\phi_{n}\right)_{n\in\mathbb{N}}$ der Tests ist asymptotisch
verfälscht zum Niveau $\alpha$ für die implizite Alternative $\left(P_{t_{n}}^{n_{1}}\otimes Q_{t_{n}}^{n_{2}}\right)_{n\in\mathbb{N}}.$
Es gilt nämlich 
\begin{eqnarray*}
 &  & \lim_{n\rightarrow\infty}\int\phi_{n}\,dP_{t_{n}}^{n_{1}}\otimes Q_{t_{n}}^{n_{2}}\\
 & = & \Phi\left(\vartheta\underbrace{\frac{a\int\widetilde{k}_{1}g_{1}\,dP_{0}+b\int\widetilde{k}_{2}g_{2}\,dQ_{0}}{\int\widetilde{k}g\,dP_{0}\otimes Q_{0}\left(a^{2}\frac{1}{1-d}\int\widetilde{k}_{1}^{2}\,dP_{0}+b^{2}\frac{1}{d}\int\widetilde{k}_{2}^{2}\,dQ_{0}\right)^{\frac{1}{2}}}}_{<0}-u_{1-\alpha}\right)\\
 & < & \Phi\left(-u_{1-\alpha}\right)\\
 & = & \alpha,
\end{eqnarray*}
 wobei
\[
\vartheta=\int\widetilde{k}g\,dP_{0}\otimes Q_{0}\lim_{n\rightarrow\infty}n^{\frac{1}{2}}t_{n}>0
\]
 ist. Analog erhält man implizite Hypothesen, so dass die Testfolge
$\left(\phi_{n}\right)_{n\in\mathbb{N}}$ das Niveau $\alpha$ asymptotisch
nicht einhält. \end{bem} Als Nächstes werden asymptotische Optimalitätseigenschaften
der Testfolge $\left(\psi_{n}\right)_{n\in\mathbb{N}}$ aus (\ref{einseit_asym_opt_testfolge})
untersucht. Die Neyman-Pearson Tests spielen dabei die Rolle der Vergleichsobjekte,
weil sie die optimale Gütefunktion besitzen. Deswegen wird im nächsten
Lemma die asymptotische Gütefunktion für eine Folge der Neyman-Pearson
Tests berechnet. \begin{lemma}\label{asymp_neymann_pearson_einfach}
Es seien $\left(P_{t_{n}}^{n_{1}}\otimes Q_{t_{n}}^{n_{2}}\right)_{n\in\mathbb{N}}\in\mathcal{K}_{1}$
eine implizite Alternative und $t\mapsto P_{t}\otimes Q_{t}$ die
zugehörige $L_{2}\left(P_{0}\otimes Q_{0}\right)$-differenzierbare
Kurve in $\mathcal{P}\otimes\mathcal{Q}$ mit Tangente $g\in L_{2}^{(0)}\left(P_{0}\otimes Q_{0}\right)$.
Die Tangente $g$ besitzt dann die eindeutige Darstellung $g=g_{1}\circ\pi_{1}+g_{2}\circ\pi_{2}$
mit $g_{1}\in L_{2}^{(0)}\left(P_{0}\right)$ und $g_{2}\in L_{2}^{(0)}\left(Q_{0}\right).$
Sei $\left(\alpha_{n}\right)_{n\in\mathbb{N}}\subset\left(0,1\right)$
eine Folge mit $\lim\limits_{n\rightarrow\infty}\alpha_{n}=\alpha$.
Es bezeichne $\varphi_{n}$ den Neyman-Pearson Test für $P_{0}^{n_{1}}\otimes Q_{0}^{n_{2}}$
gegen $P_{t_{n}}^{n_{1}}\otimes Q_{t_{n}}^{n_{2}}$ zum Niveau $\alpha_{n}.$
Die asymptotische Gütefunktion der Testfolge $\left(\varphi_{n}\right)_{n\in\mathbb{N}}$
ergibt sich dann als 
\[
\lim_{n\rightarrow\infty}\int\varphi_{n}dP_{t_{n}}^{n_{1}}\otimes Q_{t_{n}}^{n_{2}}=\Phi\left(t\sigma-u_{1-\alpha}\right),
\]
wobei $t=\lim\limits_{n\rightarrow\infty}n^{\frac{1}{2}}t_{n}$ und
$\sigma=\left(\left(1-d\right)\int g_{1}^{2}\,dP_{0}+d\int g_{2}^{2}\,dQ_{0}\right)^{\frac{1}{2}}$
sind. \end{lemma}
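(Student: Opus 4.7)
Der Plan ist folgender: Ich betrachte die lokale Parametrisierung $\vartheta\mapsto P_{\vartheta t_n}^{n_1}\otimes Q_{\vartheta t_n}^{n_2}$, die bei $\vartheta=0$ die Nullhypothese und bei $\vartheta=1$ die Alternative liefert. Mit den konstanten Regressionskoeffizienten $c_{n_1,i}:=t_n$ für alle $i\in\{1,\ldots,n_1\}$ und $\widetilde{c}_{n_2,j}:=t_n$ für alle $j\in\{1,\ldots,n_2\}$ sind die Voraussetzungen von Satz \ref{lecam} erfüllt: Es gilt $\max_i |c_{n_1,i}|=t_n\to 0$ sowie $\sum_{i=1}^{n_1}c_{n_1,i}^2=n_1 t_n^2\to(1-d)t^2$ wegen $\sqrt{n}\,t_n\to t>0$; die analoge Aussage gilt für die zweite Stichprobe. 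Nach Satz \ref{ulan_le_cam}, Satz \ref{ulan_produkt} und Anwendung \ref{ulan_anwendung} ist die Folge der Produktexperimente somit ULAN mit zentraler Folge
\[
X_n = t_n\sum_{i=1}^{n_1} g_1\circ\pi_i + t_n\sum_{j=1}^{n_2} g_2\circ\pi_{n_1+j}
\]
und asymptotischer Varianz $\tau^2:=t^2\sigma^2=(1-d)t^2\!\int g_1^2\,dP_0+dt^2\!\int g_2^2\,dQ_0$.

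Aus der LAN-Entwicklung bei $\vartheta=1$ ergibt sich
\[
\log\frac{dP_{t_n}^{n_1}\otimes Q_{t_n}^{n_2}}{dP_0^{n_1}\otimes Q_0^{n_2}}=X_n-\tfrac{1}{2}\tau^2+R_n
\]
mit $R_n\to 0$ stochastisch unter $P_0^{n_1}\otimes Q_0^{n_2}$. Der Neyman-Pearson Test $\varphi_n$ zum Niveau $\alpha_n$ ist (bis auf eine asymptotisch vernachlässigbare Randomisierung) von der Gestalt $\varphi_n=\mathbf{1}_{\{\log L_n>k_n\}}$ mit kritischen Werten $k_n$, die durch $E_{P_0^{n_1}\otimes Q_0^{n_2}}(\varphi_n)=\alpha_n$ festgelegt sind. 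Unter $P_0^{n_1}\otimes Q_0^{n_2}$ konvergiert $\log L_n$ nach Verteilung gegen $N(-\tfrac{1}{2}\tau^2,\tau^2)$; zusammen mit $\alpha_n\to\alpha$ und der Stetigkeit von $\Phi$ folgt hieraus $k_n\to u_{1-\alpha}\tau-\tfrac{1}{2}\tau^2$.

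Unter der Alternative $P_{t_n}^{n_1}\otimes Q_{t_n}^{n_2}$ liefert das dritte Lemma von Le Cam die Konvergenz $\mathcal{L}(\log L_n\mid P_{t_n}^{n_1}\otimes Q_{t_n}^{n_2})\to N(\tfrac{1}{2}\tau^2,\tau^2)$. Damit folgt
\[
\lim_{n\to\infty}\int\varphi_n\,dP_{t_n}^{n_1}\otimes Q_{t_n}^{n_2}=1-\Phi\!\left(\frac{u_{1-\alpha}\tau-\tau^2}{\tau}\right)=\Phi(\tau-u_{1-\alpha})=\Phi(t\sigma-u_{1-\alpha}).
\]
Der technisch heikelste Punkt ist die saubere Behandlung der Konvergenz der kritischen Werte bei variierendem Niveau $\alpha_n\to\alpha$; sie folgt aus der gleichmäßigen Konvergenz der Verteilungsfunktionen (Satz von Polya) zusammen mit der Stetigkeit der Grenzverteilungsfunktion $\Phi$. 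Die Bestimmung der asymptotischen Verteilung unter der Alternative reduziert sich dank ULAN auf eine Standardanwendung des dritten Lemmas von Le Cam.
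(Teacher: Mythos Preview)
Your proof is correct and rests on the same LAN structure as the paper's argument. The paper organizes the computation slightly differently: it introduces the auxiliary parameter $\vartheta=t\int\widetilde{k}g\,dP_{0}\otimes Q_{0}$, scales the central sequence accordingly, replaces the Neyman--Pearson statistic $S_{n}$ by $\vartheta X_{n}$ via the asymptotic equivalence criterion (Satz~\ref{asympt_aquivalent_testfolge}), and then reads off the power from the ready-made Satz~\ref{asymp-guete}. You instead work directly with $\log L_{n}$ and apply the third Le~Cam lemma yourself. Your route is somewhat more economical, since it avoids the detour through~$\vartheta$ (which enters the paper's calculation only to cancel out again) and does not rely on the black-box Satz~\ref{asymp-guete}; the price is that you must argue the convergence of the critical values $k_{n}$ explicitly, which you handle correctly via Polya's theorem. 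Both arguments ultimately reduce to the same two ingredients: the LAN expansion of the product experiment and the shift in distribution under contiguous alternatives.
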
\begin{proof} Der Parameter $\vartheta$ wird durch
\[
\vartheta=\lim\limits_{n\rightarrow\infty}\frac{k\left(P_{t_{n}}\otimes Q_{t_{n}}\right)-k\left(P_{0}\otimes Q_{0}\right)}{n^{-\frac{1}{2}}}
\]
definiert. Nach Voraussetzung gilt dann $\vartheta>0.$ Wie in Satz
\ref{asymp_unverf_testfolge} erhält man 
\begin{equation}
\vartheta=t\int\widetilde{k}g\,dP_{0}\otimes Q_{0}.\label{vartheta_wert}
\end{equation}
 Die LAN-Eigenschaften der Produktexperimente implizieren
\[
\log\left(\frac{dP_{t_{n}}^{n_{1}}\otimes Q_{t_{n}}^{n_{2}}}{dP_{0}^{n_{1}}\otimes Q_{0}^{n_{2}}}\right)=\vartheta X_{n}-\frac{1}{2}\vartheta\sigma_{2}^{2}+R_{n,\vartheta},
\]
wobei 
\[
X_{n}=\sum_{i=1}^{n_{1}}\frac{t_{n}}{\vartheta}g_{1}\circ\pi_{i}+\sum_{i=1}^{n_{2}}\frac{t_{n}}{\vartheta}g_{2}\circ\pi_{i+n_{1}}
\]
und $\sigma_{2}^{2}=\lim\limits_{n\rightarrow\infty}\int X_{n}^{2}dP_{0}^{n_{1}}\otimes Q_{0}^{n_{2}}$
sind. Für $\sigma_{2}^{2}$ ergibt sich
\begin{eqnarray}
\sigma_{2}^{2} & = & \lim_{n\rightarrow\infty}\left(\frac{t_{n}^{2}}{\vartheta^{2}}n_{1}\int g_{1}^{2}\,dP_{0}+\frac{t_{n}^{2}}{\vartheta^{2}}n_{2}\int g_{2}^{2}\,dQ_{0}\right)\nonumber \\
 & = & \frac{1}{\vartheta^{2}}\int g_{1}^{2}\,dP_{0}\lim_{n\rightarrow\infty}nt_{n}^{2}\frac{n_{1}}{n}+\frac{1}{\vartheta^{2}}\int g_{2}^{2}\,dQ_{0}\lim_{n\rightarrow\infty}nt_{n}^{2}\frac{n_{2}}{n}\nonumber \\
 & = & \frac{1}{\vartheta^{2}}t^{2}\left(1-d\right)\int g_{1}^{2}\,dP_{0}+\frac{1}{\vartheta^{2}}t^{2}d\int g_{2}^{2}\,dQ_{0}\nonumber \\
 & = & \frac{1}{\vartheta^{2}}t^{2}\sigma{{}^2}\nonumber \\
 & = & \sigma{{}^2}\left(\int\widetilde{k}g\,dP_{0}\otimes Q_{0}\right)^{-2}.\label{sigma_2_wert}
\end{eqnarray}
 Sei $S_{n}:=\log\left(\frac{dP_{t_{n}}^{n_{1}}\otimes Q_{t_{n}}^{n_{2}}}{dP_{0}^{n_{1}}\otimes Q_{0}^{n_{2}}}\right)+\frac{1}{2}\vartheta\sigma_{2}^{2}$
die Teststatistik des Neyman-Pearson Tests $\varphi_{n}.$ Für jedes
$\varepsilon>0$ gilt dann 
\begin{eqnarray*}
\lim_{n\rightarrow\infty}P_{0}^{n_{1}}\otimes Q_{0}^{n_{2}}\left(\left|\vartheta X_{n}-S_{n}\right|>\varepsilon\right) & = & \lim_{n\rightarrow\infty}P_{0}^{n_{1}}\otimes Q_{0}^{n_{2}}\left(\left|R_{n,\vartheta}\right|>\varepsilon\right)\\
 & = & 0.
\end{eqnarray*}
 Den Test $\widetilde{\varphi}_{n}$ erhält man aus dem Neyman-Pearson
Test $\varphi_{n}$, indem die Teststatistik $S_{n}$ durch $\vartheta X_{n}$
ersetzt wird. Nach Satz \ref{asympt_aquivalent_testfolge} gilt dann
\[
\lim_{n\rightarrow\infty}\int\left|\varphi_{n}-\widetilde{\varphi}_{n}\right|dP_{0}^{n_{1}}\otimes Q_{0}^{n_{2}}=0.
\]
 Nach Bemerkung \ref{asympt_banachb_equiv_tests} ergibt sich
\[
\lim_{n\rightarrow\infty}\int\left|\varphi_{n}-\widetilde{\varphi}_{n}\right|dP_{t_{n}}^{n_{1}}\otimes Q_{t_{n}}^{n_{2}}=0.
\]
 Hieraus folgt
\[
\lim_{n\rightarrow\infty}\int\widetilde{\varphi}_{n}\,dP_{0}^{n_{1}}\otimes Q_{0}^{n_{2}}=\lim_{n\rightarrow\infty}\int\varphi_{n}dP_{0}^{n_{1}}\otimes Q_{0}^{n_{2}}=\alpha
\]
und
\[
\lim_{n\rightarrow\infty}\int\widetilde{\varphi}_{n}\,dP_{t_{n}}^{n_{1}}\otimes Q_{t_{n}}^{n_{2}}=\lim_{n\rightarrow\infty}\int\varphi_{n}dP_{t_{n}}^{n_{1}}\otimes Q_{t_{n}}^{n_{2}}.
\]
Somit reicht es, den Grenzwert $\lim_{n\rightarrow\infty}\int\widetilde{\varphi}_{n}\,dP_{t_{n}}^{n_{1}}\otimes Q_{t_{n}}^{n_{2}}$
zu bestimmen. Die gemeinsame asymptotische Verteilung von $\vartheta X_{n}$
und $X_{n}$ ergibt sich unmittelbar als
\begin{eqnarray*}
\mathcal{L}\left(\left(\vartheta X_{n},X_{n}\right)^{t}\left|P_{n,0}\right.\right) & \rightarrow & N\left(\left(\begin{array}{c}
0\\
0
\end{array}\right),\left(\begin{array}{cc}
\vartheta^{2}\sigma_{2}^{2} & \vartheta\sigma_{2}^{2}\\
\vartheta\sigma_{2}^{2} & \sigma_{2}^{2}
\end{array}\right)\right)
\end{eqnarray*}
für $n\rightarrow\infty.$ Mit Satz \ref{asymp-guete} erhält man
nun
\begin{eqnarray*}
\lim_{n\rightarrow\infty}\int\widetilde{\varphi}_{n}\,dP_{t_{n}}^{n_{1}}\otimes Q_{t_{n}}^{n_{2}} & = & \Phi\left(\vartheta\frac{\vartheta\sigma_{2}^{2}}{\vartheta\sigma_{2}}-u_{1-\alpha}\right)\\
 & = & \Phi\left(\vartheta\sigma_{2}-u_{1-\alpha}\right)\\
 & = & \Phi\left(t\sigma-u_{1-\alpha}\right)
\end{eqnarray*}
mit Hilfe von (\ref{vartheta_wert}) und (\ref{sigma_2_wert}).\end{proof}
\begin{defi}Die Menge $K^{+}\left(P_{0}\otimes Q_{0},\mathcal{P}\otimes\mathcal{Q}\right)\subset K\left(P_{0}\otimes Q_{0},\mathcal{P}\otimes\mathcal{Q}\right)$
wird definiert durch 
\[
K^{+}\left(P_{0}\otimes Q_{0},\mathcal{P}\otimes\mathcal{Q}\right):=\left\{ g\in K\left(P_{0}\otimes Q_{0},\mathcal{P}\otimes\mathcal{Q}\right):\int g\widetilde{k}\,dP_{0}\otimes Q_{0}>0\right\} .
\]
 \end{defi}\begin{satz}[Maximin-$\alpha$-Testfolge]\label{maximin-testfolge}Der
Kegel $K\left(P_{0}\otimes Q_{0},\mathcal{P}\otimes\mathcal{Q}\right)$
sei konvex. Es gelten die Voraussetzungen aus Satz \ref{asymp_unverf_testfolge}.
Dann ist die Testfolge $\left(\psi_{n}\right)_{n\in\mathbb{N}}$ aus
(\ref{einseit_asym_opt_testfolge}) eine asymptotische Maximin-$\alpha$-Testfolge
für die Folge $\left(P_{0}^{n_{1}}\otimes Q_{0}^{n_{2}}\right)_{n\in\mathbb{N}}$
der Hypothesen gegen die impliziten Alternativen $\left(P_{t_{n}}^{n_{1}}\otimes Q_{t_{n}}^{n_{2}}\right)_{n\in\mathbb{N}}\in\mathcal{K}_{1}$
mit einem fest gewählten impliziten Parameter $\vartheta>0$, d.h.
es gilt
\[
\inf_{\mathcal{K}_{1}}\,\lim_{n\rightarrow\infty}\int\psi_{n}\,dP_{t_{n}}^{n_{1}}\otimes Q_{t_{n}}^{n_{2}}=\max_{\phi_{n}}\,\inf_{\mathcal{K}_{1}}\,\limsup_{n\rightarrow\infty}\int\phi_{n}\,dP_{t_{n}}^{n_{1}}\otimes Q_{t_{n}}^{n_{2}}.
\]
Das Infimum wird über alle impliziten Alternativen $\left(P_{t_{n}}^{n_{1}}\otimes Q_{t_{n}}^{n_{2}}\right)_{n\in\mathbb{N}}\in\mathcal{K}_{1}$
mit 
\begin{equation}
\lim\limits_{n\rightarrow\infty}\frac{k\left(P_{t_{n}}\otimes Q_{t_{n}}\right)-k\left(P_{0}\otimes Q_{0}\right)}{n^{-\frac{1}{2}}}=\vartheta\label{stern_gl}
\end{equation}
 gebildet. Das Maximum wird über alle asymptotisch $\{0\}$-$\alpha$-ähnlichen
Testfolgen gebildet, vgl. Definition \ref{asymp_0_aplha_anlich_testfolge}.
\end{satz}\begin{proof}Sei $\left(P_{t_{n}}^{n_{1}}\otimes Q_{t_{n}}^{n_{2}}\right)_{n\in\mathbb{N}}\in\mathcal{K}_{1}$
eine implizite Alternative. Dann existiert eine zugehörige $L_{2}(0)$-differenzierbare
Kurve $t\mapsto P_{t}\otimes Q_{t}$ in $\mathcal{P}\otimes\mathcal{Q}$
mit Tangente $g\in L_{2}^{(0)}\left(P_{0}\otimes Q_{0}\right).$ Die
Tangente $g$ besitzt nach Satz \ref{L2Produkt} die Darstellung $g=g_{1}\circ\pi_{1}+g_{2}\circ\pi_{2}$
mit $g_{1}\in L_{2}^{(0)}\left(P_{0}\right)$ und $g_{2}\in L_{2}^{(0)}\left(Q_{0}\right)$.
Wie in Satz \ref{asymp_unverf_testfolge} ergibt sich 
\[
\vartheta=\lim\limits_{n\rightarrow\infty}\frac{k\left(P_{t_{n}}\otimes Q_{t_{n}}\right)-k\left(P_{0}\otimes Q_{0}\right)}{n^{-\frac{1}{2}}}=t\int\widetilde{k}g\,dP_{0}\otimes Q_{0},
\]
wobei $t=\lim\limits_{n\rightarrow\infty}\frac{t_{n}}{n^{-\frac{1}{2}}}$
ist. Sei $\left(\phi_{n}\right)_{n\in\mathbb{N}}$ eine asymptotisch
$\{0\}$-$\alpha$-ähnliche Testfolge. Es bezeichne $\varphi_{n}$
den Neyman-Pearson Test für $P_{0}^{n}\otimes Q_{0}^{n}$ gegen $P_{t_{n}}^{n_{1}}\otimes Q_{t_{n}}^{n_{2}}$
zum Niveau $\max\left(\alpha,\int\phi_{n}\,dP_{0}^{n}\otimes Q_{0}^{n}\right).$
Wegen des Neyman-Pearson Lemma (vgl. \cite{Witting:1985}, Seite 196,
Satz 2.7) folgt dann
\[
\int\phi_{n}\,dP_{t_{n}}^{n_{1}}\otimes Q_{t_{n}}^{n_{2}}\leq\int\varphi_{n}\,dP_{t_{n}}^{n_{1}}\otimes Q_{t_{n}}^{n_{2}}.
\]
Nach Lemma \ref{asymp_neymann_pearson_einfach} ergibt sich 
\begin{eqnarray*}
\lim_{n\rightarrow\infty}\int\varphi_{n}\,dP_{t_{n}}^{n_{1}}\otimes Q_{t_{n}}^{n_{2}} & = & \Phi\left(t\left(\left(1-d\right)\int g_{1}^{2}\,dP_{0}+d\int g_{2}^{2}\,dQ_{0}\right)^{\frac{1}{2}}-u_{1-\alpha}\right)\\
 & = & \Phi\left(\vartheta\frac{\left(\left(1-d\right)\int g_{1}^{2}\,dP_{0}+d\int g_{2}^{2}\,dQ_{0}\right)^{\frac{1}{2}}}{\int\widetilde{k}g\,dP_{0}\otimes Q_{0}}-u_{1-\alpha}\right).
\end{eqnarray*}
 Hieraus folgt
\begin{eqnarray*}
 &  & \limsup_{n\rightarrow\infty}\int\phi_{n}\,dP_{t_{n}}^{n_{1}}\otimes Q_{t_{n}}^{n_{2}}\\
 & \leq & \lim_{n\rightarrow\infty}\int\varphi_{n}\,dP_{t_{n}}^{n_{1}}\otimes Q_{t_{n}}^{n_{2}}\\
 & = & \Phi\left(\vartheta\frac{\left(\left(1-d\right)\int g_{1}^{2}\,dP_{0}+d\int g_{2}^{2}\,dQ_{0}\right)^{\frac{1}{2}}}{\int\widetilde{k}g\,dP_{0}\otimes Q_{0}}-u_{1-\alpha}\right).
\end{eqnarray*}
Nun wird das Infimum über alle impliziten Alternativen $\left(P_{t_{n}}^{n_{1}}\otimes Q_{t_{n}}^{n_{2}}\right)_{n\in\mathbb{N}}$
aus $\mathcal{K}_{1}$, die die Bedingung (\ref{stern_gl}) erfüllen,
gebildet. Man erhält dann 
\begin{eqnarray*}
\textrm{} &  & \inf_{\mathcal{K}}\,\limsup_{n\rightarrow\infty}\int\phi_{n}\,dP_{t_{n}}^{n_{1}}\otimes Q_{t_{n}}^{n_{2}}\\
 & \leq & \inf_{g\in K^{+}\left(P_{0}\otimes Q_{0},\mathcal{P}\otimes\mathcal{Q}\right)}\Phi\left(\vartheta\frac{\left(\left(1-d\right)\int g_{1}^{2}\,dP_{0}+d\int g_{2}^{2}\,dQ_{0}\right)^{\frac{1}{2}}}{\int\widetilde{k}g\,dP_{0}\otimes Q_{0}}-u_{1-\alpha}\right).
\end{eqnarray*}
 Die asymptotische Güte 
\begin{equation}
\Phi\left(\vartheta\frac{\left(\left(1-d\right)\int g_{1}^{2}\,dP_{0}+d\int g_{2}^{2}\,dQ_{0}\right)^{\frac{1}{2}}}{\int\widetilde{k}g\,dP_{0}\otimes Q_{0}}-u_{1-\alpha}\right)\label{obere_schranke}
\end{equation}
der Folge der Neyman-Pearson Tests wird mit der asymptotischen Güte
\begin{eqnarray*}
 &  & \lim_{n\rightarrow\infty}\int\psi_{n}\,dP_{t_{n}}^{n_{1}}\otimes Q_{t_{n}}^{n_{2}}\\
 & = & \Phi\left(\vartheta\left(\frac{1}{1-d}\int\widetilde{k}_{1}^{2}\,dP_{0}+\frac{1}{d}\int\widetilde{k}_{2}^{2}\,dQ_{0}\right)^{-\frac{1}{2}}-u_{1-\alpha}\right)
\end{eqnarray*}
der Testfolge $\left(\psi_{n}\right)_{n\in\mathbb{N}}$ verglichen.
Mit der Cauchy-Schwarzschen Ungleichung erhält man zunächst
\begin{eqnarray*}
 &  & \int\widetilde{k}g\,dP_{0}\otimes Q_{0}\\
 & = & \int\left(\frac{1}{\sqrt{1-d}}\widetilde{k}_{1}+\frac{1}{\sqrt{d}}\widetilde{k}_{2}\right)\left(\sqrt{\left(1-d\right)}g_{1}+\sqrt{d}g_{2}\right)\,dP_{0}\otimes Q_{0}\\
 & \leq & \left(\int\left(\frac{1}{\sqrt{1-d}}\widetilde{k}_{1}+\frac{1}{\sqrt{d}}\widetilde{k}_{2}\right)^{2}dP_{0}\otimes Q_{0}\int\left(\sqrt{\left(1-d\right)}g_{1}+\sqrt{d}g_{2}\right)^{2}dP_{0}\otimes Q_{0}\right)^{\frac{1}{2}}\\
 & = & \left(\left(1-d\right)\int g_{1}^{2}\,dP_{o}+d\int g_{2}^{2}\,dQ_{0}\right)^{\frac{1}{2}}\left(\frac{1}{1-d}\int\widetilde{k}_{1}^{2}\,dP_{0}+\frac{1}{d}\int\widetilde{k}_{2}^{2}\,dQ_{0}\right)^{\frac{1}{2}}.
\end{eqnarray*}
Hieraus folgt 
\begin{eqnarray}
\textrm{} &  & \vartheta\left(\frac{1}{1-d}\int\widetilde{k}_{1}^{2}\,dP_{0}+\frac{1}{d}\int\widetilde{k}_{2}^{2}\,dQ_{0}\right)^{-\frac{1}{2}}\nonumber \\
 & \leq & \vartheta\frac{\left(\left(1-d\right)\int g_{1}^{2}\,dP_{o}+d\int g_{2}^{2}\,dQ_{0}\right)^{\frac{1}{2}}}{\int\widetilde{k}g\,dP_{0}\otimes Q_{0}}.\label{min_max_ungl}
\end{eqnarray}
In dieser Ungleichung erhält man das Gleichheitszeichen, falls auf
beiden Seiten das Infimum über $g\in K^{+}\left(P_{0}\otimes Q_{0},\mathcal{P}\otimes\mathcal{Q}\right)$
gebildet wird. Der Tangentenkegel $K\left(P_{0}\otimes Q_{0},\mathcal{P}\otimes\mathcal{Q}\right)$
ist bereits ein Vektorraum, weil er nach Voraussetzung konvex ist.
Der Vektorraum $K\left(P_{0}\otimes Q_{0},\mathcal{P}\otimes\mathcal{Q}\right)$
liegt deshalb dicht in $T\left(P_{0}\otimes Q_{0},\mathcal{P}\otimes\mathcal{Q}\right)$
nach Definition \ref{Tangentialraum}. Wegen $\widetilde{k}\in T\left(P_{0}\otimes Q_{0},\mathcal{P}\otimes\mathcal{Q}\right)$
existiert also eine Folge $\left(g_{n}\right)_{n\in\mathbb{N}}\subset K\left(P_{0}\otimes Q_{0},\mathcal{P}\otimes\mathcal{Q}\right)$
mit 
\[
\lim_{n\rightarrow\infty}\int\left(\widetilde{k}-g_{n}\right)^{2}dP_{0}\otimes Q_{0}=0.
\]
Die Tangente $g_{n}$ besitzt nach Satz \ref{produkt_tangentialraum}
die Darstellung $g_{n}=g_{1n}\circ\pi_{1}+g_{2n}\circ\pi_{2}$ mit
$g_{1n}\in K\left(P_{0},\mathcal{P}\right)$ und $g_{2n}\in K\left(Q_{0},\mathcal{Q}\right)$.
Man erhält somit
\begin{eqnarray*}
0 & = & \lim_{n\rightarrow\infty}\int\left(\widetilde{k}-g_{n}\right)^{2}dP_{0}\otimes Q_{0}\\
 & = & \lim_{n\rightarrow\infty}\int\left(\widetilde{k}_{1}-g_{1n}\right)^{2}dP_{0}+\lim_{n\rightarrow\infty}\int\left(\widetilde{k}_{2}-g_{2n}\right)^{2}dQ_{0}.
\end{eqnarray*}
Die Tangente $h_{n}$ ist durch
\[
h_{n}=\frac{1}{1-d}g_{1n}\circ\pi_{1}+\frac{1}{d}g_{2n}\circ\pi_{2}
\]
für alle $n\in\mathbb{N}$ definiert. Es gilt $h_{n}\in K\left(P_{0}\otimes Q_{0},\mathcal{P}\otimes\mathcal{Q}\right)$
für alle $n\in\mathbb{N}$ nach Satz \ref{produkt_tangentialraum}.
Wegen $\int\widetilde{k}_{1}^{2}\,dP_{0}+\int\widetilde{k}_{2}^{2}\,dQ_{0}>0$
ergibt sich
\begin{eqnarray*}
\lim_{n\rightarrow\infty}\int h_{n}\widetilde{k}\,dP_{0}\otimes Q_{0} & = & \frac{1}{1-d}\lim_{n\rightarrow\infty}\int\widetilde{k}_{1}g_{1n}dP_{0}+\frac{1}{d}\lim_{n\rightarrow\infty}\int\widetilde{k}_{2}g_{2n}dQ_{0}\\
 & = & \frac{1}{1-d}\int\widetilde{k}_{1}^{2}\,dP_{0}+\frac{1}{d}\int\widetilde{k}_{2}^{2}\,dQ_{0}\\
 & > & 0.
\end{eqnarray*}
Hieraus folgt $\int h_{n}\widetilde{k}\,dP_{0}\otimes Q_{0}>0$ und
somit $h_{n}\in K^{+}\left(P_{0}\otimes Q_{0},\mathcal{P}\otimes\mathcal{Q}\right)$
für hinreichend große $n\in\mathbb{N}$. Die obere Schranke
\[
\inf_{g\in K^{+}\left(P_{0}\otimes Q_{0},\mathcal{P}\otimes\mathcal{Q}\right)}\Phi\left(\vartheta\frac{\left(\left(1-d\right)\int g_{1}^{2}\,dP_{0}+d\int g_{2}^{2}\,dQ_{0}\right)^{\frac{1}{2}}}{\int\widetilde{k}g\,dP_{0}\otimes Q_{0}}-u_{1-\alpha}\right)
\]
kann nun exakt berechnet werden. Setze $h_{1n}=\frac{1}{1-d}g_{1n}$
und $h_{2n}=\frac{1}{d}g_{2n}.$ Man erhält einerseits
\begin{eqnarray*}
\textrm{} &  & \inf_{g\in K^{+}\left(P_{0}\otimes Q_{0},\mathcal{P}\otimes\mathcal{Q}\right)}\Phi\left(\vartheta\frac{\left(\left(1-d\right)\int g_{1}^{2}\,dP_{0}+d\int g_{2}^{2}\,dQ_{0}\right)^{\frac{1}{2}}}{\int\widetilde{k}g\,dP_{0}\otimes Q_{0}}-u_{1-\alpha}\right)\\
 & \leq & \lim_{n\rightarrow\infty}\Phi\left(\vartheta\frac{\left(\left(1-d\right)\int h_{1n}^{2}\,dP_{0}+d\int h_{2n}^{2}\,dQ_{0}\right)^{\frac{1}{2}}}{\int\widetilde{k}h_{n}dP_{0}\otimes Q_{0}}-u_{1-\alpha}\right)\\
 & = & \lim_{n\rightarrow\infty}\Phi\left(\frac{\left(\frac{1}{1-d}\int g_{1n}^{2}\,dP_{0}+\frac{1}{d}\int g_{2n}^{2}\,dQ_{0}\right)^{\frac{1}{2}}}{\frac{1}{1-d}\int g_{1n}\widetilde{k}_{1}\,dP_{0}+\frac{1}{d}\int g_{2n}\widetilde{k}_{2}\,dQ_{0}}-u_{1-\alpha}\right)\\
 & = & \Phi\left(\vartheta\frac{\left(\frac{1}{1-d}\int\widetilde{k}_{1}^{2}\,dP_{0}+\frac{1}{d}\int\widetilde{k}_{2}^{2}\,dQ_{0}\right)^{\frac{1}{2}}}{\frac{1}{1-d}\int\widetilde{k}_{1}^{2}\,dP_{0}+\frac{1}{d}\int\widetilde{k}_{2}^{2}\,dQ_{0}}-u_{1-\alpha}\right)\\
 & = & \Phi\left(\vartheta\left(\frac{1}{1-d}\int\widetilde{k}_{1}^{2}\,dP_{0}+\frac{1}{d}\int\widetilde{k}_{2}^{2}\,dQ_{0}\right)^{-\frac{1}{2}}-u_{1-\alpha}\right),
\end{eqnarray*}
andererseits gilt
\begin{eqnarray*}
\textrm{} &  & \inf_{g\in K^{+}\left(P_{0}\otimes Q_{0},\mathcal{P}\otimes\mathcal{Q}\right)}\Phi\left(\vartheta\frac{\left(\left(1-d\right)\int g_{1}^{2}\,dP_{o}+d\int g_{2}^{2}\,dQ_{0}\right)^{\frac{1}{2}}}{\int\widetilde{k}g\,dP_{0}\otimes Q_{0}}-u_{1-\alpha}\right)\\
 & \geq & \Phi\left(\vartheta\left(\frac{1}{1-d}\int\widetilde{k}_{1}^{2}\,dP_{0}+\frac{1}{d}\int\widetilde{k}_{2}^{2}\,dQ_{0}\right)^{-\frac{1}{2}}-u_{1-\alpha}\right)
\end{eqnarray*}
wegen der Ungleichung (\ref{min_max_ungl}). Somit ist alles bewiesen.
\end{proof}\begin{bem}Die zum Satz \ref{maximin-testfolge} analoge
Aussage für die Einstichprobenprobleme findet man in \cite{Janssen:1999a}
und \cite{Janssen:1998}. Der wesentliche Unterschied zu \cite{Janssen:1999a}
besteht darin, dass sowohl die Testfolgen als auch die impliziten
Alternativen bei den Zweistichprobenproblemen von denen bei Einstichprobenproblemen
abweichen. \end{bem}\begin{bem}[Optimierung des Parameters $d$]Der
Parameter $d$ wird meistens durch die praktischen Gegebenheiten bestimmt.
Man kann sich allerdings die Frage stellen, wie dieser Parameter am
günstigsten gewählt werden kann. Das führt zum Optimierungsproblem
für die Funktion
\begin{eqnarray*}
g:\left(0,1\right) & \rightarrow & \mathbb{R},\\
s & \mapsto & \Phi\left(\vartheta\left(\frac{1}{1-s}\int\widetilde{k}_{1}^{2}\,dP_{0}+\frac{1}{s}\int\widetilde{k}_{2}^{2}\,dQ_{0}\right)^{-\frac{1}{2}}-u_{1-\alpha}\right),
\end{eqnarray*}
 wobei $\vartheta\in\mathbb{R}\setminus\left\{ 0\right\} $ ist. Da
$\Phi$ streng isoton ist, reicht es, das Argument
\begin{eqnarray*}
f:\left(0,1\right) & \rightarrow & \mathbb{R},\\
s & \mapsto & \vartheta\left(\frac{1}{1-s}\int\widetilde{k}_{1}^{2}\,dP_{0}+\frac{1}{s}\int\widetilde{k}_{2}^{2}\,dQ_{0}\right)^{-\frac{1}{2}}-u_{1-\alpha}
\end{eqnarray*}
 zu maximieren. Es seien $\int\widetilde{k}_{1}^{2}\,dP_{0}\neq0$
und $\int\widetilde{k}_{2}^{2}\,dQ_{0}\neq0$. Man erhält zunächst
\begin{eqnarray*}
\frac{d}{ds}f\left(s\right) & = & -\frac{1}{2}\vartheta h(s)\left(\frac{1}{\left(1-s\right)^{2}}\int\widetilde{k}_{1}^{2}\,dP_{0}-\frac{1}{s^{2}}\int\widetilde{k}_{2}^{2}\,dQ_{0}\right),
\end{eqnarray*}
wobei 
\[
h(s):=\left(\frac{1}{1-s}\int\widetilde{k}_{1}^{2}\,dP_{0}+\frac{1}{s}\int\widetilde{k}_{2}^{2}\,dQ_{0}\right)^{-\frac{3}{2}}
\]
ist. Nun ist die Gleichung $\frac{d}{ds}f(s)=0$ zu lösen. Es ist
leicht zu sehen, dass $h(s)>0$ für alle $s\in\left(0,1\right)$ gilt.
Man erhält somit für alle $s\in\left(0,1\right)$ folgende Äquivalenzen:
\begin{eqnarray*}
\frac{d}{ds}f(s)=0 & \Leftrightarrow & -\frac{1}{2}\vartheta h(s)\left(\frac{1}{\left(1-s\right)^{2}}\int\widetilde{k}_{1}^{2}\,dP_{0}-\frac{1}{s^{2}}\int\widetilde{k}_{2}^{2}\,dQ_{0}\right)=0\\
 & \Leftrightarrow & \frac{1}{\left(1-s\right)^{2}}\int\widetilde{k}_{1}^{2}\,dP_{0}-\frac{1}{s^{2}}\int\widetilde{k}_{2}^{2}\,dQ_{0}=0\\
 & \Leftrightarrow & \frac{s^{2}}{\left(1-s\right)^{2}}=\frac{\int\widetilde{k}_{2}^{2}\,dQ_{0}}{\int\widetilde{k}_{1}^{2}\,dP_{0}}\\
 & \Leftrightarrow & \frac{s}{1-s}=\pm\sqrt{\frac{\int\widetilde{k}_{2}^{2}\,dQ_{0}}{\int\widetilde{k}_{1}^{2}\,dP_{0}}}.
\end{eqnarray*}
Sei 
\[
p:=\sqrt{\frac{\int\widetilde{k}_{2}^{2}\,dQ_{0}}{\int\widetilde{k}_{1}^{2}\,dP_{0}}}=\frac{\left\Vert \widetilde{k}_{2}\right\Vert _{L_{2}\left(Q_{0}\right)}}{\left\Vert \widetilde{k}_{1}\right\Vert _{L_{2}\left(P_{0}\right)}}.
\]
Es gilt $p>0$ nach Voraussetzung. Die Lösung 
\[
\frac{s}{1-s}=-p
\]
wird wegen $\frac{s}{1-s}>0$ für $s\in\left(0,1\right)$ verworfen.
Aus $\frac{s}{1-s}=p$ folgt $s=\frac{p}{1+p}$. Daher existiert eine
eindeutige Lösung $\frac{p}{1+p}\in\left(0,1\right)$ der Gleichung
$\frac{d}{ds}f(s)=0$ in dem Intervall $\left(0,1\right)$. Für eine
Optimalitätsaussage muss noch das Vorzeichenverhalten der Ableitung
$\frac{d}{ds}f(s)$ in dem Intervall $\left(0,1\right)$ untersucht
werden. Für $s\in\left(0,\frac{p}{1+p}\right)$ ergibt sich 
\begin{eqnarray*}
\frac{d}{ds}f(s) & = & -\frac{1}{2}\vartheta h(s)\left(\frac{1}{\left(1-s\right)^{2}}\int\widetilde{k}_{1}^{2}\,dP_{0}-\frac{1}{s^{2}}\int\widetilde{k}_{2}^{2}\,dQ_{0}\right)\\
 & = & -\frac{1}{2}\vartheta h(s)\frac{1}{\left(1-s\right)^{2}}\frac{1}{s^{2}}\int\widetilde{k}_{1}^{2}\,dP_{0}\left(s^{2}-\left(1-s\right)^{2}\frac{\int\widetilde{k}_{2}^{2}\,dQ_{0}}{\int\widetilde{k}_{1}^{2}\,dP_{0}}\right)\\
 & = & -\frac{1}{2}\vartheta h(s)\frac{1}{\left(1-s\right)^{2}}\frac{1}{s^{2}}\int\widetilde{k}_{1}^{2}\,dP_{0}\left(s^{2}-\left(1-s\right)^{2}p^{2}\right)\\
 &  & -\frac{1}{2}\vartheta h(s)\frac{1}{\left(1-s\right)^{2}}\frac{1}{s^{2}}\int\widetilde{k}_{1}^{2}\,dP_{0}\left(s-p+sp\right)\left(s+\left(1-s\right)p\right)\\
 & = & -\frac{1}{2}\vartheta\underbrace{h(s)\frac{1}{\left(1-s\right)^{2}}\frac{1}{s^{2}}\int\widetilde{k}_{1}^{2}\,dP_{0}}_{>0}\underbrace{\left(\underbrace{s\left(1+p\right)}_{<p}-p\right)}_{<0}\underbrace{\left(s+\left(1-s\right)p\right)}_{>0}.
\end{eqnarray*}
Für $s\in\left(\frac{p}{1+p},1\right)$ erhält man analog 
\begin{eqnarray*}
\frac{d}{ds}f(s) & =-\frac{1}{2}\vartheta & \underbrace{h(s)\frac{1}{\left(1-s\right)^{2}}\frac{1}{s^{2}}\int\widetilde{k}_{1}^{2}\,dP_{0}}_{>0}\underbrace{\left(\underbrace{s\left(1+p\right)}_{>p}-p\right)}_{>0}\underbrace{\left(s+\left(1-s\right)p\right)}_{>0}.
\end{eqnarray*}
Das Vorzeichenverhalten der Ableitung $\dot{f}(s):=\frac{d}{ds}f(s)$
und die Bedeutung der Stelle $\frac{p}{1+p}$ werden nun in der nachfolgenden
Tabelle dargestellt:\\

\begin{tabular}{|l|c|c|c|}
\hline 
 & $s\in\left(0,\frac{p}{1+p}\right)$ & $s\in\left(\frac{p}{1+p},1\right)$ & $\frac{p}{1+p}$\tabularnewline
\hline 
$\vartheta>0$ & $\dot{f}(s)>0$ & $\dot{f}(s)<0$ & globale Maximumstelle\tabularnewline
\hline 
$\vartheta<0$ & $\dot{f}(s)<0$ & $\dot{f}(s)>0$ & globale Minimumstelle\tabularnewline
\hline 
\end{tabular}\\
\\
\\
Der Wert 
\begin{equation}
d_{\mbox{opt}}:=\frac{p}{1+p}=\frac{\left\Vert \widetilde{k}_{2}\right\Vert _{L_{2}\left(Q_{0}\right)}}{\left\Vert \widetilde{k}_{1}\right\Vert _{L_{2}\left(P_{0}\right)}+\left\Vert \widetilde{k}_{2}\right\Vert _{L_{2}\left(Q_{0}\right)}}\label{d_opt}
\end{equation}
 des Parameters $d$ optimiert die asymptotische Gütefunktion (\ref{asyp_gutefunktion})
für alle $\vartheta\in\mathbb{R}\setminus\left\{ 0\right\} $ gleichmäßig.
Es ist bemerkenswert, dass $d_{\mbox{opt}}$ an der Stelle $P_{0}\otimes Q_{0}$
nur von den Werten $\left\Vert \widetilde{k}_{1}\right\Vert _{L_{2}\left(P_{0}\right)}$
und $\left\Vert \widetilde{k}_{2}\right\Vert _{L_{2}\left(Q_{0}\right)}$
abhängt. \end{bem}

\section{Asymptotische Eigenschaften der zweiseitigen Tests\label{sec:zw:twsts}}

Es werden zunächst die Hilfsmittel vorgestellt, die zur asymptotischen
Behandlung der zweiseitigen Tests benötigt werden. Es seien $\Theta_{n}\subset\mathbb{R}$
eine Folge von Mengen mit $0\in\Theta_{n}$ für alle $n\in\mathbb{N}$
und $\Theta_{n}\uparrow\Theta$ für $n\rightarrow\infty$. Sei $E_{n}=\left(\Omega_{n},\mathcal{A}_{n},\left\{ P_{n,\vartheta}:\vartheta\in\Theta_{n}\right\} \right)$
eine Folge von Experimenten, die LAN-Bedingung mit einer zentralen
Folge $X_{n}$ erfüllt. Es wird das zweiseitige Testproblem $H=\left\{ \vartheta\in\Theta:\vartheta=0\right\} $
gegen $K=\left\{ \vartheta\in\Theta:\vartheta\neq0\right\} $ betrachtet.
Die gebräuchlichen Tests, die auf einer Teststatistik $S_{n}$ beruhen,
sind dann von der Gestalt
\begin{equation}
\varphi_{n}=\left\{ \begin{array}{cccc}
1 &  & >\\
\lambda_{n} & \left|S_{n}\right| & = & c_{n}\\
0 &  & <
\end{array}\right.\label{zweiseit_testfolge}
\end{equation}
mit $\lambda_{n}\in\left[0,1\right]$ und $c_{n}\geq0,$ die durch
die Nebenbedingungen 
\begin{equation}
\lim_{n\rightarrow\infty}E_{P_{n,0}}\left(\varphi_{n}\right)=\alpha\label{asymp_niveau_alpha}
\end{equation}
und 
\begin{equation}
\lim_{n\rightarrow\infty}E_{P_{n,\vartheta}}\left(\varphi_{n}\right)\geq\alpha\label{zweiseitig_asymp_unverf}
\end{equation}
festgelegt sind.\begin{satz}[asymptotische G\"utefunktion]\label{asymp-guete_zw}Eine
Testfolge $\left(\varphi_{n}\right)_{n\in\mathbb{N}}$ sei von der
Form (\ref{zweiseit_testfolge}) und erfülle die Bedingung (\ref{asymp_niveau_alpha}).
Außerdem sei
\begin{eqnarray*}
\mathcal{L}\left(\left(S_{n},X_{n}\right)^{t}\left|P_{n,0}\right.\right) & \rightarrow & N\left(\left(\begin{array}{c}
0\\
0
\end{array}\right),\left(\begin{array}{cc}
\sigma_{1}^{2} & \sigma_{12}\\
\sigma_{12} & \sigma_{2}^{2}
\end{array}\right)\right)
\end{eqnarray*}
für $n\rightarrow\infty.$ Für jedes $\vartheta\in\Theta$ gilt dann
\[
\lim_{n\rightarrow\infty}E_{P_{n,\vartheta}}\left(\varphi_{n}\right)=\Phi\left(\frac{\vartheta\sigma_{12}}{\sigma_{1}}-u_{1-\frac{\alpha}{2}}\right)+\Phi\left(-\frac{\vartheta\sigma_{12}}{\sigma_{1}}-u_{1-\frac{\alpha}{2}}\right).
\]
 \end{satz}\begin{proof}Setze $\Lambda_{n}=\log\frac{dP_{n,\vartheta}}{dP_{n,0}}.$
Die LAN-Bedingung impliziert 
\[
\Lambda_{n}=\vartheta X_{n}-\frac{1}{2}\vartheta^{2}\sigma_{2}^{2}+R_{n,\vartheta}.
\]
Nach der Voraussetzung ergibt sich nun 
\begin{eqnarray*}
\mathcal{L}\left(\left(S_{n},\Lambda_{n}\right)^{t}\left|P_{n,0}\right.\right) & \rightarrow & N\left(\left(\begin{array}{c}
0\\
-\frac{1}{2}\vartheta^{2}\sigma_{2}^{2}
\end{array}\right),\left(\begin{array}{cc}
\sigma_{1}^{2} & \vartheta\sigma_{12}\\
\vartheta\sigma_{12} & \vartheta^{2}\sigma_{2}^{2}
\end{array}\right)\right)
\end{eqnarray*}
für $n\rightarrow\infty.$ Man erhält somit 
\begin{eqnarray*}
\alpha & = & \lim_{n\rightarrow\infty}E_{P_{n,0}}\left(\varphi_{n}\right)\\
 & = & \lim_{n\rightarrow\infty}\left(\lambda_{n}P_{n,0}\left(\left|S_{n}\right|=c_{n}\right)+P_{n,0}\left(S_{n}>c_{n}\right)+P_{n,0}\left(S_{n}<-c_{n}\right)\right)\\
 & = & 1-\Phi\left(\frac{\lim\limits_{n\rightarrow\infty}c_{n}}{\sigma_{1}}\right)+\Phi\left(-\frac{\lim\limits_{n\rightarrow\infty}c_{n}}{\sigma_{1}}\right)\\
 & = & 2-2\Phi\left(\frac{\lim\limits_{n\rightarrow\infty}c_{n}}{\sigma_{1}}\right).
\end{eqnarray*}
Hieraus folgt 
\begin{equation}
\lim\limits_{n\rightarrow\infty}c_{n}=\sigma_{1}u_{1-\frac{\alpha}{2}}.\label{krit_wert_kogz}
\end{equation}
 Das dritte Lemma von Le Cam (vgl. \cite{Hajek:1999}, Seite 257)
liefert 
\[
\mathcal{L}\left(\left.S_{n}\right|P_{n,\vartheta}\right)\rightarrow N\left(\vartheta\sigma_{12},\sigma_{1}^{2}\right)
\]
 für $n\rightarrow\infty.$ Mit (\ref{krit_wert_kogz}) erhält man
die Konvergenz
\begin{eqnarray*}
E_{P_{n,\vartheta}}\left(\varphi_{n}\right) & = & \lambda_{n}P_{n,\vartheta}\left(\left|S_{n}\right|=c_{n}\right)+P_{n,\vartheta}\left(S_{n}>c_{n}\right)+P_{n,\vartheta}\left(S_{n}<-c_{n}\right)\\
 & = & \lambda_{n}P_{n,\vartheta}\left(\left|S_{n}\right|=c_{n}\right)+P_{n,\vartheta}\left(\frac{S_{n}-\vartheta\sigma_{12}}{\sigma_{1}}<\frac{-c_{n}-\vartheta\sigma_{12}}{\sigma_{1}}\right)\\
 &  & +P_{n,\vartheta}\left(\frac{S_{n}-\vartheta\sigma_{12}}{\sigma_{1}}>\frac{c_{n}-\vartheta\sigma_{12}}{\sigma_{1}}\right)\\
 & \rightarrow & \Phi\left(\frac{\vartheta\sigma_{12}}{\sigma_{1}}-u_{1-\frac{\alpha}{2}}\right)+\Phi\left(-\frac{\vartheta\sigma_{12}}{\sigma_{1}}-u_{1-\frac{\alpha}{2}}\right)
\end{eqnarray*}
für $n\rightarrow\infty.$ \end{proof}Wir kehren nun zu dem zweiseitigen
Zweistichprobenproblem 
\[
H_{2}=\left\{ P\otimes Q\in\mathcal{P}\otimes\mathcal{Q}:k\left(P\otimes Q\right)=a\right\} 
\]
gegen
\[
K_{2}=\left\{ P\otimes Q\in\mathcal{P}\otimes\mathcal{Q}:k\left(P\otimes Q\right)\neq a\right\} ,
\]
 das beim Testen statistischer Funktionale häufig auftritt, zurück.
Zur Lösung dieses Testproblems wird eine geeignete lokale Parametrisierung
vorgestellt, die analog wie in Abschnitt \ref{sec:Asymptotische-Eigenschaften-der}
begründet werden kann. Ein statistisches Funktional $k:\mathcal{P}\otimes\mathcal{Q}\rightarrow\mathbb{R}$
sei differenzierbar an einer Stelle $P_{0}\otimes Q_{0}\in H_{2}$
mit dem kanonischen Gradienten $\widetilde{k}\in L_{2}^{(0)}\left(P_{0}\otimes Q_{0}\right).$
Außerdem gelte 
\begin{equation}
\int\widetilde{k}^{2}dP_{0}\otimes Q_{0}\neq0.\label{nnf}
\end{equation}
\begin{defi}\label{H_2_impl} Die Menge $\mathcal{H}_{2}$ der impliziten
Hypothesen ist definiert durch 
\[
\mathcal{H}_{2}:=\left\{ \left(P_{t_{n}}\otimes Q_{t_{n}}\right)_{n\in\mathbb{N}}\in\mathcal{F}_{2}:\lim\limits_{n\rightarrow\infty}\sqrt{n}\left(k\left(P_{t_{n}}\otimes Q_{t_{n}}\right)-k\left(P_{0}\otimes Q_{0}\right)\right)=0\right\} .
\]
 Die Menge $\mathcal{K}_{2}$ der impliziten Alternativen ist gegeben
durch 
\[
\mathcal{K}_{2}:=\left\{ \left(P_{t_{n}}\otimes Q_{t_{n}}\right)_{n\in\mathbb{N}}\in\mathcal{F}_{2}:\lim\limits_{n\rightarrow\infty}\sqrt{n}\left(k\left(P_{t_{n}}\otimes Q_{t_{n}}\right)-k\left(P_{0}\otimes Q_{0}\right)\right)\neq0\right\} .
\]
 \end{defi}Die Testfolge
\begin{eqnarray}
\psi_{n} & = & \left\{ \begin{array}{cccc}
1 &  & >\\
 & \left|T_{n}\right| &  & c\\
0 &  & \leq
\end{array}\right.\label{zweiset_asymp_opt_testfolge}
\end{eqnarray}
 eignet sich für das zweiseitige Testproblem $H_{2}$ gegen $K_{2},$
wobei 
\[
T_{n}\left(\omega_{1,1},\ldots,\omega_{1,n_{1}},\omega_{2,1},\ldots,\omega_{2,n_{2}}\right)=\frac{\sqrt{n}}{n_{1}}\sum_{i=1}^{n_{1}}\widetilde{k}_{1}\left(\omega_{1,i}\right)+\frac{\sqrt{n}}{n_{2}}\sum_{j=1}^{n_{2}}\widetilde{k}_{2}\left(\omega_{2,j}\right)
\]
 die Teststatistik wie in (\ref{teststatistik T}) ist. Der kritische
Wert 
\[
c:=u_{1-\frac{\alpha}{2}}\left(\frac{1}{1-d}\int\widetilde{k}_{1}^{2}\,dP_{0}+\frac{1}{d}\int\widetilde{k}_{2}^{2}\,dQ_{0}\right)^{\frac{1}{2}}
\]
ist unabhängig von $n$. Es ist zu beachten, dass 
\[
\left\Vert \widetilde{k}\right\Vert _{L_{2}\left(P_{0}\otimes Q_{0}\right)}^{2}=\int\widetilde{k}_{1}^{2}\,dP_{0}+\int\widetilde{k}_{2}^{2}\,dQ_{0}\neq0
\]
 vorausgesetzt wird.\begin{satz}\label{sgl_zw_test_asymp_gute}Sei
$\left(P_{t_{n}}\otimes Q_{t_{n}}\right)_{n\in\mathbb{N}}\in\mathcal{F}_{2}$.
Der Parameter $\vartheta$ sei durch 
\[
\vartheta:=\lim\limits_{n\rightarrow\infty}\sqrt{n}\left(k\left(P_{t_{n}}\otimes Q_{t_{n}}\right)-k\left(P_{0}\otimes Q_{0}\right)\right)
\]
 gegeben. Außerdem seien $\left(n_{1}\right)_{n\in\mathbb{N}}\subset\mathbb{N}$
und $\left(n_{2}\right)_{n\in\mathbb{N}}\subset\mathbb{N}$ zwei Folgen
mit $n_{1}+n_{2}=n$ für alle $n\in\mathbb{N}$ und $\lim\limits_{n\rightarrow\infty}\frac{n_{2}}{n}=d\in(0,1)$.
Dann gilt
\begin{equation}
\lim_{n\rightarrow\infty}\int\psi_{n}\,dP_{t_{n}}^{n_{1}}\otimes Q_{t_{n}}^{n_{2}}=\Phi\left(\frac{\vartheta}{\sigma_{1}}-u_{1-\frac{\alpha}{2}}\right)+\Phi\left(-\frac{\vartheta}{\sigma_{1}}-u_{1-\frac{\alpha}{2}}\right),\label{sqrt}
\end{equation}
wobei 
\[
\sigma_{1}:=\left(\frac{1}{1-d}\int\widetilde{k}_{1}^{2}\,dP_{0}+\frac{1}{d}\int\widetilde{k}_{2}^{2}\,dQ_{0}\right)^{\frac{1}{2}}.
\]
 \end{satz}\begin{proof}Es wird zuerst gezeigt, dass die Folge $\left(\psi_{n}\right)_{n\in\mathbb{N}}$
die Bedingung (\ref{asymp_niveau_alpha}) erfüllt. Nach Satz \ref{asymp_verhalt_der_T_stat}
ergibt sich
\begin{eqnarray*}
\lim_{n\rightarrow\infty}\int\psi_{n}\,dP_{0}^{n_{1}}\otimes Q_{0}^{n_{2}} & = & \lim_{n\rightarrow\infty}\left(P_{0}^{n_{1}}\otimes Q_{0}^{n_{2}}\left(\left\{ T_{n}>c\right\} \right)+P_{0}^{n_{1}}\otimes Q_{0}^{n_{2}}\left(\left\{ T_{n}<-c\right\} \right)\right)\\
 & = & N\left(0,\sigma_{1}^{2}\right)\left(\left(c,+\infty\right)\right)+N\left(0,\sigma_{1}^{2}\right)\left(\left(-\infty,-c\right)\right)\\
 & = & N\left(0,\sigma_{1}^{2}\right)\left(\left(u_{1-\frac{\alpha}{2}}\sigma_{1},+\infty\right)\right)\\
 &  & +N\left(0,\sigma_{1}^{2}\right)\left(\left(-\infty,-u_{1-\frac{\alpha}{2}}\sigma_{1}\right)\right)\\
 & = & 1-\Phi\left(u_{1-\frac{\alpha}{2}}\right)+\Phi\left(-u_{1-\frac{\alpha}{2}}\right)\\
 & = & \alpha.
\end{eqnarray*}
Für den Parameter $\vartheta$ erhält man 
\begin{eqnarray*}
\vartheta & = & \lim\limits_{n\rightarrow\infty}\sqrt{n}\left(k\left(P_{t_{n}}\otimes Q_{t_{n}}\right)-k\left(P_{0}\otimes Q_{0}\right)\right)\\
 & = & \lim\limits_{n\rightarrow\infty}t_{n}\sqrt{n}\frac{\left(k\left(P_{t_{n}}\otimes Q_{t_{n}}\right)-k\left(P_{0}\otimes Q_{0}\right)\right)}{t_{n}}\\
 & = & t\int\widetilde{k}g\,dP_{0}\otimes Q_{0},
\end{eqnarray*}
wobei $t:=\lim\limits_{n\rightarrow\infty}t_{n}n^{\frac{1}{2}}$ ist.
Sei $\left(P_{t_{n}}\otimes Q_{t_{n}}\right)_{n\in\mathbb{N}}\in\mathcal{K}_{2}\cup\mathcal{H}_{2}$
mit der zugehörigen Tangente $g\in L_{2}\left(P_{0}^{n_{1}}\otimes Q_{0}^{n_{2}}\right).$
Angenommen, es gilt $\int g^{2}\,dP_{0}\otimes Q_{0}=0.$ Nach Satz
\ref{niveau_alpha_trivial} folgt
\[
\lim_{n\rightarrow\infty}\int\psi_{n}\,dP_{t_{n}}^{n_{1}}\otimes Q_{t_{n}}^{n_{2}}=\lim_{n\rightarrow\infty}\int\psi_{n}\,dP_{0}^{n_{1}}\otimes Q_{0}^{n_{2}}=\alpha
\]
 und die Behauptung (\ref{sqrt}) gilt wegen 
\[
\vartheta=t\int\widetilde{k}g\,dP_{0}\otimes Q_{0}=0.
\]
Es sei nun $\int g^{2}\,dP_{0}\otimes Q_{0}\neq0.$ Nach Satz \ref{L2Produkt}
existieren $g_{1}\in L_{2}^{(0)}\left(P_{0}\right)$ und $g_{2}\in L_{2}^{(0)}\left(Q_{0}\right)$
mit $g=g_{1}\circ\pi_{1}+g_{2}\circ\pi_{2}.$ Die Folge $\left(P_{t_{n}}^{n_{1}}\otimes Q_{t_{n}}^{n_{2}}\right)_{n\in\mathbb{N}}$
der Wahrscheinlichkeitsmaße ist LAN nach Satz \ref{lan_produkt} mit
der zentralen Folge
\[
X_{n}=\sum_{i=1}^{n_{1}}t_{n}g_{1}\circ\pi_{i}+\sum_{i=n_{1}+1}^{n}t_{n}g_{2}\circ\pi_{i}
\]
 und 
\begin{eqnarray*}
\sigma_{2}^{2} & = & \lim_{n\rightarrow\infty}\mbox{Var}_{P_{0}^{n_{1}}\otimes Q_{0}^{n_{2}}}\left(\sum_{i=1}^{n_{1}}t_{n}g_{1}\circ\pi_{i}+\sum_{i=n_{1}+1}^{n}t_{n}g_{2}\circ\pi_{i}\right)\\
 & = & \lim_{n\rightarrow\infty}t_{n}^{2}n\left(\frac{n_{1}}{n}\int g_{1}^{2}\,dP_{0}+\frac{n_{2}}{n}\int g_{2}^{2}\,dQ_{0}\right)\\
 & = & t^{2}\left(\left(1-d\right)\int g_{1}^{2}\,dP_{0}+d\int g_{2}^{2}\,dQ_{0}\right).
\end{eqnarray*}
Nach Satz \ref{gem-asymp-vert} erhält man die Konvergenz
\[
\mathcal{L}\left(\left(T_{n},X_{n}\right)^{t}\left|P_{0}^{n_{1}}\otimes Q_{0}^{n_{2}}\right.\right)\rightarrow N\left(\left(\begin{array}{c}
0\\
0
\end{array}\right),\left(\begin{array}{cc}
\sigma_{1}^{2} & \sigma_{12}\\
\sigma_{12} & \sigma_{2}^{2}
\end{array}\right)\right)
\]
für $n\rightarrow\infty$ mit 
\[
\sigma_{12}=a_{1}\frac{1}{\sqrt{1-d}}\int\widetilde{k}_{1}g_{1}\,dP_{0}+a_{2}\frac{1}{\sqrt{d}}\int\widetilde{k}_{2}g_{2}\,dQ_{0}.
\]
Für die Koeffizienten $a_{1}$ und $a_{2}$ ergibt sich
\[
a_{1}=\lim_{n\rightarrow\infty}n_{1}^{-\frac{1}{2}}\sum_{i=1}^{n_{1}}t_{n}=\lim_{n\rightarrow\infty}\left(\frac{n_{1}}{n}\right)^{\frac{1}{2}}n^{\frac{1}{2}}t_{n}=t\sqrt{1-d}
\]
und
\[
a_{2}=\lim_{n\rightarrow\infty}n_{2}^{-\frac{1}{2}}\sum_{i=1}^{n_{2}}t_{n}=\lim_{n\rightarrow\infty}\left(\frac{n_{2}}{n}\right)^{\frac{1}{2}}n^{\frac{1}{2}}t_{n}=t\sqrt{d}.
\]
 Hieraus folgt 
\[
\sigma_{12}=t\int\widetilde{k}_{1}g_{1}\,dP_{0}+t\int\widetilde{k}_{2}g_{2}\,dQ_{0}=t\int\widetilde{k}g\,dP_{0}\otimes Q_{0}.
\]
Mit Hilfe von Satz \ref{asymp-guete_zw} erhält man nun
\begin{eqnarray*}
\lim_{n\rightarrow\infty}\int\psi_{n}dP_{t_{n}}^{n_{1}}\otimes Q_{t_{n}}^{n_{2}} & = & \Phi\left(\frac{\sigma_{12}}{\sigma_{1}}-u_{1-\frac{\alpha}{2}}\right)+\Phi\left(-\frac{\sigma_{12}}{\sigma_{1}}-u_{1-\frac{\alpha}{2}}\right)\\
 & = & \Phi\left(\frac{\vartheta}{\sigma_{1}}-u_{1-\frac{\alpha}{2}}\right)+\Phi\left(-\frac{\vartheta}{\sigma_{1}}-u_{1-\frac{\alpha}{2}}\right).
\end{eqnarray*}
\end{proof}\begin{korollar}Die Testfolge $\left(\psi_{n}\right)_{n\in\mathbb{N}}$
ist asymptotisch unverfälscht für $\mathcal{K}_{2}$ gegen $\mathcal{H}_{2}$.\end{korollar}\begin{proof}Wegen
$\vartheta=\lim\limits_{n\rightarrow\infty}\sqrt{n}\left(k\left(P_{t_{n}}\otimes Q_{t_{n}}\right)-k\left(P_{0}\otimes Q_{0}\right)\right)$
reicht es zu zeigen, dass die asymptotische Gütefunktion 
\[
g:\mathbb{R}\rightarrow\left(0,1\right),\,\,\,\vartheta\mapsto\Phi\left(\frac{\vartheta}{\sigma_{1}}-u_{1-\frac{\alpha}{2}}\right)+\Phi\left(-\frac{\vartheta}{\sigma_{1}}-u_{1-\frac{\alpha}{2}}\right)
\]
ihr globales Minimum an der Stelle $\vartheta=0$ annimmt. Man erhält
zunächst
\begin{eqnarray*}
\frac{d}{d\vartheta}g\left(\vartheta\right) & = & \frac{1}{\sigma_{1}}f\left(\frac{\vartheta}{\sigma_{1}}-u_{1-\frac{\alpha}{2}}\right)-\frac{1}{\sigma_{1}}f\left(-\frac{\vartheta}{\sigma_{1}}-u_{1-\frac{\alpha}{2}}\right),
\end{eqnarray*}
wobei $f(x)=\frac{1}{\sqrt{2\pi}}\exp\left(-\frac{x^{2}}{2}\right)$
die Lebesgue-Dichte der Standardnormalverteilung ist. Es gelten nun
folgende Äquivalenzen:
\begin{eqnarray*}
 &  & \frac{d}{d\vartheta}g\left(\vartheta\right)=0\\
 & \Leftrightarrow & f\left(\frac{\vartheta}{\sigma_{1}}-u_{1-\frac{\alpha}{2}}\right)-f\left(-\frac{\vartheta}{\sigma_{1}}-u_{1-\frac{\alpha}{2}}\right)=0\\
 & \Leftrightarrow & \exp\left(-\frac{1}{2}\left(\frac{\vartheta}{\sigma_{1}}-u_{1-\frac{\alpha}{2}}\right)^{2}+\frac{1}{2}\left(-\frac{\vartheta}{\sigma_{1}}-u_{1-\frac{\alpha}{2}}\right)^{2}\right)=1\\
 & \Leftrightarrow & \exp\left(2u_{1-\frac{\alpha}{2}}\frac{\vartheta}{\sigma_{1}}\right)=1\\
 & \Leftrightarrow & 2u_{1-\frac{\alpha}{2}}\frac{\vartheta}{\sigma_{1}}=0\\
 & \Leftrightarrow & \vartheta=0.
\end{eqnarray*}
 Außerdem gilt $\frac{d}{d\vartheta}g\left(\vartheta\right)<0$ für
$\vartheta<0$ und $\frac{d}{d\vartheta}g\left(\vartheta\right)>0$
für $\vartheta>0.$ Man erhält somit, dass $\vartheta=0$ die globale
Minimumstelle ist. Somit ist alles bewiesen.\end{proof}\begin{bem}In
dem nächsten Kapitel wird unter anderem gezeigt, dass $\left(\psi_{n}\right)_{n\in\mathbb{N}}$
unter schwachen Voraussetzungen an den Tangentenkegel $K\left(P_{0}\otimes Q_{0},\mathcal{P}\otimes\mathcal{Q}\right)$
eine asymptotisch optimale unverfälschte Testfolge für das Testproblem
$\mathcal{K}_{2}$ gegen $\mathcal{H}_{2}$ ist.\end{bem}

\chapter{Asymptotisch optimales Testen statistischer Funktionale\label{cha:Nichtparametrisches-asymptotisch-optimale}}

\section{Konvergenz gegen unendlich-dimensionale Gauß-Shift Experimente}

In diesem Abschnitt werden die für weitere Untersuchungen benötigten
Kenntnisse über unendlich-dimensionale Gauß-Shift Experimente bereitgestellt.
Ausführliche Informationen und viele Beispiele zu diesem Thema findet
man in \cite{Strasser:1985b}, \cite{Strasser:1985s}, \cite{LeCam:2000}
und \cite{LeCam:1986}.\begin{defi}[Standard-Gaußprozess]Es seien
$\left(H,\left\langle \cdot,\cdot\right\rangle \right)$ ein Hilbertraum
und $\left(\Omega,\mathcal{A},P_{0}\right)$ ein Wahrscheinlichkeitsraum.
Eine lineare Abbildung $L:H\rightarrow L_{2}\left(P_{0}\right)$ heißt
Standard-Gaußprozess, wenn 
\[
\mathcal{L}\left(\left.L\left(h\right)\right|P_{0}\right)=N\left(0,\left\Vert h\right\Vert ^{2}\right)
\]
 für alle $h\in H$ gilt. \end{defi}\begin{defi}[Gauß-Shift] Sei
$\left(H,\left\langle \cdot,\cdot\right\rangle \right)$ ein Hilbertraum.
Ein Experiment $E=\left(\Omega,\mathcal{A},\left\{ P_{h}:h\in H\right\} \right)$
heißt Gauß-Shift auf $H$, wenn
\[
\frac{dP_{h}}{dP_{0}}=\exp\left(L\left(h\right)-\frac{1}{2}\left\Vert h\right\Vert ^{2}\right)
\]
für alle $h\in H$ gilt, wobei $L:H\rightarrow L_{2}\left(P_{0}\right)$
ein Standard-Gaußprozess ist.\end{defi}

Der nachfolgende Satz zeigt die Verbindung zwischen dem Hilbertraum
$H$ und der Topologie der Familie $\left\{ P_{h}:h\in H\right\} $
von Wahrscheinlichkeitsmaßen bzgl. der Hellingerdistanz. Dieser Zusammenhang
überträgt sich dann ebenfalls auf die Norm der totalen Variation.
\begin{satz}\label{topologie_gaus_shift}Sei $E=\left(\Omega,\mathcal{A},\left\{ P_{h}:h\in H\right\} \right)$
ein Gauß-Shift. Die Hellingerdistanz zwischen zwei Wahrscheinlichkeitsmaßen
des Gauß-Shift Experimentes $E$ ergibt sich als 
\[
d^{2}\left(P_{h_{1}},P_{h_{2}}\right)=1-\exp\left(-\frac{1}{8}\left\Vert h_{1}-h_{2}\right\Vert ^{2}\right)
\]
 für alle $h_{1},h_{2}\in H$.\end{satz}\begin{proof}vgl. \cite{Strasser:1985b},
Seite 346, Remarks 69.8. \end{proof} \begin{korollar}\label{topologie_korollar}Sei
$\left(h_{n}\right)_{n\in\mathbb{N}}\subset H$ eine konvergente Folge
mit $\lim\limits_{n\rightarrow\infty}\left\Vert h_{n}-h\right\Vert =0$
für ein $h\in H.$ Dann gilt $\lim_{n\rightarrow\infty}d\left(P_{h},P_{h_{n}}\right)=0$
und $\lim_{n\rightarrow\infty}\left\Vert P_{h_{n}}-P_{h}\right\Vert =0.$\end{korollar}
\begin{proof}Mit Hilfe von Satz \ref{topologie_gaus_shift} ergibt
sich
\[
\lim_{n\rightarrow\infty}d^{2}\left(P_{h},P_{h_{n}}\right)=\lim_{n\rightarrow\infty}\left(1-\exp\left(-\frac{1}{8}\left\Vert h_{1}-h_{2}\right\Vert ^{2}\right)\right)=0.
\]
Für alle Wahrscheinlichkeitsmaße $P,Q\in\mathcal{M}_{1}\left(\Omega,\mathcal{A}\right)$
gilt 
\[
d^{2}\left(P,Q\right)\leq\left\Vert P-Q\right\Vert \leq\sqrt{2}d\left(P,Q\right)
\]
 nach \cite{Witting:1985}, Seite 136, Hilfssatz 1.142. Man erhält
somit
\[
0\leq\lim_{n\rightarrow\infty}\left\Vert P_{h_{n}}-P_{h}\right\Vert \leq\lim_{n\rightarrow\infty}\sqrt{2}d\left(P_{h},P_{h_{n}}\right)=0.
\]
 \end{proof}Sei nun $\mathcal{P}\subset\mathcal{M}_{1}\left(\Omega,\mathcal{A}\right)$
eine nichtparametrische Familie von Wahrscheinlichkeitsmaßen. Sei
$P_{0}\in\mathcal{P}$ ein Wahrscheinlichkeitsmaß. Die lokalen Eigenschaften
der Familie $\mathcal{P}$ an der Stelle $P_{0}$ werden mit Hilfe
des Tangentenkegels $K\left(P_{0},\mathcal{P}\right)$ und des Tangentialraums
$T\left(P_{0},\mathcal{P}\right)$ beschrieben. Um die Theorie der
Gauß-Shift Experimente anzuwenden fehlt jedoch die Zuordnung zwischen
den Wahrscheinlichkeitsmaßen aus $\mathcal{P}$ und Tangenten aus
$T\left(P_{0},\mathcal{P}\right),$ weil im Allgemeinen $K\left(P_{0},\mathcal{P}\right)\neq T\left(P_{0},\mathcal{P}\right)$
gilt. Deswegen wird eine semiparametrische Familie $\mathcal{\widetilde{P}}\left(P_{0}\right)$
von Wahrscheinlichkeitsmaßen konstruiert, die lokal an der Stelle
$P_{0}$ den gleichen Tangentialraum $T\left(P_{0},\mathcal{P}\right)$
besitzt. Die Konstruktion der $L_{2}(0)$-differenzierbaren Kurven
entspricht dem Master-Modell aus \cite{Janssen:2004b}. \begin{satz}[Konstruktion  $L_2(0)$-differenzierbarer Kurven] \label{konstruktion} Es
seien $P\in\mathcal{M}_{1}(\Omega,\mathcal{A})$ ein Wahrscheinlichkeitsmaß
und $g\in L_{2}^{(0)}\left(P\right)$ eine Tangente. Dann existiert
eine $L_{2}(0)$-differenzierbare Kurve 
\[
\mathbb{R}\rightarrow\mathcal{M}_{1}(\Omega,\mathcal{A}),\,t\mapsto P_{tg}
\]
 mit der Tangente $g$ und $P_{0}=P$. Außerdem gilt $P_{tg}\ll P_{0}$
für alle $t\in\mathbb{R}.$ \end{satz}\begin{proof}Die Funktion
$f_{tg}:=\left(1+\frac{1}{2}tg\right)^{2}c(tg)^{-1}$ mit $c(tg):=\int\left(1+\frac{1}{2}tg\right)^{2}dP$
ist für jedes $t\in\mathbb{R}$ wohldefiniert, weil
\begin{eqnarray*}
c(tg) & = & \int\left(1+\frac{1}{2}tg\right)^{2}dP=1+\frac{t^{2}}{4}\int g^{2}\,dP\geq1
\end{eqnarray*}
 für alle $t\in\mathbb{R}$ gilt. Man erhält außerdem 
\[
\left.\frac{d}{dt}c\left(tg\right)\right|_{t=0}=\left.\frac{t}{2}\int g^{2}\,dP\right|_{t=0}=0.
\]
 Durch $P_{tg}:=f_{tg}P$ wird ein Wahrscheinlichkeitsmaß definiert,
da die Bedingungen $f_{tg}\geq0$ und $\int f_{tg}dP=1$ für alle
$t\in\mathbb{R}$ und $g\in L_{2}^{(0)}\left(P\right)$ erfüllt sind.
Insbesondere erhält man $P_{0}=f_{0}P=P$. 

Es ist zu zeigen, dass die Kurve $t\mapsto P_{t}$ $L_{2}\left(P_{0}\right)$-differenzierbar
mit der Tangente $g$ ist. Die Bedingung (\ref{L2-diffbarkeit2})
ist wegen $P_{tg}\ll P_{0}$ für alle $t\in\mathbb{R}$ erfüllt. Es
bleibt also die Bedingung (\ref{L2-diffbarkeit1}) nachzuweisen. Wegen
$g\in L_{2}\left(P_{0}\right)$ gilt $P\left(\left\{ g=\infty\right\} \right)=0$.
Für alle $\omega\in\Omega\setminus\left\{ g=\infty\right\} $ erhält
man zunächst
\[
\lim_{t\rightarrow0}\frac{2}{t}\left(\left|1+\frac{1}{2}tg\left(\omega\right)\right|-1\right)=\lim_{t\rightarrow0}\frac{2}{t}\left(1+\frac{1}{2}tg\left(\omega\right)-1\right)=g\left(\omega\right).
\]
Hieraus folgt die Konvergenz $\frac{2}{t}\left(\left|1+\frac{1}{2}tg\right|-1\right)\rightarrow g$
$P$-f.s. für $t\rightarrow0$. Mit der Dreiecksungleichung $\left|\left|a\right|-\left|b\right|\right|\leq\left|a-b\right|$
erhält man die Majorante
\[
\left|\frac{2}{t}\left(\left|1+\frac{1}{2}tg\right|-1\right)\right|\leq\frac{2}{\left|t\right|}\left|1+\frac{1}{2}tg-1\right|=\left|g\right|.
\]
 Nach dem Satz von der dominierten Konvergenz ergibt sich nun
\begin{equation}
\lim_{t\rightarrow0}\left\Vert \frac{2}{t}\left(\left|1+\frac{1}{2}tg\right|-1\right)-g\right\Vert _{L_{2}\left(P_{0}\right)}=0,\label{dom_kogz_major}
\end{equation}
vgl. \cite{elstrodt:2002}, Seite 259, Satz 5.3. Es gilt außerdem
\begin{eqnarray}
\lim_{t\rightarrow0}\frac{c(tg)^{-\frac{1}{2}}-1}{t} & = & \left.\frac{d}{dt}c\left(tg\right)^{-\frac{1}{2}}\right|_{t=0}\nonumber \\
 & = & -\frac{1}{2}c\left(tg\right)^{-\frac{3}{2}}\left.\frac{d}{dt}c\left(tg\right)\right|_{t=0}\nonumber \\
 & = & 0.\label{kogz_normierung}
\end{eqnarray}
 Mit (\ref{dom_kogz_major}) und (\ref{kogz_normierung}) erhält man
insgesamt
\begin{eqnarray*}
 &  & \left\Vert \frac{2}{t}\left(\left(\frac{dP_{tg}}{dP_{0}}\right)^{\frac{1}{2}}-1\right)-g\right\Vert _{L_{2}\left(P_{0}\right)}\\
 & = & \left\Vert \frac{2}{t}\left(\left|1+\frac{1}{2}tg\right|c(tg)^{-\frac{1}{2}}-1\right)-g\right\Vert _{L_{2}\left(P_{0}\right)}
\end{eqnarray*}
\begin{eqnarray*}
 & = & \left\Vert c(tg)^{-\frac{1}{2}}\left(\frac{2}{t}\left(\left|1+\frac{t}{2}g\right|-1\right)-g\right)+\frac{2}{t}\left(c(tg)^{-\frac{1}{2}}-1\right)\right.\\
 &  & \left.+g\left(c(tg)^{-\frac{1}{2}}-1\right)\right\Vert _{L_{2}\left(P_{0}\right)}\\
\textrm{} & \leq & c(tg)^{-\frac{1}{2}}\left\Vert \frac{2}{t}\left(\left|1+\frac{1}{2}tg\right|-1\right)-g\right\Vert _{L_{2}\left(P_{0}\right)}+\left|\frac{2}{t}\left(c(tg)^{-\frac{1}{2}}-1\right)\right|\\
 &  & +\left\Vert g\right\Vert _{L_{2}\left(P_{0}\right)}\left|c(tg)^{-\frac{1}{2}}-1\right|\\
 & \rightarrow & 0
\end{eqnarray*}
für $t\rightarrow0$. Somit ist alles bewiesen.\end{proof}\begin{defi}[semiparametrische Familie]Die
semiparametrische Familie $\mathcal{\widetilde{P}}\left(P_{0}\right)$
wird definiert durch
\[
\mathcal{\widetilde{P}}\left(P_{0}\right):=\left\{ P_{tg}\in\mathcal{M}_{1}\left(\Omega,\mathcal{A}\right):g\in T\left(P_{0},\mathcal{P}\right),\,t\in\mathbb{R}\right\} ,
\]
 wobei $t\mapsto P_{tg}$ eine $L_{2}\left(0\right)$-differenzierbare
Kurve mit Tangente $g$ ist. Die Wahrscheinlichkeitsmaße $P_{tg}$
sind durch den Dichtequotient 
\[
\frac{dP_{tg}}{dP_{0}}=\left(1+\frac{1}{2}tg\right)^{2}c(tg)^{-1}
\]
 mit $c(tg)=\int\left(1+\frac{1}{2}tg\right)^{2}dP_{0}$ für alle
$g\in T\left(P_{0},\mathcal{P}\right)$ und $t\in\mathbb{R}$ gegeben,
vgl. Satz \ref{konstruktion}. \end{defi} \begin{bem}\label{lineare_vereinfachung}Es
gilt 
\[
\mathcal{\widetilde{P}}\left(P_{0}\right)=\left\{ P_{g}\in\mathcal{M}_{1}\left(\Omega,\mathcal{A}\right):g\in T\left(P_{0},\mathcal{P}\right)\right\} ,
\]
weil $T\left(P_{0},\mathcal{P}\right)$ ein Vektorraum ist.\end{bem}Im
Folgenden wird die schwache Konvergenz von Experimenten kurz vorgestellt.
Dies führt dann zu der Verallgemeinerung der lokalen asymptotischen
Normalität, die sich als schwache Konvergenz gegen ein Gauß-Shift
Experiment neu definieren lässt (vgl. \cite{Strasser:1985b}, \cite{LeCam:2000}
und \cite{LeCam:1986}).\begin{defi}[schwache Konvergenz von Experimenten]
Es seien $\Theta$ eine nichtleere Menge und $\left(\Theta_{n}\right)_{n\in\mathbb{N}}$
eine Folge der Teilmengen von $\Theta$ mit $\Theta_{n}\uparrow\Theta$
für $n\rightarrow\infty$. Eine Folge von Experimenten $E_{n}=\left(\Omega_{n},\mathcal{A}_{n},\left\{ P_{n,h}:h\in\Theta_{n}\right\} \right),$
$n\in\mathbb{N}$ konvergiert schwach gegen ein Experiment $E=\left(\Omega,\mathcal{A},\left\{ P_{h}:h\in\Theta\right\} \right)$,
wenn 
\[
\mathcal{L}\left(\left.\left(\frac{dP_{n,h}}{dP_{n,s}}\right)_{h\in Z}\right|P_{n,s}\right)\rightarrow\mathcal{L}\left(\left.\left(\frac{dP_{h}}{dP_{s}}\right)_{h\in Z}\right|P_{s}\right)
\]
 für alle endliche Teilmengen $Z\subset\Theta$ und für jedes $s\in\Theta$
gilt. \end{defi}\begin{defi}[LAN]\label{gaus_shift_lan}Es seien
$H$ ein Hilbertraum und $\left(H_{n}\right)_{n\in\mathbb{N}}$ eine
Folge der Teilmengen von $H$ mit $H_{n}\uparrow H$ für $n\rightarrow\infty$.
Eine Folge $E_{n}=\left(\Omega_{n},\mathcal{A}_{n},\left\{ P_{n,h}:h\in H_{n}\right\} \right),n\in\mathbb{N}$
von Experimenten heißt lokal asymptotisch normal, falls sie schwach
gegen ein Gauß-Shift Experiment auf $H$ konvergiert. \end{defi}\begin{bem}Die
Definition \ref{gaus_shift_lan} ist verträglich mit der eindimensionalen
Version von LAN, vgl. Definition \ref{lan}. \end{bem}\begin{satz}[Hinreichendes und notwendiges Kriterium für LAN]\label{hin_not_lan}Eine
Folge $\left(E_{n}\right)_{n\in\mathbb{N}}$ von Experimenten ist
genau dann lokal asymptotisch normal, wenn der stochastische Prozess
\[
L_{n}\left(h\right):=\log\left(\frac{dP_{n,h}}{dP_{n,0}}\right)+\frac{1}{2}\left\Vert h\right\Vert ^{2}
\]
die nachfolgenden Bedingungen erfüllt:
\begin{enumerate}
\item Es gilt $\mathcal{L}\left(\left.L_{n}\left(h\right)\right|P_{n,0}\right)\rightarrow N\left(0,\left\Vert h\right\Vert ^{2}\right)$
für $n\rightarrow\infty.$
\item Für jedes $\varepsilon>0,$ für alle $a,b\in\mathbb{R}$ und für alle
$h_{1},h_{2}\in H$ gilt 
\[
\lim_{n\rightarrow\infty}P_{n,0}\left(\left|aL_{n}\left(h_{1}\right)+bL_{n}\left(h_{2}\right)-L_{n}\left(ah_{1}+bh_{2}\right)\right|>\varepsilon\right)=0.
\]
 
\end{enumerate}
\end{satz}\begin{proof}vgl. \cite{Strasser:1985b}, Seite 409, Theorem
80.2.\end{proof}\begin{satz}\label{tang_raum_lan}Für alle $n\in\mathbb{N}$
und $g\in T\left(P_{0},\mathcal{P}\right)$ sei $P_{n,g}:=P_{\frac{1}{\sqrt{n}}g}^{n}$
mit $P_{\frac{1}{\sqrt{n}}g}\in\mathcal{\widetilde{P}}\left(P_{0}\right).$
Außerdem sei $E_{n}:=\left(\Omega^{n},\mathcal{A}^{n},\left\{ P_{n,g}:g\in T\left(P_{0},\mathcal{P}\right)\right\} \right).$
Die Folge $\left(E_{n}\right)_{n\in\mathbb{N}}$ der Experimente ist
dann lokal asymptotisch normal.\end{satz}\begin{proof}Zum Beweis
wird Satz \ref{hin_not_lan} angewandt. Nach Satz \ref{lecam} gilt
\[
\log\left(\frac{dP_{n,h}}{dP_{n,0}}\right)=\frac{1}{\sqrt{n}}\sum_{i=1}^{n}h\circ\pi_{i}-\frac{1}{2}\int h^{2}dP_{0}+R_{n,h},
\]
 wobei $\lim\limits_{n\rightarrow\infty}P_{n,0}\left(\left|R_{n,h}\right|>\varepsilon\right)=0$
für jedes $\varepsilon>0$ gilt. Hieraus folgt 
\[
L_{n}\left(h\right)=\log\left(\frac{dP_{n,h}}{dP_{n,0}}\right)+\frac{1}{2}\int h^{2}dP_{0}=\frac{1}{\sqrt{n}}\sum_{i=1}^{n}h\circ\pi_{i}+R_{n,h}.
\]
 Man erhält die schwache Konvergenz 
\begin{eqnarray*}
\mathcal{L}\left(\left.L_{n}\left(h\right)\right|P_{n,0}\right) & = & \mathcal{L}\left(\left.\frac{1}{\sqrt{n}}\sum_{i=1}^{n}h\circ\pi_{i}+R_{n,h}\right|P_{n,0}\right)\\
 & \rightarrow & N\left(0,\int h^{2}dP_{0}\right)
\end{eqnarray*}
für $n\rightarrow\infty$ nach dem Zentralen Grenzwertsatz und nach
dem Lemma von Slutsky (vgl. \cite{Witting:1995}, Seite 76). Die Bedingung
$1$ aus Satz \ref{hin_not_lan} ist somit erfüllt. Es bleibt die
Bedingung $2$ nachzuweisen. Für jedes $\varepsilon>0,$ für alle
$a,b\in\mathbb{R}$ und für alle $h_{1},h_{2}\in T\left(P_{0},\mathcal{P}\right)$
ergibt sich 
\begin{eqnarray*}
\textrm{} &  & P_{n,0}\left(\left|aL_{n}\left(h_{1}\right)+bL_{n}\left(h_{2}\right)-L_{n}\left(ah_{1}+bh_{2}\right)\right|>\varepsilon\right)\\
 & = & P_{n,0}\left(\left|aR_{n,h_{1}}+bR_{n,h_{2}}-R_{n,ah_{1}+bh_{2}}\right|>\varepsilon\right)\\
 & \leq & P_{n,0}\left(\left|aR_{n,h_{1}}+bR_{n,h_{2}}\right|>\frac{\varepsilon}{2}\right)+P_{n,0}\left(\left|R_{n,ah_{1}+bh_{2}}\right|>\frac{\varepsilon}{2}\right)\\
 & \leq & P_{n,0}\left(\left|aR_{n,h_{1}}\right|>\frac{\varepsilon}{4}\right)+P_{n,0}\left(\left|bR_{n,h_{2}}\right|>\frac{\varepsilon}{4}\right)+P_{n,0}\left(\left|R_{n,ah_{1}+bh_{2}}\right|>\frac{\varepsilon}{2}\right)\\
 & \rightarrow & 0
\end{eqnarray*}
für $n\rightarrow0.$ Somit ist alles bewiesen.\end{proof}\begin{bem}Es
gelten die Voraussetzungen aus Satz \ref{tang_raum_lan}. Die Folge
$\left(E_{n}\right)_{n\in\mathbb{N}}$ der Experimente konvergiert
dann schwach gegen ein Gauß-Shift Experiment $E$. Das Limesexperiment
$E=\left(\widetilde{\Omega},\widetilde{\mathcal{A}},\left\{ Q_{h}:h\in T\left(P_{0},\mathcal{P}\right)\right\} \right)$
wird nur von dem Tangentialraum $T\left(P_{0},\mathcal{P}\right)$
bestimmt. Die spezielle Konstruktion der $L_{2}\left(P_{0}\right)$-differenzierbaren
Kurven $t\mapsto P_{th}$ in der Familie $\mathcal{\widetilde{P}}\left(P_{0}\right)$
spielt also keine Rolle für das Limesexperiment $E$. Insbesondere
gibt es keinen Unterschied zwischen den $L_{2}\left(P_{0}\right)$-differenzierbaren
Kurven, die in $\mathcal{P}$ liegen und deren Tangenten dann entsprechend
im Tangentenkegel $K\left(P_{0},\mathcal{P}\right)$ enthalten sind,
und den konstruierten $L_{2}\left(P_{0}\right)$-differenzierbaren
Kurven mit den Tangenten aus $T\left(P_{0},\mathcal{P}\right)$. Man
kann die $L_{2}\left(P_{0}\right)$-differenzierbaren Kurven in $\mathcal{\widetilde{P}}\left(P_{0}\right)$
als Hilfskonstruktionen betrachten, die asymptotisch wieder eliminiert
werden.\end{bem}

\section{Hauptsatz der asymptotischen Testtheorie und obere Schranken für
die asymptotischen Gütefunktionen }

Der folgende Satz stellt ein mächtiges Mittel zur Herleitung der oberen
Schranken für die asymptotische Gütefunktion dar, vgl. \cite{Strasser:1985s}
und \cite{Strasser:1985b}.

\begin{satz}[Hauptsatz der asymptotischen Testtheorie]\label{hauptstz_testtheorie}Es
seien $\Theta$ eine nichtleere Menge und $\left(\Theta_{n}\right)_{n\in\mathbb{N}}$
eine Folge der Teilmengen von $\Theta$ mit $\Theta_{n}\uparrow\Theta$
für $n\rightarrow\infty$. Eine Folge von Experimenten $E_{n}=\left(\Omega_{n},\mathcal{A}_{n},\left\{ P_{n,h}:h\in\Theta_{n}\right\} \right),$
$n\in\mathbb{N}$ konvergiere schwach gegen ein Limesexperiment $E=\left(\Omega,\mathcal{A},\left\{ P_{h}:h\in\Theta\right\} \right).$
Außerdem seien $H,K\subset\Theta$ nichtleere Teilmengen mit $H\cup K=\Theta$
und $H\cap K=\emptyset.$ Sei $\left(\varphi_{n}\right)_{n\in\mathbb{N}}$
eine Testfolge für das Testproblem $H$ gegen $K.$ Zu jeder Teilfolge
$\left(\varphi_{n_{j}}\right)_{j\in\mathbb{N}}$ existiert ein Test
$\varphi$ für $E,$ so dass gilt: 
\[
\liminf_{j\rightarrow\infty}\int\varphi_{n_{j}}\,dP_{n_{j},h}\leq\int\varphi\,dP_{h}\quad\mbox{für alle}\:h\in K
\]
 und
\[
\limsup_{j\rightarrow\infty}\int\varphi_{n_{j}}\,dP_{n_{j},h}\geq\int\varphi\,dP_{h}\quad\mbox{für alle}\:h\in H.
\]
 \end{satz}\begin{proof}vgl. \cite{Strasser:1985s}, Seite 87, Satz
11.8. oder \cite{Strasser:1985b}, Seite 310, Theorem 62.5.\end{proof}\begin{korollar}Es
gelten die Voraussetzungen aus Satz \ref{hauptstz_testtheorie}. Es
seien 
\[
\Phi_{\alpha}\left(E_{n}\right):=\left\{ \varphi:\Omega_{n}\rightarrow\left[0,1\right],\,\int\varphi\,dP_{n,h}\leq\alpha\quad\mbox{für alle}\:h\in H\cap\Theta_{n}\right\} 
\]
die Menge der Niveau $\alpha$-Tests für $E_{n}$ und 
\[
\Phi_{\alpha,u}\left(E_{n}\right):=\left\{ \varphi\in\Phi_{\alpha}\left(E_{n}\right):\int\varphi\,dP_{n,h}\geq\alpha\quad\mbox{für alle}\:h\in K\cap\Theta_{n}\right\} 
\]
die Menge der unverfälschten Niveau $\alpha$-Tests für $E_{n}.$
Sei $\left(\alpha_{n}\right)_{n\in\mathbb{N}}\subset\left(0,1\right)$
eine Folge mit $\lim\limits_{n\rightarrow\infty}\alpha_{n}=\alpha\in\left(0,1\right).$
Dann gilt
\begin{equation}
\limsup_{n\rightarrow\infty}\sup_{\varphi_{n}\in\Phi_{\alpha_{n}}\left(E_{n}\right)}\int\varphi_{n}\,dP_{n,h}\leq\sup_{\psi\in\Phi_{\alpha}\left(E\right)}\int\psi\,dP_{h}\label{kor1}
\end{equation}
und 
\begin{equation}
\limsup_{n\rightarrow\infty}\sup_{\varphi_{n}\in\Phi_{\alpha_{n},u}\left(E_{n}\right)}\int\varphi_{n}\,dP_{n,h}\leq\sup_{\psi\in\Phi_{\alpha,u}\left(E\right)}\int\psi\,dP_{h}\label{kor2}
\end{equation}
für alle $h\in K.$\end{korollar}\begin{proof}Zuerst wird die Ungleichung
(\ref{kor1}) bewiesen. Sei $g\in K$ fest gewählt. Sei $\left(\phi_{n_{j}}\right)_{j\in\mathbb{N}}$
eine Folge der Tests mit $\phi_{n_{j}}\in\Phi_{\alpha_{n_{j}}}\left(E_{n_{j}}\right)$
und 
\[
\lim_{j\rightarrow\infty}\int\phi_{n_{j}}\,dP_{n_{j},g}=\limsup_{n\rightarrow\infty}\sup_{\varphi_{n}\in\Phi_{\alpha_{n}}\left(E_{n}\right)}\int\varphi_{n}\,dP_{n,g}.
\]
 Nach Satz \ref{hauptstz_testtheorie} existiert ein Test $\phi\in\Phi_{\alpha}\left(E\right)$
mit $\liminf_{j\rightarrow\infty}\int\phi_{n_{j}}\,dP_{n_{j},h}\leq\int\phi\,dP_{h}$
für alle $h\in K$ und $\limsup_{j\rightarrow\infty}\int\phi_{n_{j}}\,dP_{n_{j},h}\geq\int\phi\,dP_{h}$
für alle $h\in H$. Hieraus folgt $\phi\in\Phi_{\alpha}\left(E\right)$.
Man erhält somit 
\begin{eqnarray*}
\limsup_{n\rightarrow\infty}\sup_{\varphi_{n}\in\Phi_{\alpha_{n}}\left(E_{n}\right)}\int\varphi_{n}\,dP_{n,g} & = & \lim_{j\rightarrow\infty}\int\phi_{n_{j}}\,dP_{n_{j},g}\\
 & = & \liminf_{j\rightarrow\infty}\int\phi_{n_{j}}\,dP_{n_{j},g}\\
 & \leq & \int\phi\,dP_{g}\\
 & \leq & \sup_{\psi\in\Phi_{\alpha}\left(E\right)}\int\psi\,dP_{g}.
\end{eqnarray*}
Die Ungleichung (\ref{kor2}) beweist man analog. \end{proof}Liegt
ein Gauß-Shift Experiment als Limesexperiment vor, so kann man für
einige wichtige Testprobleme die oberen Schranken für die asymptotische
Gütefunktion exakt ausrechnen. Es seien $H$ ein Hilbertraum mit der
zugehörigen Norm $\left\Vert \cdot\right\Vert :H\rightarrow\mathbb{R}_{\geq0}$
und $f:H\rightarrow\mathbb{R}$ eine lineare Funktion. Sei $E=\left(\Omega,\mathcal{A},\left\{ P_{h}:h\in H\right\} \right)$
ein Gauß-Shift Experiment. Wir interessieren uns für das einseitige
Testproblem
\begin{equation}
H_{1}^{l}=\left\{ h\in H:f\left(h\right)\leq0\right\} \quad\mbox{gegen}\quad K_{1}^{l}=\left\{ h\in H:f\left(h\right)>0\right\} \label{einseit_lin_test_prob}
\end{equation}
und für das zweiseitige Testproblem
\begin{equation}
H_{2}^{l}=\left\{ h\in H:f\left(h\right)=0\right\} \quad\mbox{gegen}\quad K_{2}^{l}=\left\{ h\in H:f\left(h\right)\neq0\right\} .\label{zweiseit_lin_test_prob}
\end{equation}
\begin{defi}Ein Test $\varphi$ zum Niveau $\alpha$ für das einseitige
Testproblem (\ref{einseit_lin_test_prob}) heißt Niveau $\alpha$-ähnlich,
falls $\int\varphi\,dP_{h}=\alpha$ für alle $h\in H$ mit $f\left(h\right)=0$
gilt.\end{defi}\begin{satz}Sei $\varphi$ ein Niveau $\alpha$-ähnlicher
Test für $H_{1}^{l}$ gegen $K_{1}^{l}.$ Dann gilt
\[
\int\varphi\,dP_{h}\leq\Phi\left(\frac{f\left(h\right)}{\left\Vert f\right\Vert }-u_{1-\alpha}\right)\,\,\mbox{für alle}\,\,h\in K_{1}^{l}
\]
und 
\[
\int\varphi\,dP_{h}\geq\Phi\left(\frac{f\left(h\right)}{\left\Vert f\right\Vert }-u_{1-\alpha}\right)\,\,\mbox{für alle}\,\,h\in H_{1}^{l}.
\]
\end{satz}\begin{proof}vgl. \cite{Strasser:1985b}, Seite 357, Lemma
71.1.\end{proof}\begin{satz}Sei $\varphi$ ein unverfälschter Niveau
$\alpha$-Test für $H_{2}^{l}$ gegen $K_{2}^{l}.$ Für alle $h\in H$
gilt dann
\[
\int\varphi\,dP_{h}\leq\Phi\left(\frac{f\left(h\right)}{\left\Vert f\right\Vert }-u_{1-\frac{\alpha}{2}}\right)+\Phi\left(-\frac{f\left(h\right)}{\left\Vert f\right\Vert }-u_{1-\frac{\alpha}{2}}\right).
\]
\end{satz}\begin{proof}vgl. \cite{Strasser:1985b}, Seite 358, Lemma
71.5.\end{proof}\begin{defi}Eine Folge $\left(\varphi_{n}\right)_{n\in\mathbb{N}}$
der Tests für das Testproblem $H_{1}^{l}$ gegen $K_{1}^{l}$ heißt
asymptotisch Niveau $\alpha$-ähnlich, falls 
\[
\lim_{n\rightarrow\infty}\int\varphi_{n}\,dP_{n,h}=\alpha
\]
für alle $h\in H$ mit $f\left(h\right)=0$ gilt. \end{defi}\begin{defi}Es
seien $\widehat{H}$ und $\widehat{K}$ zwei nichtleere Teilmengen
von $H$ mit $H=\widehat{H}+\widehat{K}$. Eine Folge $\left(\varphi_{n}\right)_{n\in\mathbb{N}}$
der Tests für das Testproblem $\widehat{H}$ gegen $\widehat{K}$
heißt asymptotisch unverfälscht zum Niveau $\alpha$, falls 
\[
\limsup_{n\rightarrow\infty}\int\varphi_{n}\,dP_{n,h}\leq\alpha\,\,\mbox{für alle}\,\,h\in\widehat{H}
\]
und 
\[
\liminf_{n\rightarrow\infty}\int\varphi_{n}\,dP_{n,h}\geq\alpha\,\,\mbox{für alle}\,\,h\in\widehat{K}.
\]
\end{defi}\begin{bem}Jede asymptotisch unverfälschte Niveau $\alpha$-Testfolge
für $H_{1}^{l}$ gegen $K_{1}^{l}$ ist ebenfalls asymptotisch Niveau
$\alpha$-ähnlich (vgl. \cite{Strasser:1985b}, Seite 429, Definition
82.5).\end{bem}\begin{satz}\label{asymp_einseitig} Sei $\left(\varphi_{n}\right)_{n\in\mathbb{N}}$
eine asymptotisch Niveau $\alpha$-ähnliche Testfolge für das Testproblem
$H_{1}^{l}$ gegen $K_{1}^{l}.$ Dann gilt
\[
\limsup_{n\rightarrow\infty}\int\varphi_{n}\,dP_{n,h}\leq\Phi\left(\frac{f\left(h\right)}{\left\Vert f\right\Vert }-u_{1-\alpha}\right)\,\,\mbox{für alle}\,\,h\in K_{1}^{l}
\]
und 
\[
\liminf_{n\rightarrow\infty}\int\varphi\,dP_{n,h}\geq\Phi\left(\frac{f\left(h\right)}{\left\Vert f\right\Vert }-u_{1-\alpha}\right)\,\,\mbox{für alle}\,\,h\in H_{1}^{l}.
\]
\end{satz}\begin{proof}vgl. \cite{Strasser:1985b}, Seite 429, Lemma
82.6.\end{proof}\begin{satz}\label{asymp_zweiseitig} Eine Folge
$\left(\varphi_{n}\right)_{n\in\mathbb{N}}$ der Tests für das Testproblem
$H_{2}^{l}$ gegen $K_{2}^{l}$ sei asymptotisch unverfälscht zum
Niveau $\alpha.$ Für alle $h\in H$ gilt dann
\[
\limsup_{n\rightarrow\infty}\int\varphi_{n}\,dP_{n,h}\leq\Phi\left(\frac{f\left(h\right)}{\left\Vert f\right\Vert }-u_{1-\frac{\alpha}{2}}\right)+\Phi\left(-\frac{f\left(h\right)}{\left\Vert f\right\Vert }-u_{1-\frac{\alpha}{2}}\right).
\]
\end{satz}\begin{proof}vgl. \cite{Strasser:1985b}, Seite 431, Lemma
82.12.\end{proof}

\section{Asymptotisch optimales Testen statistischer Funktionale bei Einstichprobenproblemen\label{sec:asymp_opt_ein}}

Es seien $\left(\Omega,\mathcal{A}\right)$ ein Messraum, $\mathcal{P}\subset\mathcal{M}_{1}\left(\Omega,\mathcal{A}\right)$
eine nichtparametrische Familie von Wahrscheinlichkeitsmaßen und $k:\mathcal{P}\rightarrow\mathbb{R}$
ein statistisches Funktional. Sei $a\in\mathbb{R}$ eine Zahl. Die
einseitigen Testprobleme 
\begin{equation}
H_{3}=\left\{ P\in\mathcal{P}:k\left(P\right)\leq a\right\} \quad\mbox{gegen}\quad K_{3}=\left\{ P\in\mathcal{P}:k\left(P\right)>a\right\} \label{testproblem3}
\end{equation}
 und die zweiseitigen Testprobleme
\begin{equation}
H_{4}=\left\{ P\in\mathcal{P}:k\left(P\right)=a\right\} \quad\mbox{gegen}\quad K_{4}=\left\{ P\in\mathcal{P}:k\left(P\right)\neq a\right\} \label{testproblem4}
\end{equation}
 sind von großer Bedeutung, weil viele Testprobleme aus der Praxis
sich auf diese Gestalt bringen lassen. Das statistische Funktional
$k:\mathcal{P}\rightarrow\mathbb{R}$ sei differenzierbar an einer
Stelle $P_{0}\in H_{4}$ mit $k\left(P_{0}\right)=a.$ Der kanonische
Gradient $\widetilde{k}\in L_{2}^{(0)}\left(P_{0}\right)$ des Funktionals
$k$ an der Stelle $P_{0}$ erfülle 
\begin{equation}
\int\widetilde{k}^{2}dP_{0}\neq0.\label{nicht_0_vor}
\end{equation}
Die Parametrisierung der Testprobleme (\ref{testproblem3}) und (\ref{testproblem4})
erfolgt dann wie in Abschnitten \ref{sec:Asymptotische-Eigenschaften-der}
und \ref{sec:zw:twsts}. Die ausführliche Darstellung der Parametrisierung
findet man außerdem in \cite{Janssen:1998}, \cite{Janssen:1999a}
und \cite{Janssen:1999b}.

\subsection{Asymptotisch optimale einseitige Tests}

\begin{defi}Die Menge $\mathcal{F}_{1}$ enthalte alle Folgen $\left(P_{t_{n}}\right)_{n\in\mathbb{N}}$,
die den folgenden Bedingungen genügen:
\begin{enumerate}
\item Die Nullfolge $\left(t_{n}\right)_{n\in\mathbb{N}}$ erfüllt $\lim_{n\rightarrow\infty}t_{n}\sqrt{n}>0.$ 
\item Es existieren ein $\varepsilon>0$ und eine $L_{2}\left(P_{0}\right)$-differenzierbare
Kurve $f:\left(-\varepsilon,\varepsilon\right)\rightarrow\mathcal{P},\,t\mapsto P_{t}$,
so dass $f\left(t_{n}\right)=P_{t_{n}}$ für alle $n\in\mathbb{N}$
gilt.
\end{enumerate}
\end{defi}\begin{defi}\label{impl_alternativen_einstichprobe} Die
Menge $\mathcal{K}_{3}$ der impliziten Alternativen für das Testproblem
(\ref{testproblem3}) ist definiert durch 
\[
\mathcal{K}_{3}:=\left\{ \left(P_{t_{n}}^{n}\right)_{n\in\mathbb{N}}\in\mathcal{F}_{1}:\lim_{n\rightarrow\infty}\sqrt{n}\left(k\left(P_{t_{n}}\right)-k\left(P_{0}\right)\right)>0\right\} .
\]

\noindent Die Menge $\mathcal{H}_{3}$ der impliziten Hypothesen für
das Testproblem (\ref{testproblem3}) ist gegeben durch
\[
\mathcal{H}_{3}:=\left\{ \left(P_{t_{n}}^{n}\right)_{n\in\mathbb{N}}\in\mathcal{F}_{1}:\lim_{n\rightarrow\infty}\sqrt{n}\left(k\left(P_{t_{n}}\right)-k\left(P_{0}\right)\right)\leq0\right\} .
\]
 \end{defi}\begin{bem}\label{bem_verallg_def_impl_alternat} In Definition
\ref{impl_alternativen_einstichprobe} reicht es wohl, wenn eine Folge
$\left(P_{t_{n}}^{n}\right)_{n\in\mathbb{N}}$ in $\mathcal{P}$ liegt.
Die zugehörige $L_{2}\left(P_{0}\right)$-differenzierbare Kurve $t\mapsto P_{t}$
braucht dann nicht in $\mathcal{P}$ zu liegen. Hier wird jedoch die
bewährte elegante Formulierung mit Hilfe des Tangentenkegels $K\left(P_{0},\mathcal{P}\right)$
gewählt. Der Tangentenkegel $K\left(P_{0},\mathcal{P}\right)$ beschreibt
in diesem Fall die lokalen Eigenschaften der Mengen aller impliziten
Alternativen $\mathcal{K}_{3}$ und aller impliziten Hypothesen $\mathcal{H}_{3}$.
Auf die Verallgemeinerung von Definition \ref{impl_alternativen_einstichprobe}
wird deswegen verzichtet. Außerdem ist der Unterschied unwesentlich,
weil die Familie $\mathcal{P}$ in der nichtparametrischen Statistik
meistens nicht genau genug festgelegt ist. Die Modellbildung kann
viel mehr mit Hilfe des Tangentenkegels $K\left(P_{0},\mathcal{P}\right)$
geschehen. \end{bem} Sei $\alpha\in\left(0,1\right).$ Für das einseitige
Testproblem $H_{3}$ gegen $K_{3}$ eignet sich die asymptotisch $\left\{ 0\right\} $-$\alpha$-ähnliche
Testfolge
\begin{equation}
\varphi_{n}=\left\{ \begin{array}{cccc}
1 &  & >\\
 & \frac{1}{\sqrt{n}}\sum\limits_{i=1}^{n}\widetilde{k}\circ\pi_{i} &  & u_{1-\alpha}\\
0 &  & \leq
\end{array}\left\Vert \widetilde{k}\right\Vert _{L_{2}\left(P_{0}\right)}\right.\label{eins_einstichprob_opt_test}
\end{equation}
mit $\left\Vert \widetilde{k}\right\Vert _{L_{2}\left(P_{0}\right)}=\left(\int\widetilde{k}^{2}\,dP_{0}\right)^{\frac{1}{2}}.$
Ausführliche Informationen über diese Testfolge findet man in \cite{Janssen:1999a},
\cite{Janssen:1998} und \cite{Strasser:1985b}. 

\noindent\begin{satz}\label{ep_es_asymp_gute} Die Voraussetzung
(\ref{nicht_0_vor}) sei erfüllt. Dann sind folgende Aussagen gültig:
\begin{enumerate}
\item Für jede Folge $\left(P_{t_{n}}^{n}\right)_{n\in\mathbb{N}}\in\mathcal{F}_{1}$
gilt dann
\[
\lim_{n\rightarrow\infty}\int\varphi_{n}\,dP_{t_{n}}^{n}=\Phi\left(\frac{\vartheta}{\left\Vert \widetilde{k}\right\Vert _{L_{2}\left(P_{0}\right)}}-u_{1-\alpha}\right),
\]
wobei $\vartheta=\lim\limits_{n\rightarrow\infty}\sqrt{n}\left(k\left(P_{t_{n}}\right)-k\left(P_{0}\right)\right)$
ist.
\item Die Testfolge $\left(\varphi_{n}\right)_{n\in\mathbb{N}}$ ist asymptotisch
unverfälscht zum Niveau $\alpha$ für $\mathcal{H}_{3}$ gegen $\mathcal{K}_{3}.$
\end{enumerate}
\noindent\end{satz}\begin{proof}vgl. Satz \ref{asymp_gute} und
Satz \ref{asymp_unverf_testfolge} oder \cite{Janssen:1999a}, Theorem
3.1.\end{proof} \begin{defi} Eine Testfolge $\left(\psi_{n}\right)_{n\in\mathbb{N}}$
für $\mathcal{H}_{3}$ gegen $\mathcal{K}_{3}$ heißt asymptotisch
Niveau $\alpha$-ähnlich, falls für alle implizite Hypothesen $\left(P_{t_{n}}^{n}\right)_{n\in\mathbb{N}}\in\mathcal{H}_{3}$
mit
\[
\lim_{n\rightarrow\infty}\sqrt{n}\left(k\left(P_{t_{n}}\right)-k\left(P_{0}\right)\right)=0
\]
bereits 
\[
\lim_{n\rightarrow\infty}\int\psi_{n}\,dP_{t_{n}}^{n}=\alpha
\]
 gilt. \end{defi}\begin{satz}\label{asymp_opt_einseit_eistichprob_tang}
Es gelte 
\begin{equation}
K\left(P_{0},\mathcal{P}\right)=T\left(P_{0},\mathcal{P}\right).\label{RaumKegel}
\end{equation}
 Die Voraussetzung (\ref{nicht_0_vor}) sei erfüllt. Die Testfolge
$\left(\varphi_{n}\right)_{n\in\mathbb{N}}$ ist dann asymptotisch
optimal in der Menge aller Testfolgen für $\mathcal{H}_{3}$ gegen
$\mathcal{K}_{3}$, die asymptotisch Niveau $\alpha$-ähnlich sind.\end{satz}\begin{proof}Es
seien $\left(P_{t_{n}}^{n}\right)_{n\in\mathbb{N}}\in\mathcal{F}_{1}$
eine Folge von Wahrscheinlichkeitsmaßen und $t\mapsto P_{t}$ die
zugehörige $L_{2}\left(P_{0}\right)$-differenzierbare Kurve in $\mathcal{P}$
mit Tangente $h\in K\left(P_{0},\mathcal{P}\right).$ Wie in (\ref{diff_t_n_grad})
erhält man dann
\begin{eqnarray}
\vartheta=\lim_{n\rightarrow\infty}\sqrt{n}\left(k\left(P_{t_{n}}\right)-k\left(P_{0}\right)\right) & = & t\int h\widetilde{k}\,dP_{0}\label{eq:lim_funk}
\end{eqnarray}
 mit $t=\lim\limits_{n\rightarrow\infty}\left(n^{\frac{1}{2}}t_{n}\right).$
Es gilt außerdem $t>0.$ Die Testfolge $\left(\varphi_{n}\right)_{n\in\mathbb{N}}$
ist asymptotisch Niveau $\alpha$-ähnlich, denn aus Bedingung 
\[
\lim_{n\rightarrow\infty}\sqrt{n}\left(k\left(P_{t_{n}}\right)-k\left(P_{0}\right)\right)=0
\]
 folgt $\lim_{n\rightarrow\infty}\int\varphi_{n}\,dP_{t_{n}}^{n}=\alpha$
nach Satz \ref{ep_es_asymp_gute}. Die Folge $\left(P_{t_{n}}^{n}\right)_{n\in\mathbb{N}}$
von Wahrscheinlichkeitsmaßen ist ULAN nach Satz \ref{ulan_le_cam}.
Wegen 
\[
\lim\limits_{n\rightarrow\infty}\frac{n^{-\frac{1}{2}}t}{t_{n}}=1
\]
 folgt
\[
\lim_{n\rightarrow\infty}\left\Vert P_{t_{n}}^{n}-P_{n^{-\frac{1}{2}}t}^{n}\right\Vert =0
\]
 nach \cite{Janssen:1998}, Satz $14.17$. Man erhält somit 
\begin{eqnarray}
\lim_{n\rightarrow\infty}\int\psi_{n}\,dP_{t_{n}}^{n} & = & \lim_{n\rightarrow\infty}\int\psi_{n}\,dP_{n^{-\frac{1}{2}}t}^{n}\label{ulan-vereinfachung}
\end{eqnarray}
 für jede Testfolge $\left(\psi_{n}\right)_{n\in\mathbb{N}}$ nach
Hilfssatz \ref{hsatz1}, falls einer der Grenzwerte existiert. 

\noindent Die Abbildung $f:T\left(P_{0},\mathcal{P}\right)\rightarrow\mathbb{R},h\mapsto\int h\widetilde{k}\,dP_{0}$
ist linear und stetig. Es gilt 
\[
\left\Vert f\right\Vert =\left(\int\widetilde{k}^{2}\,dP_{0}\right)^{\frac{1}{2}}=\left\Vert \widetilde{k}\right\Vert _{L_{2}\left(P_{0}\right)}.
\]
 Aus (\ref{eq:lim_funk}) ergeben sich folgende zwei Äquivalenzen:
\[
\left(P_{t_{n}}^{n}\right)_{n\in\mathbb{N}}\in\mathcal{K}_{3}\Leftrightarrow\int h\widetilde{k}\,dP_{0}>0,
\]
 
\[
\left(P_{t_{n}}^{n}\right)_{n\in\mathbb{N}}\in\mathcal{H}_{3}\Leftrightarrow\int h\widetilde{k}\,dP_{0}\leq0.
\]
Sei $\left(\psi_{n}\right)_{n\in\mathbb{N}}$ eine asymptotisch Niveau
$\alpha$-ähnliche Testfolge für $\mathcal{H}_{3}$ gegen $\mathcal{K}_{3}.$
Wegen (\ref{ulan-vereinfachung}), (\ref{RaumKegel}) und Bemerkung
\ref{lineare_vereinfachung} erhält man dann, dass $\psi_{n}$ ein
Test für das einseitige Testproblem 
\[
H_{1}^{l}=\left\{ h\in T\left(P_{0},\mathcal{P}\right):f\left(h\right)\leq0\right\} \quad\mbox{gegen}\quad K_{1}^{l}=\left\{ h\in T\left(P_{0},\mathcal{P}\right):f\left(h\right)>0\right\} 
\]
für das Experiment $E_{n}=\left(\Omega^{n},\mathcal{A}^{n},\left\{ P_{n,h}:h\in T\left(P_{0},\mathcal{P}\right)\right\} \right)$
aus Satz \ref{tang_raum_lan} ist. Beachte, dass an dieser Stelle
die Voraussetzung (\ref{RaumKegel}) eine wichtige Rolle spielt. Die
Folge $\left(E_{n}\right)_{n\in N}$ der Experimente konvergiert schwach
gegen ein Gauß-Shift Experiment $E=\left(\widehat{\Omega},\widehat{\mathcal{A}},\left\{ Q_{h}:h\in T\left(P_{0},\mathcal{P}\right)\right\} \right)$
nach Satz \ref{tang_raum_lan}. Die Testfolge $\left(\psi_{n}\right)_{n\in\mathbb{N}}$
ist somit eine asymptotisch Niveau $\alpha$-ähnliche Testfolge für
das Testproblem $H_{1}^{l}$ gegen $K_{1}^{l}.$ Nach Satz \ref{asymp_einseitig}
ergibt sich dann
\[
\limsup_{n\rightarrow\infty}\int\psi_{n}\,dP_{n,h}\leq\Phi\left(\frac{\int h\widetilde{k}\,dP_{0}}{\left\Vert \widetilde{k}\right\Vert _{L_{2}\left(P_{0}\right)}}-u_{1-\alpha}\right)\,\,\mbox{für alle}\,\,h\in K_{1}^{l}
\]
und 
\[
\liminf_{n\rightarrow\infty}\int\psi_{n}\,dP_{n,h}\geq\Phi\left(\frac{\int h\widetilde{k}\,dP_{0}}{\left\Vert \widetilde{k}\right\Vert _{L_{2}\left(P_{0}\right)}}-u_{1-\alpha}\right)\,\,\mbox{für alle}\,\,h\in H_{1}^{l}.
\]
 Falls $\left(P_{t_{n}}^{n}\right)_{n\in\mathbb{N}}\in\mathcal{K}_{3}$
eine implizite Alternative ist, so erhält man 
\begin{eqnarray*}
\limsup_{n\rightarrow\infty}\int\psi_{n}\,dP_{t_{n}}^{n} & = & \limsup_{n\rightarrow\infty}\int\psi_{n}\,dP_{\frac{1}{\sqrt{n}}t}^{n}\\
 & = & \limsup_{n\rightarrow\infty}\int\psi_{n}\,dP_{n,th}\\
 & \leq & \Phi\left(\frac{t\int h\widetilde{k}\,dP_{0}}{\left\Vert \widetilde{k}\right\Vert _{L_{2}\left(P_{0}\right)}}-u_{1-\alpha}\right)\\
 & = & \lim_{n\rightarrow\infty}\int\varphi_{n}\,dP_{t_{n}}^{n}.
\end{eqnarray*}
Ist $\left(P_{t_{n}}^{n}\right)_{n\in\mathbb{N}}\in\mathcal{H}_{3}$
eine implizite Hypothese, so ergibt sich
\begin{eqnarray*}
\liminf_{n\rightarrow\infty}\int\psi_{n}\,dP_{t_{n}}^{n} & = & \liminf_{n\rightarrow\infty}\int\psi_{n}\,dP_{\frac{1}{\sqrt{n}}t}^{n}\\
 & = & \liminf_{n\rightarrow\infty}\int\psi_{n}\,dP_{n,th}\\
 & \geq & \Phi\left(\frac{t\int h\widetilde{k}\,dP_{0}}{\left\Vert \widetilde{k}\right\Vert _{L_{2}\left(P_{0}\right)}}-u_{1-\alpha}\right)\\
 & = & \lim_{n\rightarrow\infty}\int\varphi_{n}\,dP_{t_{n}}^{n}.
\end{eqnarray*}
Somit ist alles bewiesen.\end{proof}Die Voraussetzung $K\left(P_{0},\mathcal{P}\right)=T\left(P_{0},\mathcal{P}\right)$
von Satz \ref{asymp_opt_einseit_eistichprob_tang} kann noch abgeschwächt
werden. Der Vektorraum 
\[
N\left(\widetilde{k},P_{0},\mathcal{P}\right):=\left\{ h\in T\left(P_{0},\mathcal{P}\right):\int\widetilde{k}h\,dP_{0}=0\right\} 
\]
 ist das orthogonale Komplement des kanonischen Gradienten $\widetilde{k}$
in dem Tangentialraum $T\left(P_{0},\mathcal{P}\right).$ Der Vektorraum
$N\left(\widetilde{k},P_{0},\mathcal{P}\right)$ ist somit ein abgeschlossener
Unterraum des Hilbertraums $T\left(P_{0},\mathcal{P}\right).$ \begin{satz}\label{asymp_opt_einseit_eistichprob_null}
Die Voraussetzung (\ref{nicht_0_vor}) sei erfüllt. Gilt $N\left(\widetilde{k},P_{0},\mathcal{P}\right)\subset K\left(P_{0},\mathcal{P}\right),$
so ist $\left(\varphi_{n}\right)_{n\in\mathbb{N}}$ eine asymptotisch
optimale Testfolge in der Menge aller asymptotisch Niveau $\alpha$-ähnlichen
Testfolgen für $\mathcal{H}_{3}$ gegen $\mathcal{K}_{3}$.\end{satz}\begin{proof}Es
reicht zu zeigen, dass jede asymptotisch Niveau $\alpha$-ähnliche
Testfolge $\left(\psi_{n}\right)_{n\in\mathbb{N}}$ für $\mathcal{H}_{3}$
gegen $\mathcal{K}_{3}$ bereits eine asymptotisch Niveau $\alpha$-ähnliche
Testfolge für das Testproblem $H_{1}^{l}=\left\{ h\in T\left(P_{0},\mathcal{P}\right):\int h\widetilde{k}\,dP_{0}\leq0\right\} $
gegen $K_{1}^{l}=\left\{ h\in T\left(P_{0},\mathcal{P}\right):\int h\widetilde{k}\,dP_{0}>0\right\} $
ist. Die Behauptung folgt dann analog wie im Beweis von Satz \ref{asymp_opt_einseit_eistichprob_tang}.
Sei $h\in N\left(\widetilde{k},P_{0},\mathcal{P}\right)$ beliebig.
Wegen Voraussetzung $N\left(\widetilde{k},P_{0},\mathcal{P}\right)\subset K\left(P_{0},\mathcal{P}\right)$
existiert eine $L_{2}\left(P_{0}\right)$-differenzierbare Kurve $t\mapsto P_{t}$
in $\mathcal{P}$ mit der Tangente $h.$ Man erhält nun
\begin{eqnarray*}
\lim_{n\rightarrow\infty}\int\psi_{n}dP_{n,h} & = & \lim_{n\rightarrow\infty}\int\psi_{n}dP_{\frac{1}{\sqrt{n}}}^{n}=\alpha
\end{eqnarray*}
für jede asymptotisch Niveau $\alpha$-ähnliche Testfolge $\left(\psi_{n}\right)_{n\in\mathbb{N}}$
für $\mathcal{H}_{3}$ gegen $\mathcal{K}_{3}$. Somit ist alles bewiesen.\end{proof}Die
Voraussetzungen aus Satz \ref{asymp_opt_einseit_eistichprob_null}
lassen sich weiter abschwächen. Dafür braucht man ein Ergebnis aus
\cite{Vaart:1991}, welches in Satz \ref{existenz_limis_test} vorgestellt
wird.\begin{satz}\label{existenz_limis_test} Es seien $\Theta$ eine
beliebige nichtleere Menge und $E_{n}=\left(\Omega_{n},\mathcal{A}_{n},\left\{ P_{n,\vartheta}:\vartheta\in\Theta\right\} \right)$
eine Folge von Experimenten, die schwach gegen ein dominiertes Limesexperiment
$E=\left(\Omega,\mathcal{A},\left\{ P_{\vartheta}:\vartheta\in\Theta\right\} \right)$
konvergiert. Sei $\left(\phi_{n}\right)_{n\in\mathbb{N}}$ eine Folge
der Tests für die Experimente $\left(E_{n}\right)_{n\in\mathbb{N}}.$
Die Folge der Gütefunktionen 
\[
f_{n}:\Theta\rightarrow\left[0,1\right],\,\,\vartheta\mapsto\int\phi_{n}dP_{n,\vartheta}
\]
 konvergiere punktweise gegen eine Funktion $f:\Theta\rightarrow\left[0,1\right],$
d.h. für alle $\vartheta\in\Theta$ gelte
\[
\lim_{n\rightarrow\infty}\int\phi_{n}\,dP_{n,\vartheta}=f(\vartheta).
\]
Dann existiert ein Test $\phi$ für das Limesexperiment $E$ mit der
Gütefunktion $f,$ d.h. $\int\phi\,dP_{\vartheta}=f(\vartheta)$ gilt
für alle $\vartheta\in\Theta.$ \end{satz}\begin{proof}vgl. \cite{Vaart:1991},
Theorem 7.1.\end{proof} \begin{satz}\label{asymp_opt_einseit_eistichprob_null_app}Die
Voraussetzung (\ref{nicht_0_vor}) sei erfüllt. Es existiere eine
Teilmenge 
\[
D\subset K\left(P_{0},\mathcal{P}\right)\cap N\left(\widetilde{k},P_{0},\mathcal{P}\right),
\]
 die bzgl. der $L_{2}\left(P_{0}\right)$-Norm dicht in $N\left(\widetilde{k},P_{0},\mathcal{P}\right)$
liegt. Die Testfolge $\left(\varphi_{n}\right)_{n\in\mathbb{N}}$
ist dann asymptotisch optimal in der Menge aller asymptotisch Niveau
$\alpha$-ähnlichen Testfolgen für $\mathcal{H}_{3}$ gegen $\mathcal{K}_{3}$.\end{satz}\begin{proof}Es
reicht zu zeigen, dass jede asymptotisch Niveau $\alpha$-ähnliche
Testfolge $\left(\psi_{n}\right)_{n\in\mathbb{N}}$ für $\mathcal{H}_{3}$
gegen $\mathcal{K}_{3}$ bereits eine asymptotisch Niveau $\alpha$-ähnliche
Testfolge für das Testproblem $H_{1}^{l}$ gegen $K_{1}^{l}$ ist.
Es muss also gezeigt werden, dass 
\begin{equation}
\lim_{n\rightarrow\infty}\int\psi_{n}\,dP_{n,h}=\alpha\label{eq:alpha}
\end{equation}
 für alle $h\in N\left(\widetilde{k},P_{0},\mathcal{P}\right)$ gilt.
Die Bedingung (\ref{eq:alpha}) ist für alle $h\in D$ nach Voraussetzung
erfüllt. 

\noindent Sei $g\in N\left(\widetilde{k},P_{0},\mathcal{P}\right)\setminus D$
beliebig aber fest gewählt. Nach Voraussetzung existiert eine konvergente
Folge $\left(g_{m}\right)_{m\in\mathbb{N}}\subset D$ mit 
\[
\lim\limits_{m\rightarrow\infty}\left\Vert g-g_{m}\right\Vert _{L_{2}\left(P_{0}\right)}=0.
\]
 Die Folge der Experimente 
\[
E_{n}=\left(\Omega^{n},\mathcal{A}^{n},\left\{ P_{n,h}:h\in T\left(P_{0},\mathcal{P}\right)\right\} \right),\,\,n\in\mathbb{N}
\]
 konvergiert nach Satz \ref{tang_raum_lan} schwach gegen ein Gauß-Shift
Experiment 
\[
E=\left(\widehat{\Omega},\widehat{\mathcal{A}},\left\{ P_{h}:h\in T\left(P_{0},\mathcal{P}\right)\right\} \right).
\]
Für das Limesexperiment $E$ ergibt sich dann 
\begin{equation}
\lim\limits_{m\rightarrow\infty}\left\Vert P_{g_{m}}-P_{g}\right\Vert =0\label{lim_exp_app}
\end{equation}
nach Korollar \ref{topologie_korollar}. Im Folgenden zieht man sich
auf eine geeignete Folge der Teilexperimente zurück. Sei $\left(\psi_{n_{i}}\right)_{i\in\mathbb{N}}$
eine Teilfolge mit 
\[
\lim_{i\rightarrow\infty}\int\psi_{n_{i}}\,dP_{n_{i},g}=\limsup_{n\rightarrow\infty}\int\psi_{n}\,dP_{n,g}.
\]
 Sei $\Theta^{g}=\left\{ g_{m}:m\in\mathbb{N}\right\} \cup\left\{ g\right\} $.
Für alle $i\in\mathbb{N}$ definiere 
\[
E_{i}^{g,\sup}=\left(\Omega^{n_{i}},\mathcal{A}^{n_{i}},\left\{ P_{n_{i},h}:h\in\Theta^{g}\right\} \right).
\]
 Die Folge $\left(E_{i}^{g,\sup}\right)_{i\in\mathbb{N}}$ der Experimente
konvergiert dann schwach gegen das Limesexperiment
\[
E^{g,\sup}=\left(\widehat{\Omega},\widehat{\mathcal{A}},\left\{ P_{h}:h\in\Theta^{g}\right\} \right).
\]
Die Familie $\left\{ P_{h}:h\in\Theta^{g}\right\} $ der Wahrscheinlichkeitsmaße
ist dominiert. Die Folge 
\[
f_{i}^{g,\sup}:\Theta^{g}\rightarrow\left[0,1\right],\,h\mapsto\int\psi_{n_{i}}\,dP_{n_{i},h}
\]
 der Gütefunktionen konvergiert punktweise gegen eine Funktion $f^{g,\sup}:\Theta^{g}\rightarrow\left[0,1\right]$
wegen $\lim_{i\rightarrow\infty}\int\psi_{n_{i}}\,dP_{n_{i},g_{m}}=\lim_{n\rightarrow\infty}\int\psi_{n}\,dP_{n,g_{m}}=\alpha$
für alle $m\in\mathbb{N}$ und $\lim_{i\rightarrow\infty}\int\psi_{n_{i}}\,dP_{n_{i},g}=\limsup_{n\rightarrow\infty}\int\psi_{n}\,dP_{n,g}.$

\noindent Nach Satz \ref{existenz_limis_test} existiert ein Test
$\psi^{\sup}$ für das Limesexperiment $E^{g,\sup}$ mit der Gütefunktion
\[
\int\psi^{\sup}\,dP_{h}=f^{g,\sup}\left(h\right)
\]
 für alle $h\in\Theta^{g}$. Für jedes $m\in\mathbb{N}$ ergibt sich
somit
\[
\int\psi^{\sup}\,dP_{g_{m}}=f^{g,\sup}\left(g_{m}\right)=\lim_{i\rightarrow\infty}\int\psi_{n_{i}}\,dP_{n_{i},g_{m}}=\alpha.
\]

\noindent Wegen (\ref{lim_exp_app}) erhält man nun 
\[
\int\psi^{\sup}\,dP_{g}=\lim_{m\rightarrow\infty}\int\psi^{\sup}\,dP_{g_{m}}=\alpha
\]
mit Hilfe von Lemma \ref{hsatz1}. Hieraus folgt 
\begin{equation}
\limsup_{n\rightarrow\infty}\int\psi_{n}\,dP_{n,g}=\lim_{i\rightarrow\infty}\int\psi_{n_{i}}\,dP_{n_{i},g}=f^{g,\sup}\left(g\right)=\int\psi^{\sup}\,dP_{g}=\alpha.\label{lim_sup_alpha}
\end{equation}
 Sei nun $\left(\psi_{n_{j}}\right)_{j\in\mathbb{N}}$ eine Teilfolge
mit 
\[
\lim_{j\rightarrow\infty}\int\psi_{n_{j}}\,dP_{n_{j},g}=\liminf_{n\rightarrow\infty}\int\psi_{n}\,dP_{n,g}.
\]
Für jedes $j\in\mathbb{N}$ definiere 
\[
E_{j}^{g,\inf}=\left(\Omega^{n_{j}},\mathcal{A}^{n_{j}},\left\{ P_{n_{j},h}:h\in\Theta^{g}\right\} \right).
\]
Die Folge $\left(E_{j}^{g,\inf}\right)_{j\in\mathbb{N}}$ der Experimente
konvergiert schwach gegen das dominierte Limesexperiment $E^{g,\inf}=\left(\widehat{\Omega},\widehat{\mathcal{A}},\left\{ P_{h}:h\in\Theta^{g}\right\} \right).$
Die Folge 
\[
f_{j}^{g,\inf}:\Theta^{g}\rightarrow\left[0,1\right],\,h\mapsto\int\psi_{n_{j}}\,dP_{n_{j},h}
\]
 der Gütefunktionen konvergiert punktweise gegen eine Funktion $f^{g,\inf}:\Theta^{g}\rightarrow\left[0,1\right].$
Nach Satz \ref{existenz_limis_test} existiert ein Test $\psi^{\inf}$
für das Limesexperiment $E^{g,\inf}$ mit der Gütefunktion $f^{g,\inf}.$
Für alle $m\in\mathbb{N}$ gilt dann 
\[
\int\psi^{\inf}dP_{g_{m}}=\lim_{j\rightarrow\infty}\int\psi_{n_{j}}dP_{n_{j},g_{m}}=\alpha.
\]
 Wegen (\ref{lim_exp_app}) und Lemma \ref{hsatz1} ergibt sich
\[
\int\psi^{\inf}\,dP_{g}=\lim_{m\rightarrow\infty}\int\psi^{\inf}\,dP_{g_{m}}=\alpha.
\]
 Man erhält somit 
\begin{equation}
\liminf_{n\rightarrow\infty}\int\psi_{n}\,dP_{n,g}=\lim_{j\rightarrow\infty}\int\psi_{n_{j}}\,dP_{n_{j},g}=\int\psi^{\inf}\,dP_{g}=\alpha.\label{lim_inf_alpha}
\end{equation}
Aus (\ref{lim_sup_alpha}) und (\ref{lim_inf_alpha}) folgt
\[
\lim_{n\rightarrow\infty}\int\psi_{n}\,dP_{n,g}=\limsup_{n\rightarrow\infty}\int\psi_{n}\,dP_{n,g}=\liminf_{n\rightarrow\infty}\int\psi_{n}\,dP_{n,g}=\alpha.
\]
Somit ist alles bewiesen.\end{proof}\begin{bem}Ist der Tangentenkegel
$K\left(P_{0},\mathcal{P}\right)$ ein Vektorraum, so ist die Voraussetzung
der Existenz einer Menge $D\subset K\left(P_{0},\mathcal{P}\right)\cap N\left(\widetilde{k},P_{0},\mathcal{P}\right),$
die bzgl. der $L_{2}\left(P_{0}\right)$-Norm dicht in $N\left(\widetilde{k},P_{0},\mathcal{P}\right)$
liegt, bereits erfüllt. Zum Beweis vergleiche Bemerkung \ref{dichte_bemerkung}.\end{bem}

\subsection{Asymptotisch optimale zweiseitige Tests\label{opt:zw:test:einstichprobe}}

Zur Behandlung der zweiseitigen Testprobleme

\begin{equation}
H_{4}=\left\{ P\in\mathcal{P}:k\left(P\right)=a\right\} \quad\mbox{gegen}\quad K_{4}=\left\{ P\in\mathcal{P}:k\left(P\right)\neq a\right\} \label{tp_4}
\end{equation}
wird zuerst eine geeignete Parametrisierung vorgestellt. Analog zu
den einseitigen Testproblemen wird die nachfolgende Definition begründet:\begin{defi}\label{K_4_H_4}Die
Menge $\mathcal{K}_{4}$ der impliziten Alternativen für das Testproblem
(\ref{tp_4}) ist definiert durch 
\[
\mathcal{K}_{4}:=\left\{ \left(P_{t_{n}}^{n}\right)_{n\in\mathbb{N}}\in\mathcal{F}_{1}:\lim_{n\rightarrow\infty}\sqrt{n}\left(k\left(P_{t_{n}}\right)-k\left(P_{0}\right)\right)\neq0\right\} .
\]

\noindent Die Menge $\mathcal{H}_{4}$ der impliziten Hypothesen für
das Testproblem (\ref{tp_4}) ist gegeben durch
\[
\mathcal{H}_{4}:=\left\{ \left(P_{t_{n}}^{n}\right)_{n\in\mathbb{N}}\in\mathcal{F}_{1}:\lim_{n\rightarrow\infty}\sqrt{n}\left(k\left(P_{t_{n}}\right)-k\left(P_{0}\right)\right)=0\right\} .
\]

\end{defi}\begin{bem}Definition \ref{K_4_H_4} lässt sich analog
wie Definition \ref{impl_alternativen_einstichprobe} verallgemeinern.
Allerdings treten die gleichen Schwierigkeiten auf und der Vorteil
der Verallgemeinerung ist aus der Sicht der nichtparametrischen Statistik
gering, vgl. Bemerkung \ref{bem_verallg_def_impl_alternat}.\end{bem}Für
das zweiseitige Testproblem (\ref{tp_4}) sind die Tests
\[
\varphi_{n}=\left\{ \begin{array}{cccc}
1 &  & >\\
 & \left|\frac{1}{\sqrt{n}}\sum\limits_{i=1}^{n}\widetilde{k}\circ\pi_{i}\right| &  & u_{1-\frac{\alpha}{2}}\\
0 &  & \leq
\end{array}\left\Vert \widetilde{k}\right\Vert _{L_{2}\left(P_{0}\right)}\right.
\]
geeignet. Zunächst wird die asymptotische Gütefunktion der Testfolge
$\left(\varphi_{n}\right)_{n\in\mathbb{N}}$ berechnet. Dann werden
die Optimalitätseigenschaften dieser Testfolge unter verschiedenen
Voraussetzungen an den Tangentenkegel $K\left(P_{0},\mathcal{P}\right)$
untersucht.\begin{satz}\label{alpha_trivial}Sei $t\mapsto P_{t}$
eine $L_{2}\left(P_{0}\right)$-differenzierbare Kurve in $\mathcal{P}$
mit Tangente $g=0\in L_{2}\left(P_{0}\right)$. Außerdem sei $\left(\phi_{n}\right)_{n\in\mathbb{N}}$
eine Testfolge mit $\lim\limits_{n\rightarrow\infty}\int\phi_{n}\,dP_{0}^{n}=\alpha.$
Für jede Nullfolge $\left(t_{n}\right)_{n\in\mathbb{N}}$ mit $\lim_{n\rightarrow\infty}t_{n}\sqrt{n}>0$
gilt dann $\lim\limits_{n\rightarrow\infty}\int\phi_{n}\,dP_{t_{n}}^{n}=\alpha.$
\end{satz}\begin{proof} Nach \cite{Strasser:1985b}, Seite 386,
Theorem 75.8 erhält man 
\[
\lim_{n\rightarrow\infty}\left\Vert P_{t_{n}}^{n}-P_{0}^{n}\right\Vert =0.
\]
 Nach Hilfssatz \ref{hsatz1} gilt dann

\[
\lim\limits_{n\rightarrow\infty}\int\phi_{n}\,dP_{t_{n}}^{n}=\lim\limits_{n\rightarrow\infty}\int\phi_{n}\,dP_{0}^{n}=\alpha.
\]
 \end{proof}\begin{satz}\label{zw_asymp_gutefunktion}Die Voraussetzung
(\ref{nicht_0_vor}) sei erfüllt. Für jede Folge $\left(P_{t_{n}}^{n}\right)_{n\in\mathbb{N}}\in\mathcal{F}_{1}$
gilt dann
\begin{equation}
\lim_{n\rightarrow\infty}\int\varphi_{n}\,dP_{t_{n}}^{n}=\Phi\left(\vartheta\left\Vert \widetilde{k}\right\Vert _{L_{2}\left(P_{0}\right)}^{-1}-u_{1-\frac{\alpha}{2}}\right)+\Phi\left(-\vartheta\left\Vert \widetilde{k}\right\Vert _{L_{2}\left(P_{0}\right)}^{-1}-u_{1-\frac{\alpha}{2}}\right),\label{gf_zt_es}
\end{equation}
wobei $\vartheta=\lim\limits_{n\rightarrow\infty}\sqrt{n}\left(k\left(P_{t_{n}}\right)-k\left(P_{0}\right)\right)$
ist.\end{satz} \begin{proof}Es wird zunächst gezeigt, dass die Bedingung
(\ref{asymp_niveau_alpha}) erfüllt ist. Der Zentrale Grenzwertsatz
impliziert die schwache Konvergenz 
\[
\mathcal{L}\left(\left.\frac{1}{\sqrt{n}}\sum\limits_{i=1}^{n}\widetilde{k}\circ\pi_{i}\right|P_{0}^{n}\right)\rightarrow N\left(0,\left\Vert \widetilde{k}\right\Vert _{L_{2}\left(P_{0}\right)}^{2}\right)
\]
für $n\rightarrow\infty.$ Man erhält somit 
\begin{eqnarray*}
\lim_{n\rightarrow\infty}\int\varphi_{n}\,dP_{0}^{n} & = & \lim_{n\rightarrow\infty}P_{0}^{n}\left(\frac{1}{\sqrt{n}}\sum\limits_{i=1}^{n}\widetilde{k}\circ\pi_{i}>u_{1-\frac{\alpha}{2}}\left\Vert \widetilde{k}\right\Vert _{L_{2}\left(P_{0}\right)}\right)\\
 &  & +\lim_{n\rightarrow\infty}P_{0}^{n}\left(\frac{1}{\sqrt{n}}\sum\limits_{i=1}^{n}\widetilde{k}\circ\pi_{i}<-u_{1-\frac{\alpha}{2}}\left\Vert \widetilde{k}\right\Vert _{L_{2}\left(P_{0}\right)}\right)\\
 & = & 1-\Phi\left(u_{1-\frac{\alpha}{2}}\right)+\Phi\left(-u_{1-\frac{\alpha}{2}}\right)\\
 & = & \alpha.
\end{eqnarray*}
 Es seien $\left(P_{t_{n}}^{n}\right)_{n\in\mathbb{N}}\in\mathcal{F}_{1}$
eine beliebige aber fest gewählte Folge und $g\in L_{2}^{\left(0\right)}\left(P_{0}\right)$
die zugehörige Tangente. Angenommen, es gilt $\int g^{2}\,dP_{0}=0.$
Nach Satz \ref{alpha_trivial} ergibt sich dann 
\[
\lim_{n\rightarrow\infty}\int\varphi_{n}\,dP_{t_{n}}^{n}=\lim_{n\rightarrow\infty}\int\varphi_{n}\,dP_{0}^{n}=\alpha.
\]
Mit (\ref{eq:lim_funk}) erhält man außerdem 
\[
\vartheta=\lim\limits_{n\rightarrow\infty}\sqrt{n}\left(k\left(P_{t_{n}}\right)-k\left(P_{0}\right)\right)=\int\widetilde{k}g\,dP_{0}\lim_{n\rightarrow\infty}n^{\frac{1}{2}}t_{n}=0.
\]
 Hieraus folgt die Gültigkeit von (\ref{gf_zt_es}), falls $\int g^{2}\,dP_{0}=0$
ist. \\
Sei nun $\int g^{2}\,dP_{0}\neq0.$ Die Folge $\left(P_{t_{n}}^{n}\right)_{n\in\mathbb{N}}$
der Wahrscheinlichkeitsmaße ist dann LAN mit der zentralen Folge 
\[
X_{n}=\sum_{i=1}^{n}t_{n}g\circ\pi_{i}.
\]
Man erhält zunächst 
\[
\sigma_{2}^{2}=\lim_{n\rightarrow\infty}\mbox{Var}_{P_{0}^{n}}\left(X_{n}\right)=\left\Vert g\right\Vert _{L_{2}\left(P_{0}\right)}^{2}\lim_{n\rightarrow\infty}nt_{n}^{2}=t^{2}\left\Vert g\right\Vert _{L_{2}\left(P_{0}\right)}^{2},
\]
 wobei $t:=\lim_{n\rightarrow\infty}n^{\frac{1}{2}}t_{n}$ ist. Mit
Hilfe des Cram\'er-Wold-Device und des Zentralen Grenzwertsatzes
ergibt sich
\[
\mathcal{L}\left(\left.\left(\frac{1}{\sqrt{n}}\sum\limits_{i=1}^{n}\widetilde{k}\circ\pi_{i},X_{n}\right)^{t}\right|P_{0}^{n}\right)\rightarrow N\left(\left(\begin{array}{c}
0\\
0
\end{array}\right),\left(\begin{array}{cc}
\sigma_{1}^{2} & \sigma_{12}\\
\sigma_{12} & \sigma_{2}^{2}
\end{array}\right)\right)
\]
 für $n\rightarrow\infty$ mit $\sigma_{1}=\left\Vert \widetilde{k}\right\Vert _{L_{2}\left(P_{0}\right)}.$
Mit (\ref{eq:lim_funk}) erhält man 
\begin{eqnarray*}
\sigma_{12} & = & \lim_{n\rightarrow\infty}\mbox{Cov}_{P_{0}^{n}}\left(\frac{1}{\sqrt{n}}\sum\limits_{i=1}^{n}\widetilde{k}\circ\pi_{i},\sum_{i=1}^{n}t_{n}g\circ\pi_{i}\right)\\
 & = & \mbox{Cov}_{P_{0}}\left(\widetilde{k},g\right)\lim_{n\rightarrow\infty}n^{\frac{1}{2}}t_{n}\\
 & = & t\int\widetilde{k}g\,dP_{0}\\
 & = & \vartheta.
\end{eqnarray*}
 Nach Satz \ref{asymp-guete} folgt nun
\begin{eqnarray*}
\lim_{n\rightarrow\infty}\int\varphi_{n}\,dP_{t_{n}}^{n} & = & \Phi\left(\frac{\sigma_{12}}{\sigma_{1}}-u_{1-\frac{\alpha}{2}}\right)+\Phi\left(-\frac{\sigma_{12}}{\sigma_{1}}-u_{1-\frac{\alpha}{2}}\right)\\
 & = & \left(\vartheta\left\Vert \widetilde{k}\right\Vert _{L_{2}\left(P_{0}\right)}^{-1}-u_{1-\frac{\alpha}{2}}\right)+\Phi\left(-\vartheta\left\Vert \widetilde{k}\right\Vert _{L_{2}\left(P_{0}\right)}^{-1}-u_{1-\frac{\alpha}{2}}\right).
\end{eqnarray*}
 \end{proof}\begin{korollar}Die Voraussetzung (\ref{nicht_0_vor})
sei erfüllt. Die Testfolge $\left(\varphi_{n}\right)_{n\in\mathbb{N}}$
ist dann asymptotisch unverfälscht zum Niveau $\alpha$ für das Testproblem
$\mathcal{H}_{4}$ gegen $\mathcal{K}_{4}$.\end{korollar}\begin{proof}Die
Aussage folgt aus Satz \ref{zw_asymp_gutefunktion} mit Hilfe von
(\ref{eq:lim_funk}). \end{proof}\begin{satz}\label{zwei_seitige_opt}Die
Voraussetzung (\ref{nicht_0_vor}) sei erfüllt. Es gelte $K\left(P_{0},\mathcal{P}\right)=T\left(P_{0},\mathcal{P}\right)$.
Die Testfolge $\left(\varphi_{n}\right)_{n\in\mathbb{N}}$ ist dann
asymptotisch optimal in der Menge aller Testfolgen für das Testproblem
$\mathcal{H}_{4}$ gegen $\mathcal{K}_{4}$, die zum Niveau $\alpha$
asymptotisch unverfälscht sind. \end{satz}\begin{proof}Betrachte
die lineare Funktion $f:T\left(P_{0},\mathcal{P}\right)\rightarrow\mathbb{R},h\mapsto\int\widetilde{k}h\,dP_{0}$.
Analog zum Beweis von Satz \ref{asymp_opt_einseit_eistichprob_tang}
zeigt man, dass jede asymptotisch unverfälschte Niveau $\alpha$ Testfolge
$\left(\psi_{n}\right)_{n\in\mathbb{N}}$ für das Testproblem $\mathcal{H}_{4}$
gegen $\mathcal{K}_{4}$ bereits eine asymptotisch unverfälschte Niveau
$\alpha$ Testfolge für das Testproblem $H_{2}^{l}=\left\{ h\in T\left(P_{0},\mathcal{P}\right):f(h)=0\right\} $
gegen $K_{2}^{l}=\left\{ h\in T\left(P_{0},\mathcal{P}\right):f(h)\neq0\right\} $
ist. Nach Satz \ref{asymp_zweiseitig} ergibt sich 
\begin{eqnarray*}
 &  & \limsup_{n\rightarrow\infty}\int\psi_{n}\,dP_{n,h}\\
 & \leq & \Phi\left(\frac{f\left(h\right)}{\left\Vert f\right\Vert }-u_{1-\frac{\alpha}{2}}\right)+\Phi\left(-\frac{f\left(h\right)}{\left\Vert f\right\Vert }-u_{1-\frac{\alpha}{2}}\right)\\
 & = & \Phi\left(\int\widetilde{k}h\,dP_{0}\left\Vert \widetilde{k}\right\Vert _{L_{2}\left(P_{0}\right)}^{-1}-u_{1-\frac{\alpha}{2}}\right)+\Phi\left(-\int\widetilde{k}h\,dP_{0}\left\Vert \widetilde{k}\right\Vert _{L_{2}\left(P_{0}\right)}^{-1}-u_{1-\frac{\alpha}{2}}\right)
\end{eqnarray*}
für alle $h\in T\left(P_{0},\mathcal{P}\right)$. Es seien $\left(P_{t_{n}}^{n}\right)_{n\in\mathbb{N}}\in\mathcal{F}_{1}$
eine Folge von Wahrscheinlichkeitsmaßen und $t\mapsto P_{t}$ die
zugehörige $L_{2}\left(P_{0}\right)$-differenzierbare Kurve in $\mathcal{P}$
mit Tangente $h\in L_{2}^{(0)}\left(P_{0}\right).$ Mit (\ref{ulan-vereinfachung})
und (\ref{eq:lim_funk}) erhält man nun
\begin{eqnarray*}
\textrm{} &  & \limsup_{n\rightarrow\infty}\int\psi_{n}\,dP_{t_{n}}^{n}\\
 & = & \limsup_{n\rightarrow\infty}\int\psi_{n}\,dP_{\frac{t}{\sqrt{n}}}^{n}\\
 & = & \limsup_{n\rightarrow\infty}\int\psi_{n}\,dP_{n,th}\\
 & \leq & \Phi\left(t\int\widetilde{k}h\,dP_{0}\left\Vert \widetilde{k}\right\Vert _{L_{2}\left(P_{0}\right)}^{-1}-u_{1-\frac{\alpha}{2}}\right)+\Phi\left(-t\int\widetilde{k}h\,dP_{0}\left\Vert \widetilde{k}\right\Vert _{L_{2}\left(P_{0}\right)}^{-1}-u_{1-\frac{\alpha}{2}}\right)
\end{eqnarray*}
\begin{eqnarray*}
 & = & \Phi\left(\vartheta\left\Vert \widetilde{k}\right\Vert _{L_{2}\left(P_{0}\right)}^{-1}-u_{1-\frac{\alpha}{2}}\right)+\Phi\left(-\vartheta\left\Vert \widetilde{k}\right\Vert _{L_{2}\left(P_{0}\right)}^{-1}-u_{1-\frac{\alpha}{2}}\right)\\
 & = & \lim_{n\rightarrow\infty}\int\varphi_{n}\,dP_{t_{n}}^{n},
\end{eqnarray*}
wobei $t=\lim\limits_{n\rightarrow\infty}n^{\frac{1}{2}}t_{n}$ und
$\vartheta=\lim\limits_{n\rightarrow\infty}\sqrt{n}\left(k\left(P_{t_{n}}\right)-k\left(P_{0}\right)\right)$
sind. Somit ist alles bewiesen.\end{proof}Die Voraussetzung $K\left(P_{0},\mathcal{P}\right)=T\left(P_{0},\mathcal{P}\right)$
aus Satz \ref{zwei_seitige_opt} kann noch abgeschwächt werden.\begin{satz}\label{zw_zw_adv_asymp_opt}Die
Voraussetzung (\ref{nicht_0_vor}) sei erfüllt. Der Tangentenkegel
$K\left(P_{0},\mathcal{P}\right)$ liege bzgl. der $L_{2}\left(P_{0}\right)$-Norm
dicht in $T\left(P_{0},\mathcal{P}\right).$ Außerdem sei vorausgesetzt,
dass die Menge $K\left(P_{0},\mathcal{P}\right)\cap N\left(\widetilde{k},P_{0},\mathcal{P}\right)$
bzgl. der $L_{2}\left(P_{0}\right)$-Norm dicht in $N\left(\widetilde{k},P_{0},\mathcal{P}\right)$
liegt. Die Testfolge $\left(\varphi_{n}\right)_{n\in\mathbb{N}}$
ist dann asymptotisch optimal in der Menge aller Testfolgen für das
Testproblem $\mathcal{H}_{4}$ gegen $\mathcal{K}_{4}$, die asymptotisch
unverfälscht zum Niveau $\alpha$ sind.\end{satz}\begin{proof}Sei
$\left(\psi_{n}\right)_{n\in\mathbb{N}}$ eine beliebige asymptotisch
unverfälschte Testfolge zum Niveau $\alpha$ für das Testproblem $\mathcal{H}_{4}$
gegen $\mathcal{K}_{4}$. Wie im Beweis von Satz \ref{zwei_seitige_opt}
reicht es zu zeigen, dass die Testfolge $\left(\psi_{n}\right)_{n\in\mathbb{N}}$
dann bereits asymptotisch unverfälscht zum Niveau $\alpha$ für das
Testproblem $H_{2}^{l}$ gegen $K_{2}^{l}$ ist. Die Behauptung folgt
dann analog wie in Satz \ref{zwei_seitige_opt}.

An dieser Stelle sei daran erinnert, dass die Folge der Experimente
$E_{n}=\left(\Omega^{n},\mathcal{A}^{n},\left\{ P_{n,h}:h\in T\left(P_{0},\mathcal{P}\right)\right\} \right)$
nach Satz \ref{tang_raum_lan} schwach gegen ein Gauß-Shift Experiment
$E=\left(\widetilde{\Omega},\widetilde{\mathcal{A}},\left\{ P_{h}:h\in T\left(P_{0},\mathcal{P}\right)\right\} \right)$
konvergiert. Sei $t\mapsto P_{t}$ eine $L_{2}\left(P_{0}\right)$-differenzierbare
Kurve in $\mathcal{P}$ mit Tangente $g\in K\left(P_{0},\mathcal{P}\right)$.
Nach Voraussetzung gilt dann
\begin{eqnarray}
\liminf_{n\rightarrow\infty}\int\psi_{n}dP_{n,g} & = & \liminf_{n\rightarrow\infty}\int\psi_{n}dP_{\frac{1}{\sqrt{n}}}^{n}\geq\alpha\label{liminf_geq_alpha}
\end{eqnarray}
 für alle $g\in K\left(P_{0},\mathcal{P}\right)\cap K_{2}^{l}$ und
\begin{eqnarray}
\limsup_{n\rightarrow\infty}\int\psi_{n}dP_{n,g} & = & \limsup_{n\rightarrow\infty}\int\psi_{n}dP_{\frac{1}{\sqrt{n}}}^{n}\leq\alpha\label{limsup_leq_alpha}
\end{eqnarray}
 für alle $g\in K\left(P_{0},\mathcal{P}\right)\cap H_{2}^{l}$, weil
die Testfolge $\left(\psi_{n}\right)_{n\in\mathbb{N}}$ asymptotisch
unverfälscht zum Niveau $\alpha$ für das Testproblem $\mathcal{H}_{4}$
gegen $\mathcal{K}_{4}$ ist.

Sei nun $h\in K_{2}^{l}\setminus K\left(P_{0},\mathcal{P}\right)$
beliebig. Nach Voraussetzung existiert dann eine Folge $\left(h_{m}\right)_{m\in\mathbb{N}}\subset K\left(P_{0},\mathcal{P}\right)$
mit 
\[
\lim_{m\rightarrow\infty}\left\Vert h-h_{m}\right\Vert _{L_{2}\left(P_{0}\right)}=0.
\]
Wegen $\int h\widetilde{k}\,dP_{0}\neq0$ existiert ein $m_{0}\in\mathbb{N},$
so dass $\int h_{m}\widetilde{k}\,dP_{0}\neq0$ für alle $m\geq m_{0}$
gilt. Deswegen wird es ohne Einschränkung angenommen, dass $\int h_{m}\widetilde{k}\,dP_{0}\neq0$
für alle $m\in\mathbb{N}$ gilt. Diese Annahme ist äquivalent zu $\left(h_{m}\right)_{m\in\mathbb{N}}\subset K\left(P_{0},\mathcal{P}\right)\cap K_{2}^{l}.$
Für das Gauß-Shift Experiment $E$ ergibt sich 
\[
\lim_{m\rightarrow\infty}\left\Vert P_{h}-P_{h_{m}}\right\Vert =0
\]
nach Korollar \ref{topologie_korollar}. Sei nun $\varepsilon>0$
beliebig aber fest gewählt. Dann existiert ein $k\in\mathbb{N}$ mit
\[
\left\Vert P_{h}-P_{h_{k}}\right\Vert \leq\frac{\varepsilon}{2}.
\]
Sei $\left(\psi_{n_{j}}\right)_{j\in\mathbb{N}}$ eine Teilfolge,
so dass 
\begin{equation}
\liminf_{n\rightarrow\infty}\int\psi_{n}dP_{n,h}=\lim_{j\rightarrow\infty}\int\psi_{n_{j}}dP_{n_{j},h}\label{itm1}
\end{equation}
gilt und der Grenzwert 
\begin{equation}
\lim_{j\rightarrow\infty}\int\psi_{n_{j}}dP_{n_{j},h_{k}}\label{itm2}
\end{equation}
existiert. Die Existenz einer Teilfolge mit (\ref{itm1}) und (\ref{itm2})
ist folgendermaßen gesichert. Wähle zunächst eine Teilfolge $\left(\psi_{n}\right)_{n\in N},N\subset\mathbb{N}$
von $\left(\psi_{n}\right)_{n\in\mathbb{N}},$ so dass die Bedingung
(\ref{itm1}) erfüllt ist. Danach wähle eine weitere Teilfolge $\left(\psi_{n_{j}}\right)_{j\in\mathbb{N}}$
der Folge $\left(\psi_{n}\right)_{n\in N}$, so dass der Grenzwert
(\ref{itm2}) existiert. Die erste Bedingung (\ref{itm1}) bleibt
dabei erfüllt, weil alle Teilfolgen der Folge $\left(\psi_{n}\right)_{n\in N}$
die Bedingung (\ref{itm1}) ebenfalls erfüllen.

Die Folge $E_{j}^{h,\varepsilon}:=\left(\Omega^{n_{j}},\mathcal{A}^{n_{j}},\left\{ P_{n_{j},h},\,P_{n_{j},h_{k}}\right\} \right)$
der Experimente konvergiert schwach gegen das Limesexperiment $E^{h,\varepsilon}=\left(\widetilde{\Omega},\widetilde{\mathcal{A}},\left\{ P_{h},\,P_{h_{k}}\right\} \right).$
Nach Satz \ref{existenz_limis_test} existiert dann ein Test $\psi$
für das Experiment $E^{h,\varepsilon}$, so dass 
\[
\lim_{j\rightarrow\infty}\int\psi_{n_{j}}dP_{n_{j},h}=\int\psi\,dP_{h}
\]
und
\[
\lim_{j\rightarrow\infty}\int\psi_{n_{j}}dP_{n_{j},h_{k}}=\int\psi\,dP_{h_{k}}
\]
 gelten. Einerseits erhält man 
\begin{equation}
\int\psi\,dP_{h_{k}}=\lim_{j\rightarrow\infty}\int\psi_{n_{j}}dP_{n_{j},h_{k}}\geq\liminf_{n\rightarrow\infty}\int\psi_{n}dP_{n,h_{k}}\geq\alpha,\label{vor_eps_ungl}
\end{equation}
andererseits gilt
\begin{equation}
\left|\int\psi\,dP_{h}-\int\psi\,dP_{h_{k}}\right|\leq2\left\Vert P_{h}-P_{h_{k}}\right\Vert \leq\varepsilon.\label{epsilon_ungl}
\end{equation}
Die Ungleichung (\ref{epsilon_ungl}) wird nun genau analysiert. Es
sei zuerst angenommen, dass $\int\psi\,dP_{h}\geq\int\psi\,dP_{h_{k}}$
gilt. Man erhält dann unmittelbar 
\[
\liminf_{n\rightarrow\infty}\int\psi_{n}dP_{n,h}=\lim_{j\rightarrow\infty}\int\psi_{n_{j}}dP_{n_{j},h}=\int\psi\,dP_{h}\geq\int\psi\,dP_{h_{k}}\geq\alpha.
\]
 Es gelte nun $\int\psi\,dP_{h_{k}}>\int\psi\,dP_{h}.$ Aus Ungleichung
(\ref{epsilon_ungl}) ergibt sich dann
\begin{eqnarray*}
\textrm{} &  & \int\psi\,dP_{h_{k}}-\int\psi\,dP_{h}\leq\varepsilon\\
 & \Leftrightarrow & \int\psi\,dP_{h_{k}}\leq\int\psi\,dP_{h}+\varepsilon.
\end{eqnarray*}
Hieraus folgt
\[
\int\psi\,dP_{h}+\varepsilon\geq\int\psi\,dP_{h_{k}}\geq\alpha,
\]
vgl. (\ref{vor_eps_ungl}). Man erhält somit $\int\psi\,dP_{h}\geq\alpha$,
weil $\varepsilon>0$ beliebig klein gewählt werden kann. Insgesamt
ergibt sich 
\[
\liminf_{n\rightarrow\infty}\int\psi_{n}dP_{n,h}=\lim_{j\rightarrow\infty}\int\psi_{n_{j}}dP_{n_{j},h}=\int\psi\,dP_{h}\geq\alpha
\]
für alle $h\in K_{2}^{l}.$ 

Sei nun $h\in H_{2}^{l}\setminus K\left(P_{0},\mathcal{P}\right)$
beliebig aber fest gewählt. Nach Voraussetzung existiert eine Folge
$\left(h_{m}\right)_{m\in\mathbb{N}}\subset K\left(P_{0},\mathcal{P}\right)\cap N\left(\widetilde{k},P_{0},\mathcal{P}\right)$
mit 
\[
\lim_{m\rightarrow\infty}\left\Vert h-h_{m}\right\Vert _{L_{2}\left(P_{0}\right)}=0.
\]
 Es gilt insbesondere $\left(h_{m}\right)_{m\in\mathbb{N}}\subset K\left(P_{0},\mathcal{P}\right)\cap H_{2}^{l}.$
Sei $\varepsilon>0$ beliebig aber fest gewählt. Wegen $\lim\limits_{m\rightarrow0}\left\Vert P_{h}-P_{h_{m}}\right\Vert =0$
existiert ein $k\in\mathbb{N}$ mit $\left\Vert P_{h}-P_{h_{k}}\!\right\Vert \!\leq\frac{\varepsilon}{2}.$
Sei nun $\left(\psi_{n_{j}}\right)_{j\in\mathbb{N}}$ eine Teilfolge,
so dass 
\[
\limsup_{n\rightarrow\infty}\int\psi_{n}dP_{n,h}=\lim_{j\rightarrow\infty}\int\psi_{n_{j}}dP_{n_{j},h}
\]
gilt und der Grenzwert
\[
\lim_{j\rightarrow\infty}\int\psi_{n_{j}}dP_{n_{j},h_{k}}
\]
 existiert. Nach Satz \ref{existenz_limis_test} existiert ein Test
$\psi$ für das Experiment $E^{h,\varepsilon}=\left(\widetilde{\Omega},\widetilde{\mathcal{A}},\left\{ P_{h},\,\,P_{h_{k}}\right\} \right)$
mit 
\[
\lim_{j\rightarrow\infty}\int\psi_{n_{j}}dP_{n_{j},h}=\int\psi\,dP_{h}
\]
und
\[
\lim_{j\rightarrow\infty}\int\psi_{n_{j}}dP_{n_{j},h_{k}}=\int\psi\,dP_{h_{k}}.
\]
Es gilt außerdem
\[
\int\psi\,dP_{h_{k}}=\lim_{j\rightarrow\infty}\int\psi_{n_{j}}dP_{n_{j},h_{k}}\leq\limsup_{n\rightarrow\infty}\int\psi_{n}dP_{n,h_{k}}\leq\alpha
\]
wegen $h_{k}\in K\left(P_{0},\mathcal{P}\right)\cap H_{2}^{l},$ vgl.
(\ref{limsup_leq_alpha}). Analog wie in (\ref{epsilon_ungl}) ergibt
sich $\left|\int\psi\,dP_{h}-\int\psi\,dP_{h_{k}}\right|\leq\varepsilon$.
Wäre $\int\psi\,dP_{h}\leq\int\psi\,dP_{h_{k}},$ so würde man 
\[
\limsup_{n\rightarrow\infty}\int\psi_{n}dP_{n,h}=\lim_{j\rightarrow\infty}\int\psi_{n_{j}}dP_{n_{j},h}=\int\psi\,dP_{h}\leq\int\psi\,dP_{h_{k}}\leq\alpha.
\]
unmittelbar erhalten. Angenommen, es gilt $\int\psi\,dP_{h}>\int\psi\,dP_{h_{k}}.$
Es ergibt sich dann 
\[
\int\psi\,dP_{h}\leq\varepsilon+\int\psi\,dP_{h_{k}}\leq\varepsilon+\alpha.
\]
Hieraus folgt $\int\psi\,dP_{h}\leq\alpha,$ weil $\varepsilon>0$
beliebig klein gewählt werden kann. Insgesamt erhält man 
\[
\limsup_{n\rightarrow\infty}\int\psi_{n}dP_{n,h}=\lim_{j\rightarrow\infty}\int\psi_{n_{j}}dP_{n_{j},h}=\int\psi\,dP_{h}\leq\alpha
\]
 für alle $h\in H_{2}^{l}.$ Die Testfolge $\left(\psi_{n}\right)_{n\in\mathbb{N}}$
ist also asymptotisch unverfälscht zum Niveau $\alpha$ für das Testproblem
$H_{2}^{l}$ gegen $K_{2}^{l}$. Somit ist alles bewiesen.\end{proof}\begin{bem}\label{dichte_bemerkung}Die
Voraussetzungen aus Satz \ref{zw_zw_adv_asymp_opt} bzgl. des Tangentenkegels
$K\left(P_{0},\mathcal{P}\right)$ sind erfüllt, falls $K\left(P_{0},\mathcal{P}\right)$
ein Vektorraum ist.

Ist der Tangentenkegel $K\left(P_{0},\mathcal{P}\right)$ ein Vektorraum,
so liegt er dicht in dem Tangentialraum $T\left(P_{0},\mathcal{P}\right)$
bzgl. der $L_{2}\left(P_{0}\right)$-Norm nach Definition \ref{Tangentialraum}.
Es bleibt zu zeigen, dass die Menge 
\[
W:=K\left(P_{0},\mathcal{P}\right)\cap N\left(\widetilde{k},P_{0},\mathcal{P}\right)
\]
 bzgl. der $L_{2}\left(P_{0}\right)$-Norm dicht in $N\left(\widetilde{k},P_{0},\mathcal{P}\right)$
liegt. Die Menge $W$ ist ein Vektorraum, weil die Menge $N\left(\widetilde{k},P_{0},\mathcal{P}\right)$
und der Tangentenkegel $K\left(P_{0},\mathcal{P}\right)$ jeweils
Vektorräume sind. Sei $h\in N\left(\widetilde{k},P_{0},\mathcal{P}\right)$
beliebig aber fest gewählt. Dann existiert eine Folge $\left(h_{n}\right)_{n\in\mathbb{N}}$
in $K\left(P_{0},\mathcal{P}\right)$ mit 
\begin{equation}
\lim_{n\rightarrow\infty}\left\Vert h-h_{n}\right\Vert _{L_{2}\left(P_{0}\right)}=0.\label{eq:67}
\end{equation}
Wegen $\int h\widetilde{k}\,dP_{0}=0$ erhält man insbesondere
\begin{equation}
\lim\limits_{n\rightarrow\infty}\int h_{n}\widetilde{k}\,dP_{0}=0.\label{eq:68}
\end{equation}
 Besitzt die Folge $\left(h_{n}\right)_{n\in\mathbb{N}}$ eine weitere
Teilfolge, die in dem Vektorraum $W$ enthalten ist, so ist die Behauptung
bereits bewiesen. Es sei also angenommen, dass $h_{n}\notin W$ für
schließlich alle $n\in\mathbb{N}$ gilt. Ohne Einschränkung sei $h_{1}\notin W,$
das bedeutet $\int h_{1}\widetilde{k}\,dP_{0}\neq0.$ Für jedes $n\in\mathbb{N}$
definiere
\[
\widehat{h}_{n}:=h_{n}-h_{1}\frac{\int h_{n}\widetilde{k}\,dP_{0}}{\int h_{1}\widetilde{k}\,dP_{0}}.
\]
 Man erhält $\widehat{h}_{n}\in W$ für alle $n\in\mathbb{N}$, weil
$K\left(P_{0},\mathcal{P}\right)$ ein Vektorraum ist und 
\[
\int\widehat{h}_{n}\widetilde{k}\,dP_{0}=\int h_{n}\widetilde{k}\,dP_{0}-\int h_{1}\widetilde{k}\,dP_{0}\frac{\int h_{n}\widetilde{k}\,dP_{0}}{\int h_{1}\widetilde{k}\,dP_{0}}=0
\]
für alle $n\in\mathbb{N}$ gilt. Für die Folge $\left(\widehat{h}_{n}\right)_{n\in\mathbb{N}}$
erhält man mit Hilfe von (\ref{eq:67}) und (\ref{eq:68}) die Konvergenz
\begin{eqnarray*}
\left\Vert \widehat{h}_{n}-h\right\Vert _{L_{2}\left(P_{0}\right)} & = & \left\Vert h_{n}-h_{1}\frac{\int h_{n}\widetilde{k}\,dP_{0}}{\int h_{1}\widetilde{k}\,dP_{0}}-h\right\Vert _{L_{2}\left(P_{0}\right)}\\
 & \leq & \left\Vert h_{n}-h\right\Vert _{L_{2}\left(P_{0}\right)}+\left\Vert h_{1}\frac{\int h_{n}\widetilde{k}\,dP_{0}}{\int h_{1}\widetilde{k}\,dP_{0}}\right\Vert _{L_{2}\left(P_{0}\right)}\\
 & = & \left\Vert h_{n}-h\right\Vert _{L_{2}\left(P_{0}\right)}+\left\Vert h_{1}\right\Vert _{L_{2}\left(P_{0}\right)}\left|\frac{\int h_{n}\widetilde{k}\,dP_{0}}{\int h_{1}\widetilde{k}\,dP_{0}}\right|\\
 & \rightarrow & 0
\end{eqnarray*}
für $n\rightarrow\infty$. Somit ist alles bewiesen.\end{bem}\begin{bem}Der
Tangentenkegel $K\left(P_{0},\mathcal{P}\right)$ ist genau dann ein
Vektorraum, wenn er konvex ist. \end{bem}

\section{Asymptotisch optimales Testen statistischer Funktionale bei Zweistichprobenproblemen}

In diesem Abschnitt kehren wir zu den Zweistichprobenproblemen beim
Testen statistischer Funktionale zurück, vgl. Abschnitt \ref{sec:Zweistichprobenprobleme-beim-Testen}.
Im Folgenden seien $\left(n_{1}\right)_{n\in\mathbb{N}}$ und $\left(n_{2}\right)_{n\in\mathbb{N}}$
zwei Folgen mit $\lim\limits_{n\rightarrow\infty}\frac{n_{2}}{n}=d\in\left(0,1\right)$
und $n_{1}+n_{2}=n$ für alle $n\in\mathbb{N}$. Dabei bezeichnet
$n_{i}$ den Umfang der $i$-ten Stichprobe für $i=1,2$. Ein statistisches
Funktional $k:\mathcal{P}\otimes\mathcal{Q}\rightarrow\mathbb{R}$
sei differenzierbar an einer Stelle $P_{0}\otimes Q_{0}$ mit dem
kanonischen Gradienten $\widetilde{k}\in L_{2}^{(0)}\left(P_{0}\otimes Q_{0}\right)$.
Nach Satz \ref{umrechnung} existieren dann eindeutige $\widetilde{k}_{1}\in L_{2}^{(0)}\left(P_{0}\right)$
und $\widetilde{k}_{2}\in L_{2}^{(0)}\left(Q_{0}\right)$ mit $\widetilde{k}=\widetilde{k}_{1}\circ\pi_{1}+\widetilde{k}_{2}\circ\pi_{2}$.
Es wird weiterhin immer vorausgesetzt, dass die Bedingung 
\begin{equation}
\left\Vert \widetilde{k}\right\Vert _{L_{2}\left(P_{0}\otimes Q_{0}\right)}\neq0\label{zw_nicht_null_bed}
\end{equation}
 erfüllt ist. Für das einseitige Testproblem wurde eine lokale Parametrisierung
an der Stelle $P_{0}\otimes Q_{0}$ entwickelt, die sich aus der Mengen
der lokalen impliziten Alternativen $\mathcal{K}_{1}$ und der lokalen
impliziten Hypothesen $\mathcal{H}_{1}$ zusammensetzt, vgl. Definition
\ref{impl_alternativen}. Für das zweiseitige Testproblem findet man
in Abschnitt \ref{sec:zw:twsts} eine lokale Parametrisierung in $P_{0}\otimes Q_{0}$,
die entsprechend aus der lokalen impliziten Alternativen $\mathcal{K}_{2}$
und aus der lokalen impliziten Hypothesen $\mathcal{H}_{2}$ besteht.

Um die Optimalitätseigenschaften der in Abschnitten \ref{sec:Asymptotische-Eigenschaften-der}
und \ref{sec:zw:twsts} vorgestellten Testfolgen nachzuweisen, benötigt
man einige Vorbereitungen. Zuerst wird ein neues Skalarprodukt auf
dem Tangentialraum $T\left(P_{0}\otimes Q_{0},\mathcal{P}\otimes\mathcal{Q}\right)$
eingeführt. 

\begin{satz}Die Abbildung 
\begin{eqnarray*}
\left\langle \cdot,\cdot\right\rangle _{d}:\,\,T\left(P_{0}\otimes Q_{0},\mathcal{P}\otimes\mathcal{Q}\right)^{2} & \rightarrow & \mathbb{R}\\
\left(g_{1}\circ\pi_{1}+g_{2}\circ\pi_{2},h_{1}\circ\pi_{1}+h_{2}\circ\pi_{2}\right) & \mapsto & \left(1-d\right)\int g_{1}h_{1}\,dP_{0}+d\int g_{2}h_{2}\,dQ_{0}
\end{eqnarray*}
ist ein Skalarprodukt zum Parameter $d\in\left(0,1\right)$, wobei
$g_{1},h_{1}\in T\left(P_{0},\mathcal{P}\right)$ und $g_{2},h_{2}\in T\left(Q_{0},\mathcal{Q}\right)$
sind. \end{satz}

\begin{proof} Die Abbildung $\left\langle \cdot,\cdot\right\rangle _{d}$
ist wohl definiert, weil die Darstellung $g=g_{1}\circ\pi_{1}+g_{2}\circ\pi_{2}$
mit $g_{1}\in T\left(P_{0},\mathcal{P}\right)$ und $g_{2}\in T\left(Q_{0},\mathcal{Q}\right)$
für jedes $g\in T\left(P_{0}\otimes Q_{0},\mathcal{P}\otimes\mathcal{Q}\right)$
nach Bemerkung \ref{eind_tangente} eindeutig ist. \\
Die Symmetrie $\left\langle h,g\right\rangle _{d}=\left\langle g,h\right\rangle _{d}$
ist für alle $h,g\in T\left(P_{0}\otimes Q_{0},\mathcal{P}\otimes\mathcal{Q}\right)$
leicht zu sehen. Es seien nun $h,f\in T\left(P_{0}\otimes Q_{0},\mathcal{P}\otimes\mathcal{Q}\right)$
und $r,s\in\mathbb{R}$ beliebig. Dann existieren eindeutige $h_{1},f_{1}\in T\left(P_{0},\mathcal{P}\right)$
und $h_{2},f_{2}\in T\left(Q_{0},\mathcal{Q}\right)$ mit $h=h_{1}\circ\pi_{1}+h_{2}\circ\pi_{2}$
und $f=f_{1}\circ\pi_{1}+f_{2}\circ\pi_{2}$. Man erhält zunächst
\begin{eqnarray*}
\left\langle rh+sf,g\right\rangle _{d} & = & \left(1-d\right)\int\left(rh_{1}+sf_{1}\right)g_{1}\,dP_{0}+d\int\left(rh_{2}+sf_{2}\right)g_{2}\,dQ_{0}
\end{eqnarray*}
\begin{eqnarray*}
 & = & r\left(\left(1-d\right)\int h_{1}g_{1}\,dP_{0}+d\int h_{2}g_{2}\,dQ_{0}\right)\\
 &  & +s\left(\left(1-d\right)\int f_{1}g_{1}\,dP_{0}+d\int f_{2}g_{2}\,dQ_{0}\right)\\
 & = & r\left\langle h,g\right\rangle _{d}+s\left\langle f,g\right\rangle _{d}.
\end{eqnarray*}
Es ist somit gezeigt, dass $\left\langle \cdot,\cdot\right\rangle _{d}$
eine symmetrische bilineare Abbildung ist. Es gilt außerdem
\begin{eqnarray*}
\textrm{} &  & \left\langle g,g\right\rangle _{d}=0\\
 & \Leftrightarrow & \left(1-d\right)\int g_{1}^{2}\,dP_{0}+d\int g_{2}^{2}\,dQ_{0}=0\\
 & \Leftrightarrow & \int g_{1}^{2}\,dP_{0}=0\quad\mbox{und}\quad\int g_{2}^{2}\,dQ_{0}=0\\
 & \Leftrightarrow & g_{1}=0\quad P_{0}\mbox{-f.s.}\quad\mbox{und}\quad g_{2}=0\quad Q_{0}\mbox{-f.s.}\\
 & \Leftrightarrow & g=0\quad P_{0}\otimes Q_{0}\mbox{-f.s.}
\end{eqnarray*}

\end{proof}Die von dem Skalarprodukt $\left\langle \cdot,\cdot\right\rangle _{d}$
erzeugte Norm wird mit 
\[
\left\Vert \cdot\right\Vert _{d}:T\left(P_{0}\otimes Q_{0},\mathcal{P}\otimes\mathcal{Q}\right)\rightarrow\mathbb{R}_{\geq0},\,\,g\mapsto\sqrt{\left\langle g,g\right\rangle _{d}}
\]
 bezeichnet.\begin{korollar}Der Tangentialraum $T\left(P_{0}\otimes Q_{0},\mathcal{P}\otimes\mathcal{Q}\right)$
ist ein Hilbertraum bzgl. des Skalarproduktes $\left\langle \cdot,\cdot\right\rangle _{d}$.\end{korollar}\begin{proof}Sei
$g_{0}\in T\left(P_{0}\otimes Q_{0},\mathcal{P}\otimes\mathcal{Q}\right)$
beliebig. Dann existiert eine Folge $\left(g_{n}\right)_{n\in\mathbb{N}}$
in $T\left(P_{0}\otimes Q_{0},\mathcal{P}\otimes\mathcal{Q}\right)$
mit 
\[
\lim_{n\rightarrow\infty}\left\Vert g_{0}-g_{n}\right\Vert _{L_{2}\left(P_{0}\otimes Q_{0}\right)}=0.
\]
Für jedes $n\in\mathbb{N}_{0}$ besitzt die Tangente $g_{n}$ die
eindeutige Darstellung $g_{n}=g_{n1}\circ\pi_{1}+g_{n2}\circ\pi_{2}$
mit $g_{n1}\in T\left(P_{0},\mathcal{P}\right)$ und $g_{n2}\in T\left(Q_{0},\mathcal{Q}\right)$.
Man erhält somit 
\[
0=\lim_{n\rightarrow\infty}\left\Vert g_{0}-g_{n}\right\Vert _{L_{2}\left(P_{0}\otimes Q_{0}\right)}^{2}=\lim_{n\rightarrow\infty}\left\Vert g_{01}-g_{n1}\right\Vert _{L_{2}\left(P_{0}\right)}^{2}+\lim_{n\rightarrow\infty}\left\Vert g_{02}-g_{n2}\right\Vert _{L_{2}\left(Q_{0}\right)}^{^{2}}.
\]
Hieraus folgt $\lim\limits_{n\rightarrow\infty}\left\Vert g_{01}-g_{n1}\right\Vert _{L_{2}\left(P_{0}\right)}^{2}=0$
und $\lim\limits_{n\rightarrow\infty}\left\Vert g_{02}-g_{n2}\right\Vert _{L_{2}\left(P_{0}\right)}^{2}=0.$
Es ergibt sich nun
\begin{eqnarray*}
\textrm{} &  & \lim_{n\rightarrow\infty}\left\Vert g_{0}-g_{n}\right\Vert _{d}^{2}\\
 & = & \lim_{n\rightarrow\infty}\left\langle g_{0}-g_{n},\,g_{0}-g_{n}\right\rangle _{d}\\
 & = & \lim_{n\rightarrow\infty}\left(\left(1-d\right)\int\left(g_{01}-g_{n1}\right)^{2}dP_{0}+d\int\left(g_{02}-g_{n2}\right)^{2}dQ_{0}\right)\\
 & = & \left(1-d\right)\lim_{n\rightarrow\infty}\left\Vert g_{01}-g_{n1}\right\Vert _{L_{2}\left(P_{0}\right)}^{2}+d\lim_{n\rightarrow\infty}\left\Vert g_{02}-g_{n2}\right\Vert _{L_{2}\left(Q_{0}\right)}^{^{2}}\\
 & = & 0.
\end{eqnarray*}
 \end{proof}Der Hilbertraum $T\left(P_{0}\otimes Q_{0},\mathcal{P}\otimes\mathcal{Q}\right)$
bzgl. des Skalarproduktes $\left\langle \cdot,\cdot\right\rangle _{d}$
wird zur Abkürzung mit $H_{d}$ bezeichnet. 

Sei $g\in T\left(P_{0}\otimes Q_{0},\mathcal{P}\otimes\mathcal{Q}\right)$
beliebig aber fest gewählt. Für die Konstruktion einer $L_{2}\left(P_{0}\otimes Q_{0}\right)$-differenzierbaren
Kurve in $\mathcal{P}\otimes\mathcal{Q}$ mit Tangente $g$ lässt
sich das Master-Modell aus \cite{Janssen:2004b} verwenden, vgl. also
Satz \ref{konstruktion}. Die Tangente $g$ besitzt die eindeutige
Darstellung $g=g_{1}\circ\pi_{1}+g_{2}\circ\pi_{2}$ mit $g_{1}\in T\left(P_{0},\mathcal{P}\right)$
und $g_{2}\in T\left(Q_{0},\mathcal{Q}\right)$. Nach Satz \ref{konstruktion}
existieren dann eine $L_{2}\left(P_{0}\right)$-differenzierbare Kurve
$t\mapsto P_{t}$ in $\mathcal{M}_{1}\left(\Omega_{1},\mathcal{A}_{1}\right)$
mit Tangente $g_{1}$ und eine $L_{2}\left(Q_{0}\right)$-differenzierbare
Kurve $t\mapsto Q_{t}$ in $\mathcal{M}_{1}\left(\Omega_{2},\mathcal{A}_{2}\right)$
mit Tangente $g_{2}$. Die Kurve $t\mapsto P_{t}\otimes Q_{t}$ ist
$L_{2}\left(P_{0}\otimes Q_{0}\right)$-differenzierbar mit Tangente
$g$ nach Satz \ref{L2Produkt}. Für jedes $g\in T\left(P_{0}\otimes Q_{0},\mathcal{P}\otimes\mathcal{Q}\right)$
und alle $n\in\mathbb{N}$ definiert man ein Wahrscheinlichkeitsmaß
$P_{n,g}$ durch
\[
P_{n,g}:=P_{\frac{1}{\sqrt{n}}}^{n_{1}}\otimes Q_{\frac{1}{\sqrt{n}}}^{n_{2}},
\]
 wobei $t\mapsto P_{t}\otimes Q_{t}$ eine $L_{2}\left(P_{0}\otimes Q_{0}\right)$-differenzierbare
Kurve mit Tangente $g$ ist. Es wird nun die Folge 
\[
E_{d,n}=\left(\Omega_{1}^{n_{1}}\times\Omega_{2}^{n_{2}},\mathcal{A}_{1}^{n_{1}}\otimes\mathcal{A}_{2}^{n_{2}},\left\{ P_{n,g}:g\in T\left(P_{0}\otimes Q_{0},\mathcal{P}\otimes\mathcal{Q}\right)\right\} \right)
\]
der statistischen Experimente betrachtet, die der Modellbildung durch
implizite Alternativen und Hypothesen bei Zweistichprobenproblemen
entspricht. Außerdem sei 
\[
E_{d}=\left(\Omega,\mathcal{A},\left\{ P_{h}:h\in H_{d}\right\} \right)
\]
ein Gauß-Shift Experiment bzgl. des Hilbertraums $H_{d}$.\begin{satz}Die
Folge $\left(E_{d,n}\right)_{n\in\mathbb{N}}$ der Experimente konvergiert
schwach gegen ein Gauß-Shift Experiment $E_{d}$ und ist somit lokal
asymptotisch normal. \end{satz}\begin{proof}Es reicht die zwei Voraussetzungen
von Satz \ref{hin_not_lan} nachzuweisen. Sei $g\in T\left(P_{0}\otimes Q_{0},\mathcal{P}\otimes\mathcal{Q}\right)$
beliebig. Die Tangente $g$ besitzt die eindeutige Darstellung $g=g_{1}\circ\pi_{1}+g_{2}\circ\pi_{2}$.
Man berechnet zunächst
\begin{eqnarray*}
 &  & \lim_{n\rightarrow\infty}\mbox{Var}_{P_{n,0}}\left(\frac{1}{\sqrt{n}}\left(\sum_{i=1}^{n_{1}}g_{1}\circ\pi_{i}+\sum_{i=n_{1}+1}^{n}g_{2}\circ\pi_{i}\right)\right)\\
 & = & \left(1-d\right)\int g_{1}^{2}dP+d\int g_{2}^{2}dQ\\
 & = & \left\Vert g\right\Vert _{d}^{2}.
\end{eqnarray*}
 Nach Satz \ref{lan_produkt} und Anwendung \ref{ulan_anwendung}
ergibt sich
\[
\log\left(\frac{dP_{n,g}}{dP_{n,0}}\right)=\frac{1}{\sqrt{n}}\left(\sum_{i=1}^{n_{1}}g_{1}\circ\pi_{i}+\sum_{i=n_{1}+1}^{n}g_{2}\circ\pi_{i}\right)-\frac{1}{2}\left\Vert g\right\Vert _{d}^{2}+R_{n,g},
\]
 wobei $\lim\limits_{n\rightarrow\infty}P_{n,0}\left(\left|R_{n,g}\right|>\varepsilon\right)=0$
für jedes $\varepsilon>0$ gilt. Hieraus folgt 
\[
L_{n}\left(g\right)=\log\left(\frac{dP_{n,g}}{dP_{n,0}}\right)+\frac{1}{2}\left\Vert g\right\Vert _{d}^{2}=\frac{1}{\sqrt{n}}\left(\sum_{i=1}^{n_{1}}g_{1}\circ\pi_{i}+\sum_{i=n_{1}+1}^{n}g_{2}\circ\pi_{i}\right)+R_{n,h}.
\]
 Man erhält somit die schwache Konvergenz 
\begin{eqnarray*}
\mathcal{L}\left(\left.L_{n}\left(g\right)\right|P_{n,0}\right) & = & \mathcal{L}\left(\left.\frac{1}{\sqrt{n}}\left(\sum_{i=1}^{n_{1}}g_{1}\circ\pi_{i}+\sum_{i=n_{1}+1}^{n}g_{2}\circ\pi_{i}\right)+R_{n,h}\right|P_{n,0}\right)\\
 & \rightarrow & N\left(0,\left\Vert g\right\Vert _{d}^{2}\right)
\end{eqnarray*}
für $n\rightarrow\infty$ nach dem Zentralen Grenzwertsatz und nach
dem Lemma von Slutsky. Die $1.$ Bedingung von Satz \ref{hin_not_lan}
ist somit erfüllt. Die $2.$ Bedingung von Satz \ref{hin_not_lan}
lässt sich genauso nachweisen wie im Beweis von Satz \ref{tang_raum_lan}.
\end{proof}Es wird nun die bzgl. der $L_{2}\left(P_{0}\otimes Q_{0}\right)$-Norm
stetige lineare Funktion
\begin{eqnarray*}
f:T\left(P_{0}\otimes Q_{0},\mathcal{P}\otimes\mathcal{Q}\right) & \rightarrow & \mathbb{R}\\
g & \mapsto & \int g\widetilde{k}\,dP_{0}\otimes Q_{0}
\end{eqnarray*}
betrachtet. Sei $g=g_{1}\circ\pi_{1}+g_{2}\circ\pi_{2}$ mit $g_{1}\in L_{2}^{(0)}\left(P_{0}\right)$
und $g_{2}\in L_{2}^{(0)}\left(Q_{0}\right)$. Für die Funktion $f$
findet man leicht die Darstellung
\begin{eqnarray}
f\left(g\right) & = & \int\left(g_{1}\circ\pi_{1}+g_{2}\circ\pi_{2}\right)\left(\widetilde{k}_{1}\circ\pi_{1}+\widetilde{k}_{2}\circ\pi_{2}\right)dP_{0}\otimes Q_{0}\nonumber \\
 & = & \int g_{1}\widetilde{k}_{1}\,dP_{0}+\int g_{2}\widetilde{k}_{2}\,dQ_{0}\nonumber \\
 & = & \left\langle g_{1}\circ\pi_{1}+g_{2}\circ\pi_{2},\frac{1}{1-d}\widetilde{k}_{1}\circ\pi_{1}+\frac{1}{d}\widetilde{k}_{2}\circ\pi_{2}\right\rangle _{d}\nonumber \\
 & = & \left\langle g,\widehat{k}\right\rangle _{d},\label{d-darstellung}
\end{eqnarray}
wobei $\widehat{k}=\frac{1}{1-d}\widetilde{k}_{1}\circ\pi_{1}+\frac{1}{d}\widetilde{k}_{2}\circ\pi_{2}$
ist. Die Funktion $f:T\left(P_{0}\otimes Q_{0},\mathcal{P}\otimes\mathcal{Q}\right)\rightarrow\mathbb{R}$
lässt sich also als Skalarprodukt $\left\langle \cdot,\cdot\right\rangle _{d}$
mit $\widehat{k}$ darstellen. Man erhält außerdem
\[
\left\Vert f\right\Vert _{d}^{2}=\left\Vert \widehat{k}\right\Vert _{d}^{2}=\frac{1}{1-d}\int\widetilde{k}_{1}^{2}\,dP_{0}+\frac{1}{d}\int\widetilde{k}_{2}^{2}\,dQ_{0}.
\]
Die schwache Konvergenz der Folge $\left(E_{d,n}\right)_{n\in\mathbb{N}}$
der Experimente gegen das Gauß-Shift Experiment $E_{d}$ und die Darstellung
(\ref{d-darstellung}) der linearen Funktion $f$ ermöglichen eine
elegante Behandlung der Zweistichprobenprobleme beim Testen statistischer
Funktionale. Analog wie in Abschnitt \ref{sec:asymp_opt_ein} lässt
sich die asymptotische Optimalität der in Kapitel \ref{cha:Testen-implizit-definierter}
entwickelten Testfolgen für die einseitigen und zweiseitigen Zweistichprobenprobleme
nachweisen.

\subsection{Asymptotisch optimale einseitige Tests}

Die Testfolge 
\begin{eqnarray*}
\psi_{n} & = & \left\{ \begin{array}{cccc}
1 &  & >\\
 & T_{n} &  & u_{1-\alpha}\left\Vert \widehat{k}\right\Vert _{d}\\
0 &  & \leq
\end{array}\right.
\end{eqnarray*}
 mit 
\[
T_{n}=\frac{\sqrt{n}}{n_{1}}\sum_{i=1}^{n_{1}}\widetilde{k}_{1}\circ\pi_{i}+\frac{\sqrt{n}}{n_{2}}\sum_{i=n_{1}+1}^{n}\widetilde{k}_{2}\circ\pi_{i}
\]
eignet sich für das Testproblem $\mathcal{H}_{1}$ gegen $\mathcal{K}_{1}$,
vgl. Abschnitt \ref{sec:Asymptotische-Eigenschaften-der}. Unter den
schwachen Voraussetzungen an den Tangentenkegel $K\left(P_{0}\otimes Q_{0},\mathcal{P}\otimes\mathcal{Q}\right)$
lässt sich die asymptotische Optimalität der Testfolge $\left(\psi_{n}\right)_{n\in\mathbb{N}}$
nachweisen. \begin{defi} Eine Testfolge $\left(\varphi_{n}\right)_{n\in\mathbb{N}}$
für $\mathcal{H}_{1}$ gegen $\mathcal{K}_{1}$ heißt asymptotisch
Niveau $\alpha$-ähnlich, falls
\[
\lim_{n\rightarrow\infty}\int\psi_{n}\,dP_{t_{n}}^{n_{1}}\otimes Q_{t_{n}}^{n_{2}}=\alpha
\]
 für alle impliziten Hypothesen $\left(P_{t_{n}}^{n_{1}}\otimes Q_{t_{n}}^{n_{2}}\right)_{n\in\mathbb{N}}\in\mathcal{H}_{1}$
mit 
\[
\lim_{n\rightarrow\infty}\sqrt{n}\left(k\left(P_{t_{n}}\otimes Q_{t_{n}}\right)-k\left(P_{0}\otimes Q_{0}\right)\right)=0
\]
 gilt. \end{defi}\begin{satz}Es gelte $K\left(P_{0}\otimes Q_{0},\mathcal{P}\otimes\mathcal{Q}\right)=T\left(P_{0}\otimes Q_{0},\mathcal{P}\otimes\mathcal{Q}\right)$.
Die Testfolge $\left(\psi_{n}\right)_{n\in\mathbb{N}}$ ist dann asymptotisch
optimal innerhalb der Menge aller asymptotisch Niveau $\alpha$-ähnlichen
Testfolgen für $\mathcal{H}_{1}$ gegen $\mathcal{K}_{1}$. \end{satz}\begin{proof}Sei
$\left(\varphi_{n}\right)_{n\in\mathbb{N}}$ eine asymptotisch Niveau
$\alpha$-ähnliche Testfolge für $\mathcal{H}_{1}$ gegen $\mathcal{K}_{1}$.
Analog wie im Beweis von Satz \ref{asymp_opt_einseit_eistichprob_tang}
zeigt man, dass $\left(\varphi_{n}\right)_{n\in\mathbb{N}}$ bereits
eine asymptotisch Niveau $\alpha$-ähnliche Testfolge für das Testproblem
\[
H_{1}^{l}=\left\{ h\in T\left(P_{0}\otimes Q_{0},\mathcal{P}\otimes\mathcal{Q}\right):f(h)\leq0\right\} 
\]
 gegen 
\[
K_{1}^{l}=\left\{ h\in T\left(P_{0}\otimes Q_{0},\mathcal{P}\otimes\mathcal{Q}\right):f(h)>0\right\} 
\]
 ist. Nach Satz \ref{asymp_einseitig} ergibt sich 
\[
\limsup_{n\rightarrow\infty}\int\varphi_{n}\,dP_{n,h}\leq\Phi\left(\frac{f(h)}{\left\Vert f\right\Vert _{d}}-u_{1-\alpha}\right)
\]
für alle $h\in K_{1}^{l}$ und 
\[
\liminf_{n\rightarrow\infty}\int\varphi_{n}\,dP_{n,h}\geq\Phi\left(\frac{f(h)}{\left\Vert f\right\Vert _{d}}-u_{1-\alpha}\right)
\]
 für alle $h\in H_{1}^{l}$. Sei $\left(P_{t_{n}}^{n_{1}}\otimes Q_{t_{n}}^{n_{2}}\right)_{n\in\mathbb{N}}\in\mathcal{K}_{1}$
eine implizite Alternative mit der zugehörigen Tangente $h\in T\left(P_{0}\otimes Q_{0},\mathcal{P}\otimes\mathcal{Q}\right).$
Außerdem sei $t=\lim\limits_{n\rightarrow\infty}n^{\frac{1}{2}}t_{n}.$
Man erhält dann 
\begin{eqnarray*}
\limsup_{n\rightarrow\infty}\int\varphi_{n}\,dP_{t_{n}}^{n_{1}}\otimes Q_{t_{n}}^{n_{2}} & = & \limsup_{n\rightarrow\infty}\int\varphi_{n}\,dP_{n,th}\\
 & \leq & \Phi\left(\frac{f(th)}{\left\Vert f\right\Vert _{d}}-u_{1-\alpha}\right)\\
 & = & \Phi\left(\frac{t\int h\widetilde{k}\,dP_{0}\otimes Q_{0}}{\left(\frac{1}{1-d}\int\widetilde{k}_{1}^{2}\,dP_{0}+\frac{1}{d}\int\widetilde{k}_{2}^{2}\,dQ_{0}\right)^{\frac{1}{2}}}-u_{1-\alpha}\right)\\
 & = & \lim_{n\rightarrow\infty}\int\psi_{n}\,dP_{t_{n}}^{n_{1}}\otimes Q_{t_{n}}^{n_{2}}.
\end{eqnarray*}
 Für eine implizite Hypothese $\left(P_{t_{n}}^{n_{1}}\otimes Q_{t_{n}}^{n_{2}}\right)_{n\in\mathbb{N}}\in\mathcal{H}_{1}$
mit der zugehörigen Tangente $h\in T\left(P_{0}\otimes Q_{0},\mathcal{P}\otimes\mathcal{Q}\right)$
ergibt sich 
\begin{eqnarray*}
\liminf_{n\rightarrow\infty}\int\varphi_{n}\,dP_{t_{n}}^{n_{1}}\otimes Q_{t_{n}}^{n_{2}} & = & \liminf_{n\rightarrow\infty}\int\varphi_{n}\,dP_{n,th}\\
 & \geq & \Phi\left(\frac{f(th)}{\left\Vert f\right\Vert _{d}}-u_{1-\alpha}\right)\\
 & = & \Phi\left(\frac{t\int h\widetilde{k}\,dP_{0}\otimes Q_{0}}{\left(\frac{1}{1-d}\int\widetilde{k}_{1}^{2}\,dP_{0}+\frac{1}{d}\int\widetilde{k}_{2}^{2}\,dQ_{0}\right)^{\frac{1}{2}}}-u_{1-\alpha}\right)\\
 & = & \lim_{n\rightarrow\infty}\int\psi_{n}\,dP_{t_{n}}^{n_{1}}\otimes Q_{t_{n}}^{n_{2}}.
\end{eqnarray*}
 Somit ist alles bewiesen.\end{proof}\begin{satz} Es gelte $N\left(\widetilde{k},P_{0}\otimes Q_{0},\mathcal{P}\otimes\mathcal{Q}\right)\subset K\left(P_{0}\otimes Q_{0},\mathcal{P}\otimes\mathcal{Q}\right)$.
Die Folge $\left(\psi_{n}\right)_{n\in\mathbb{N}}$ der Tests ist
dann asymptotisch optimal in der Menge aller asymptotisch Niveau $\alpha$-ähnlichen
Testfolgen für $\mathcal{H}_{1}$ gegen $\mathcal{K}_{1}$.\end{satz}\begin{proof}vgl.
Beweis von Satz \ref{asymp_opt_einseit_eistichprob_null}.\end{proof}\begin{satz}\label{hsatz_zw_einseitig}Liegt
die Menge $N\left(\widetilde{k},P_{0}\otimes Q_{0},\mathcal{P}\otimes\mathcal{Q}\right)\cap K\left(P_{0}\otimes Q_{0},\mathcal{P}\otimes\mathcal{Q}\right)$
dicht in dem Vektorraum $N\left(\widetilde{k},P_{0}\otimes Q_{0},\mathcal{P}\otimes\mathcal{Q}\right)$
bzgl. der $L_{2}\left(P_{0}\otimes Q_{0}\right)$-Norm, so ist die
Testfolge $\left(\psi_{n}\right)_{n\in\mathbb{N}}$ asymptotisch optimal
innerhalb der Menge aller asymptotisch Niveau $\alpha$-ähnlichen
Testfolgen für $\mathcal{H}_{1}$ gegen $\mathcal{K}_{1}$.\end{satz}\begin{proof}vgl.
Beweis von Satz \ref{asymp_opt_einseit_eistichprob_null_app}.\end{proof}\begin{bem}\label{bem_4_1_1}Ist
der Tangentenkegel $K\left(P_{0}\otimes Q_{0},\mathcal{P}\otimes\mathcal{Q}\right)$
ein Vektorraum, so ist die Vo\-raussetzung, dass die Menge $N\left(\widetilde{k},P_{0}\otimes Q_{0},\mathcal{P}\otimes\mathcal{Q}\right)\cap K\left(P_{0}\otimes Q_{0},\mathcal{P}\otimes\mathcal{Q}\right)$
dicht in dem Vektorraum $N\left(\widetilde{k},P_{0}\otimes Q_{0},\mathcal{P}\otimes\mathcal{Q}\right)$
liegt, bereits erfüllt (vgl. Bemerkung \ref{dichte_bemerkung}). Der
Tangentenkegel $K\left(P_{0}\otimes Q_{0},\mathcal{P}\otimes\mathcal{Q}\right)$
ist seinerseits genau dann ein Vektorraum, wenn die Tangentenkegel
$K\left(P_{0},\mathcal{P}\right)$ und $K\left(Q_{0},\mathcal{Q}\right)$
beide Vektorräume sind.\end{bem}\begin{bem}\label{bem_4_1_2}Im
Allgemeinen liegt die Menge $N\left(\widetilde{k},P_{0}\otimes Q_{0},\mathcal{P}\otimes\mathcal{Q}\right)\cap K\left(P_{0}\otimes Q_{0},\mathcal{P}\otimes\mathcal{Q}\right)$
bzgl. der $L_{2}\left(P_{0}\otimes Q_{0}\right)$-Norm dicht in $N\left(\widetilde{k},P_{0}\otimes Q_{0},\mathcal{P}\otimes\mathcal{Q}\right)$
genau dann, wenn die Menge $N\left(\widetilde{k}_{1},P_{0},\mathcal{P}\right)\cap K\left(P_{0},\mathcal{P}\right)$
bzgl. der $L_{2}\left(P_{0}\right)$-Norm dicht in $N\left(\widetilde{k}_{1},P_{0},\mathcal{P}\right)$
und die Menge $N\left(\widetilde{k}_{2},Q_{0},\mathcal{Q}\right)\cap K\left(Q_{0},\mathcal{Q}\right)$
bzgl. der $L_{2}\left(Q_{0}\right)$-Norm dicht in $N\left(\widetilde{k}_{2},Q_{0},\mathcal{Q}\right)$
liegen.\end{bem}

\subsection{Asymptotisch optimale zweiseitige Tests}

Für das zweiseitige Testproblem $\mathcal{H}_{2}$ gegen $\mathcal{K}_{2}$
eignet sich die Testfolge 
\begin{eqnarray*}
\psi_{n} & = & \left\{ \begin{array}{cccc}
1 &  & >\\
 & \left|T_{n}\right| &  & u_{1-\frac{\alpha}{2}}\left\Vert \widehat{k}\right\Vert _{d}\\
0 &  & \leq
\end{array}\right.,
\end{eqnarray*}
die in Abschnitt \ref{sec:zw:twsts} entwickelt wurde. Analog zum
Abschnitt \ref{opt:zw:test:einstichprobe} wird nun gezeigt, dass
die Testfolge $\left(\psi_{n}\right)_{n\in\mathbb{N}}$ unter den
schwachen Voraussetzungen an den Tangentenkegel $K\left(P_{0}\otimes Q_{0},\mathcal{P}\otimes\mathcal{Q}\right)$
innerhalb der Menge aller zum Niveau $\alpha$ asymptotisch unverfälschten
Testfolgen für das Testproblem $\mathcal{H}_{2}$ gegen $\mathcal{K}_{2}$
optimal ist. \begin{satz}Es gelte $K\left(P_{0}\otimes Q_{0},\mathcal{P}\otimes\mathcal{Q}\right)=T\left(P_{0}\otimes Q_{0},\mathcal{P}\otimes\mathcal{Q}\right)$.
Die Testfolge $\left(\psi_{n}\right)_{n\in\mathbb{N}}$ ist dann asymptotisch
optimal innerhalb der Menge aller zum Niveau $\alpha$ asymptotisch
unverfälschten Testfolgen für das Testproblem $\mathcal{H}_{2}$ gegen
$\mathcal{K}_{2}$.\end{satz}\begin{proof}Eine Testfolge $\left(\varphi_{n}\right)_{n\in\mathbb{N}}$
für $\mathcal{H}_{2}$ gegen $\mathcal{K}_{2}$ sei asymptotisch unverfälscht
zum Niveau $\alpha$. Analog wie im Beweis von Satz \ref{asymp_opt_einseit_eistichprob_tang}
zeigt man, dass $\left(\varphi_{n}\right)_{n\in\mathbb{N}}$ dann
bereits eine zum Niveau $\alpha$ asymptotisch unverfälschte Testfolge
für das Testproblem
\[
H_{2}^{l}=\left\{ h\in T\left(P_{0}\otimes Q_{0},\mathcal{P}\otimes\mathcal{Q}\right):f(h)=0\right\} 
\]
 gegen 
\[
K_{2}^{l}=\left\{ h\in T\left(P_{0}\otimes Q_{0},\mathcal{P}\otimes\mathcal{Q}\right):f(h)\neq0\right\} 
\]
ist. Nach Satz \ref{asymp_zweiseitig} erhält man zunächst 
\[
\limsup_{n\rightarrow\infty}\int\varphi_{n}\,dP_{n,h}\leq\Phi\left(\frac{f\left(h\right)}{\left\Vert f\right\Vert _{d}}-u_{1-\frac{\alpha}{2}}\right)+\Phi\left(-\frac{f\left(h\right)}{\left\Vert f\right\Vert _{d}}-u_{1-\frac{\alpha}{2}}\right)
\]
für alle $h\in K_{2}^{l}$ und 
\[
\liminf_{n\rightarrow\infty}\int\varphi_{n}\,dP_{n,h}\geq\Phi\left(\frac{f\left(h\right)}{\left\Vert f\right\Vert _{d}}-u_{1-\frac{\alpha}{2}}\right)+\Phi\left(-\frac{f\left(h\right)}{\left\Vert f\right\Vert _{d}}-u_{1-\frac{\alpha}{2}}\right)
\]
 für alle $h\in H_{2}^{l}$. Sei $\left(P_{t_{n}}^{n_{1}}\otimes Q_{t_{n}}^{n_{2}}\right)_{n\in\mathbb{N}}\in\mathcal{K}_{2}$
eine implizite Alternative mit der zugehörigen Tangente $h\in T\left(P_{0}\otimes Q_{0},\mathcal{P}\otimes\mathcal{Q}\right).$
Außerdem sei $t=\lim\limits_{n\rightarrow\infty}n^{\frac{1}{2}}t_{n}.$
Nach Satz \ref{sgl_zw_test_asymp_gute} ergibt sich 
\begin{eqnarray*}
\limsup_{n\rightarrow\infty}\int\varphi_{n}\,dP_{t_{n}}^{n_{1}}\otimes Q_{t_{n}}^{n_{2}} & = & \limsup_{n\rightarrow\infty}\int\varphi_{n}\,dP_{n,th}\\
 & \leq & \Phi\left(\frac{f\left(th\right)}{\left\Vert f\right\Vert _{d}}-u_{1-\frac{\alpha}{2}}\right)+\Phi\left(-\frac{f\left(th\right)}{\left\Vert f\right\Vert _{d}}-u_{1-\frac{\alpha}{2}}\right)\\
 & = & \lim_{n\rightarrow\infty}\int\psi_{n}\,dP_{t_{n}}^{n_{1}}\otimes Q_{t_{n}}^{n_{2}}.
\end{eqnarray*}
Für eine implizite Hypothese $\left(P_{t_{n}}^{n_{1}}\otimes Q_{t_{n}}^{n_{2}}\right)_{n\in\mathbb{N}}\in\mathcal{H}_{2}$
mit der zugehörigen Tangente $h\in T\left(P_{0}\otimes Q_{0},\mathcal{P}\otimes\mathcal{Q}\right)$
erhält man 
\begin{eqnarray*}
\liminf_{n\rightarrow\infty}\int\varphi_{n}\,dP_{t_{n}}^{n_{1}}\otimes Q_{t_{n}}^{n_{2}} & = & \liminf_{n\rightarrow\infty}\int\varphi_{n}\,dP_{n,th}\\
 & \geq & \Phi\left(\frac{f\left(th\right)}{\left\Vert f\right\Vert _{d}}-u_{1-\frac{\alpha}{2}}\right)+\Phi\left(-\frac{f\left(th\right)}{\left\Vert f\right\Vert _{d}}-u_{1-\frac{\alpha}{2}}\right)\\
 & = & \lim_{n\rightarrow\infty}\int\psi_{n}\,dP_{t_{n}}^{n_{1}}\otimes Q_{t_{n}}^{n_{2}}.
\end{eqnarray*}
Somit ist alles bewiesen.\end{proof}\begin{satz}\label{zw_ze_opt}
Der Tangentenkegel $K\left(P_{0}\otimes Q_{0},\mathcal{P}\otimes\mathcal{Q}\right)$
liege bzgl. der $L_{2}\left(P_{0}\otimes Q_{0}\right)$-Norm dicht
in dem Tangentialraum $T\left(P_{0}\otimes Q_{0},\mathcal{P}\otimes\mathcal{Q}\right).$
Die Menge 
\[
K\left(P_{0}\otimes Q_{0},\mathcal{P}\otimes\mathcal{Q}\right)\cap N\left(\widetilde{k},P_{0},\mathcal{P}\right)
\]
 liege bzgl. der $L_{2}\left(P_{0}\otimes Q_{0}\right)$-Norm dicht
in $N\left(\widetilde{k},P_{0},\mathcal{P}\right)$. Die Testfolge
$\left(\varphi_{n}\right)_{n\in\mathbb{N}}$ ist dann asymptotisch
optimal innerhalb der Menge aller zum Niveau $\alpha$ asymptotisch
unverfälschten Testfolgen für das Testproblem $\mathcal{H}_{2}$ gegen
$\mathcal{K}_{2}$.\end{satz}\begin{proof}vgl. Beweis von Satz \ref{zw_zw_adv_asymp_opt}.\end{proof}\begin{bem}\label{bem_4_2_1}Die
Voraussetzungen von Satz \ref{zw_ze_opt} sind erfüllt, falls der
Tangentenkegel $K\left(P_{0}\otimes Q_{0},\mathcal{P}\otimes\mathcal{Q}\right)$
ein Vektorraum ist (vgl. Bemerkung \ref{dichte_bemerkung}).\end{bem}\begin{bem}\label{bem_4_2_2}Im
Allgemeinen erfüllt der Tangentenkegel $K\left(P_{0}\otimes Q_{0},\mathcal{P}\otimes\mathcal{Q}\right)$
die Voraussetzungen von Satz \ref{zw_ze_opt} genau dann, wenn die
Tangentenkegel $K\left(P_{0},\mathcal{P}\right)$ und $K\left(Q_{0},\mathcal{Q}\right)$
die Voraussetzungen von Satz \ref{zw_zw_adv_asymp_opt} erfüllen.
\end{bem}

\chapter{Anwendungen und Beispiele}

In diesem Kapitel werden die Anwendungen der Theorie des Testens statistischer
Funktionale für die Zweistichprobenprobleme dargestellt. Es werden
einige praxisrelevante Beispiele vorgestellt und ausführlich erklärt.
Auf die Vorteile und die Schwierigkeiten der Anwendung im nichtparametrischen
Kontext wird hingewiesen.

Seien $\mathcal{P}\subset\mathcal{M}_{1}\left(\Omega_{1},\mathcal{A}_{1}\right)$
und $\mathcal{Q}\subset\mathcal{M}_{1}\left(\Omega_{2},\mathcal{A}_{2}\right)$
zwei nichtparametrische Familien von Wahrscheinlichkeitsmaßen. Es
seien $\left(n_{1}\right)_{n\in\mathbb{N}}\subset\mathbb{N}$ und
$\left(n_{2}\right)_{n\in\mathbb{N}}\subset\mathbb{N}$ zwei Folgen
mit $n_{1}+n_{2}=n$ für alle $n\in\mathbb{N}$ und $\lim_{n\rightarrow\infty}\frac{n_{2}}{n}=d$
für ein $d\in\left(0,1\right).$ Mit $n_{i}$ wird der Umfang der
$i$-ten Stichprobe für $i=1,2$ bezeichnet. Der gesamte Stichprobenumfang
ist dann $n=n_{1}+n_{2}$. Sei $\pi_{i}:\Omega_{1}^{n_{1}}\otimes\Omega_{2}^{n_{2}}\rightarrow\Omega_{1}\cup\Omega_{2},\,\left(\omega_{1},\ldots,\omega_{n}\right)\mapsto\omega_{i}$
die $i$-te kanonische Projektion. 

Außerdem wird die Bezeichnung $X_{i}:=\pi_{i}$ für $i\in\left\{ 1,\ldots,n_{1}\right\} $
und $Y_{i}:=\pi_{i+n_{1}}$ für $i\in\left\{ 1,\ldots,n_{2}\right\} $
verwendet. Die Zufallsvariablen $X_{1},\ldots,X_{n_{1}}$ und $Y_{1},\ldots,Y_{n_{2}}$
sind stochastisch unabhängig bzgl. $P^{n_{1}}\otimes Q^{n_{2}}$ auf
$\Omega_{1}^{n_{1}}\otimes\Omega_{2}^{n_{2}}.$ Die Zufallsvariablen
$X_{1},\ldots,X_{n_{1}}$ sind identisch verteilt gemäß $P$ für ein
$P\in\mathcal{P}$, denn es gilt $\mathcal{L}\left(\left.X_{i}\right|P^{n_{1}}\otimes Q^{n_{2}}\right)=P$
für alle $i\in\left\{ 1,\ldots,n_{1}\right\} $. Die Zufallsvariablen
$Y_{1},\ldots,Y_{n_{2}}$ sind identisch verteilt gemäß $Q\in\mathcal{Q}$,
weil $\mathcal{L}\left(\left.Y_{i}\right|P^{n_{1}}\otimes Q^{n_{2}}\right)=Q$
für alle $j\in\left\{ 1,\ldots,n_{2}\right\} $ gilt. Die erste Stichprobe
wird durch die Zufallsvariablen $X_{1},\ldots,X_{n_{1}}$ modelliert
und die Zufallsvariablen $Y_{1},\ldots,Y_{n_{2}}$ entsprechen der
zweiten Stichprobe.

Seien $k:\mathcal{P}\otimes\mathcal{Q}\rightarrow\mathbb{R}$ ein
statistisches Funktional und $a\in\mathbb{R}$ eine Zahl. Wir beschäftigen
uns weiter mit dem einseitigen Testproblem 
\[
H_{1}=\left\{ P\otimes Q\in\mathcal{P}\otimes\mathcal{Q}:k\left(P\otimes Q\right)\leq a\right\} 
\]
\begin{equation}
\mbox{gegen}\quad K_{1}=\left\{ P\otimes Q\in\mathcal{P}\otimes\mathcal{Q}:k\left(P\otimes Q\right)>a\right\} \label{eins_a_b_tp}
\end{equation}
und dem zweiseitigen Testproblem
\[
H_{2}=\left\{ P\otimes Q\in\mathcal{P}\otimes\mathcal{Q}:k\left(P\otimes Q\right)=a\right\} 
\]
\begin{equation}
\mbox{gegen}\quad K_{2}=\left\{ P\otimes Q\in\mathcal{P}\otimes\mathcal{Q}:k\left(P\otimes Q\right)\neq a\right\} .\label{zweis_a_b_tp}
\end{equation}
Es wird stets vorausgesetzt, dass das statistische Funktional $k:\mathcal{P}\otimes\mathcal{Q}\rightarrow\mathbb{R}$
an jeder Stelle $P_{0}\otimes Q_{0}\in H_{2}$ differenzierbar ist
und der kanonische Gradient $\widetilde{k}=\widetilde{k}\left(P_{0}\otimes Q_{0}\right)$
an jeder Stelle $P_{0}\otimes Q_{0}\in H_{2}$ die Voraussetzung 
\begin{equation}
\left\Vert \widetilde{k}\right\Vert _{L_{2}\left(P_{0}\otimes Q_{0}\right)}\neq0\label{a_b_no_null_vor}
\end{equation}
erfüllt. Nach Satz \ref{umrechnung} und Bemerkung \ref{eind_tangente}
besitzt der kanonische Gradient $\widetilde{k}$ die eindeutige Darstellung
$\widetilde{k}=\widetilde{k}_{1}\circ\pi_{1}+\widetilde{k}_{2}\circ\pi_{2}$
mit $\widetilde{k}_{1}\in T\left(P_{0},\mathcal{P}\right)$ und $\widetilde{k}_{2}\in T\left(Q_{0},\mathcal{Q}\right)$.

Die Testfolgen (\ref{einseit_asym_opt_testfolge}) und (\ref{zweiset_asymp_opt_testfolge}),
die in Kapitel \ref{cha:Testen-implizit-definierter} entwickelt worden
sind, lösen die Testprobleme (\ref{eins_a_b_tp}) und (\ref{zweis_a_b_tp})
lokal für ein fest gewähltes Wahrscheinlichkeitsmaß $P_{0}\otimes Q_{0}\in H_{2}$.
Die Testfolge (\ref{einseit_asym_opt_testfolge}) ist zur Erinnerung
durch 
\begin{equation}
\varphi_{1n}=\left\{ \begin{array}{cccc}
1 &  & >\\
 & T_{n} &  & c_{1}\\
0 &  & \leq
\end{array}\right.\label{erinnerung_asymp_einseit_testfolge}
\end{equation}
mit dem kritischen Wert 
\begin{equation}
c_{1}=u_{1-\alpha}\left(\frac{1}{1-d}\int\widetilde{k}_{1}^{2}\,dP_{0}+\frac{1}{d}\int\widetilde{k}_{2}^{2}\,dQ_{0}\right)^{\frac{1}{2}}\label{erinnerung_c1}
\end{equation}
gegeben, wobei 
\[
T_{n}=\frac{\sqrt{n}}{n_{1}}\sum_{i=1}^{n_{1}}\widetilde{k}_{1}\circ X_{i}+\frac{\sqrt{n}}{n_{2}}\sum_{i=1}^{n_{2}}\widetilde{k}_{2}\circ Y_{i}
\]
 die Teststatistik (\ref{teststatistik T}) ist. Die Testfolge (\ref{zweiset_asymp_opt_testfolge})
ist durch 
\begin{equation}
\varphi_{2n}=\left\{ \begin{array}{cccc}
1 &  & >\\
 & \left|T_{n}\right| &  & c_{2}\\
0 &  & \leq
\end{array}\right.\label{erinnerung_asymp_zweiseit_testfolge}
\end{equation}
mit dem kritischen Wert 
\begin{equation}
c_{2}=u_{1-\frac{\alpha}{2}}\left(\frac{1}{1-d}\int\widetilde{k}_{1}^{2}\,dP_{0}+\frac{1}{d}\int\widetilde{k}_{2}^{2}\,dQ_{0}\right)^{\frac{1}{2}}\label{erinnerung_c2}
\end{equation}
definiert. In Kapitel \ref{cha:Nichtparametrisches-asymptotisch-optimale}
wurde gezeigt, dass die Testfolgen (\ref{einseit_asym_opt_testfolge})
und (\ref{zweiset_asymp_opt_testfolge}) unter der schwachen Voraussetzungen
an den Tangentenkegel $K\left(P_{0}\otimes Q_{0},\mathcal{P}\otimes\mathcal{Q}\right)$
asymptotisch optimal an der Stelle $P_{0}\otimes Q_{0}$ sind, vgl.
Satz \ref{hsatz_zw_einseitig} und Satz \ref{zw_ze_opt}. Wir können
stets annehmen, dass die Voraussetzungen von Satz \ref{hsatz_zw_einseitig}
und Satz \ref{zw_ze_opt} erfüllt sind. Aus der Sicht der nichtparametrischen
Statistik ist diese Annahme berechtigt, vgl. die Bemerkungen \ref{bem_4_1_1},
\ref{bem_4_1_2}, \ref{bem_4_2_1} und \ref{bem_4_2_2}. Die Testfolgen
(\ref{einseit_asym_opt_testfolge}) und (\ref{zweiset_asymp_opt_testfolge})
sind dann asymptotisch optimal an der Stelle $P_{0}\otimes Q_{0}$.
Das Wahrscheinlichkeitsmaß $P_{0}\otimes Q_{0}$ wird in diesem Fall
als Fußpunkt bezeichnet und kann als unbekannter Parameter angesehen
werden.

Zur Ausführung der Tests aus (\ref{einseit_asym_opt_testfolge}) und
(\ref{zweiset_asymp_opt_testfolge}) ist es allerdings nicht erforderlich,
das Wahrscheinlichkeitsmaß $P_{0}\otimes Q_{0}$ zu kennen, sondern
man braucht nur die Abbildungen $\widetilde{k}_{1},\widetilde{k}_{2}$
zu bestimmen und die Werte $\left\Vert \widetilde{k}_{1}\right\Vert _{L_{2}\left(P_{0}\right)}$
und $\left\Vert \widetilde{k}_{2}\right\Vert _{L_{2}\left(Q_{0}\right)}$
zu schätzen. Die Abbildungen $\widetilde{k}_{1}\in T\left(P_{0},\mathcal{P}\right)$
und $\widetilde{k}_{2}\in T\left(Q_{0},\mathcal{Q}\right)$ hängen
im Allgemeinen von dem Wahrscheinlichkeitsmaß $P_{0}\otimes Q_{0}$
ab. Da die Tangentialräume $T\left(P_{0},\mathcal{P}\right)$ und
$T\left(Q_{0},\mathcal{Q}\right)$ in der nichtparametrischen Statistik
in der Regel unendlichdimensional sind, können die Abbildungen $\widetilde{k}_{1}$
und $\widetilde{k}_{2}$ als unbekannte unendlichdimensionale Parameter
angesehen werden. Falls die Abbildungen $\widetilde{k}_{1}$ und $\widetilde{k}_{2}$
eine besonders günstige Gestalt besitzen, so kann die Teststatistik
$T_{n}$ anhand der Realisierungen von Zufallsvariablen $X_{1},\ldots,X_{n_{1}}$
und $Y_{1},\ldots,Y_{n_{2}}$ ausgewertet werden. In manchen Fällen
findet man eine Teststatistik $\widehat{T}_{n}$, die anhand der Realisierungen
von Zufallsvariablen $X_{1},\ldots,X_{n_{1}}$ und $Y_{1},\ldots,Y_{n_{2}}$
berechenbar ist und die Bedingung 
\[
\lim_{n\rightarrow\infty}\mathcal{L}\left(\left.T_{n}\right|P_{0}^{n_{1}}\otimes Q_{0}^{n_{2}}\right)=\lim_{n\rightarrow\infty}\mathcal{L}\left(\left.\widehat{T}_{n}\right|P_{0}^{n_{1}}\otimes Q_{0}^{n_{2}}\right)
\]
 erfüllt, d.h. die Teststatistiken $T_{n}$ und $\widehat{T}_{n}$
besitzen die gleiche asymptotische Verteilung unter $P_{0}^{n_{1}}\otimes Q_{0}^{n_{2}}$.
Außerdem braucht man ausreichend gute Schätzer für die unbekannte
Werte $\left\Vert \widetilde{k}_{1}\right\Vert _{L_{2}\left(P_{0}\right)}$
und $\left\Vert \widetilde{k}_{2}\right\Vert _{L_{2}\left(Q_{0}\right)}$.
Sind diese Voraussetzungen erfüllt, so kann man die Testfolgen (\ref{einseit_asym_opt_testfolge})
und (\ref{zweiset_asymp_opt_testfolge}) mit der Teststatistik $T_{n}$
oder $\widehat{T}_{n}$ und geschätzten Werten $\left\Vert \widetilde{k}_{1}\right\Vert _{L_{2}\left(P_{0}\right)}$,
$\left\Vert \widetilde{k}_{2}\right\Vert _{L_{2}\left(Q_{0}\right)}$
ausführen. Dabei erhält man eine Testfolge, die asymptotisch äquivalent
zu der Testfolge (\ref{einseit_asym_opt_testfolge}) bzw. (\ref{zweiset_asymp_opt_testfolge})
für das Testproblem (\ref{eins_a_b_tp}) bzw. (\ref{zweis_a_b_tp})
ist. \begin{defi}[lokale asymptotische Äquivalenz]Sei $P_{0}\otimes Q_{0}\in H_{2}$
beliebig aber fest gewählt. Zwei Testfolgen $\left(\widehat{\varphi}_{n}\right)_{n\in\mathbb{N}}$
und $\left(\varphi_{n}\right)_{n\in\mathbb{N}}$ heißen lokal asymptotisch
äquivalent für das Testproblem (\ref{eins_a_b_tp}) bzw. (\ref{zweis_a_b_tp})
an der Stelle $P_{0}\otimes Q_{0}$, falls 
\[
\lim_{n\rightarrow\infty}\int\varphi_{n}dP_{t_{n}}^{n_{1}}\otimes Q_{t_{n}}^{n_{2}}=\lim_{n\rightarrow\infty}\int\widehat{\varphi}_{n}dP_{t_{n}}^{n_{1}}\otimes Q_{t_{n}}^{n_{2}}
\]
für jede Folge $\left(P_{t_{n}}^{n_{1}}\otimes Q_{t_{n}}^{n_{2}}\right)_{n\in\mathbb{N}}\in\mathcal{F}_{2}$
von Wahrscheinlichkeitsmaßen gilt. \end{defi}\begin{bem}Sind zwei
Testfolgen $\left(\widehat{\varphi}_{n}\right)_{n\in\mathbb{N}}$
und $\left(\varphi_{n}\right)_{n\in\mathbb{N}}$ lokal asymptotisch
äquivalent an einer Stelle $P_{0}\otimes Q_{0}\in H_{2}$, so besitzen
sie die gleiche asymptotische Gütefunktion entlang der impliziten
Alternativen und Hypothesen an der Stelle $P_{0}\otimes Q_{0}\in H_{2}$.
Insbesondere ist die Testfolge $\left(\widehat{\varphi}_{n}\right)_{n\in\mathbb{N}}$
lokal asymptotisch optimal an der Stelle $P_{0}\otimes Q_{0}\in H_{2}$
in einer Menge $\Psi$ der Testfolgen, falls die Testfolge $\left(\varphi_{n}\right)_{n\in\mathbb{N}}$
in der Menge $\Psi$ der Testfolgen lokal asymptotisch optimal an
der Stelle $P_{0}\otimes Q_{0}\in H_{2}$ ist. \end{bem}\begin{defi}[asymptotische Äquivalenz]Zwei
Testfolgen $\left(\widehat{\varphi}_{n}\right)_{n\in\mathbb{N}}$
und $\left(\varphi_{n}\right)_{n\in\mathbb{N}}$ heißen asymptotisch
äquivalent für das Testproblem (\ref{eins_a_b_tp}) bzw. (\ref{zweis_a_b_tp}),
falls sie an jeder Stelle $P_{0}\otimes Q_{0}\in H_{2}$ lokal asymptotisch
äquivalent für das Testproblem (\ref{eins_a_b_tp}) bzw. (\ref{zweis_a_b_tp})
sind.\end{defi}

\begin{beisp}[Summe zweier von Mises Funktionale]\label{summ_mises_funk}

Seien $k_{1}:\mathcal{P}\rightarrow\mathbb{R},\,P\mapsto\int f_{1}\,dP$
und $k_{2}:\mathcal{Q}\rightarrow\mathbb{R},\,Q\mapsto\int f_{2}\,dQ$
zwei von Mises Funktionale. Für jedes $P_{0}\in\mathcal{P}$ existiere
ein $\varepsilon>0$ und ein $K>0$, so dass $0<\int f_{1}^{2}\,dP<K$
für alle $P\in\mathcal{P}$ mit $d\left(P_{0},P\right)<\varepsilon$
gilt. Nach Lemma \ref{LemmaIundH} und Satz \ref{topologie}  ist
das statistische Funktional $k_{1}$ differenzierbar an jeder Stelle
$P_{0}\in\mathcal{P}$ mit der Abbildung $f_{1}-E_{P_{0}}\left(f_{1}\right)$
als Gradient. Außerdem sei $f_{1}-E_{P_{0}}\left(f_{1}\right)\in T\left(P_{0},\mathcal{P}\right)$
für alle $P_{0}\in\mathcal{P}$. Die Abbildung $f_{1}-E_{P_{0}}\left(f_{1}\right)$
ist dann der kanonische Gradient von $k_{1}$ an der Stelle $P_{0}\in\mathcal{P}$.
Das statistische Funktional $k_{2}$ erfülle die analogen Voraussetzungen,
so dass $k_{2}$ an jeder Stelle $Q_{0}\in\mathcal{Q}$ differenzierbar
mit dem kanonischen Gradienten $f_{2}-E_{Q_{0}}\left(f_{2}\right)$
ist. Nach Beispiel \ref{bsp_zusamm_gesetzte_funk} ist das statistische
Funktional 
\[
k:\mathcal{P}\otimes\mathcal{Q}\rightarrow\mathbb{R},\,P\otimes Q\mapsto k_{1}\left(P\right)+k_{2}\left(Q\right)
\]
 differenzierbar mit dem kanonischen Gradienten 
\[
\widetilde{k}:\Omega_{1}\times\Omega_{2}\rightarrow\mathbb{R},\left(\omega_{1},\omega_{2}\right)\mapsto f_{1}\left(\omega_{1}\right)-E_{P_{0}}\left(f_{1}\right)+f_{2}\left(\omega_{2}\right)-E_{Q_{0}}\left(f_{2}\right)
\]
 an jeder Stelle $P_{0}\otimes Q_{0}\in\mathcal{P}$. Man erhält insbesondere
$\widetilde{k}_{1}=f_{1}-E_{P_{0}}\left(f_{1}\right)$ und $\widetilde{k}_{2}=f_{2}-E_{Q_{0}}\left(f_{2}\right)$.
Sei nun $P_{0}\otimes Q_{0}\in H_{2}$, das bedeutet 
\[
k\left(P_{0}\otimes Q_{0}\right)=E_{P_{0}}\left(f_{1}\right)+E_{Q_{0}}\left(f_{2}\right)=a.
\]
 Für die Teststatistik $T_{n}$ ergibt sich dann 
\begin{eqnarray*}
T_{n} & = & \frac{\sqrt{n}}{n_{1}}\sum_{i=1}^{n_{1}}\widetilde{k}_{1}\circ X_{i}+\frac{\sqrt{n}}{n_{2}}\sum_{i=1}^{n_{2}}\widetilde{k}_{2}\circ Y_{i}\\
 & = & \frac{\sqrt{n}}{n_{1}}\sum_{i=1}^{n_{1}}f_{1}\circ X_{i}+\frac{\sqrt{n}}{n_{2}}\sum_{i=1}^{n_{2}}f_{2}\circ Y_{i}-\sqrt{n}\left(E_{P_{0}}\left(f_{1}\right)+E_{Q_{0}}\left(f_{2}\right)\right)\\
 & = & \frac{\sqrt{n}}{n_{1}}\sum_{i=1}^{n_{1}}f_{1}\circ X_{i}+\frac{\sqrt{n}}{n_{2}}\sum_{i=1}^{n_{2}}f_{2}\circ Y_{i}-\sqrt{n}a.
\end{eqnarray*}
Es bezeichne $\overline{f}_{1,n}=\frac{1}{n_{1}}\sum_{i=1}^{n_{1}}f_{1}\circ X_{i}$
und $\overline{f}_{2,n}=\frac{1}{n_{2}}\sum_{i=1}^{n_{2}}f_{2}\circ Y_{i}$.
Außerdem seien 
\[
\widehat{\sigma}_{n}\left(f_{1}\right)^{2}=\frac{1}{n_{1}-1}\sum_{i=1}^{n_{1}}\left(f_{1}\circ X_{i}-\overline{f}_{1,n}\right)^{2}
\]
 und
\[
\widehat{\sigma}_{n}\left(f_{2}\right)^{2}=\frac{1}{n_{2}-1}\sum_{i=1}^{n_{2}}\left(f_{2}\circ Y_{i}-\overline{f}_{2,n}\right)^{2}
\]
 die kanonischen Schätzer für $\mbox{Var}_{P_{0}}\left(f_{1}\right)$
und $\mbox{Var}_{Q_{0}}\left(f_{2}\right)$. Die Testfolge $\left(\widehat{\varphi}_{1n}\right)_{n\in\mathbb{N}}$
wird definiert durch
\begin{equation}
\widehat{\varphi}_{1n}=\left\{ \begin{array}{cccc}
1 &  & >\\
 & T_{n} &  & \widehat{c}_{1,n}\\
0 &  & \leq
\end{array}\right.\label{dach_einseit_asymp_testfolge}
\end{equation}
mit dem kritischen Wert 
\begin{equation}
\widehat{c}_{1,n}=u_{1-\alpha}\left(\frac{n}{n_{1}}\widehat{\sigma}_{n}\left(f_{1}\right)^{2}+\frac{n}{n_{2}}\widehat{\sigma}_{n}\left(f_{2}\right)^{2}\right)^{\frac{1}{2}}.\label{krit_wert_c_1_dach}
\end{equation}
 Die Testfolgen (\ref{erinnerung_asymp_einseit_testfolge}) und (\ref{dach_einseit_asymp_testfolge})
sind asymptotisch äquivalent für das einseitige Testproblem (\ref{eins_a_b_tp}).
Zum Beweis betrachte ein beliebiges aber fest gewähltes Wahrscheinlichkeitsmaß
$P_{0}\otimes Q_{0}\in H_{2}$. Die Testfolgen (\ref{erinnerung_asymp_einseit_testfolge})
und (\ref{dach_einseit_asymp_testfolge}) werden zunächst folgendermaßen
äquivalent umgeformt:
\[
\widehat{\varphi}_{1n}=\left\{ \begin{array}{cccc}
1 &  & >\\
 & T_{n}-\widehat{c}_{1,n} &  & 0\\
0 &  & \leq
\end{array}\right.\qquad\mbox{und}\qquad\varphi_{1n}=\left\{ \begin{array}{cccc}
1 &  & >\\
 & T_{n}-c_{1} &  & 0\\
0 &  & \leq
\end{array}\right..
\]
Die beiden Testfolgen (\ref{erinnerung_asymp_einseit_testfolge})
und (\ref{dach_einseit_asymp_testfolge}) bestehen aus den nicht randomisierten
Tests. Nach Lemma \ref{benach_imp_asymp_eq} und Bemerkung \ref{asympt_banachb_equiv_tests}
reicht es 
\[
\lim_{n\rightarrow\infty}\int\left|\widehat{\varphi}_{1n}-\varphi_{1n}\right|dP_{0}^{n_{1}}\otimes Q_{0}^{n_{2}}=0
\]
 nachzuweisen. Nach Lemma \ref{asympt_aquivalent_testfolge} genügt
es zu zeigen, dass
\[
\lim_{n\rightarrow\infty}P_{0}^{n_{1}}\otimes Q_{0}^{n_{2}}\left(\left\{ \left|\widehat{c}_{1,n}-c_{1}\right|>\varepsilon\right\} \right)=0
\]
 für alle $\varepsilon>0$ gilt. Nach dem starken Gesetz der großen
Zahlen ergibt sich die fast sichere Konvergenz $\widehat{\sigma}_{n}\left(f_{1}\right)^{2}\rightarrow\mbox{Var}_{P_{0}}\left(f_{1}\right)$
und $\widehat{\sigma}_{n}\left(f_{2}\right)^{2}\rightarrow\mbox{Var}_{Q_{0}}\left(f_{2}\right)$
für $n\rightarrow\infty$, weil $f_{1}\in L_{2}\left(P_{0}\right)$
und $f_{2}\in L_{2}\left(Q_{0}\right)$ nach Voraussetzung sind. Es
gilt außerdem 
\[
\int\widetilde{k}_{1}^{2}\,dP_{0}=\mbox{Var}_{P_{0}}\left(\widetilde{k}_{1}\right)=\mbox{Var}_{P_{0}}\left(f_{1}-E_{P_{0}}\left(f_{1}\right)\right)=\mbox{Var}_{P_{0}}\left(f_{1}\right),
\]
 
\[
\int\widetilde{k}_{2}^{2}\,dQ_{0}=\mbox{Var}_{Q_{0}}\left(\widetilde{k}_{2}\right)=\mbox{Var}_{Q_{0}}\left(f_{2}-E_{Q_{0}}\left(f_{2}\right)\right)=\mbox{Var}_{Q_{0}}\left(f_{2}\right).
\]
Hieraus folgt die fast sichere Konvergenz
\begin{equation}
\left(\frac{n}{n_{1}}\widehat{\sigma}_{n}\left(f_{1}\right)^{2}+\frac{n}{n_{2}}\widehat{\sigma}_{n}\left(f_{2}\right)^{2}\right)^{\frac{1}{2}}\rightarrow\left(\frac{1}{1-d}\int\widetilde{k}_{1}^{2}\,dP_{0}+\frac{1}{d}\int\widetilde{k}_{2}^{2}\,dQ_{0}\right)^{\frac{1}{2}}\label{st_c_1_kogz}
\end{equation}
 für $n\rightarrow\infty$. Für alle $\varepsilon>0$ erhält man insbesondere
\[
\lim_{n\rightarrow\infty}P_{0}^{n_{1}}\otimes Q_{0}^{n_{2}}\left(\left\{ \left|\widehat{c}_{1,n}-c_{1}\right|>\varepsilon\right\} \right)=0.
\]

Die Testfolge $\left(\widehat{\varphi}_{2n}\right)_{n\in\mathbb{N}}$
für das zweiseitige Problem (\ref{zweis_a_b_tp}) wird definiert durch
\begin{equation}
\widehat{\varphi}_{2n}=\left\{ \begin{array}{cccc}
1 &  & >\\
 & \left|T_{n}\right| &  & \widehat{c}_{2,n}\\
0 &  & \leq
\end{array}\right.\label{dach_zweiseit_asymp_testfolge}
\end{equation}
mit dem kritischen Wert 
\begin{equation}
\widehat{c}_{2,n}=u_{1-\frac{\alpha}{2}}\left(\frac{n}{n_{1}}\widehat{\sigma}_{n}\left(f_{1}\right)^{2}+\frac{n}{n_{2}}\widehat{\sigma}_{n}\left(f_{2}\right)^{2}\right)^{\frac{1}{2}}.\label{krit_wert_c_2_dach}
\end{equation}
 Mit (\ref{st_c_1_kogz}) ergibt sich $\lim_{n\rightarrow\infty}P_{0}^{n_{1}}\otimes Q_{0}^{n_{2}}\left(\left\{ \left|\widehat{c}_{2,n}-c_{2}\right|>\varepsilon\right\} \right)=0$
für alle \\
$\varepsilon>0$. Nach Lemma \ref{asympt_aquivalent_testfolge}, Lemma
\ref{benach_imp_asymp_eq} und Bemerkung \ref{asympt_banachb_equiv_tests}
folgt nun analog, dass die Testfolgen (\ref{erinnerung_asymp_zweiseit_testfolge})
und (\ref{dach_zweiseit_asymp_testfolge}) für das zweiseitige Testproblem
(\ref{zweis_a_b_tp}) asymptotisch äquivalent sind.\end{beisp}\begin{anwendung}[Testen der Erwartungswerte]Es
gelten die Voraussetzungen aus Beispiel \ref{summ_mises_funk}. Außerdem
seien $\left(\Omega_{1},\mathcal{A}_{1}\right)=\left(\Omega_{2},\mathcal{A}_{2}\right)=\left(\mathbb{R},\mathcal{B}\right)$
und $f_{1}=-f_{2}=\mbox{Id}_{\mathbb{R}}$. Für die Testprobleme (\ref{eins_a_b_tp})
und (\ref{zweis_a_b_tp}) erhält man dann
\begin{equation}
H_{1}=\left\{ E_{P}\left(\mbox{Id}_{\mathbb{R}}\right)-E_{Q}\left(\mbox{Id}_{\mathbb{R}}\right)\leq a\right\} \;\mbox{gegen}\;K_{1}=\left\{ E_{P}\left(\mbox{Id}_{\mathbb{R}}\right)-E_{Q}\left(\mbox{Id}_{\mathbb{R}}\right)>a\right\} \label{eq:h1k1}
\end{equation}
 und 
\begin{equation}
H_{2}=\left\{ E_{P}\left(\mbox{Id}_{\mathbb{R}}\right)-E_{Q}\left(\mbox{Id}_{\mathbb{R}}\right)=a\right\} \;\mbox{gegen}\;K_{2}=\left\{ E_{P}\left(\mbox{Id}_{\mathbb{R}}\right)-E_{Q}\left(\mbox{Id}_{\mathbb{R}}\right)\neq a\right\} ,\label{eq:h2k2}
\end{equation}
 wobei $\left\{ E_{P}\left(\mbox{Id}_{\mathbb{R}}\right)-E_{Q}\left(\mbox{Id}_{\mathbb{R}}\right)\leq a\right\} $
die kurze Schreibweise für 
\[
\left\{ P\otimes Q\in\mathcal{P}\otimes\mathcal{Q}:E_{P}\left(\mbox{Id}_{\mathbb{R}}\right)-E_{Q}\left(\mbox{Id}_{\mathbb{R}}\right)\leq a\right\} 
\]
 ist. Es bezeichne $\overline{X}_{n_{1}}=\frac{1}{n_{1}}\sum_{i=1}^{n_{1}}X_{i}$
und $\overline{Y}_{n_{2}}=\frac{1}{n_{2}}\sum_{i=1}^{n_{2}}Y_{i}$.
Die Testfolge $\left(\widehat{\varphi}_{1n}\right)_{n\in\mathbb{N}}$
für das einseitige Testproblem (\ref{eq:h1k1}) ist durch die Teststatistik
\[
T_{n}=\frac{\sqrt{n}}{n_{1}}\sum_{i=1}^{n_{1}}X_{i}-\frac{\sqrt{n}}{n_{2}}\sum_{i=1}^{n_{2}}Y_{i}-\sqrt{n}a=\sqrt{n}\left(\overline{X}_{n_{1}}-\overline{Y}_{n_{2}}-a\right)
\]
 und den kritischen Wert
\[
\widehat{c}_{1,n}=u_{1-\alpha}\left(\frac{n}{n_{1}}\widehat{\sigma}_{1,n}^{2}+\frac{n}{n_{2}}\widehat{\sigma}_{2,n}^{2}\right)^{\frac{1}{2}}
\]
 gegeben, wobei 
\[
\widehat{\sigma}_{1,n}^{2}=\frac{1}{n_{1}-1}\sum_{i=1}^{n_{1}}\left(X_{i}-\overline{X}_{n_{1}}\right)^{2}\;\mbox{und}\quad\widehat{\sigma}_{2,n}^{2}=\frac{1}{n_{2}-1}\sum_{i=1}^{n_{2}}\left(Y_{i}-\overline{Y}_{n_{2}}\right)^{2}
\]
 sind. Die Testfolge $\left(\widehat{\varphi}_{2n}\right)_{n\in\mathbb{N}}$
für das zweiseitige Testproblem (\ref{eq:h2k2}) ist ebenfalls durch
die Teststatistik $T_{n}$ und den kritischen Wert 
\[
\widehat{c}_{2,n}=u_{1-\frac{\alpha}{2}}\left(\frac{n}{n_{1}}\widehat{\sigma}_{1,n}^{2}+\frac{n}{n_{2}}\widehat{\sigma}_{2,n}^{2}\right)^{\frac{1}{2}}
\]
 festgelegt. Es ist zu beachten, dass diese Testfolgen das Niveau
$\alpha$ nur asymptotisch einhalten. Für den endlichen Stichprobenumfang
gibt es bereits für zwei parametrische Familien $\mathcal{P}:=\left\{ N\left(\mu,\sigma^{2}\right):\mu\in\mathbb{R},\,\sigma^{2}\in\left(0,+\infty\right)\right\} $
und $\mathcal{Q}:=\mathcal{P}$ der Normalverteilungen nur triviale
exakte Niveau $\alpha$-Tests für das Testproblem (\ref{eins_a_b_tp})
bzw. (\ref{zweis_a_b_tp}), vgl. Behrens-Fisher Testproblem, \cite{Lehmann:1986},
Seite 304. \end{anwendung}\begin{anwendung}[Testen von Verteilungsfunktionen]Es
gelten die Voraussetzungen aus Beispiel \ref{summ_mises_funk}. Es
seien $\left(\Omega_{1},\mathcal{A}_{1}\right)=\left(\Omega_{2},\mathcal{A}_{2}\right)=\left(\mathbb{R},\mathcal{B}\right)$
und $f_{1}=-f_{2}=\mathbf{1}_{\left(-\infty,q\right]}$ für ein $q\in\mathbb{R}$.
Die Verteilungsfunktion eines Wahrscheinlichkeitsmaßes $P\in\mathcal{M}_{1}\left(\mathbb{R},\mathcal{B}\right)$
wird mit $F_{P}$ bezeichnet. Für die Testprobleme (\ref{eins_a_b_tp})
und (\ref{zweis_a_b_tp}) ergibt sich 
\begin{equation}
H_{1}=\left\{ F_{P}\left(q\right)-F_{Q}\left(q\right)\leq a\right\} \;\mbox{gegen}\;K_{1}=\left\{ F_{P}\left(q\right)-F_{Q}\left(q\right)>a\right\} \label{eq:q_quant_eins}
\end{equation}
 und 
\begin{equation}
H_{2}=\left\{ F_{P}\left(q\right)-F_{Q}\left(q\right)=a\right\} \;\mbox{gegen}\;K_{2}=\left\{ F_{P}\left(q\right)-F_{Q}\left(q\right)\neq a\right\} .\label{eq:q_quant_zwei}
\end{equation}
Als Teststatistik erhält man dann 
\[
T_{n}=\frac{\sqrt{n}}{n_{1}}\sum_{i=1}^{n_{1}}\mathbf{1}_{\left(-\infty,q\right]}\left(X_{i}\right)-\frac{\sqrt{n}}{n_{2}}\sum_{i=1}^{n_{2}}\mathbf{1}_{\left(-\infty,q\right]}\left(Y_{i}\right)-\sqrt{n}a.
\]
 Der Test $\widehat{\varphi}_{1n}$ für das einseitige Testproblem
(\ref{eq:q_quant_eins}) ist durch den kritischen Wert $\widehat{c}_{1,n}=u_{1-\alpha}\left(\frac{n}{n_{1}}\widehat{\sigma}_{1,n}^{2}+\frac{n}{n_{2}}\widehat{\sigma}_{2,n}^{2}\right)^{\frac{1}{2}}$
gegeben, wobei 
\[
\widehat{\sigma}_{1,n}^{2}=\frac{1}{n_{1}-1}\sum_{i=1}^{n_{1}}\left(\mathbf{1}_{\left(-\infty,q\right]}\left(X_{i}\right)-\frac{1}{n_{1}}\sum_{j=1}^{n_{1}}\mathbf{1}_{\left(-\infty,q\right]}\left(X_{j}\right)\right)^{2}
\]
 und
\[
\widehat{\sigma}_{2,n}^{2}=\frac{1}{n_{2}-1}\sum_{i=1}^{n_{2}}\left(\mathbf{1}_{\left(-\infty,q\right]}\left(Y_{i}\right)-\frac{1}{n_{2}}\sum_{j=1}^{n_{2}}\mathbf{1}_{\left(-\infty,q\right]}\left(Y_{j}\right)\right)^{2}
\]
 sind. Der Test $\widehat{\varphi}_{2n}$ für das zweiseitige Testproblem
(\ref{eq:q_quant_zwei}) wird durch den kritischen Wert $\widehat{c}_{2,n}=u_{1-\alpha}\left(\frac{n}{n_{1}}\widehat{\sigma}_{1,n}^{2}+\frac{n}{n_{2}}\widehat{\sigma}_{2,n}^{2}\right)^{\frac{1}{2}}$
festgelegt. \end{anwendung}\begin{beisp}[Wilcoxon Test]\label{WT_first}Sei
$\left(\Omega_{1},\mathcal{A}_{1}\right)=\left(\Omega_{2},\mathcal{A}_{2}\right)=\left(\mathbb{R},\mathcal{B}\right)$.
Es wird vorausgesetzt, dass die nichtparametrischen Familien $\mathcal{P}\subset\mathcal{M}_{1}\left(\mathbb{R},\mathcal{B}\right)$
und $\mathcal{Q}\subset\mathcal{M}_{1}\left(\mathbb{R},\mathcal{B}\right)$
nur die stetigen Wahrscheinlichkeitsmaße enthalten. Es bezeichne $k:\mathcal{P}\otimes\mathcal{Q}\rightarrow\mathbb{R},\,P\otimes Q\mapsto\int1_{\left\{ \left(x,y\right)\in\mathbb{R}^{2}:x\geq y\right\} }dP\otimes Q$
das Wilcoxon Funktional. Für jedes stetiges Wahrscheinlichkeitsmaß
$P\in\mathcal{M}_{1}\left(\mathbb{R},\mathcal{B}\right)$ gilt $k\left(P\otimes P\right)=\frac{1}{2}$.
Anstelle der Testprobleme (\ref{eins_a_b_tp}) und (\ref{zweis_a_b_tp})
betrachtet man häufig das einseitige Testproblem mit der verkleinerten
Hypothese 
\[
\widetilde{H}_{2}=\left\{ P\otimes Q\in\mathcal{P}\otimes\mathcal{Q}:\,k\left(P\otimes Q\right)=\frac{1}{2}\:\mbox{und}\:P=Q\right\} 
\]
gegen $K_{1}$ und das zweiseitige Testproblem $\widetilde{H}_{2}$
gegen $K_{2}$. Wir beschränken uns auf das einseitige Testproblem
$\widetilde{H}_{2}$ gegen $K_{1}$. Das zweiseitige Testproblem $\widetilde{H}_{2}$
gegen $K_{2}$ kann analog behandelt werden. Das Wilcoxon Funktional
ist differenzierbar an jeder Stelle $P_{0}\otimes Q_{0}\in\mathcal{P}\otimes\mathcal{Q}$
und die Abbildung 
\[
\widetilde{k}\left(P_{0}\otimes Q_{0}\right):\mathbb{R}^{2}\rightarrow\mathbb{R},\,\left(x,y\right)\mapsto Q_{0}\left(\left[x,+\infty\right)\right)+P_{0}\left(\left(-\infty,y\right]\right)-2k\left(P_{0}\otimes Q_{0}\right)
\]
 ist ein Gradient von $k$ an der Stelle $P_{0}\otimes Q_{0}$, vgl.
Anwendung \ref{wilcoxon}. Es wird außerdem vorausgesetzt, dass $\widetilde{k}\left(P_{0}\otimes Q_{0}\right)$
bereits der kanonische Gradient von $k$ an jeder Stelle $P_{0}\otimes Q_{0}\in\widetilde{H}_{2}$
ist, d.h. es gilt $\widetilde{k}\left(P_{0}\otimes Q_{0}\right)\in T\left(P_{0}\otimes Q_{0},\mathcal{P}\otimes\mathcal{Q}\right)$
für alle $P_{0}\otimes Q_{0}\in\widetilde{H}_{2}$. Sei nun $P_{0}\otimes Q_{0}\in\widetilde{H}_{2}$
beliebig aber fest gewählt. Für die Teststatistik $T_{n}$ erhält
man nach Beispiel \ref{Ts_WF} 
\begin{eqnarray*}
T_{n} & = & \frac{\sqrt{n}}{n_{1}}\sum_{i=1}^{n_{1}}Q_{0}\left(\left(-\infty,X_{i}\right]\right)+\frac{\sqrt{n}}{n_{2}}\sum_{j=1}^{n_{2}}P_{0}\left(\left[Y_{j},+\infty\right)\right)-2\sqrt{n}k\left(P_{0}\otimes Q_{0}\right)\\
 & = & \frac{\sqrt{n}}{n_{1}}\sum_{i=1}^{n_{1}}F_{Q_{0}}\left(X_{i}\right)+\frac{\sqrt{n}}{n_{2}}\sum_{j=1}^{n_{2}}\left(1-F_{P_{0}}\left(Y_{j}\right)\right)-\sqrt{n}\\
 & = & \frac{\sqrt{n}}{n_{1}}\sum_{i=1}^{n_{1}}F_{Q_{0}}\left(X_{i}\right)-\frac{\sqrt{n}}{n_{2}}\sum_{j=1}^{n_{2}}F_{P_{0}}\left(Y_{j}\right).
\end{eqnarray*}
Unter der Hypothese $\widetilde{H}_{2}$ gilt die Gleichheit $F_{P_{0}}=F_{Q_{0}}$
nach Voraussetzung. Die Verteilungsfunktion $F_{P_{0}}$ stellt einen
unbekannten unendlichdimensionalen Parameter dar. Durch den Übergang
zu den Rängen wird dieser Parameter eliminiert, vgl. \cite{Hajek:1999},
\cite{Janssen:1999a}, \cite{Janssen:1997} und \cite{Janssen:2000}.
Sei $\left(Z_{1},\ldots,Z_{n}\right)=\left(X_{1},\ldots,X_{n_{1}},Y_{1},\ldots,Y_{n_{2}}\right)$
die gesamte Stichprobe. Mit $R_{n}=\left(R_{n,1},\ldots,R_{n,n}\right)$
werden die gemeinsamen Ränge von Zufallsvariablen $\left(Z_{1},\ldots,Z_{n}\right)$
bezeichnet. Die unbekannte Verteilungsfunktion $F_{P_{0}}=F_{Q_{0}}$
wird durch die empirische Verteilungsfunktion 
\[
\widehat{F}\left(Z_{i}\right)=\frac{1}{n}R_{n,i}
\]
 ersetzt. Die lineare Rangstatistik $\widehat{T}_{n}$ wird durch
\[
\widehat{T}_{n}=\frac{1}{n_{1}\sqrt{n}}\sum_{i=1}^{n_{1}}R_{n,i}-\frac{1}{n_{2}\sqrt{n}}\sum_{j=1+n_{1}}^{n}R_{n,j}
\]
 definiert. Es gilt 
\[
\lim_{n\rightarrow\infty}\int\left(T_{n}-\widehat{T}_{n}\right)^{2}dP_{0}^{n_{1}}\otimes Q_{0}^{n_{2}}=\lim_{n\rightarrow\infty}\int\left(T_{n}-\widehat{T}_{n}\right)^{2}dP_{0}^{n}=0
\]
 nach \cite{Janssen:1998}, Satz 9.3 oder \cite{Hajek:1999}, Section
6.1. Hieraus folgt 
\[
\lim_{n\rightarrow\infty}P_{0}^{n_{1}}\otimes Q_{0}^{n_{2}}\left(\left|T_{n}-\widehat{T}_{n}\right|>\varepsilon\right)=0
\]
 für alle $\varepsilon>0$. Sei $S_{n}$ die symmetrische Gruppe,
d.h. die Gruppe aller Permutationen auf der Menge $\left\{ 1,\ldots,n\right\} $.
Sei nun 
\begin{equation}
\widehat{\varphi}_{1n}=\left\{ \begin{array}{cccc}
1 &  & >\\
\widehat{\gamma}_{n} & \widehat{T}_{n} & = & \widehat{c}_{1,n}(\alpha)\\
0 &  & <
\end{array}\right.\label{phi_1n_dach_test}
\end{equation}
 der Rangtest zum Niveau $\alpha,$ wobei $\widehat{c}_{1,n}(\alpha)$
das $(1-\alpha)$-Quantil der Verteilung der Statistik 
\[
S_{n}\rightarrow\mathbb{R},\pi\mapsto\frac{1}{n_{1}\sqrt{n}}\sum_{i=1}^{n_{1}}\pi(i)-\frac{1}{n_{2}\sqrt{n}}\sum_{j=1+n_{1}}^{n}\pi(j)
\]
 unter der Gleichverteilung auf der symmetrischen Gruppe $S_{n}$
ist. Man erhält die stochastische Konvergenz 
\[
\lim_{n\rightarrow\infty}P_{0}^{n_{1}}\otimes Q_{0}^{n_{2}}\left(\left|\widehat{c}_{1,n}(\alpha)-c_{1}\right|>\varepsilon\right)=0
\]
 für alle $\varepsilon>0$ nach \cite{Janssen:1998}, Satz 11.1 und
Lemma 11.3, vgl. ebenfalls \cite{Janssen:1997} und \cite{Janssen:1999a}.
Nach Satz 11.1 aus \cite{Janssen:1998} folgt außerdem 
\[
\lim_{n\rightarrow\infty}P_{0}^{n_{1}}\otimes Q_{0}^{n_{2}}\left(\left\{ \widehat{\varphi}_{1n}\in\left(0,1\right)\right\} \right)=\lim_{n\rightarrow\infty}P_{0}^{n_{1}}\otimes Q_{0}^{n_{2}}\left(\left\{ \widehat{T}_{n}=\widehat{c}_{1,n}(\alpha)\right\} \right)=0.
\]
Nach Lemma \ref{benach_imp_asymp_eq} und Bemerkung \ref{asympt_banachb_equiv_tests}
ergibt sich nun, dass die Testfolgen (\ref{phi_1n_dach_test}) und
(\ref{erinnerung_asymp_einseit_testfolge}) an jeder Stelle $P_{0}\otimes Q_{0}\in\widetilde{H}_{2}$
lokal asymptotisch äquivalent sind. Die Teststatistik $\widehat{T}_{n}$
lässt sich folgendermaßen äquivalent umformen: 
\begin{eqnarray*}
\widehat{T}_{n} & = & \frac{1}{n_{1}\sqrt{n}}\sum_{i=1}^{n_{1}}R_{n,i}-\frac{1}{n_{2}\sqrt{n}}\sum_{j=1+n_{1}}^{n}R_{n,j}\\
 & = & \frac{1}{n_{1}\sqrt{n}}\sum_{i=1}^{n_{1}}R_{n,i}-\frac{1}{n_{2}\sqrt{n}}\left(\frac{n\left(n+1\right)}{2}-\sum_{i=1}^{n_{1}}R_{n,i}\right)\\
 & = & \left(\frac{1}{n_{1}\sqrt{n}}+\frac{1}{n_{2}\sqrt{n}}\right)\sum_{i=1}^{n_{1}}R_{n,i}-\frac{n\left(n+1\right)}{2n_{2}\sqrt{n}}.
\end{eqnarray*}
Durch die äquivalente Transformation erhält man
\[
\widehat{\varphi}_{1n}=\left\{ \begin{array}{cccc}
1 &  & >\\
\widetilde{\gamma}_{n} & \sum_{i=1}^{n_{1}}R_{n,i} & = & \widetilde{c}_{1,n}(\alpha)\\
0 &  & <
\end{array}\right.
\]
 für geeignete $\widetilde{\gamma}_{n}$ und $\widetilde{c}_{1,n}(\alpha)$.
Der kritische Wert $\widetilde{c}_{1,n}(\alpha)$ lässt sich außerdem
als der $\left(1-\alpha\right)$-Quantil der Verteilung der Statistik
\[
S_{n}\rightarrow\mathbb{R},\pi\mapsto\sum_{i=1}^{n_{1}}\pi(i)
\]
 unter der Gleichverteilung auf der symmetrischen Gruppe $S_{n}$
bestimmen. Der Test $\widehat{\varphi}_{1n}$ ist also der übliche
Zweistichproben-Wilcoxon-Test. Es gibt studentisierte Versionen des
Wilcoxon-Tests, die unter der allgemeinen Nullhypothese $H_{2}=\left\{ P\otimes Q\in\mathcal{P}\otimes\mathcal{Q}:\,k\left(P\otimes Q\right)=\frac{1}{2}\right\} $
als Permutationstests ausgeführt werden, vgl. \cite{Hajek:1999},
\cite{Janssen:1997} und \cite{Janssen:1999a}. \end{beisp}Es bezeichne
$\mathbb{P}(\mathbb{N})$ die Potenzmenge von $\mathbb{N}$. Seien
$\mathcal{P}\subset\mathcal{M}_{1}\left(\mathbb{R},\mathcal{B}\right)$
und $\mathcal{Q}\subset\mathcal{M}_{1}\left(\mathbb{N},\mathbb{P}(\mathbb{N})\right)$
zwei Familien von Wahrscheinlichkeitsmaßen. Man betrachte die randomisierte
Summe 
\[
S=\sum_{i=1}^{Y_{1}}X_{i}\,.
\]
 Es gilt dann $E\left(S\right)=E\left(X_{1}\right)E\left(Y_{1}\right)=E_{P}\left(\mbox{Id}_{\mathbb{R}}\right)E_{Q}\left(\mbox{Id}_{\mathbb{N}}\right)$.
Die einseitigen und zweiseitigen Testprobleme für das statistische
Funktional $k:\mathcal{P}\otimes\mathcal{Q}\rightarrow\mathbb{R},\,P\otimes Q\mapsto E_{P}\left(\mbox{Id}_{\mathbb{R}}\right)E_{Q}\left(\mbox{Id}_{\mathbb{N}}\right)$
motivieren das nachfolgende Beispiel.\begin{beisp}[Produkt zweier von Mises Funktionale]\label{prod_mises_funk}Seien
$k_{1}:\mathcal{P}\rightarrow\mathbb{R},\,P\mapsto\int f_{1}\,dP$
und $k_{2}:\mathcal{Q}\rightarrow\mathbb{R},\,Q\mapsto\int f_{2}\,dQ$
zwei von Mises Funktionale. Die Voraussetzungen aus Beispiel \ref{summ_mises_funk}
seien erfüllt. Das Funktional $k_{1}$ ist dann differenzierbar an
jeder Stelle $P_{0}\in\mathcal{P}$ mit dem kanonischen Gradienten
$f_{1}-E_{P_{0}}\left(f_{1}\right)$ und das Funktional $k_{2}$ ist
differenzierbar an jeder Stelle $Q_{0}\in\mathcal{Q}$ mit dem kanonischen
Gradienten $f_{2}-E_{Q_{0}}\left(f_{2}\right)$. Nach Beispiel \ref{bsp_zusamm_gesetzte_funk}
ist das statistische Funktional 
\[
k:\mathcal{P}\otimes\mathcal{Q}\rightarrow\mathbb{R},\,P\otimes Q\mapsto k_{1}\left(P\right)k_{2}\left(Q\right)
\]
differenzierbar an jeder Stelle $P_{0}\otimes Q_{0}\in\mathcal{P}\otimes\mathcal{Q}$
mit dem kanonischen Gradienten
\[
\widetilde{k}:\left(\omega_{1},\omega_{2}\right)\mapsto E_{Q_{0}}\left(f_{2}\right)\left(f_{1}\left(\omega_{1}\right)-E_{P_{0}}\left(f_{1}\right)\right)+E_{P_{0}}\left(f_{1}\right)\left(f_{2}\left(\omega_{2}\right)-E_{Q_{0}}\left(f_{2}\right)\right).
\]
 Hieraus folgt $\widetilde{k}_{1}=E_{Q_{0}}\left(f_{2}\right)\left(f_{1}-E_{P_{0}}\left(f_{1}\right)\right)$
und $\widetilde{k}_{2}=E_{P_{0}}\left(f_{1}\right)\left(f_{2}-E_{Q_{0}}\left(f_{2}\right)\right)$.
Die Testprobleme (\ref{eins_a_b_tp}) und (\ref{zweis_a_b_tp}) für
$a\neq0$ werden betrachtet. Es bezeichne wieder $\overline{f}_{1,n}=\frac{1}{n_{1}}\sum_{i=1}^{n_{1}}f_{1}\circ X_{i}$
und $\overline{f}_{2,n}=\frac{1}{n_{2}}\sum_{i=1}^{n_{2}}f_{2}\circ Y_{i}$.
Sei $P_{0}\otimes Q_{0}\in H_{2}$ beliebig aber fest gewählt. Dann
gilt $a=k\left(P_{0}\otimes Q_{0}\right)=E_{P_{0}}\left(f_{1}\right)E_{Q_{0}}\left(f_{2}\right).$
Für die Teststatistik $T_{n}$ erhält man nun 
\begin{eqnarray*}
T_{n} & = & \frac{\sqrt{n}}{n_{1}}\sum_{i=1}^{n_{1}}E_{Q_{0}}\left(f_{2}\right)\left(f_{1}\left(X_{i}\right)-E_{P_{0}}\left(f_{1}\right)\right)\\
 &  & +\frac{\sqrt{n}}{n_{2}}\sum_{i=1}^{n_{2}}E_{P_{0}}\left(f_{1}\right)\left(f_{2}\left(Y_{i}\right)-E_{Q_{0}}\left(f_{2}\right)\right)\\
 & = & E_{Q_{0}}\left(f_{2}\right)\frac{\sqrt{n}}{n_{1}}\sum_{i=1}^{n_{1}}f_{1}\left(X_{i}\right)-\sqrt{n}E_{P_{0}}\left(f_{1}\right)E_{Q_{0}}\left(f_{2}\right)\\
 &  & +E_{P_{0}}\left(f_{1}\right)\frac{\sqrt{n}}{n_{2}}\sum_{i=1}^{n_{2}}f_{2}\left(Y_{i}\right)-\sqrt{n}E_{P_{0}}\left(f_{1}\right)E_{Q_{0}}\left(f_{2}\right)\\
 & = & \sqrt{n}E_{Q_{0}}\left(f_{2}\right)\overline{f}_{1,n}+\sqrt{n}E_{P_{0}}\left(f_{1}\right)\overline{f}_{2,n}-2\sqrt{n}a.
\end{eqnarray*}
 Die Werte $E_{P_{0}}\left(f_{1}\right)$ und $E_{Q_{0}}\left(f_{2}\right)$
sind unbekannt und werden durch $\overline{f}_{1,n}$ und $\overline{f}_{2,n}$
geschätzt. Die Teststatistik $\widehat{T}_{n}$ mit geschätzten Erwartungswerten
ist durch 
\[
\widehat{T}_{n}=\sqrt{n}\,\overline{f}_{1,n}\overline{f}_{2,n}-\sqrt{n}a
\]
definiert. Die Teststatistik $\widehat{T}_{n}$ läßt sich auch mit
Hilfe der kanonischen Zwei\-stich\-proben-U-Statistik für das statistische
Funktional $k$ herleiten. Die Zufallsvariable $X_{1}Y_{1}$ ist ein
erwartungstreuer Schätzer für $k$. Für die Zwei\-stich\-proben-U-Statistik
mit der Abbildung $\psi:\left(x,y\right)\mapsto xy$ als Kern erhält
man dann 
\[
\frac{1}{n_{1}n_{2}}\sum_{i=1}^{n_{1}}\sum_{j=1}^{n_{2}}X_{i}Y_{j}=\left(\frac{1}{n_{1}}\sum_{i=1}^{n_{1}}X_{i}\right)\left(\frac{1}{n_{2}}\sum_{j=1}^{n_{2}}Y_{j}\right)=\overline{f}_{1,n}\overline{f}_{2,n}.
\]
 Die asymptotische Verteilung der Teststatistik $\widehat{T}_{n}$
lässt sich mit Delta Methode bestimmen, vgl. \cite{Vaart:1998}, Seite
26, Theorem 3.1 oder \cite{Witting:1995}, Seite 107, Satz 5.107 (Cram\'er).
Nach dem starken Gesetz der großen Zahlen ergibt sich die fast sichere
Konvergenz $\left(\overline{f}_{1,n},\overline{f}_{2,n}\right)\rightarrow\left(E_{P_{0}}\left(f_{1}\right),E_{Q_{0}}\left(f_{2}\right)\right)$
für $n\rightarrow\infty$. Mit Cram\'er-Wold-Device zeigt man die
schwache Konvergenz 
\begin{eqnarray*}
\textrm{} &  & \mathcal{L}\left(\left.\sqrt{n}\left(\left(\begin{array}{c}
\overline{f}_{1,n}\\
\overline{f}_{2,n}
\end{array}\right)-\left(\begin{array}{c}
E_{P_{0}}\left(f_{1}\right)\\
E_{Q_{0}}\left(f_{2}\right)
\end{array}\right)\right)\right|P_{0}^{n_{1}}\otimes Q_{0}^{n_{2}}\right)\\
 & \rightarrow & N\left(\left(\begin{array}{c}
0\\
0
\end{array}\right),\left(\begin{array}{cc}
\frac{1}{1-d}\mbox{Var}_{P_{0}}\left(f_{1}\right) & 0\\
0 & \frac{1}{d}\mbox{Var}_{Q_{0}}\left(f_{2}\right)
\end{array}\right)\right)
\end{eqnarray*}
für $n\rightarrow\infty$. Die Abbildung $H_{1}:\mathbb{R}^{2}\rightarrow\mathbb{R},\left(x,y\right)\mapsto xy$
ist differenzierbar in $\mathbb{R}^{2}$ mit der Jacobi-Matrix 
\[
JH_{1}\left(x,y\right)=\left(\begin{array}{c}
y\\
x
\end{array}\right).
\]
Mit Delta Methode erhält man für $n\rightarrow\infty$ die schwache
Konvergenz
\begin{eqnarray*}
 &  & \mathcal{L}\left(\left.\widehat{T}_{n}\right|P_{0}^{n_{1}}\otimes Q_{0}^{n_{2}}\right)\\
 & = & \mathcal{L}\left(\left.\sqrt{n}\,\overline{f}_{1,n}\overline{f}_{2,n}-\sqrt{n}E_{P_{0}}\left(f_{1}\right)E_{Q_{0}}\left(f_{2}\right)\right|P_{0}^{n_{1}}\otimes Q_{0}^{n_{2}}\right)\\
 & = & \mathcal{L}\left(\left.\sqrt{n}\left(H_{1}\left(\overline{f}_{1,n},\overline{f}_{2,n}\right)-H_{1}\left(E_{P_{0}}\left(f_{1}\right),E_{Q_{0}}\left(f_{2}\right)\right)\right)\right|P_{0}^{n_{1}}\otimes Q_{0}^{n_{2}}\right)\\
 & \rightarrow & N\left(0,\,E_{Q_{0}}\left(f_{2}\right)^{2}\frac{1}{1-d}\mbox{Var}_{P_{0}}\left(f_{1}\right)+E_{P_{0}}\left(f_{1}\right)^{2}\frac{1}{d}\mbox{Var}_{Q_{0}}\left(f_{2}\right)\right)\\
 & = & N\left(0,\,\frac{1}{1-d}\int\widetilde{k}_{1}^{2}\,dP_{0}+\frac{1}{d}\int\widetilde{k}_{2}^{2}\,dQ_{0}\right).
\end{eqnarray*}
Die Testfolgen 
\begin{equation}
\widetilde{\varphi}_{1n}=\left\{ \begin{array}{cccc}
1 &  & >\\
 & \widehat{T}_{n} &  & c_{1}\\
0 &  & \leq
\end{array}\right.\label{pmf_sch_1}
\end{equation}
 und 
\begin{equation}
\widetilde{\varphi}_{2n}=\left\{ \begin{array}{cccc}
1 &  & >\\
 & \left|\widehat{T}_{n}\right| &  & c_{2}\\
0 &  & \leq
\end{array}\right.\label{pmf_sch_2}
\end{equation}
erfüllen dann die Bedingung $\lim_{n\rightarrow\infty}\int\widetilde{\varphi}_{in}\,dP_{0}^{n_{1}}\otimes Q_{0}^{n_{2}}=\alpha$
für $i=1,2$. Die kritischen Werte $c_{1}$ und $c_{2}$ sind in (\ref{erinnerung_c1})
und (\ref{erinnerung_c2})  definiert. Es wird nun bewiesen, dass
die Testfolgen (\ref{pmf_sch_1}) und (\ref{erinnerung_asymp_einseit_testfolge})
bzw. (\ref{pmf_sch_2}) und (\ref{erinnerung_asymp_zweiseit_testfolge})
lokal asymptotisch äquivalent für das Testproblem (\ref{eins_a_b_tp})
bzw. (\ref{zweis_a_b_tp}) an der Stelle $P_{0}\otimes Q_{0}$ sind.
Sei $t\mapsto P_{t}\otimes Q_{t}$ eine $L_{2}\left(P_{0}\otimes Q_{0}\right)$-differenzierbare
Kurve in $\mathcal{P}\otimes\mathcal{Q}$ mit Tangente $g\in L_{2}^{(0)}\left(P_{0}\otimes Q_{0}\right).$
Sei $\int g^{2}\,dP_{0}\otimes Q_{0}\neq0$ ohne Einschränkung, sonst
folgt die Behauptung nach Satz \ref{niveau_alpha_trivial}. Die Tangente
$g$ besitzt nach Satz \ref{L2Produkt} die Darstellung $g=g_{1}\circ\pi_{1}+g_{2}\circ\pi_{2}$
mit $g_{1}\in L_{2}^{(0)}\left(P_{0}\right)$ und $g_{2}\in L_{2}^{(0)}\left(Q_{0}\right)$.
Sei 
\[
\Lambda_{n}=\frac{1}{\sqrt{n}}\left(\sum_{i=1}^{n_{1}}g_{1}\left(X_{i}\right)+\sum_{i=1}^{n_{2}}g_{2}\left(Y_{i}\right)\right).
\]
Die asymptotische Gütefunktion der Testfolgen (\ref{erinnerung_asymp_einseit_testfolge})
und (\ref{erinnerung_asymp_zweiseit_testfolge}) entlang der impliziten
Alternativen und Hypothesen $\left(P_{t_{n}}^{n_{1}}\otimes Q_{t_{n}}^{n_{2}}\right)_{n\in\mathbb{N}}\in\mathcal{F}_{2}$
ist eindeutig bestimmt durch die gemeinsame asymptotische Verteilung
\[
\lim_{n\rightarrow\infty}\mathcal{L}\left(\left.\left(T_{n},\Lambda_{n}\right)\right|P_{0}^{n_{1}}\otimes Q_{0}^{n_{2}}\right)
\]
von $T_{n}$ und $\Lambda_{n}$. Die asymptotische Gütefunktion der
Testfolgen (\ref{pmf_sch_1}) und (\ref{pmf_sch_2}) entlang der impliziten
Alternativen und Hypothesen $\left(P_{t_{n}}^{n_{1}}\otimes Q_{t_{n}}^{n_{2}}\right)_{n\in\mathbb{N}}\in\mathcal{F}_{2}$
ist ebenfalls eindeutig bestimmt durch die gemeinsame asymptotische
Verteilung 
\[
\lim_{n\rightarrow\infty}\mathcal{L}\left(\left.\left(\widehat{T}_{n},\Lambda_{n}\right)\right|P_{0}^{n_{1}}\otimes Q_{0}^{n_{2}}\right)
\]
von $\widehat{T}_{n}$ und $\Lambda_{n}$, vgl. Abschnitte \ref{sec:Asymptotische-Eigenschaften-der}
und \ref{sec:zw:twsts}, insbesondere Satz \ref{asymp-guete} und
Satz \ref{asymp-guete_zw}. Es reicht also 
\begin{equation}
\lim_{n\rightarrow\infty}\mathcal{L}\left(\left.\left(T_{n},\Lambda_{n}\right)\right|P_{0}^{n_{1}}\otimes Q_{0}^{n_{2}}\right)=\lim_{n\rightarrow\infty}\mathcal{L}\left(\left.\left(\widehat{T}_{n},\Lambda_{n}\right)\right|P_{0}^{n_{1}}\otimes Q_{0}^{n_{2}}\right)\label{eq:asymp_gl_der_vert}
\end{equation}
 nachzuweisen. Nach Satz \ref{gem-asymp-vert} erhält man
\begin{equation}
\lim_{n\rightarrow\infty}\mathcal{L}\left(\left.\left(\begin{array}{c}
T_{n}\\
\Lambda_{n}
\end{array}\right)\right|P_{0}^{n_{1}}\otimes Q_{0}^{n_{2}}\right)=N\left(\left(\begin{array}{c}
0\\
0
\end{array}\right),\left(\begin{array}{cc}
\sigma_{1}^{2} & \sigma_{12}\\
\sigma_{12} & \sigma_{2}^{2}
\end{array}\right)\right),\label{eq:asymp_vert_von_t_n_lambda_n}
\end{equation}
 wobei 
\begin{eqnarray*}
\sigma_{1}^{2} & = & \frac{1}{1-d}\int\widetilde{k}_{1}^{2}\,dP_{0}+\frac{1}{d}\int\widetilde{k}_{2}^{2}\,dQ_{0}\\
 & = & \frac{1}{1-d}E_{Q_{0}}\left(f_{2}\right)^{2}\mbox{Var}_{P_{0}}\left(f_{1}\right)+\frac{1}{d}E_{P_{0}}\left(f_{1}\right)^{2}\mbox{Var}_{Q_{0}}\left(f_{2}\right),
\end{eqnarray*}
\[
\sigma_{2}^{2}=\left(1-d\right)\int g_{1}^{2}\,dP_{0}+d\int g_{2}^{2}\,dQ_{0}=\left(1-d\right)\mbox{Var}_{P_{0}}\left(g_{1}\right)+d\mbox{Var}_{Q_{0}}\left(g_{2}\right),
\]
\[
\sigma_{12}=\int\widetilde{k}g\,dP_{0}\otimes Q_{0}=E_{Q_{0}}\left(f_{2}\right)\mbox{Cov}_{P_{0}}\left(f_{1},g_{1}\right)+E_{P_{0}}\left(f_{1}\right)\mbox{Cov}_{Q_{0}}\left(f_{2},g_{2}\right)
\]
 sind. Die asymptotische Verteilung von $\left(\widehat{T}_{n},\Lambda_{n}\right)$
unter $P_{0}^{n_{1}}\otimes Q_{0}^{n_{2}}$ wird mit Delta Methode
bestimmt. Es bezeichne 
\[
Y_{n}=\left(\overline{f}_{1,n},\overline{f}_{2,n},\frac{1}{n}\sum_{i=1}^{n_{1}}g_{1}\left(X_{i}\right),\frac{1}{n}\sum_{i=1}^{n_{2}}g_{2}\left(Y_{i}\right)\right).
\]
 Es gilt dann 
\[
E\left(Y_{n}\right)=\left(E_{P_{0}}\left(f_{1}\right),E_{Q_{0}}\left(f_{2}\right),0,0\right).
\]
 Nach dem starken Gesetz der großen Zahlen ergibt sich die fast sichere
Konvergenz $Y_{n}\rightarrow\left(E_{P_{0}}\left(f_{1}\right),E_{Q_{0}}\left(f_{2}\right),0,0\right)$
für $n\rightarrow\infty$. Mit Cram\'er-Wold-Device erhält man die
schwache Konvergenz 
\[
\lim_{n\rightarrow\infty}\mathcal{L}\left(\left.\sqrt{n}\left(Y_{n}-E\left(Y_{n}\right)\right)\right|P_{0}^{n_{1}}\otimes Q_{0}^{n_{2}}\right)=N\left(0,\Sigma\right).
\]
Die Kovarianzmatrix $\Sigma$ ergibt sich als
\begin{eqnarray*}
\Sigma & = & \left(\begin{array}{cccc}
\frac{1}{1-d}\mbox{Var}_{P_{0}}\left(f_{1}\right) & 0 & \mbox{Cov}_{P_{0}}\left(f_{1},g_{1}\right) & 0\\
0 & \frac{1}{d}\mbox{Var}_{Q_{0}}\left(f_{2}\right) & 0 & \mbox{Cov}_{Q_{0}}\left(f_{2},g_{2}\right)\\
\mbox{Cov}_{P_{0}}\left(f_{1},g_{1}\right) & 0 & \left(1-d\right)\mbox{Var}_{P_{0}}\left(g_{1}\right) & 0\\
0 & \mbox{Cov}_{Q_{0}}\left(f_{2},g_{2}\right) & 0 & d\mbox{Var}_{Q_{0}}\left(g_{2}\right)
\end{array}\right).
\end{eqnarray*}
Die Abbildung $H_{2}:\mathbb{R}^{4}\rightarrow\mathbb{R}^{2},\left(x_{1},x_{2},x_{3},x_{4}\right)\mapsto\left(x_{1}x_{2},x_{3}+x_{4}\right)$
ist differenzierbar in $\mathbb{R}^{4}$ mit der Jacobi-Matrix 
\[
JH_{2}\left(x_{1},x_{2},x_{3},x_{4}\right)=\left(\begin{array}{cccc}
x_{2} & x_{1} & 0 & 0\\
0 & 0 & 1 & 1
\end{array}\right).
\]
Sei $B:=JH_{2}\left(\left(E_{P_{0}}\left(f_{1}\right),E_{Q_{0}}\left(f_{2}\right),0,0\right)\right)$.
Nach Delta Methode ergibt sich die schwache Konvergenz 
\begin{eqnarray*}
 &  & \lim_{n\rightarrow\infty}\mathcal{L}\left(\left.\left(\widehat{T}_{n},\Lambda_{n}\right)\right|P_{0}^{n_{1}}\otimes Q_{0}^{n_{2}}\right)\\
 & = & \lim_{n\rightarrow\infty}\mathcal{L}\left(\left.\sqrt{n}\left(H_{2}\left(Y_{n}\right)-H_{2}\left(E\left(Y_{n}\right)\right)\right)\right|P_{0}^{n_{1}}\otimes Q_{0}^{n_{2}}\right)\\
 & = & N\left(0,\,B\Sigma B^{t}\right).
\end{eqnarray*}
Man erhält außerdem $B\Sigma B^{t}=\left(\begin{array}{cc}
\sigma_{1}^{2} & \sigma_{12}\\
\sigma_{12} & \sigma_{2}^{2}
\end{array}\right)$, vgl. (\ref{eq:asymp_vert_von_t_n_lambda_n}). Die Gleichheit (\ref{eq:asymp_gl_der_vert})
ist somit bewiesen. Die Werte $c_{1}$ und $c_{2}$ sind unbekannt
und müssen geschätzt werden. Die Schätzer $\overline{f}_{1,n}$, $\overline{f}_{2,n}$,
$\widehat{\sigma}_{n}\left(f_{1}\right)^{2}$ und $\widehat{\sigma}_{n}\left(f_{2}\right)^{2}$
seien wie in Beispiel \ref{summ_mises_funk} definiert. Die geschätzten
kritischen Werte $\widehat{c}_{1,n}$ und $\widehat{c}_{2,n}$ sind
definiert durch 
\[
\widehat{c}_{1,n}=u_{1-\alpha}\left(\frac{n}{n_{1}}\left(\overline{f}_{2,n}\right)^{2}\widehat{\sigma}_{n}\left(f_{1}\right)^{2}+\frac{n}{n_{2}}\left(\overline{f}_{1,n}\right)^{2}\widehat{\sigma}_{n}\left(f_{2}\right)^{2}\right)^{\frac{1}{2}}
\]
 und
\[
\widehat{c}_{1,n}=u_{1-\frac{\alpha}{2}}\left(\frac{n}{n_{1}}\left(\overline{f}_{2,n}\right)^{2}\widehat{\sigma}_{n}\left(f_{1}\right)^{2}+\frac{n}{n_{2}}\left(\overline{f}_{1,n}\right)^{2}\widehat{\sigma}_{n}\left(f_{2}\right)^{2}\right)^{\frac{1}{2}}.
\]
 Nach Lemma \ref{asympt_aquivalent_testfolge}, Lemma \ref{benach_imp_asymp_eq}
und Bemerkung \ref{asympt_banachb_equiv_tests} sind die Testfolgen
\[
\widehat{\varphi}_{1n}=\left\{ \begin{array}{cccc}
1 &  & >\\
 & \widehat{T}_{n} &  & \widehat{c}_{1,n}\\
0 &  & \leq
\end{array}\right.\qquad\mbox{und}\qquad\widehat{\varphi}_{2n}=\left\{ \begin{array}{cccc}
1 &  & >\\
 & \left|\widehat{T}_{n}\right| &  & \widehat{c}_{2,n}\\
0 &  & \leq
\end{array}\right.
\]
 asymptotisch äquivalent zu den Testfolgen $\widetilde{\varphi}_{1n}$
und $\widetilde{\varphi}_{2n}$ für das Testproblem (\ref{eins_a_b_tp})
bzw. (\ref{zweis_a_b_tp}), weil $\widehat{c}_{1,n}\rightarrow c_{1}$
und $\widehat{c}_{2,n}\rightarrow c_{2}$ für $n\rightarrow\infty$
fast sicher konvergieren, vgl. Beispiel \ref{summ_mises_funk}. Die
Testfolge $\widehat{\varphi}_{1n}$ ist also asymptotisch äquivalent
zu der Testfolge (\ref{erinnerung_asymp_einseit_testfolge}) für das
Testproblem (\ref{eins_a_b_tp}) und die Testfolge $\widehat{\varphi}_{2n}$
ist asymptotisch äquivalent zu der Testfolge (\ref{erinnerung_asymp_zweiseit_testfolge})
für das Testproblem (\ref{zweis_a_b_tp}).\end{beisp}\begin{anwendung}[Testen des Wilcoxon Funktionals]In
dieser Anwendung kehren wir zu dem Wilcoxon Funktional $k:\mathcal{P}\otimes\mathcal{Q}\rightarrow\mathbb{R},\,P\otimes Q\mapsto\int1_{\left\{ \left(x,y\right)\in\mathbb{R}^{2}:x\geq y\right\} }dP\otimes Q$
zurück. Es gilt $k\left(P\otimes Q\right)=\int F_{Q}dP$. Sei $\left(\Omega_{1},\mathcal{A}_{1}\right)=\left(\Omega_{2},\mathcal{A}_{2}\right)=\left(\mathbb{R},\mathcal{B}\right)$.
Im Gegensatz zu Beispiel \ref{WT_first} wird nicht vorausgesetzt,
dass die nichtparametrischen Familien $\mathcal{P}\subset\mathcal{M}_{1}\left(\mathbb{R},\mathcal{B}\right)$
und $\mathcal{Q}\subset\mathcal{M}_{1}\left(\mathbb{R},\mathcal{B}\right)$
nur stetige Wahrscheinlichkeitsmaße enthalten. Es werden das einseitige
Testproblem
\[
H_{1}=\left\{ P\otimes Q\in\mathcal{P}\otimes\mathcal{Q}:k\left(P\otimes Q\right)\leq\frac{1}{2}\right\} 
\]
 gegen

\begin{equation}
K_{1}=\left\{ P\otimes Q\in\mathcal{P}\otimes\mathcal{Q}:k\left(P\otimes Q\right)>\frac{1}{2}\right\} \label{wt1}
\end{equation}
und das zweiseitige Testproblem 
\[
H_{2}=\left\{ P\otimes Q\in\mathcal{P}\otimes\mathcal{Q}:k\left(P\otimes Q\right)=\frac{1}{2}\right\} 
\]
 gegen

\begin{equation}
K_{2}=\left\{ P\otimes Q\in\mathcal{P}\otimes\mathcal{Q}:k\left(P\otimes Q\right)\neq\frac{1}{2}\right\} \label{wt2}
\end{equation}
betrachtet. Aus der Anwendung \ref{wilcoxon} ist es bereits bekannt,
dass das Wilcoxon Funktional an jeder Stelle $P_{0}\otimes Q_{0}\in\mathcal{P}\otimes\mathcal{Q}$
differenzierbar ist und die Abbildung 
\[
\widetilde{k}\left(P_{0}\otimes Q_{0}\right):\mathbb{R}^{2}\rightarrow\mathbb{R},\,\left(x,y\right)\mapsto Q_{0}\left(\left(-\infty,x\right]\right)+P_{0}\left(\left[y,+\infty\right)\right)-2k\left(P_{0}\otimes Q_{0}\right)
\]
 ein Gradient von $k$ an der Stelle $P_{0}\otimes Q_{0}$ ist. Es
wird vorausgesetzt, dass $\widetilde{k}\left(P_{0}\otimes Q_{0}\right)$
der kanonische Gradient von $k$ an jeder Stelle $P_{0}\otimes Q_{0}\in H_{2}$
ist, d.h. es gilt $\widetilde{k}\left(P_{0}\otimes Q_{0}\right)\in T\left(P_{0}\otimes Q_{0},\mathcal{P}\otimes\mathcal{Q}\right)$
für alle $P_{0}\otimes Q_{0}\in H_{2}$. Diese Voraussetzung besagt,
dass das Modell hinreichend groß ist. Sei nun $P_{0}\otimes Q_{0}\in H_{2}$
beliebig aber fest gewählt. Nach Beispiel \ref{Ts_WF} liegt die Teststatistik
\[
T_{n}=\frac{\sqrt{n}}{n_{1}}\sum_{i=1}^{n_{1}}Q_{0}\left(\left(-\infty,X_{i}\right]\right)+\frac{\sqrt{n}}{n_{2}}\sum_{j=1}^{n_{2}}P_{0}\left(\left[Y_{j},+\infty\right)\right)-2\sqrt{n}k\left(P_{0}\otimes Q_{0}\right)
\]
 den asymptotisch optimalen Testfolgen (\ref{erinnerung_asymp_einseit_testfolge})
und (\ref{erinnerung_asymp_zweiseit_testfolge}) zugrunde. Wegen $k\left(P_{0}\otimes Q_{0}\right)=\frac{1}{2}$
erhält man unter $H_{2}$
\[
T_{n}=\frac{\sqrt{n}}{n_{1}}\sum_{i=1}^{n_{1}}Q_{0}\left(\left(-\infty,X_{i}\right]\right)+\frac{\sqrt{n}}{n_{2}}\sum_{j=1}^{n_{2}}P_{0}\left(\left[Y_{j},+\infty\right)\right)-\sqrt{n}.
\]
 Im Folgenden wird eine adaptive Methode vorgestellt, mit der die
unbekannten Fußpunkte $P_{0}$ und $Q_{0}$ eliminiert werden können.
Die Werte $Q_{0}\left(\left(-\infty,X_{i}\right]\right)$ und $P_{0}\left(\left[Y_{j},+\infty\right)\right)$,
die in die Teststatistik $T_{n}$ hineingehen, sind unbekannt und
müssen geschätzt werden. Für $t\in\mathbb{R}$ wird $Q_{0}\left(\left(-\infty,t\right]\right)$
durch 
\[
\frac{1}{n_{2}}\sum_{i=1}^{n_{2}}\mathbf{1}_{\left(-\infty,t\right]}\left(Y_{i}\right)
\]
 geschätzt. Der Wert $P_{0}\left(\left[s,+\infty\right)\right)$ wird
für $s\in\mathbb{R}$ durch 
\[
\frac{1}{n_{1}}\sum_{i=1}^{n_{1}}\mathbf{1}_{\left[s,+\infty\right)}\left(X_{i}\right)
\]
 geschätzt. Man erhält somit die neue Teststatistik
\begin{eqnarray*}
\widehat{T}_{n} & = & \frac{\sqrt{n}}{n_{1}}\sum_{i=1}^{n_{1}}\frac{1}{n_{2}}\sum_{j=1}^{n_{2}}\mathbf{1}_{\left(-\infty,X_{i}\right]}\left(Y_{j}\right)+\frac{\sqrt{n}}{n_{2}}\sum_{j=1}^{n_{2}}\frac{1}{n_{1}}\sum_{i=1}^{n_{1}}\mathbf{1}_{\left[Y_{j},+\infty\right)}\left(X_{i}\right)-\sqrt{n}\\
 & = & 2\frac{\sqrt{n}}{n_{1}n_{2}}\sum_{i=1}^{n_{1}}\sum_{j=1}^{n_{2}}\mathbf{1}_{\left(-\infty,X_{i}\right]}\left(Y_{j}\right)-\sqrt{n}.
\end{eqnarray*}
 Weiterhin wird die Teststatistik $\widetilde{T}_{n}=\frac{1}{2}\widehat{T}_{n}$
betrachtet. Es bezeichne 
\[
\mathbb{F}_{n_{1},P_{0}}(t)=\frac{1}{n_{1}}\sum_{i=1}^{n_{1}}\mathbf{1}_{\left(-\infty,t\right]}\left(X_{i}\right)
\]
 die empirische Verteilungsfunktion von $P_{0}$ und 
\[
\mathbb{F}_{n_{2},Q_{0}}(t)=\frac{1}{n_{2}}\sum_{i=1}^{n_{2}}\mathbf{1}_{\left(-\infty,t\right]}\left(Y_{i}\right)
\]
 die empirische Verteilungsfunktion von $Q_{0}$. Mit diesen Bezeichnungen
erhält man 
\begin{eqnarray*}
\widetilde{T}_{n} & = & \frac{\sqrt{n}}{n_{1}n_{2}}\sum_{i=1}^{n_{1}}\sum_{j=1}^{n_{2}}\mathbf{1}_{\left(-\infty,X_{i}\right]}\left(Y_{j}\right)-\frac{1}{2}\sqrt{n}\\
 & = & \sqrt{n}\int\mathbb{F}_{n_{2},Q_{0}}d\mathbb{F}_{n_{1},P_{0}}-\frac{1}{2}\sqrt{n}\\
 & = & \sqrt{n}\left(\int\mathbb{F}_{n_{2},Q_{0}}d\mathbb{F}_{n_{1},P_{0}}-\int F_{Q_{0}}dP_{0}\right).
\end{eqnarray*}
 Die asymptotische Verteilung von $\widetilde{T}_{n}$ unter $P_{0}^{n_{1}}\otimes Q_{0}^{n_{2}}$
wird nun untersucht.\begin{lemma} Für alle $\varepsilon>0$ gilt
\[
\lim\limits_{n\rightarrow\infty}P_{0}^{n_{1}}\otimes Q_{0}^{n_{2}}\left(\left|T_{n}-\widetilde{T}_{n}\right|>\varepsilon\right)=0.
\]
 \end{lemma}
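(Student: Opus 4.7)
The plan is to recognize $\widetilde{T}_n$ as the normalized two-sample U-statistic with kernel $h(x,y)=\mathbf{1}_{\{x\geq y\}}$ and $T_n$ as its Hoeffding projection onto the sum of two sample averages; the difference is then a degenerate two-sample U-statistic whose variance is easily controlled.

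First I would rewrite both statistics in a common form. Under $H_2$ we have $k(P_0\otimes Q_0)=\int F_{Q_0}\,dP_0=\tfrac12$, and in addition (using the continuity or, more generally, the identity $\int F_{Q_0}\,dP_0+\int F_{P_0}^-\,dQ_0\le 1$ together with the $H_2$ constraint) one checks that the constant $-\sqrt n$ in $T_n$ corresponds exactly to centring. Setting $\psi_1(x):=Q_0((-\infty,x])=\widetilde k_1(x)+k$ and $\psi_2(y):=P_0([y,+\infty))=\widetilde k_2(y)+k$, this gives
\begin{equation*}
T_n=\frac{\sqrt n}{n_1}\sum_{i=1}^{n_1}\bigl(\psi_1(X_i)-k\bigr)+\frac{\sqrt n}{n_2}\sum_{j=1}^{n_2}\bigl(\psi_2(Y_j)-k\bigr),
\end{equation*}
while $\widetilde T_n=\sqrt n\bigl(U_{n_1,n_2}-k\bigr)$ with $U_{n_1,n_2}=\tfrac{1}{n_1n_2}\sum_i\sum_j h(X_i,Y_j)$.

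Next I would invoke the Hoeffding decomposition
\begin{equation*}
h(x,y)=k+\bigl(\psi_1(x)-k\bigr)+\bigl(\psi_2(y)-k\bigr)+h_2(x,y),
\end{equation*}
where $h_2(x,y):=h(x,y)-\psi_1(x)-\psi_2(y)+k$ satisfies $E_{P_0}\bigl(h_2(\cdot,y)\bigr)=0$ for every $y$ and $E_{Q_0}\bigl(h_2(x,\cdot)\bigr)=0$ for every $x$. Summing and substituting yields
\begin{equation*}
\widetilde T_n-T_n=\frac{\sqrt n}{n_1n_2}\sum_{i=1}^{n_1}\sum_{j=1}^{n_2}h_2(X_i,Y_j),
\end{equation*}
which has vanishing $P_0^{n_1}\otimes Q_0^{n_2}$-expectation.

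The key step is a variance computation. By stochastic independence and the two degeneracy properties of $h_2$, every covariance $\mathrm{Cov}\bigl(h_2(X_i,Y_j),h_2(X_{i'},Y_{j'})\bigr)$ vanishes unless $i=i'$ and $j=j'$; indeed, conditioning on the coordinate that appears only once reduces the covariance to an integral against a zero-mean function. Hence
\begin{equation*}
\mathrm{Var}_{P_0^{n_1}\otimes Q_0^{n_2}}\bigl(\widetilde T_n-T_n\bigr)=\frac{n}{n_1n_2}\,\mathrm{Var}\bigl(h_2(X_1,Y_1)\bigr).
\end{equation*}
Since $h$ is bounded, so is $h_2$, and $\tfrac{n}{n_1n_2}=\tfrac{1}{n_1}\cdot\tfrac{n}{n_2}\to 0$ because $n_1\to\infty$ and $n_2/n\to d\in(0,1)$. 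The conclusion then follows from Chebyshev's inequality,
\begin{equation*}
P_0^{n_1}\otimes Q_0^{n_2}\bigl(|T_n-\widetilde T_n|>\varepsilon\bigr)\le \varepsilon^{-2}\,\mathrm{Var}\bigl(\widetilde T_n-T_n\bigr)\longrightarrow 0.
\end{equation*}
The main thing to be careful about is justifying the vanishing of all off-diagonal covariances cleanly; once the two-sided degeneracy of $h_2$ is exploited, the rest is routine.
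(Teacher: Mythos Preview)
Your argument is correct, and it is genuinely different from the paper's proof. The paper linearises the bilinear map $(F,G)\mapsto\int G\,dF$ via the functional delta method (citing van der Vaart, Example~20.11) to obtain
\[
\sqrt{n}\Bigl(\int\mathbb{F}_{n_2,Q_0}\,d\mathbb{F}_{n_1,P_0}-\int F_{Q_0}\,dP_0\Bigr)=\sqrt{n}\int(\mathbb{F}_{n_2,Q_0}-F_{Q_0})\,dP_0-\sqrt{n}\int(\mathbb{F}_{n_1,P_0}-F_{P_0})_{-}\,dQ_0+R_n
\]
with $R_n\to 0$ in probability, and then computes the two linear terms on the right to show they add up to $T_n$ exactly. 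By contrast, you identify $\widetilde T_n$ as the centred and scaled two-sample U-statistic with kernel $h=\mathbf{1}_{\{x\ge y\}}$, recognise $T_n$ as its H\'ajek--Hoeffding projection, and bound the variance of the degenerate remainder directly via Chebyshev. Your route is more elementary and entirely self-contained---no empirical-process machinery is needed---while the paper's route ties the result to a general delta-method framework that would transfer to other bilinear functionals. One small remark: your parenthetical about ``continuity or, more generally, the identity $\int F_{Q_0}\,dP_0+\int F_{P_0}^-\,dQ_0\le 1$'' is unnecessary here; the centring constants match simply because $E_{P_0}\psi_1=E_{Q_0}\psi_2=k=\tfrac12$, which follows directly from Fubini and the definition of $k$, with no continuity assumption required.
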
 \begin{proof}Es gilt 
\begin{eqnarray*}
 &  & \sqrt{n}\left(\int\mathbb{F}_{n_{2},Q_{0}}d\mathbb{F}_{n_{1},P_{0}}-\int F_{Q_{0}}dP_{0}\right)\\
\textrm{} & = & \sqrt{n}\int\left(\mathbb{F}_{n_{2},Q_{0}}-F_{Q_{0}}\right)dP_{0}-\sqrt{n}\int\left(\mathbb{F}_{n_{1},P_{0}}-F_{P_{0}}\right)_{-}dQ_{0}+R_{n}
\end{eqnarray*}
mit 
\begin{equation}
\lim\limits_{n\rightarrow\infty}P_{0}^{n_{1}}\otimes Q_{0}^{n_{2}}\left(\left|R_{n}\right|>\varepsilon\right)=0\label{eq:wf1}
\end{equation}
 für alle $\varepsilon>0$ nach der starken Version der funktionellen
Delta Methode, vgl. \cite{Vaart:1998}, Seite 299, Example 20.11.
Mit $\left(\mathbb{F}_{n_{1},P_{0}}-F_{P_{0}}\right)_{-}$ wird die
linksseitig stetige Version der Abbildung $t\mapsto\mathbb{F}_{n_{1},P_{0}}(t)-F_{P_{0}}(t)$
bezeichnet. Mit den äquivalenten Umformungen erhält man 
\begin{eqnarray*}
 &  & \sqrt{n}\int\left(\mathbb{F}_{n_{1},P_{0}}-F_{P_{0}}\right)_{-}dQ_{0}\\
 & = & \sqrt{n}\int\left(\frac{1}{n_{1}}\sum_{i=1}^{n_{1}}\mathbf{1}_{\left(-\infty,t\right]}\left(X_{i}\right)-P_{0}\left(\left(-\infty,t\right]\right)\right)_{-}dQ_{0}(t)\\
 & = & \sqrt{n}\int\left(\frac{1}{n_{1}}\sum_{i=1}^{n_{1}}\mathbf{1}_{\left(-\infty,t\right)}\left(X_{i}\right)-P_{0}\left(\left(-\infty,t\right)\right)\right)dQ_{0}\left(t\right)\\
 & = & \frac{\sqrt{n}}{n_{1}}\sum_{i=1}^{n_{1}}\left(\int\mathbf{1}_{\left(-\infty,t\right)}\left(X_{i}\right)dQ_{0}\left(t\right)-\int P_{0}\left(\left(-\infty,t\right)\right)dQ_{0}\left(t\right)\right)\\
 & = & \frac{\sqrt{n}}{n_{1}}\sum_{i=1}^{n_{1}}\left(\int\mathbf{1}_{\left(X_{i},+\infty\right)}\left(t\right)dQ_{0}\left(t\right)-1+k\left(P_{0}\otimes Q_{0}\right)\right)\\
 & = & \frac{\sqrt{n}}{n_{1}}\sum_{i=1}^{n_{1}}\left(Q_{0}\left(\left(X_{i},+\infty\right)\right)-1+k\left(P_{0}\otimes Q_{0}\right)\right)\\
 & = & -\frac{\sqrt{n}}{n_{1}}\sum_{i=1}^{n_{1}}\left(Q_{0}\left(\left(-\infty,X_{i}\right]\right)-k\left(P_{0}\otimes Q_{0}\right)\right),
\end{eqnarray*}
wobei 
\begin{eqnarray*}
\int P_{0}\left(\left(-\infty,t\right)\right)dQ_{0}\left(t\right) & = & \int\left(\int\mathbf{1}_{\left(-\infty,t\right)}\left(x\right)dP_{0}\left(x\right)\right)dQ_{0}\left(t\right)\\
 & = & \int1_{\left\{ \left(x,t\right)\in\mathbb{R}^{2}:x<y\right\} }dP_{0}\otimes Q_{0}\\
 & = & 1-\int1_{\left\{ \left(x,t\right)\in\mathbb{R}^{2}:x\geq y\right\} }dP_{0}\otimes Q_{0}\\
 & = & 1-k\left(P_{0}\otimes Q_{0}\right)
\end{eqnarray*}
 gilt. Analog ergibt sich 
\begin{eqnarray*}
 &  & \sqrt{n}\int\left(\mathbb{F}_{n_{2},Q_{0}}-F_{Q_{0}}\right)\\
 & = & \frac{\sqrt{n}}{n_{2}}\sum_{j=1}^{n_{2}}\int\left(\mathbf{1}_{\left(-\infty,t\right]}\left(Y_{j}\right)-F_{Q_{0}}\left(t\right)\right)dP_{0}\left(t\right)\\
 & = & \frac{\sqrt{n}}{n_{2}}\sum_{j=1}^{n_{2}}\left(\int\mathbf{1}_{\left[Y_{j},+\infty\right)}\left(t\right)dP_{0}\left(t\right)-\int F_{Q_{0}}\left(t\right)dP_{0}\left(t\right)\right)\\
 & = & \frac{\sqrt{n}}{n_{2}}\sum_{j=1}^{n_{2}}\left(P_{0}\left(\left[Y_{j},+\infty\right)\right)-k\left(P_{0}\otimes Q_{0}\right)\right).
\end{eqnarray*}
Insgesamt erhält man 
\begin{eqnarray*}
\textrm{} &  & \sqrt{n}\int\left(\mathbb{F}_{n_{2},Q_{0}}-F_{Q_{0}}\right)dP_{0}-\sqrt{n}\int\left(\mathbb{F}_{n_{1},P_{0}}-F_{P_{0}}\right)_{-}dQ_{0}\\
 & = & \frac{\sqrt{n}}{n_{2}}\sum_{j=1}^{n_{2}}\left(P_{0}\left(\left[Y_{j},+\infty\right)\right)-k\left(P_{0}\otimes Q_{0}\right)\right)\\
 &  & +\frac{\sqrt{n}}{n_{1}}\sum_{i=1}^{n_{1}}\left(Q_{0}\left(\left(-\infty,X_{i}\right]\right)-k\left(P_{0}\otimes Q_{0}\right)\right)\\
 & = & \frac{\sqrt{n}}{n_{1}}\sum_{i=1}^{n_{1}}Q_{0}\left(\left(-\infty,X_{i}\right]\right)+\frac{\sqrt{n}}{n_{2}}\sum_{j=1}^{n_{2}}P_{0}\left(\left[Y_{j},+\infty\right)\right)-2\sqrt{n}k\left(P_{0}\otimes Q_{0}\right)\\
 & = & T_{n}.
\end{eqnarray*}
Hieraus folgt
\begin{eqnarray}
\widetilde{T}_{n} & = & \int\mathbb{F}_{n_{2},Q_{0}}d\mathbb{F}_{n_{1},P_{0}}-\int F_{Q_{0}}dF_{P_{0}}\nonumber \\
 & = & \sqrt{n}\int\left(\mathbb{F}_{n_{2},Q_{0}}-F_{Q_{0}}\right)dP_{0}-\sqrt{n}\int\left(\mathbb{F}_{n_{1},P_{0}}-F_{P_{0}}\right)_{-}dQ_{0}+R_{n}\nonumber \\
 & = & T_{n}+R_{n}.\label{ref:wf2}
\end{eqnarray}
 Aus (\ref{eq:wf1}) und (\ref{ref:wf2}) folgt die Behauptung 
\[
\lim\limits_{n\rightarrow\infty}P_{0}^{n_{1}}\otimes Q_{0}^{n_{2}}\left(\left|T_{n}-\widetilde{T}_{n}\right|>\varepsilon\right)=0
\]
 für alle $\varepsilon>0$.\end{proof} Man kann bei den asymptotisch
optimalen Testfolgen (\ref{erinnerung_asymp_einseit_testfolge}) und
(\ref{erinnerung_asymp_zweiseit_testfolge}) die Teststatistik $T_{n}$
durch $\widetilde{T}_{n}$ ersetzen. Dabei erhält man die Testfolgen,
die zu der asymptotisch optimalen Testfolgen (\ref{erinnerung_asymp_einseit_testfolge})
und (\ref{erinnerung_asymp_zweiseit_testfolge}) für das entsprechende
Testproblem asymptotisch äquivalent sind.

Zur Berechnung des kritischen Wertes für die Tests (\ref{erinnerung_asymp_einseit_testfolge})
und (\ref{erinnerung_asymp_zweiseit_testfolge}) braucht man die asymptotische
Varianz 
\[
\lim_{n\rightarrow\infty}\mbox{Var}_{P_{0}^{n_{1}}\otimes Q_{0}^{n_{2}}}\left(\widetilde{T}_{n}\right)=\lim_{n\rightarrow\infty}\mbox{Var}_{P_{0}^{n_{1}}\otimes Q_{0}^{n_{2}}}\left(T_{n}\right).
\]
 Mit Beispiel \ref{wilcoxon} und Satz \ref{asymp_verhalt_der_T_stat} 
ergibt sich
\begin{eqnarray*}
 &  & \lim_{n\rightarrow\infty}\mbox{Var}_{P_{0}^{n_{1}}\otimes Q_{0}^{n_{2}}}\left(\widetilde{T}_{n}\right)\\
 & = & \frac{1}{1-d}\int\left(\int1_{\left\{ \left(x,y\right)\in\mathbb{R}^{2}:x\geq y\right\} }dQ_{0}(y)-k\left(P_{0}\otimes Q_{0}\right)\right)^{2}dP_{0}\left(x\right)\\
 &  & +\frac{1}{d}\int\left(\int1_{\left\{ \left(x,y\right)\in\mathbb{R}^{2}:x\geq y\right\} }dP_{0}(x)-k\left(P_{0}\otimes Q_{0}\right)\right)^{2}dQ_{0}\left(y\right),
\end{eqnarray*}
wobei $d=\lim\limits_{n\rightarrow\infty}\frac{n_{2}}{n}$ ist. Die
Werte 
\[
w_{1}=\int\left(\int1_{\left\{ \left(x,y\right)\in\mathbb{R}^{2}:x\geq y\right\} }dQ_{0}(y)-k\left(P_{0}\otimes Q_{0}\right)\right)^{2}dP_{0}\left(x\right)
\]
 und 
\[
w_{2}=\int\left(\int1_{\left\{ \left(x,y\right)\in\mathbb{R}^{2}:x\geq y\right\} }dP_{0}(x)-k\left(P_{0}\otimes Q_{0}\right)\right)^{2}dQ_{0}\left(y\right)
\]
 sind unbekannt und müssen geschätzt werden. Es reicht die konsistenten
und erwartungstreuen Schätzer $\widehat{w}_{1,n}$ und $\widehat{w}_{2,n}$
für $w_{1}$ und $w_{2}$ zu finden. Die Testfolgen mit den geschätzten
Werten von $w_{1}$ und $w_{2}$ sind dann asymptotisch äquivalent
zu den Testfolgen mit den exakten Werten von $w_{1}$ und $w_{2}$,
vgl. Beispiel \ref{summ_mises_funk} zum Beweis. Dieses Schätzproblem
wird nun in einer verallgemeinerten Form gelöst, die eine Anwendung
beim Testen der von Mises Funktionale ermöglicht, vgl. Beispiel \ref{von_mises}.

\end{anwendung}\begin{lemma}Sei $P\otimes Q\in\mathcal{P}\otimes\mathcal{Q}$
beliebig aber fest gewählt. Sei $h:\left(\Omega_{1}\times\Omega_{2},\mathcal{A}_{1}\otimes\mathcal{A}_{2}\right)\rightarrow\left(\mathbb{R},\mathcal{B}\right)$
eine Zufallsvariable mit $\int h^{4}dP\otimes Q<\infty$ und $\int h\,dP\otimes Q=0$.
Die Zufallsvariable 
\[
S_{n}=\frac{1}{n_{1}n_{2}\left(n_{2}-1\right)}\sum_{i=1}^{n_{1}}\sum_{j=1}^{n_{2}}\sum_{{k=1\atop k\neq j}}^{n_{2}}h\left(X_{i},Y_{k}\right)h\left(X_{i},Y_{j}\right)
\]
 ist dann ein erwartungstreuer und konsistenter Schätzer für den Wert
\begin{equation}
\int\left(\int h\left(x,y\right)dQ\left(y\right)\right)^{2}dP\left(x\right).\label{Wert_h_two}
\end{equation}
 \end{lemma}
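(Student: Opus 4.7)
The plan is to recognise $S_n$ as a (normalised) two-sample U-statistic and then to invoke the standard convergence theory for U-statistics, which the author already references (\cite{Lee:1990}, \cite{Korolyuk:1994}, \cite{Witting:1985}, \cite{Witting:1995}). Concretely, I would define the symmetric kernel
\[
\psi(x,y_{1},y_{2}):=h(x,y_{1})h(x,y_{2}),\qquad \psi(x,y_{1},y_{2})=\psi(x,y_{2},y_{1}),
\]
of degrees $(1,2)$, and observe that
\[
S_n=\frac{1}{n_{1}\binom{n_{2}}{2}}\sum_{i=1}^{n_{1}}\sum_{1\le j<k\le n_{2}}\psi(X_{i},Y_{j},Y_{k}),
\]
since the inner double sum $\sum_{j\neq k}$ in the definition of $S_n$ counts each unordered pair $\{j,k\}$ exactly twice and the factor $2$ cancels with $\binom{n_{2}}{2}^{-1}/(n_2(n_2-1))^{-1}$.

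For the unbiasedness part, I would use the stochastic independence of $X_i$, $Y_j$, $Y_k$ for $j\ne k$ together with Fubini, to obtain
\[
E\bigl(\psi(X_{1},Y_{1},Y_{2})\bigr)=\int\!\!\int\!\!\int h(x,y_{1})\,h(x,y_{2})\,dQ(y_{1})\,dQ(y_{2})\,dP(x)=\int\!\Bigl(\int h(x,y)\,dQ(y)\Bigr)^{\!2}dP(x),
\]
so that $E(S_{n})$ coincides with the target value for every $n$. This uses only $\int h^{2}\,dP\otimes Q<\infty$, which is implied by the moment hypothesis $\int h^{4}\,dP\otimes Q<\infty$ together with $\int h\,dP\otimes Q=0$.

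For consistency I would verify the integrability condition on the kernel needed for the $L_{2}$-law of two-sample U-statistics. By Jensen's inequality applied to $y\mapsto h(x,y)^{2}$ and Fubini,
\[
\int\psi^{2}\,dP\otimes Q\otimes Q=\int\!\Bigl(\int h(x,y)^{2}dQ(y)\Bigr)^{\!2}dP(x)\le\int h^{4}\,dP\otimes Q<\infty.
\]
With this in hand, the classical $L_{2}$-convergence (or the SLLN) for two-sample U-statistics gives $S_{n}\to E(\psi(X_{1},Y_{1},Y_{2}))$ in probability as $\min(n_{1},n_{2})\to\infty$; under the standing assumption $\lim_{n\to\infty}n_{2}/n=d\in(0,1)$ both sample sizes tend to infinity, so consistency along $n\to\infty$ follows.

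The main obstacle, if one were to avoid citing U-statistic theory, would be the bookkeeping required to control $\mathrm{Var}(S_{n})$: one has to split the sextuple sum over $(i,j,k,i',j',k')$ according to the patterns of coincidence between $i,i'$ and between $\{j,k\}$ and $\{j',k'\}$, and bound each block using the fourth-moment hypothesis. The U-statistic framework (Hoeffding decomposition) packages exactly this combinatorial argument, so once the kernel $\psi$ and its square-integrability have been identified as above, the result follows without further calculation.
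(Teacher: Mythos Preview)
Your proposal is correct, and the unbiasedness argument via Fubini is exactly what the paper does. For consistency, however, the paper takes the elementary route you describe in your final paragraph rather than invoking U-statistic theory: it shows $\mathrm{Var}_{P^{n_1}\otimes Q^{n_2}}(S_n)\to 0$ by writing the variance as a double sum over index triples $s,t\in I_n$, bounding each nonzero covariance by $\int h^{4}\,dP\otimes Q$ via Cauchy--Schwarz (this is precisely where the fourth-moment hypothesis enters), observing that $\mathrm{Cov}(f_s,f_t)=0$ whenever $s$ and $t$ share no coordinate, and then checking that the number of pairs with at least one shared coordinate is $o(z_n^{2})$ with $z_n=n_1n_2(n_2-1)$. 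Your route via the two-sample U-statistic of degrees $(1,2)$ with kernel $\psi(x,y_1,y_2)=h(x,y_1)h(x,y_2)$ is cleaner and perfectly legitimate given that the thesis already cites \cite{Lee:1990}, \cite{Korolyuk:1994} and \cite{Witting:1995} for U-statistic theory; your verification that $\int\psi^{2}\,dP\otimes Q\otimes Q\le\int h^{4}\,dP\otimes Q$ isolates exactly the hypothesis needed for the $L_{2}$-law, whereas the paper's hands-on calculation is self-contained and makes the combinatorics explicit.

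One small remark: $\int h^{2}\,dP\otimes Q<\infty$ follows from $\int h^{4}\,dP\otimes Q<\infty$ by Jensen alone; the centering condition $\int h\,dP\otimes Q=0$ is not needed for this implication (and in fact plays no role anywhere in the proof).
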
\begin{proof}Die Zufallsvariable $S_{n}$ ist ein erwartungstreuer
Schätzer für den Wert (\ref{Wert_h_two}), denn es gilt 
\begin{eqnarray*}
 &  & E_{P^{n_{1}}\otimes Q^{n_{2}}}\left(S_{n}\right)\\
 & = & \frac{1}{n_{1}n_{2}\left(n_{2}-1\right)}\sum_{i=1}^{n_{1}}\sum_{j=1}^{n_{2}}\sum_{{k=1\atop k\neq j}}^{n_{2}}\int h\left(x_{i},y_{k}\right)h\left(x_{i},y_{j}\right)dP\otimes Q^{2}\left(x_{i},y_{k},y_{j}\right)\\
 & = & \frac{1}{n_{1}n_{2}\left(n_{2}-1\right)}\sum_{i=1}^{n_{1}}\sum_{j=1}^{n_{2}}\sum_{{k=1\atop k\neq j}}^{n_{2}}\int\left(\int h\left(x,y\right)dQ\left(y\right)\right)^{2}dP\left(x\right)\\
 & = & \int\left(\int h\left(x,y\right)dQ\left(y\right)\right)^{2}dP\left(x\right)
\end{eqnarray*}
nach dem Satz von Fubini. Es wird nun gezeigt, dass der Schätzer $S_{n}$
für den Wert (\ref{Wert_h_two}) konsistent ist. Es reicht zu zeigen,
dass 
\begin{equation}
\lim_{n\rightarrow\infty}\mbox{Var}_{P^{n_{1}}\otimes Q^{n_{2}}}\left(S_{n}\right)=0\label{eq:konv_var}
\end{equation}
 gilt. Es seien 
\[
I_{n}=\left\{ \left(i,j,k\right):i\in\left\{ 1,\ldots,n_{1}\right\} \mbox{ und }j,k\in\left\{ 1,\ldots,n_{2}\right\} \mbox{ und }k\neq j\right\} 
\]
 die Indexmenge und 
\[
z_{n}=n_{1}n_{2}\left(n_{2}-1\right)
\]
die Anzahl der Elemente von $I_{n}$. Es bezeichne $f_{s}=h\left(X_{i},Y_{k}\right)h\left(X_{i},Y_{j}\right)$
für ein $s=\left(i,j,k\right)\in I_{n}$. Man erhält zunächst
\begin{eqnarray*}
 &  & \mbox{Var}_{P^{n_{1}}\otimes Q^{n_{2}}}\left(S_{n}\right)\\
 & = & \mbox{Var}_{P^{n_{1}}\otimes Q^{n_{2}}}\left(\frac{1}{z_{n}}\sum_{s\in I_{n}}f_{s}\right)\\
 & = & \frac{1}{z_{n}^{2}}\sum_{s\in I_{n}}\mbox{Var}_{P^{n_{1}}\otimes Q^{n_{2}}}\left(f_{s}\right)+\frac{1}{z_{n}^{2}}\sum_{s\in I_{n}}\,\sum_{t\in I_{n}\setminus\left\{ s\right\} }\mbox{Cov}_{P^{n_{1}}\otimes Q^{n_{2}}}\left(f_{s},f_{t}\right).
\end{eqnarray*}
 Für die Summe der Varianzen ergibt sich somit
\[
\frac{1}{z_{n}^{2}}\sum_{s\in I_{n}}\mbox{Var}_{P^{n_{1}}\otimes Q^{n_{2}}}\left(f_{s}\right)=\frac{1}{z_{n}}\mbox{Var}_{P^{n_{1}}\otimes Q^{n_{2}}}\left(f_{\left(1,1,2\right)}\right)\rightarrow0
\]
für $n\rightarrow\infty$, weil 
\begin{eqnarray}
\mbox{Var}_{P^{n_{1}}\otimes Q^{n_{2}}}\left(f_{\left(1,1,2\right)}\right) & = & \int\left(h\left(x_{1},y_{1}\right)h\left(x_{1},y_{2}\right)\right)^{2}dP\otimes Q^{2}\left(x_{1},y_{1},y_{2}\right)\nonumber \\
 & = & \int\left(\int h\left(x_{1},y_{1}\right)^{2}dQ\left(y_{1}\right)\right)^{2}dP\left(x_{1}\right)\nonumber \\
 & \leq & \int\left(\int h\left(x_{1},y_{1}\right)^{4}dQ\left(y_{1}\right)\right)dP\left(x_{1}\right)\nonumber \\
 & = & \int h^{4}dP\otimes Q\nonumber \\
 & < & \infty\label{eq:var_cov_ab}
\end{eqnarray}
nach Voraussetzung gilt. Es bleibt die Summe 
\begin{equation}
\frac{1}{z_{n}^{2}}\sum_{s\in I_{n}}\,\sum_{t\in I_{n}\setminus\left\{ s\right\} }\mbox{Cov}_{P^{n_{1}}\otimes Q^{n_{2}}}\left(f_{s},f_{t}\right)\label{sum_cov}
\end{equation}
der Kovarianzen zu betrachten. Mit der Abschätzung (\ref{eq:var_cov_ab})
erhält man 
\begin{equation}
\left|\mbox{Cov}_{P^{n_{1}}\otimes Q^{n_{2}}}\left(f_{s},f_{t}\right)\right|\leq\left(\mbox{Var}_{P^{n_{1}}\otimes Q^{n_{2}}}\left(f_{s}\right)\mbox{Var}_{P^{n_{1}}\otimes Q^{n_{2}}}\left(f_{t}\right)\right)^{\frac{1}{2}}\leq\int h^{4}dP\otimes Q<\infty.\label{eq:cov_ab}
\end{equation}
Es bezeichne $s=\left(s_{1},s_{2},s_{3}\right)$ und $t=\left(t_{1},t_{2},t_{3}\right)$.
Es gilt 
\[
\mbox{Cov}_{P^{n_{1}}\otimes Q^{n_{2}}}\left(f_{s},f_{t}\right)=0,
\]
falls $s_{k}\neq t_{k}$ für alle $k\in\left\{ 1,2,3\right\} $ sind.
Insgesamt gibt es also mindestens 
\[
n_{2}\left(n_{2}-1\right)n_{1}\left(n_{2}-2\right)\left(n_{2}-3\right)\left(n_{1}-1\right)=z_{n}\left(n_{2}-2\right)\left(n_{2}-3\right)\left(n_{1}-1\right)
\]
 verschiedene Kovarianzen $\mbox{Cov}_{P^{n_{1}}\otimes Q^{n_{2}}}\left(f_{s},f_{t}\right)$
in der Summe (\ref{sum_cov}), die gleich Null sind. Man erhält somit
die Abschätzung
\begin{eqnarray*}
\textrm{} &  & \left|\frac{1}{z_{n}^{2}}\sum_{s\in I_{n}}\,\sum_{t\in I_{n}\setminus\left\{ s\right\} }\mbox{Cov}_{P^{n_{1}}\otimes Q^{n_{2}}}\left(f_{s},f_{t}\right)\right|\\
 & \leq & \frac{1}{z_{n}^{2}}\left(z_{n}\left(z_{n}-1\right)-z_{n}\left(n_{2}-2\right)\left(n_{2}-3\right)\left(n_{1}-1\right)\right)\int h^{4}dP\otimes Q\\
 & = & \frac{1}{z_{n}}\left(z_{n}-1-\left(n_{2}-2\right)\left(n_{2}-3\right)\left(n_{1}-1\right)\right)\int h^{4}dP\otimes Q\\
 & = & \frac{n_{2}\left(n_{2}-1\right)n_{1}-\left(n_{2}-2\right)\left(n_{2}-3\right)\left(n_{1}-1\right)-1}{n_{2}\left(n_{2}-1\right)n_{1}}\int h^{4}dP\otimes Q\\
 & \rightarrow & 0
\end{eqnarray*}
 für $n\rightarrow\infty$. Somit ist die Konvergenz (\ref{eq:konv_var})
gezeigt.\end{proof}

\begin{beisp}Speziell für das Wilcoxon Funktional mit $h=1_{\left\{ \left(x,y\right)\in\mathbb{R}^{2}:x\geq y\right\} }-\frac{1}{2}$
erhält man den erwartungstreuen und konsistenten Schätzer 
\[
\widehat{w}_{1,n}=\frac{1}{n_{1}n_{2}\left(n_{2}-1\right)}\sum_{i=1}^{n_{1}}\sum_{j=1}^{n_{2}}\sum_{{k=1\atop k\neq j}}^{n_{2}}\left(\mathbf{1}_{\left\{ X_{i}\leq Y_{k}\right\} }-\frac{1}{2}\right)\left(\mathbf{1}_{\left\{ X_{i}\leq Y_{j}\right\} }-\frac{1}{2}\right)
\]
für $w_{1}$. Analog ergibt sich der erwartungstreue und konsistente
Schätzer 
\[
\widehat{w}_{2,n}=\frac{1}{n_{2}n_{1}\left(n_{1}-1\right)}\sum_{i=1}^{n_{2}}\sum_{j=1}^{n_{1}}\sum_{{k=1\atop k\neq j}}^{n_{1}}\left(\mathbf{1}_{\left\{ X_{j}\leq Y_{i}\right\} }-\frac{1}{2}\right)\left(\mathbf{1}_{\left\{ X_{k}\leq Y_{i}\right\} }-\frac{1}{2}\right)
\]
 für $w_{2}$. Insgesamt erhält man die asymptotisch optimale Folge
\[
\widetilde{\varphi}_{1n}=\left\{ \begin{array}{cccc}
1 &  & >\\
 & \widetilde{T}_{n} &  & u_{1-\alpha}\left(\frac{n}{n_{1}}\widehat{w}_{1,n}+\frac{n}{n_{2}}\widehat{w}_{2,n}\right)^{\frac{1}{2}}\\
0 &  & \leq
\end{array}\right.
\]
 der einseitigen Tests für das Testproblem (\ref{wt1}) und die asymptotisch
optimale Folge
\[
\widetilde{\varphi}_{2n}=\left\{ \begin{array}{cccc}
1 &  & >\\
 & \left|\widetilde{T}_{n}\right| &  & u_{1-\frac{\alpha}{2}}\left(\frac{n}{n_{1}}\widehat{w}_{1,n}+\frac{n}{n_{2}}\widehat{w}_{2,n}\right)^{\frac{1}{2}}\\
0 &  & \leq
\end{array}\right.
\]
 der zweiseitigen Tests für das Testproblem (\ref{wt2}).\end{beisp} 

\chapter*{Anhang}

\addcontentsline{toc}{chapter}{Anhang}

\section*{Symbol $o$}

\addcontentsline{toc}{section}{Symbol $o$}

\subsection*{Symbol $o$ für Folgen}

Es seien $\left(a_{n}\right)_{n\in\mathbb{N}}$ und $\left(b_{n}\right)_{n\in\mathbb{N}}$
zwei Folgen von reellen Zahlen.\\
Das Symbol $a_{n}\in o\left(b_{n}\right)$ bedeutet, dass $\lim\limits_{n\rightarrow\infty}\frac{a_{n}}{b_{n}}=0$
ist. \\

\subsection*{Symbol $o$ für Funktionen}

Es seien $M$ ein metrischer Raum und $f,g:M\rightarrow\mathbb{R}$
zwei Abbildungen. Das Symbol $f\in o\left(g\right)$ für $x\rightarrow x_{0}$
in $M$ bedeutet dann, dass $\lim\limits_{x\rightarrow x_{0}}\frac{f(x)}{g(x)}=0$
gilt.

Die hier vorgelegte Dissertation habe ich eigenständig und ohne unerlaubte
Hilfe angefertigt. Die Dissertation wurde in der vorgelegten oder
in ähnlicher Form noch bei keiner anderen Institution eingereicht.
Ich habe bisher keine erfolglosen Promotionsversuche unternommen.\vspace*{1.5cm}\\
Düsseldorf, den 17.02.2006\\
Vladimir Ostrovski.
\end{document}